 \newcommand{\resp}{{\it resp.} }
\newcommand{\cf}{{\it cf.} }
\newcommand{\ie}{{\it i.e.} }
\newcommand{\eg}{{\it e.g.} }
\newcommand{\loccit}{{\it loc. cit.} }
\renewcommand{\qed}{\hfill$\Box$\medskip}
\newcommand{\e}{\frac{1}{p^\infty}}
\newcommand{\f}{-\frac{1}{p^\infty}}
\newcommand{\s}{\sm o}
 \renewcommand{\ss}{\sm o \sm o}
\newcommand{\sA}{\mathcal{A}}
\newcommand{\sB}{\mathcal{B}}
\newcommand{\sC}{\mathcal{C}}
\newcommand{\sG}{\mathcal{G}}
\newcommand{\sH}{\mathcal{H}}
\newcommand{\sI}{\mathcal{I}}
\newcommand{\sL}{\mathcal{L}}
\newcommand{\sM}{\mathcal{M}}
\newcommand{\sK}{\mathcal{K}}
\newcommand{\Q}{\mathbf{Q}}
\newcommand{\Z}{\mathbf{Z}}
\newcommand{\F}{\mathbf{F}}
\newcommand{\N}{\mathbf{N}}
\newcommand{\R}{\mathbf{R}}
 \newcommand{\inj}{\hookrightarrow}
\newcommand{\surj}{\rightarrow\!\!\!\!\!\rightarrow}
\newcommand{\car}{\operatorname{car}}
\renewcommand{\Im}{\operatorname{Im}}
\renewcommand{\ker}{\rm{ker}\,}
\newcommand{\Coker}{{\rm{Coker}}\,}
\newcommand{\Spec}{\rm{Spec}\,}
\renewcommand{\epsilon}{\varepsilon}
\font\sm=cmr10 at9pt
\newtheorem{thm}{Th\'eor\`eme}[subsection]
\newtheorem{lemma}[thm]{Lemme}
\newtheorem{prop}[thm]{Proposition}
\newtheorem{cor}[thm]{Corollaire}
\newtheorem{sorite}[thm]{Sorite}
\theoremstyle{definition}
\newtheorem{defn}[thm]{Definition}
\newtheorem{qn}[thm]{Question}
\numberwithin{equation}{section}
\font\sm=cmr9 at6pt
\begin{document}

 \title[Lemme d'Abhyankar perfectoide]{Le lemme d'Abhyankar perfectoide}

\author{Yves
Andr\'e}

\address{Institut de Math\'ematiques de Jussieu\\  4 place Jussieu, 75005
Paris\\France.}
\email{yves.andre@imj-prg.fr}
   \date{23 ao\^ut 2016}
 \keywords{non-archimedean uniform Banach algebra, perfectoid algebra, almost finite etale extension, perfectoid Abhyankar lemma} \subjclass{11S15, 14G20}

 \begin{sloppypar}
 
   \bigskip 
   
   \bigskip 
    
  \medskip \begin{abstract} 
    Nous \'etendons le th\'eor\`eme de presque-puret\'e de Faltings-Scholze-Kedlaya-Liu sur les extensions \'etales finies d'alg\`ebres perfecto\"{\i}des au cas des extensions ramifi\'ees, sans 
  restriction sur le discriminant. Le point cl\'e est une version perfecto\"{\i}de du th\'eor\`eme d'extension de Riemann. 
    Au pr\'ealable, nous revenons sur les aspects cat\'egoriques des alg\`ebres de Banach uniformes et des alg\`ebres perfecto\"{\i}des. 
   \bigskip
   
\noindent{\sm{ABSTRACT.}}  We extend Faltings's ``almost purity theorem" on finite etale extensions of perfectoid algebras (as generalized by Scholze and Kedlaya-Liu) to the ramified case, without 
 restriction on the discriminant. The key point is a perfectoid version of Riemann's extension theorem.   
  Categorical aspects of uniform Banach algebras and perfectoid algebras are revisited beforehand.
  \end{abstract}

     \bigskip 
        \bigskip  
   \maketitle
  \let\languagename\relax

\tableofcontents

 \newpage

  \bigskip    \section*{Introduction.} 
      
    \bigskip
    
    \bigskip  \subsection{} La th\'eorie des extensions presque \'etales de G. Faltings trouve sa source dans les travaux de J. Tate sur les extensions profond\'ement ramifi\'ees de corps locaux, et son aboutissement dans l'\'etude de la cohomologie \'etale $p$-adique des vari\'et\'es alg\'ebriques ou analytiques sur un corps $p$-adique (\cf \eg \cite{Fa}\cite{O}). Elle utilise le langage de la {``presque-alg\`ebre"} introduit par Faltings \`a ce propos, dont le principe consiste,  dans le cadre $(\mathfrak V, \mathfrak m)$ constitu\'e d'un anneau et d'un id\'eal idempotent, \`a {``n\'egliger"} les $\mathfrak V$-modules annul\'es par $\mathfrak m\,$ \cite{GR1}.
      
      Cette th\'eorie a \'et\'e renouvel\'ee  
      par l'approche perfecto\"{i}de de P. Scholze. Un corps $\sK$ complet pour une valuation $p$-adique non discr\`ete, d'anneau de valuation $\sK^{\s}$ et d'id\'eal de valuation $\sK^{\ss}$, est dit {\it perfecto\"{\i}de} si l'\'el\'evation \`a la puissance $p$ est surjective sur $\sK^{\s}/p\sK^{\s}$. 
         Une $\sK$-alg\`ebre de Banach $\sA$ est dite {\it perfecto\"{\i}de} si elle est uniforme (\ie si la sous-$\sK^{\s}$-alg\`ebre $\sA^{\s}$ des \'el\'ements dont les puissances sont born\'ees est born\'ee), et si l'\'el\'evation \`a la puissance $p$ est surjective sur $\sA^{\s}/p\sA^{\s}$.  
               
      \subsection{} Le th\'eor\`eme de {``presque-puret\'e"} de Faltings, sous la forme g\'en\'erale que lui ont donn\'ee Scholze d'une part \cite{S1}, et K. Kedlaya et R. Liu d'autre part \cite{KL}, affirme que {\it si $\sA$ est une $\sK$-alg\`ebre perfecto\"{\i}de et $\sB$ une $\sA$-alg\`ebre \'etale finie, alors $\sB$ est perfecto\"{\i}de et $\sB^{\s}$ est presque \'etale finie sur $\sA^{\s}$ dans le cadre $(\sK^{\s}, \sK^{\ss})$}, de sorte que $tr_{\sB/\sA}$ envoie $\sB^{\s}$ presque surjectivement dans $\sA^{\s}$ si $\sB^{\s}$ est un $\sA^{\s}$-module fid\`ele.

       \subsection{}\label{obj} L'objectif de l'article est de g\'en\'eraliser ce th\'eor\`eme au {\it cas ramifi\'e}, en s'inspirant du lemme d'Abhyankar. Rappelons que dans la situation d'un anneau local r\'egulier d'in\'egale caract\'eristique $(0,p)$ et d'une extension finie plate, ramifi\'ee le long d'un diviseur \`a croisements normaux d\'efini par une \'equation $pg=0$, d'indices de ramification premiers \`a $p$, ce lemme assure qu'on peut {``rendre l'extension \'etale"} sans inverser $pg$, par l'adjonction de racines de $pg$ d'ordre divisible par tous les indices de ramification, suivie du passage \`a la fermeture int\'egrale.        
        
      \smallskip Notre r\'esultat principal peut \^etre vu comme un analogue perfecto\"{\i}de du lemme d'Abhyankar, sans hypoth\`ese de croisements normaux ni condition sur les indices de ramification, au prix de remplacer {``\'etale"} par {``presque \'etale"} (dans un cadre qui, contrairement \`a l'usage, n'est pas celui d'un anneau de valuation et de son id\'eal maximal).

            \begin{thm}\label{T1}  Soit $\sA$ une alg\`ebre perfecto\"{\i}de sur un corps perfecto\"{\i}de $\sK$ de caract\'eristique r\'esiduelle $p$.  On suppose que $\sA$ contient une suite de racines $p^m$-\`emes compatibles d'un \'el\'ement $g\in \sA^{\s} $  
      non diviseur de z\'ero, ce qui permet de voir $\sA^{\s}$ comme $\sK^{\s}[T^{\e}]$-alg\`ebre en envoyant $  T^{\frac{1}{p^i}}$ sur $  g^{\frac{1}{p^i}}$. On se place dans le cadre $\,(\sK^{\s}[T^{\e}], T^{\e}\sK^{\ss}[T^{\e}])$.
      
    \smallskip  Soit $\sB'$ une $\sA[\frac{1}{g}]$-alg\`ebre \'etale finie.
      \begin{enumerate} \item Il existe une plus grande alg\`ebre perfecto\"{\i}de $\,\sB\,$ comprise entre $\,\sA\,$ et $\,\sB'$, telle que l'inclusion $\,\sA\inj \sB\,$ soit continue. On a $\,\sB[\frac{1}{g}]= \sB'$, et $\,\sB^{\s}\,$ est contenue dans la fermeture int\'egrale de $\,g^{\f}\sA^{\s}\, $ dans $\,\sB'$ et lui est presque isomorphe,   
     \item Pour tout $m\in \N$, $\sB^{\s}/p^m\,$ est presque \'etale finie sur $\,\sA^{\s}/p^m\,$ (et $\sB^{\s}[\frac{1}{g}]$ est presque \'etale finie sur $\,\sA^{\s}[\frac{1}{g}]$ dans le cadre $(\sK^{\s}, \sK^{\ss}))$. 
    \item Si $\sB'$ est un $\sA[\frac{1}{g}]$-module fid\`ele, le morphisme trace $\,tr_{\sB'/\sA[\frac{1}{g}]}\,$ envoie $\,\sB^{\s}\,$ presque surjectivement dans $\,g^{\f}\sA^{\s}$.
       \end{enumerate} \end{thm}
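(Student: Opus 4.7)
The plan is to reduce to characteristic $p$ via tilting, build $\sB$ directly in that setting by an integral closure construction, and then untilt. Let $\sA^\flat$ and $\sK^\flat$ be the tilts, and $g^\flat\in(\sA^\flat)^\s$ the element corresponding to the compatible system $(g^{1/p^n})_n$. Via the tilt equivalence for finite \'etale extensions of perfectoid algebras (almost puret\'e de Scholze--Kedlaya--Liu) applied over $\sA[\frac{1}{g}]$, whose tilt is $\sA^\flat[\frac{1}{g^\flat}]$, the finite \'etale extension $\sB'$ corresponds canonically to a finite \'etale extension $(\sB')^\flat$ of $\sA^\flat[\frac{1}{g^\flat}]$. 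All of the assertions (existence of a largest perfectoid intermediate, integrality, almost \'etalet\'e mod $p^m$, trace surjectivity) are preserved under the tilt/untilt dictionary; so it suffices to produce a perfectoid $\sK^\flat$-algebra $\sB^\flat$ with the analogous properties and then untilt it to $\sB$.

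\textbf{Construction en caract\'eristique $p$.} I take $\sB^{\flat,+}$ to be the integral closure of $(g^\flat)^\f\,(\sA^\flat)^\s$ in $(\sB')^\flat$, and set $\sB^\flat:=\sB^{\flat,+}[\frac{1}{\pi^\flat}]$ for a pseudo-uniformizer $\pi^\flat$ of $\sK^\flat$. Because $(g^\flat)^\f\,(\sA^\flat)^\s$ is perfect and we work in characteristic $p$, integral closure commutes with Frobenius, so $\sB^{\flat,+}/\pi^\flat$ is automatically perfect. The candidate $\sB^\flat$ is then perfectoid \emph{provided} $\sB^{\flat,+}$ is uniform, \ie bounded in $(\sB')^\flat$ for its natural Banach topology; this uniformity is precisely the perfectoid incarnation of the classical Riemann extension theorem. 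Once it holds, untilting produces $\sB$, and the equivalence of \'etale sites after inverting $g$ gives $\sB[\frac{1}{g}]=\sB'$; maximality among perfectoid intermediates is automatic, since any such intermediate tilts to a perfectoid squeezed between $\sA^\flat$ and $(\sB')^\flat$, hence consisting of elements integral over $(\sA^\flat)^\s$ with powers bounded after inverting $g^\flat$, \ie contained in $\sB^{\flat,+}$.

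\textbf{Obstacle principal et conclusion.} The main obstacle is exactly the uniformity step above: a priori, integral elements of $(\sB')^\flat$ over $(g^\flat)^\f\,(\sA^\flat)^\s$ could acquire denominators in $g^\flat$ of arbitrarily negative valuation, and one must rule this out by a perfectoid Riemann-type extension argument (the author's key lemma). Granted this, parts (2) and (3) follow from a trace calculation in the style of Faltings--Scholze. The trace pairing of the \'etale extension $(\sB')^\flat/\sA^\flat[\frac{1}{g^\flat}]$ is perfect after inverting $g^\flat$; its restriction to $\sB^{\flat,+}$ produces a discriminant ideal containing every $T^{1/p^n}=(g^\flat)^{1/p^n}$, and is therefore almost the unit ideal in the modified cadre $((\sK^\flat)^\s[T^\e],\,T^\e(\sK^\flat)^\ss[T^\e])$. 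This yields almost \'etalet\'e of $\sB^{\flat,+}/(\pi^\flat)^m$ over $(\sA^\flat)^\s/(\pi^\flat)^m$ for every $m$, and untilting transports this to assertion~(2) for $\sB^\s/p^m$ over $\sA^\s/p^m$ (and for $\sB^\s[\frac{1}{g}]$ over $\sA^\s[\frac{1}{g}]$ in the ordinary cadre, upon inverting $T$). Assertion~(3) is the companion: when $\sB'$ is faithful over $\sA[\frac{1}{g}]$, the perfectness of the almost trace pairing on $\sB^\s$ forces $tr_{\sB'/\sA[\frac{1}{g}]}$ to map $\sB^\s$ almost surjectively onto $g^\f\sA^\s$.
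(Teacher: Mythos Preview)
Your reduction to characteristic $p$ is natural, but it conceals rather than resolves the two real difficulties, and in one place it presupposes something the paper leaves open.

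\medskip

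\textbf{The tilt of $\sB'$ is not defined by the almost-purity theorem.} The algebra $\sA[\frac{1}{g}]$ is not a perfectoid $\sK$-algebra (it is not complete, nor uniform in the required sense), so the Scholze--Kedlaya--Liu equivalence does not apply to it directly and there is no ``$(\sB')^\flat$'' available off the shelf. What one \emph{can} tilt are the rational localizations $\sA^{[j]}=\sA\{\frac{\varpi^j}{g}\}$ and the finite \'etale algebras $\sB^j=\sB'\otimes_{\sA[\frac{1}{g}]}\sA^{[j]}$ over them. The paper's route is precisely this: work with the projective system $(\sB^j)_j$, show (Th.~\ref{T6}) that the uniform limit ${\rm ulim}\,\sB^j$ is almost perfectoid, and identify it (Prop.~\ref{P17}, Prop.~\ref{L19}) with the integral closure of $g^{\f}\sA^{\s}$ in $\sB'$. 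The ``tilt of $\sB'$'' only acquires meaning \emph{a posteriori}, once one knows ${\rm ulim}\,\sB^j$ is (almost) perfectoid.

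\medskip

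\textbf{Uniformity is the theorem, not an obstacle to be granted.} You write that the integral closure in characteristic $p$ is perfectoid ``provided it is uniform'' and that this ``is precisely the perfectoid incarnation of the Riemann extension theorem''. But that Riemann-type result (Th.~\ref{T5} together with Th.~\ref{T6}) is the technical heart of the paper; it is proved by a delicate analysis of the system $(\sA^{\s}/\varpi^r)^{[j]}$ and an almost-vanishing of $\lim^1$. Deferring it leaves essentially nothing proved. Moreover, even in characteristic $0$ after all this work, the integral closure $\tilde\sB^{\s}$ is only shown to be \emph{almost} perfectoid; whether $g^{\f}\sB^{\s}[\frac{1}{\varpi}]$ is genuinely perfectoid is explicitly left open (Question~\ref{q}). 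Your untilting step implicitly assumes the untilt of the characteristic-$p$ integral closure coincides with the characteristic-$0$ integral closure, which amounts to this open question; the paper instead takes $\sB=\tilde\sB^{\natural}$ and shows it is almost isomorphic to $\tilde\sB$.

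\medskip

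\textbf{The discriminant argument for (2) is too quick.} You assert that the discriminant ideal contains every $(g^\flat)^{1/p^n}$, but give no reason; there is no a priori bound on the ramification of $\sB'$ along $g=0$. The paper instead reduces to the Galois case (Lemma~\ref{L0}), passes to the limit of the isomorphisms $\sB_2^{[j]\s}\cong\prod_G\sB^{[j]\s}$ to obtain $(\widehat{\sB_2}^{\s})_\ast\cong\prod_G\sB^{\s}_\ast$, and reads off almost-Galois (hence almost-\'etale) modulo $p^m$ from that (Prop.~\ref{L20}). Part~(3) then follows by Nakayama from the almost-surjectivity of the trace modulo $p$.
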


 %Voir \S \ref{pT1}. %- l'hypoth\`ese du point $(3)$ est probablement superflue. 
 
 \medskip  Ce r\'esultat pr\'esente un double aspect heuristique: pour toute extension finie d'une alg\`ebre perfecto\"{\i}de, $i)$ {\it la ramification g\'eom\'etrique est presque supprim\'ee par adjonction des racines $p^\infty$-i\`emes du discriminant, compl\'etion et passage \`a une fermeture 
  int\'egrale} (en analogie avec le lemme d'Abhyankar), et $ii)$ {\it il en est alors essentiellement de m\^eme de la ramification arithm\'etique} (ce qui \'etend le th\'eor\`eme de presque-puret\'e de Faltings {\it et al.} qui traite du cas g\'eom\'etriquement non ramifi\'e).   
  
\smallskip En ce qui concerne le point $(2)$, on peut naturellement se demander si $\sB^{\s} \,$ est presque \'etale finie sur $\,\sA^{\s} $. C'est plausible, mais nous ne l'avons pas d\'emontr\'e, le probl\`eme \'etant celui de la presque finitude de $\sB^{\s} \,$ sur $\sA^{\s} \,$, \cf rem. \ref{fr}.

       \subsection{} Nous d\'emontrerons ce th\'eor\`eme \`a partir du cas \'etale, appliqu\'e aux {``localisations affino\"{\i}des"}   $\sA\{ \frac{\lambda}{g}\},\; \lambda \in \sK^{\ss}\setminus 0\,$ (c'est-\`a-dire, en termes g\'eom\'etriques, aux compl\'ementaires de voisinages tubulaires du lieu des z\'eros de $\,g\,$ dans le spectre analytique de $\sA$). 
       Le passage crucial \`a la limite $\lambda \to 0$ repose sur des r\'esultats du type {``th\'eor\`eme d'extension de Riemann"}: {\it pour $(\sA, g)$ comme ci-dessus,  $\,\displaystyle  \lim_{j\to \infty}\,  {\sA}\{ \frac{p^j}{g} \}^{\s}    $ est la fermeture compl\`etement int\'egrale de $\sA^{\s}$ dans $\sA[\frac{1}{g}]$,  qui n'est autre que $\,g^{\f}\sA^{\s}  \,$} (th. \ref{T5}).   
             
 Inversement, {\it \'etant donn\'e un syst\`eme projectif d'alg\`ebres perfecto\"{\i}des $(\sA^{j})$ qui se d\'eduisent les unes des autres par localisation ($\,\displaystyle \sA^{i}\{ \frac{p^j}{g}\}   \cong \sA^{j}$)  et contiennent les racines $g^{\frac{1}{p^m}}$,  l'alg\`ebre de Banach uniforme $\,\displaystyle \sA :=  (\lim_{j \to \infty}\,  {\sA}^{j\,\s}) [\frac{1}{p}]$ est presque perfecto\"{\i}de} (\ie l'\'el\'evation \`a la puissance $p$ est presque surjective sur $\sA^{\s}/p\sA^{\s}$) (th. \ref{T6}).

   \subsection{}   Il nous a paru n\'ecessaire de faire pr\'ec\'eder la preuve de ces r\'esultats d'un examen approfondi, assorti de maint exemple, des techniques de construction et des aspects cat\'egoriques des alg\`ebres de Banach uniformes et des alg\`ebres perfecto\"{\i}des. Ces deux cat\'egories admettent limites et colimites, et le plongement de celle-ci dans celle-l\`a admet un adjoint \`a droite. Les colimites s'y calculent donc de m\^eme mani\`ere, mais pas les limites (\cf \S \ref{lp}). C'est le n\oe ud du probl\`eme dans les passages \`a la limite ci-dessus. 
   
   Nous ferons grand usage de techniques galoisiennes (\cf \ref{eg}, \ref{eeap}, \ref{tp}), qui permettent entre autre de simplifier la preuve du th\'eor\`eme de {``presque-puret\'e"}. % Pour les mettre en \oe uvre, nous adapterons au contexte perfecto\"{\i}de la localisation de Pierce qui vise \`a se ramener au cas connexe.     

     \subsection{} Le th\'eor\`eme \ref{T1} fournit un moyen aussi \'economique que g\'en\'eral pour {\it associer \`a toute $\sK$-alg\`ebre affine ou affino\"{\i}de r\'eduite une alg\`ebre (presque) perfecto\"{\i}de \`a l'aide d'une normalisation de Noether, en adjoignant les racines $p^\infty$-i\`emes des coordonn\'ees et d'un discriminant, compl\'etant et prenant une fermeture int\'egrale}. Dans une certaine mesure, ce moyen lib\`ere la construction d'alg\`ebres perfecto\"{\i}des du {``carcan torique"}
   habituel (\cf \ref{nN}). 
   
   Dans un article suivant, nous appliquerons le lemme d'Abhyankar perfecto\"{\i}de \`a l'\'etude de la conjecture du facteur direct de M. Hochster en alg\`ebre commutative.

      \bigskip
          
    \bigskip
\begin{small} {\it Remerciements.}  Je suis tr\`es reconnaissant \`a O. Gabber de m'avoir envoy\'e toute une liste d'erreurs, corrections, simplifications et suggestions (les \'eventuelles erreurs restantes sont \'evidemment de ma seule responsabilit\'e). Je remercie aussi B. Bhatt et P. Scholze de leur int\'er\^et  pour ce travail.  
  
  Je remercie le Laboratoire Fibonacci et la Scuola Normale Superiore de Pise o\`u j'ai effectu\'e une partie de ce travail dans les meilleures conditions. \end{small}

 \newpage
  \section{Pr\'eliminaires de {presque-alg\`ebre}.}\label{pa} Nous ferons appel \`a quelques r\'esultats de cette th\'eorie initi\'ee par G. Faltings et d\'evelopp\'ee dans l'ouvrage fondamental de O. Gabber et L. Ramero \cite{GR1}. 
      
        \subsection{Cadre.} Un {\it cadre}\footnote{``Basic setting" dans  \loccit} 
          pour la presque-alg\`ebre consiste en la donn\'ee d'un anneau commutatif unitaire ${\mathfrak V}$ et d'un id\'eal idempotent $\mathfrak m = \mathfrak m^2$. 
          
          Comme dans \cite{GR1}, nous supposerons que 
          {\it $\tilde{\mathfrak m} := \mathfrak m \otimes_{\mathfrak V} \mathfrak m$ est plat sur $\mathfrak V$}. Cette hypoth\`ese est stable par changement de base \cite[rem. 2.1.4]{GR1}, et plus faible que la platitude de $\mathfrak m $ (qui entra\^{\i}ne $\mathfrak m   \cong \tilde{\mathfrak m} $). 
         La platitude de $\mathfrak m$ est acquise si 
         $\mathfrak m$ est de la forme $\pi^{\e}{\mathfrak V}$, o\`u $(\pi^{\frac{1}{p^i}})$ est une suite compatible de racines $p^i$-i\`emes d'un \'el\'ement $\pi $ non-diviseur de z\'ero de l'anneau $\mathfrak V$ (cas qui suffirait dans la suite de l'article).

       \medskip\subsection{${\mathfrak V}^a$-modules.}\label{Mod}  La cat\'egorie des ${\mathfrak V}^a$-modules (ou $({\mathfrak V}, \mathfrak m)^a$-modules, s'il y a lieu de pr\'eciser) est la cat\'egorie des ${\mathfrak V}$-modules localis\'ee par la sous-cat\'egorie de Serre des modules de $\mathfrak m$-torsion: un ${\mathfrak V}$-module est {\it presque nul} s'il est annul\'e par $\mathfrak m$.   
       
        On a $Hom_{{\mathfrak V}^a}(M,N)= Hom_{\mathfrak V}({\tilde{\mathfrak m}} \otimes_{\mathfrak V} M, N),\,$  \cf \cite[\S 2.2.2]{GR1}. 
      
      On dit qu'un morphisme de ${\mathfrak V}$-modules est presque injectif (\resp presque surjectif) si son noyau (\resp conoyau) est annul\'e par $\mathfrak m$; cela revient \`a dire qu'il est un monomorphisme (\resp \'epimorphisme) de ${\mathfrak V}^a$-modules.
      
      La cat\'egorie des ${\mathfrak V}^a$-modules est ab\'elienne. En particulier, un morphisme de ${\mathfrak V}$-modules est \`a la fois presque injectif et presque surjectif si et seulement s'il est un presque-isomorphisme. 
         Le foncteur de localisation $M\mapsto M^a$, qui est l'identit\'e sur les objets, admet\footnote{l'existence et les propri\'et\'es de $(\;)_\ast$ font partie de la th\'eorie g\'en\'erale de la localisation \cite{Ga}; en revanche, l'existence de $(\;)_!$ est sp\'ecifique \`a la presque-alg\`ebre.} 
   \begin{itemize} \item   un adjoint \`a droite: $N\mapsto N_\ast := {\rm{Hom}}_{{\mathfrak V}^a}({\mathfrak V}^a, N) \,$ (module des {\it presque-\'el\'ements}),
      \item un adjoint \`a gauche $N_! :=  \tilde{\mathfrak m} \otimes_{\mathfrak V} N_\ast $.
      \end{itemize}
 En particulier, il commute aux limites et aux colimites. On a en outre  $(N_\ast)^a\cong N$ et  $ (M^a)_\ast \cong  {\rm{Hom}}_{{\mathfrak V}}(\tilde{\mathfrak m}, M)$ (qu'on \'ecrira parfois abusivement $M_\ast$ pour all\'eger).  
  Le foncteur $(\,)_\ast$ n'est pas exact \`a droite, mais $N \to N'$ est un \'epimorphisme si et seulement si $N_\ast \to N'_\ast$ est presque surjectif.

\medskip\subsection{Lemmes de Mittag-Leffler et de Nakayama.}\label{MLN}  Le lemme de Mittag-Leffler vaut dans ce contexte: {\it si $(N^{n})$ un syst\`eme projectif de ${\mathfrak V}^a$-modules dont les morphismes de transition sont des \'epimorphismes, alors $\lim \, N^{n}\to N^{0}$ est un \'epimorphisme}. Cela se d\'eduit du cas des ${\mathfrak V}$-modules en appliquant successivement les foncteurs exacts \`a droite $(\,)_!$ et $(\,)^a$ (dont le compos\'e est l'identit\'e de ${\mathfrak V}^a\hbox{-}{\bf{Mod}}$).

 Au demeurant, la cat\'egorie ab\'elienne ${\mathfrak V}^a\hbox{-}{\bf{Mod}}$ est bicompl\`ete (\ie admet limites et colimites) et admet $\mathfrak V$ comme g\'en\'erateur, les produits d'\'epimorphismes y sont des \'epimorphismes, donc les $\lim^i\, $ se comportent comme dans la cat\'egorie des groupes ab\'eliens \cite{Ro}; en particulier, elles s'annulent pour $i>1$ dans le cas d'un syst\`eme index\'e par un ensemble ordonn\'e d\'enombrable. La localisation commute aux $\lim^1\, $, qui se calculent comme conoyau du t\'elescope habituel.
 
  \smallskip La version du lemme de Nakayama pour les modules complets vaut aussi dans ce contexte (\cite[lem. 5.3.3]{GR1}):  {\it soit $\mathfrak I$ un id\'eal de $\mathfrak V$ et soit $f : M \to N $ un morphisme entre ${\mathfrak V}^a$-modules ${\mathfrak I}$-adiquement complets; alors $f$ est un \'epimorphisme s'il l'est mod. ${\mathfrak I}$}.

En effet, on a un diagramme commutatif \`a fl\`eches horizontales \'epimorphiques 
\[ \begin{CD} gr_{\mathfrak I} {\mathfrak V} \otimes_{{\mathfrak V}/{\mathfrak I}} M/{\mathfrak I}M@>  >>  gr_{\mathfrak I} M \\@V{1\otimes \bar f}VV @VV{gr_{\mathfrak I} f}V     \\\ gr_{\mathfrak I} {\mathfrak V} \otimes_{{\mathfrak V}/{\mathfrak I}} N/{\mathfrak I}N @>  >>  gr_{\mathfrak I} N , \end{CD}\] 
   donc l'hypoth\`ese entra\^ine que $gr_{\mathfrak I} f$ est un \'epimorphisme. Ceci implique que $M/{\mathfrak I}^nM \stackrel{f_n}{\to} N/{\mathfrak I}^n N$ est un \'epimorphisme, de m\^eme que ${\ker} f_{n+1} \to {\ker} f_n$ par le lemme du serpent. Passant \`a la limite, $f  $ est donc un \'epimorphisme d'apr\`es Mittag-Leffler.

 \subsection{${\mathfrak V}^a$-alg\`ebres.}\label{Alg} Les ${\mathfrak V}^a$-alg\`ebres sont les mono\"{\i}des dans ${\mathfrak V}^a\hbox{-}{\bf{Mod}}$. Le foncteur $(\; )^a$ envoie ${\mathfrak V}\hbox{-}{\bf{Alg}}$ vers ${\mathfrak V}^a\hbox{-}{\bf{Alg}}$ et admet (la restriction de) $(\;)_\ast$ comme adjoint \`a droite. Il admet aussi un adjoint \`a gauche not\'e $(\;)_{!!}$
 \cite[\S 2.2.25]{GR1}.
 
\smallskip La cat\'egorie  ${\mathfrak V}^a\hbox{-}{\bf{Alg}}$ admet des sommes amalgam\'ees,  
et $(\mathfrak B\otimes_{\mathfrak A} \mathfrak C)^a\cong \mathfrak B^a\otimes_{\mathfrak A^a} \mathfrak C^a$. 
  
  Si $\mathfrak A$ est une ${\mathfrak V}^a$-alg\`ebre, on d\'efinit des cat\'egories $\mathfrak A\hbox{-}{\bf{Mod}}$ et $\mathfrak A\hbox{-}{\bf{Alg}}$  \cite[2.2.12]{GR1}. Le foncteur $(\; )^a$ envoie ${\mathfrak A}_\ast\hbox{-}{\bf{Alg}}$ vers ${\mathfrak A}\hbox{-}{\bf{Alg}}$ et admet $(\;)_\ast$ comme adjoint \`a droite.  
    
\smallskip Rappelons que la cat\'egorie des $\sK$-alg\`ebres (associatives commutatives unif\`eres) admet des (petites) limites et colimites, et que le foncteur module sous-jacent pr\'eserve les limites, ainsi que les colimites filtrantes. Via $(\;)^a$, il en est de m\^eme pour les presque alg\`ebres. Une d\'ecomposition en produit fini $\mathfrak B = \prod {\mathfrak B}_i$ correspond \`a un syst\`eme complet d'idempotents orthogonaux $ e_i \in {\mathfrak B}_\ast$.

    La cat\'egorie $\mathfrak A\hbox{-}{\bf{Mod}}$ admet un produit tensoriel $\otimes_{\mathfrak A}$,  
  qui induit le coproduit de $\mathfrak A\hbox{-}{\bf{Alg}}$. 
  
  \smallskip Soit $\phi:  \mathfrak A  {\to} \mathfrak B$ un morphisme dans ${\mathfrak V}^a\hbox{-}{\bf{Alg}}$. C'est un monomorphisme si et seulement si $\phi_\ast$ est presque injectif; on dit alors que $\mathfrak B$ est une {\it extension} de $\mathfrak A$.

   \subsubsection{Exemple} La presque-alg\`ebre s'introduit naturellement dans le contexte suivant. Soient $\sK$ un corps complet pour une valuation non discr\`ete,  $\mathfrak V = \sK^{\s}$ son anneau de valuation,  $\mathfrak m = \sK^{\ss}$ son id\'eal de valuation, $\varpi$ un \'el\'ement non nul de $\mathfrak m$, et $\mathfrak A$ une $\mathfrak V$-alg\`ebre sans $\varpi$-torsion ni \'el\'ement infiniment $\varpi$-divisible. On peut alors munir la $\sK$-alg\`ebre $\mathfrak A[\frac{1}{\varpi}]$ d'une norme canonique, dont la boule unit\'e, en g\'en\'eral distincte de $\mathfrak A$ mais presque isomorphe \`a $\mathfrak A$, est $(\mathfrak A)^a_\ast$, \cf sorite \ref{s1} (2b).
  
    \subsection{Recadrage}\label{rec} Au rebours de la pratique en presque-alg\`ebre, dans le pr\'esent travail,
      le cadre ne sera pas fix\'e une fois pour toutes; au contraire, nous utiliserons de mani\`ere essentielle des changements de cadre $({\mathfrak V}, \mathfrak m) \to ({\mathfrak V}', \mathfrak m')$ (homomorphismes d'anneaux envoyant $\mathfrak m$ dans  $\mathfrak m'$). Le foncteur exact de restriction des scalaires $ {\mathfrak V}' \hbox{-}{\bf{Mod}} \to   {\mathfrak V}\hbox{-}{\bf{Mod}}$ induit un  foncteur exact 

\centerline{ $ ({\mathfrak V}', \mathfrak m')^a\hbox{-}{\bf{Mod}} \to  ({\mathfrak V}, \mathfrak m)^a\hbox{-}{\bf{Mod}}; $ }
si ${\mathfrak V}= {\mathfrak V}'= {\mathfrak m}' $, c'est le foncteur $(\;)^a$.  On a de m\^eme un foncteur $({\mathfrak V}', \mathfrak m')^a\hbox{-}{\bf{Alg}} \to  ({\mathfrak V}, \mathfrak m)^a\hbox{-}{\bf{Alg}}$, qui est l'identit\'e sur les anneaux sous-jacents. Si $\mathfrak A'$ correspond \`a $\mathfrak A$ par ce dernier foncteur, on a des foncteurs de {\it recadrage}

\centerline{$\mathfrak A'\hbox{-}{\bf{Mod}} \to  \mathfrak A\hbox{-}{\bf{Mod}},\;\mathfrak A'\hbox{-}{\bf{Alg}} \to \mathfrak A\hbox{-}{\bf{Alg}} $} 
\noindent qui sont l'identit\'e sur les objets (vus comme modules, \resp alg\`ebres, sur l'anneau sous-jacent \`a $\mathfrak A$); si ${\mathfrak V}= {\mathfrak V}'= {\mathfrak m}' $, il s'agit encore du foncteur $(\;)^a$. Ces foncteurs sont des isomorphismes si $\mathfrak m$ engendre $\mathfrak m'$, puisque la presque-nullit\'e d'un module sur l'anneau sous-jacent \`a $\mathfrak A$ n'est pas modifi\'ee.
On a une transformation naturelle de {\it recadrage} 
\[ {\rm{Hom}}_{{\mathfrak V}' }(\tilde{\mathfrak m}' , N)\to  {\rm{Hom}}_{{\mathfrak V} }(\tilde{\mathfrak m} , N)\]
 entre foncteurs des presque-\'el\'ements et des presque-\'el\'ements du recadr\'e, respectivement.

  Heuristiquement, le sens de {``presque"} devient plus grossier par recadrage (passage de $\mathfrak m'$ \`a $\mathfrak m$), mais ne change pas si l'image de $\mathfrak m$ engendre $\mathfrak m'$.

 \subsection{Platitude.} Un $\mathfrak A$-module $M$ est dit {\it plat}  si l'endofoncteur
 {$ - \otimes_{\mathfrak A} M$ de   $\mathfrak A\hbox{-}{\bf{Mod}}$}
  est exact. 
 
  On dit que $\phi$ est {\it (fid\`element) plat} si  
 {$  \mathfrak B \otimes_{\mathfrak A} -:  \mathfrak A\hbox{-}{\bf{Mod}}\to \mathfrak B\hbox{-}{\bf{Mod}}$}
 est (fid\`ele) exact; $\phi$ est plat (\resp fid\`element plat) si et seulement si $\mathfrak B$ est un $\mathfrak A$-module plat (\resp $\phi$ est un monomorphisme et $\mathfrak B/\mathfrak A$ est un $\mathfrak A$-module plat \cite[3.1.2 (vi)]{GR1}) -  c'est le cas en particulier si $\phi_\ast$ est plat (\resp fid\`element plat). En outre, $\phi$ est fid\`element plat si et seulement si $\phi_{!!}$ l'est \cite[3.1.3 ii]{GR1}. Toute colimite filtrante d'extensions (fid\`element) plates est (fid\`element) plate.

  \subsection{$\mathfrak A$-Modules projectifs finis.} Passons \`a de plus subtiles propri\'et\'es qui, contrairement aux pr\'ec\'edentes, ne sont pas purement cat\'egoriques.  
  
 \subsubsection{} Soit $\mathfrak A$ une $\mathfrak V^a$-alg\`ebre. Un 
    $\mathfrak A$-module $P$ est {\it projectif fini}\footnote{Nous adoptons les simplifications terminologiques de Scholze \cite[4.10, 4.13]{S1}; ce n'est d'ailleurs pas l\`a la d\'efinition de \loccit mais c'en est une caract\'erisation \cf \cite[lemma 2.4.15]{GR1}.} si pour tout $\eta\in \mathfrak m$, il existe $n\in \N$ et des morphismes $P \to \mathfrak A^n\to P$ dont le compos\'e est $\eta 1_P$ (cette notion ne d\'epend que de la classe d'isomorphisme de $P$ dans $\mathfrak A\hbox{-}{\bf{Mod}}$). 
        
    Il revient au m\^eme (\cite[2.3.10 (i), 2.4.15]{GR1}) de dire que 
    
    $(a)$ $P$ est {\it presque projectif}, \ie pour tout module $N$, et tout $i>0$, $\mathfrak m{\rm{Ext^i}}\,(P, N)=0 $,
     
    $(b)$ $P$ est {\it presque de type fini}, \ie pour tout $\eta\in \mathfrak m$, il existe un morphisme $\mathfrak A^{n(\eta)}\to P$ de conoyau annul\'e par $\eta$.
   
 \subsubsection{Remarques}\label{r1} $(1)$ Tout module $P$ presque projectif est plat  \cite[2.4.18]{GR1}, et fid\`element plat si $P$ est un $\mathfrak A$-module fid\`ele (\ie $\mathfrak A\to  End(P)^a$ est un monomorphisme)\footnote{Le crit\`ere \'equivalent donn\'e dans \cite[2.4.28]{GR1} est que le morphisme {``dual"} $End(P)^a \to \mathfrak A$ soit un \'epimorphisme.}. 
 
\smallskip \noindent $(2)$ Si $P$ est projectif fini, il en de m\^eme de ses puissances ext\'erieures. On dispose d'un morphisme trace $tr_{P/A}: ({\rm{End}}\, P)^a \to A$ \cite[4.1.1]{GR1}.   

  \smallskip \noindent $(3)$ Soit $\mathfrak A'$ une $\mathfrak A$-alg\`ebre fid\`element plate.  Alors $\mathfrak P$ est un $\mathfrak A$-module projectif fini si et seulement si  $P\otimes_{\mathfrak A} \mathfrak A'$ est un $\mathfrak A'$-module projectif fini  \cite[3.2.26 iii]{GR1}. 
  
  \begin{lemma}\label{L1} Soit $\mathfrak I$ un id\'eal de $\mathfrak A$ pour lequel $\mathfrak A$ est $\mathfrak I$-adiquement compl\`ete. Alors tout module projectif fini $P$ est $\mathfrak I$-adiquement complet.  \end{lemma}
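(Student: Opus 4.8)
The plan is to transcribe into almost algebra the elementary fact that a direct summand of a complete module is complete, the substitute for direct summands being the characterization of projective finite modules recalled just before the statement: for every $\eta\in\mathfrak m$ there are $n=n(\eta)\in\N$ and $\mathfrak A$-linear maps $P\xrightarrow{f_\eta}\mathfrak A^{n}\xrightarrow{g_\eta}P$ with $g_\eta\circ f_\eta=\eta\,1_P$. First I would note that $\mathfrak A^{n}$ is itself $\mathfrak I$-adically complete: the $\mathfrak I$-adic completion functor commutes with finite direct sums, and by the remarks of \S\ref{MLN} the localization functor commutes with limits, so one may run the whole argument with honest representatives and localize at the end. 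Write $\iota_M\colon M\to\widehat M$ for the canonical map to the $\mathfrak I$-adic completion; the goal is to show that $\iota_P$ is an isomorphism in $\mathfrak A\hbox{-}{\bf{Mod}}$, equivalently that $\Ker\iota_P$ and $\Coker\iota_P$ are annihilated by $\mathfrak m$.

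Fix $\eta\in\mathfrak m$ and apply the $\mathfrak I$-adic completion to the factorization: one obtains $\widehat P\xrightarrow{\widehat{f_\eta}}\widehat{\mathfrak A^{n}}\xrightarrow{\widehat{g_\eta}}\widehat P$ with composite $\eta\,1_{\widehat P}$, together with the naturality squares $\iota_{\mathfrak A^{n}}\circ f_\eta=\widehat{f_\eta}\circ\iota_P$ and $\iota_P\circ g_\eta=\widehat{g_\eta}\circ\iota_{\mathfrak A^{n}}$. Since $\mathfrak A^{n}$ is complete, $\iota_{\mathfrak A^{n}}$ is an isomorphism; identifying $\widehat{\mathfrak A^{n}}$ with $\mathfrak A^{n}$ through it, the two squares become $f_\eta=\widehat{f_\eta}\circ\iota_P$ and $\widehat{g_\eta}=\iota_P\circ g_\eta$.

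For the kernel: if $x\in\Ker\iota_P$ then $f_\eta(x)=\widehat{f_\eta}(\iota_P(x))=0$, whence $\eta x=g_\eta(f_\eta(x))=0$; as $\eta\in\mathfrak m$ was arbitrary, $\mathfrak m\cdot\Ker\iota_P=0$. For the cokernel: given $\widehat x\in\widehat P$ and $\eta\in\mathfrak m$, put $a:=\widehat{f_\eta}(\widehat x)\in\widehat{\mathfrak A^{n}}=\mathfrak A^{n}$; then $\eta\widehat x=\widehat{g_\eta}(a)=\iota_P(g_\eta(a))\in\Im\iota_P$, so $\mathfrak m\cdot\Coker\iota_P=0$. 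Hence $\iota_P$ is simultaneously a monomorphism and an epimorphism in the abelian category $\mathfrak A\hbox{-}{\bf{Mod}}$, therefore an isomorphism, i.e. $P$ is $\mathfrak I$-adically complete.

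I do not anticipate a real obstacle; the mathematical content is the elementary remark above. The two points that require a little care are bookkeeping in nature: verifying that the finite free module occurring in the factorization is itself complete — which rests on the commutation of completion with finite direct sums and of localization with limits, both recorded in \S\ref{MLN} — and the fact that one has only an almost splitting, not an honest one, so that the factorization must be invoked separately for each $\eta\in\mathfrak m$; this is precisely why killing $\Ker\iota_P$ and $\Coker\iota_P$ by every $\eta\in\mathfrak m$ is the right substitute for splitting $\iota_P$ outright.
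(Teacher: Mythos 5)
Your proof is correct and follows the same route as the paper: for each $\eta\in\mathfrak m$ you complete the almost-splitting $P\to\mathfrak A^{n}\to P$, identify $\widehat{\mathfrak A^{n}}$ with $\mathfrak A^{n}$ via completeness of $\mathfrak A$, and chase the resulting commutative diagram to conclude that the kernel and cokernel of $P\to\widehat P$ are killed by $\eta$. The paper's proof is the same chase, stated more tersely.
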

  
  \begin{proof}\footnote{voir aussi \cite[5.3.5]{GR1}.} Il s'agit de voir que $P\to \hat P:= \lim P/\mathfrak I^n$ est un isomorphisme. Pour tout $\eta\in \mathfrak m$, il existe $n\in \N$ et un diagramme 
    \[   \begin{CD}  P@>   >>   \mathfrak A^n   @>  >>  P    \\       @V VV   @V  VV @VV  V     \\\   \hat{P}@>   >>    \hat{\mathfrak A}^n   @>  >>   \hat{P}   \end{CD}\] o\`u les deux fl\`eches compos\'ees horizontales sont la multiplication par $\eta$ et la fl\`eche verticale du milieu est un isomorphisme. Chassant, on trouve que le noyau et le conoyau de $P\to \hat P$ sont annul\'es par $\eta$.  
   \end{proof}
 
  \subsubsection{} Un $\mathfrak A$-module projectif fini $P$ est {\it de rang (constant) $r$} si \begin{small} $\displaystyle  \bigwedge^{r+1}$ \end{small}  $P =0$ et \begin{small} $\displaystyle  \bigwedge^{r }$ \end{small}  $P$ est un $\mathfrak A$-module inversible  \cite[4.3.9]{GR1}. D'apr\`es \cite[4.4.24]{GR1}, un tel module est localement libre de rang $r$ pour la topologie fpqc (nous n'utiliserons pas ce r\'esultat).

 \subsection{$\mathfrak A$-alg\`ebres \'etales finies.}  Une $\mathfrak A$-alg\`ebre $\mathfrak B$ est {\it  \'etale finie} (\resp {\it \'etale finie de rang $r$}) si 
  $\,(a)$ $\mathfrak B$ est un $\mathfrak A$-module 
  projectif fini  (\resp projectif fini de rang $r$),
 
 $(b)$ $\mathfrak B$ est {\it non ramifi\'ee}, \ie on a une d\'ecomposition de $\mathfrak A$-alg\`ebre
  $\;\mathfrak B\otimes_{\mathfrak A} \mathfrak B \cong \mathfrak B \times \mathfrak C\,$
  dans laquelle la premi\`ere projection (dans le membre de droite) s'identifie au morphisme de multiplication $\mu_{\mathfrak B}: \mathfrak B\otimes_{\mathfrak A} \mathfrak B\to \mathfrak B$. 
  
  Sous $(a)$, la condition $(b)$ \'equivaut \`a la platitude de $\mu_{\mathfrak B}$ \cite[3.1.2 vii, 3.1.9]{GR1}.

 \subsubsection{Remarques}\label{r1'} $(1)$ Toute extension \'etale finie $\mathfrak A\inj \mathfrak B$ est fid\`element plate.
 
\smallskip \noindent $(2)$ Si $\mathfrak B$ est \'etale finie sur $\mathfrak A$, on dispose d'une trace $Tr_{\mathfrak B/\mathfrak A}: \, \mathfrak B\to \mathfrak A $ (morphisme de $\mathfrak A$-modules) d\'efinie comme trace de l'endomorphisme de multiplication par $b$. Il commute au changement de base, et la composition avec la multiplication de $\mathfrak B$ induit un isomorphisme de $\mathfrak B$ sur son dual $\mathfrak B^\vee$ \cite[4.1.14]{GR1} (d'o\`u aussi un isomorphisme entre $\mathfrak B_\ast$ et son $\mathfrak A_\ast$-dual). Si $\mathfrak B$ est une extension \'etale finie, c'est un \'epimorphisme \cite[4.1.7, 4.1.8, 4.1.11]{GR1}.  

 \smallskip \noindent $(3)$ Soient $\mathfrak A', \mathfrak B$ deux $\mathfrak A$-alg\`ebres. On suppose $\mathfrak A \inj \mathfrak A'$ fid\`element plat. Alors $\mathfrak B$ est  \'etale finie (resp \'etale finie de rang $r$) sur $\mathfrak A$ si et seulement si $\mathfrak B\otimes_{\mathfrak A} \mathfrak A'$ l'est sur $\mathfrak A'$ \cite[2.4.18, 3.2.26 (ii)]{GR1}.

\subsubsection{}\label{eqrem} L'{``{\it \'equivalence remarquable}"} de Grothendieck vaut dans ce contexte, du moins si  $\mathfrak m$ est de la forme $\pi^{\e}{\mathfrak V}$ et si ${\mathfrak A}$ soit $\pi$-adiquement compl\`ete: la r\'eduction modulo $\pi$ induit alors une \'equivalence entre $\mathfrak A$-alg\`ebres \'etales finies et $\mathfrak A/\pi$-alg\`ebres \'etales finies \cite[th. 5.3.27]{GR1}.

 \subsection{Extensions galoisiennes.}\label{eg} 

\subsubsection{} Soient $\mathfrak A \inj \mathfrak B \inj \mathfrak C$ des extensions, et soit $X$ un ensemble de $\mathfrak A$-homomorphismes de $\mathfrak B$ vers $\mathfrak C$. On a alors un morphisme canonique de $\mathfrak C$-alg\`ebres

\smallskip \centerline{  $\mathfrak B \otimes_{\mathfrak A} \mathfrak C \to $  \begin{small} $\displaystyle  \prod_{\chi  \in X}$ \end{small}  $\mathfrak C ,  \;\;  \;b\otimes c \mapsto (\chi(b)c)_{\chi \in X}$ .}

 En particulier, si $G$ un groupe fini de $\mathfrak A$-automorphismes de $\mathfrak B$, on a un morphisme canonique de $\mathfrak B$-alg\`ebres (pour l'action \`a droite de $\mathfrak B$ sur le membre de gauche) 

\smallskip \centerline{ $\mathfrak B \otimes_{\mathfrak A} \mathfrak B \to $  \begin{small} $\displaystyle  \prod_{\gamma  \in G}$ \end{small}  $\mathfrak B ,  \;\;  \;b\otimes b' \mapsto (\gamma(b)b')_{\gamma\in G}$.} 
  
  \noindent Si $n$ est l'ordre de $G$, et si l'on num\'erote les \'el\'ements de $G$ pour identifier le groupe de ses permutations \`a  $\mathfrak S_n$, le produit en couronne\footnote{\ie le produit $\mathfrak S_n \times G^n$ muni de la multiplication 
  $(\sigma, (\gamma_1, \ldots, \gamma_n)) (\sigma', (\gamma_1', \ldots, \gamma_n'))= (\sigma\sigma', (\gamma_{\sigma'(1)} \gamma'_1, \ldots, \gamma_{\sigma'(n)} \gamma'_n))$.} $\,\mathfrak S_n \wr G \,$ agit par $\mathfrak A$-automorphismes de  
  {{\begin{small} $  \prod $ \end{small}  $\mathfrak B\,$}} 
  via 
$\;(\sigma, (\gamma_1, \ldots, \gamma_n))(b_1, \ldots, b_n) = (\gamma_{\sigma^{-1}(1)}(b_{\sigma^{-1}(1)}),\ldots ,\gamma_{\sigma^{-1}(n)}(b_{\sigma^{-1}(n)})). $

 On note $\mathfrak B$ la $\mathfrak A$-alg\`ebre des $G$-invariants de $\mathfrak B\,$ (en tant que $\mathfrak A$-module, c'est le noyau de $\mathfrak B \stackrel{(\cdots, \gamma-1, \cdots)}{\to}$\begin{small} $\displaystyle  \prod_{  G}$ \end{small}  $\mathfrak B$).

\subsubsection{}  On dit que l'extension $\mathfrak A\inj \mathfrak B$ est {\it galoisienne} de groupe $G$ si {\it $\mathfrak B^G= \mathfrak A$ et si le morphisme canonique $\mathfrak B \otimes_{\mathfrak A} \mathfrak B \to $  \begin{small} $\displaystyle  \prod_{  G}$ \end{small}  $\mathfrak B$ est un isomorphisme} (de $ \mathfrak B$-modules \`a droite). Comme $(\;)$ commute aux limites, il revient au m\^eme de requ\'erir que $(\mathfrak B_\ast)^G= \mathfrak A_\ast$ et que $(\mathfrak B \otimes_{\mathfrak A} \mathfrak B)_\ast \to $  \begin{small} $\displaystyle  \prod_{  G}$ \end{small}  $\mathfrak B_\ast$ soit un isomorphisme\footnote{cela n'entra\^{\i}ne ni que $\mathfrak B_\ast$ soit galoisienne sur $\mathfrak A_\ast$ (car $(\,)_\ast$ ne commute pas au produit tensoriel), ni que $\mathfrak B_{!!}$ soit galoisienne sur $\mathfrak A_{!!}$ (car $(\,)_{!!}$ ne commute pas aux produits finis).}. 
 Le groupe $ G\times G\ $ d'automorphismes de $\, \mathfrak B \otimes_{\mathfrak A} \mathfrak B  \cong $  \begin{small} $\displaystyle  \prod_{  G}$ \end{small}  $\mathfrak B  $ s'identifie alors \`a un sous-groupe de $ \mathfrak S_n \wr G $ par
\begin{equation}\label{eqg} (\gamma , 1) \mapsto (r_{\gamma^{-1}}, (1, \ldots, 1)), \;\; ( 1, \gamma) \mapsto  (\ell_\gamma\,, (\gamma, \ldots, \gamma)), \end{equation} 
o\`u $\ell, r$ d\'esignent les translations \`a gauche et \`a droite respectivement.

\begin{prop}\label{P2}
\begin{enumerate}
\item Toute extension galoisienne de groupe $G$ est \'etale finie, de rang l'ordre de $G$. La trace est donn\'ee par la somme des conjugu\'es.
\item  Soit $\mathfrak A'$ une $\mathfrak A$-alg\`ebre. Si $\mathfrak A \inj \mathfrak B$ est une extension galoisienne de groupe $G$, il en est de m\^eme de $\mathfrak A' \inj \mathfrak B\otimes_{\mathfrak A}\mathfrak A'$, et r\'eciproquement si $\mathfrak A'$ est fid\`element plate.  
\item Si $\mathfrak C$ est une extension interm\'ediaire telle que $\mathfrak C\inj \mathfrak B$ est galoisienne de groupe $H< G$, le morphisme canonique de $\mathfrak B$-alg\`ebres $\mathfrak C \otimes_{\mathfrak A} \mathfrak B \to $  \begin{small} $\displaystyle  \prod_{  G/H}$ \end{small}  $\mathfrak B  $ est un isomorphisme, et $\mathfrak A \inj \mathfrak C$ est \'etale finie, de rang $\vert G/H\vert$. 
 \end{enumerate}
\end{prop}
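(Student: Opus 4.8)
The plan is to deduce everything from the defining isomorphism $\mathfrak B \otimes_{\mathfrak A} \mathfrak B \iso \prod_{G} \mathfrak B$, which already encodes both "finiteness" and "unramifiedness" once we check the projectivity of $\mathfrak B$ over $\mathfrak A$. For part (1), I would first establish that $\mathfrak B$ is a projective finite $\mathfrak A$-module of rank $n = \vert G\vert$. The natural route is descent: after base change along the faithfully flat (in fact Galois, by the obvious case of part (2)) extension $\mathfrak A \inj \mathfrak B$, the module $\mathfrak B \otimes_{\mathfrak A}\mathfrak B$ becomes $\prod_G \mathfrak B$, which is free of rank $n$ over $\mathfrak B$; by Remark~\ref{r1} (3) (faithfully flat descent of the property "projective finite of rank $r$", via \cite[3.2.26 iii]{GR1}) it follows that $\mathfrak B$ is projective finite of rank $n$ over $\mathfrak A$. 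For unramifiedness one must exhibit the decomposition $\mathfrak B \otimes_{\mathfrak A}\mathfrak B \cong \mathfrak B \times \mathfrak C$ with first projection $\mu_{\mathfrak B}$: under the Galois isomorphism, $\mu_{\mathfrak B}$ corresponds to the projection onto the factor indexed by $1 \in G$, so $\mathfrak C$ is the product over $G \setminus \{1\}$ and we are done. The trace formula $Tr_{\mathfrak B/\mathfrak A}(b) = \sum_{\gamma \in G}\gamma(b)$ is then a computation: after the faithfully flat base change to $\mathfrak B$, the multiplication endomorphism by $b$ on $\mathfrak B\otimes_{\mathfrak A}\mathfrak B \cong \prod_G \mathfrak B$ becomes the diagonal matrix with entries $(\gamma(b))_{\gamma\in G}$ (using the identification \eqref{eqg} of the $G\times G$-action), whose trace is visibly $\sum_\gamma \gamma(b)$; since trace commutes with base change and $\mathfrak A \inj \mathfrak B$ is a monomorphism, this identifies $Tr_{\mathfrak B/\mathfrak A}$ as claimed.

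For part (2), one direction is formal: given $\mathfrak A'$, applying $-\otimes_{\mathfrak A}\mathfrak A'$ to $\mathfrak B\otimes_{\mathfrak A}\mathfrak B \iso \prod_G \mathfrak B$ and using that $\otimes$ commutes with finite products yields $(\mathfrak B\otimes_{\mathfrak A}\mathfrak A')\otimes_{\mathfrak A'}(\mathfrak B\otimes_{\mathfrak A}\mathfrak A') \iso \prod_G (\mathfrak B\otimes_{\mathfrak A}\mathfrak A')$; one checks $G$ still acts and that the invariants are $\mathfrak A'$. For the invariants I would use that, as $\mathfrak A$-module, $\mathfrak B^G$ is the kernel of $\mathfrak B \xrightarrow{(\gamma-1)_\gamma} \prod_G \mathfrak B$, and then tensor this left-exact sequence — actually the sequence $0 \to \mathfrak A \to \mathfrak B \to \prod_G\mathfrak B$ — with the flat module $\mathfrak A'$; here I need $\mathfrak A'$ flat, but in fact $\mathfrak A\inj\mathfrak B$ being a \emph{split} injection of $\mathfrak A$-modules (split by $\frac{1}{n}Tr$? no — $n$ need not be invertible) is the subtle point. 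Instead I would argue that the sequence $0\to\mathfrak A\to\mathfrak B\to\prod_G\mathfrak B$ is "universally exact": after base change to $\mathfrak B$ it becomes the split sequence computing the $G$-invariants of $\prod_G\mathfrak B$ (projection to diagonal followed by the difference map), which is exact and remains so under any base change; then faithfully flat descent along $\mathfrak A\inj\mathfrak B$ gives exactness of the original sequence after $\otimes_{\mathfrak A}\mathfrak A'$. The converse (descent of the Galois property along faithfully flat $\mathfrak A'$) is then: the isomorphism $\mathfrak B\otimes_{\mathfrak A}\mathfrak B\to\prod_G\mathfrak B$ and the identity $\mathfrak B^G=\mathfrak A$ can both be tested after the faithfully flat base change $-\otimes_{\mathfrak A}\mathfrak A'$, by the standard faithfully-flat-descent principle for isomorphisms and for exactness of sequences.

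For part (3): since $\mathfrak C\inj\mathfrak B$ is Galois with group $H$, part (1) gives that $\mathfrak B$ is projective finite of rank $\vert H\vert$ over $\mathfrak C$; combined with $\mathfrak B$ projective finite of rank $\vert G\vert$ over $\mathfrak A$ and faithful flatness of $\mathfrak C\inj\mathfrak B$, the multiplicativity of ranks (checked after base change to $\mathfrak B$) forces $\mathfrak C$ to be projective finite of rank $\vert G/H\vert$ over $\mathfrak A$. For the isomorphism $\mathfrak C\otimes_{\mathfrak A}\mathfrak B \iso \prod_{G/H}\mathfrak B$: I would start from $\mathfrak B\otimes_{\mathfrak A}\mathfrak B\iso\prod_G\mathfrak B$ and take $H$-invariants on the \emph{left} tensor factor — the left factor's $H$-action corresponds under \eqref{eqg} to permuting the $G$-indexed components by left translation by $H$, so the $H$-invariants of $\prod_G\mathfrak B$ for this action are $\prod_{H\backslash G}\mathfrak B\cong\prod_{G/H}\mathfrak B$, while the $H$-invariants of $\mathfrak B\otimes_{\mathfrak A}\mathfrak B$ on the left factor are $\mathfrak B^H\otimes_{\mathfrak A}\mathfrak B = \mathfrak C\otimes_{\mathfrak A}\mathfrak B$ — here again I must justify that $(-)^H$ commutes with $\otimes_{\mathfrak A}\mathfrak B$, which follows because the relevant invariants sequence is universally exact as in part (2). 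Finally, étaleness of $\mathfrak A\inj\mathfrak C$ of rank $\vert G/H\vert$ follows from Remark~\ref{r1'} (3): it can be checked after the faithfully flat base change $-\otimes_{\mathfrak A}\mathfrak B$, where $\mathfrak C\otimes_{\mathfrak A}\mathfrak B\cong\prod_{G/H}\mathfrak B$ is manifestly étale finite of rank $\vert G/H\vert$ over $\mathfrak B$.

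I expect the main obstacle to be the repeated need to know that forming $G$- or $H$-invariants commutes with base change $\otimes_{\mathfrak A}\mathfrak A'$ (or $\otimes_{\mathfrak A}\mathfrak B$) when $\vert G\vert$ is not invertible in $\mathfrak A$; the clean way through is to observe once and for all that the defining sequence $0\to\mathfrak A\to\mathfrak B\to\prod_G\mathfrak B$, and more generally the invariants sequences appearing above, are \emph{universally exact} because they become split exact after the faithfully flat base change along $\mathfrak A\inj\mathfrak B$, and then to invoke faithfully flat descent throughout (all objects involved are $\mathfrak V^a$-modules, and Remark~\ref{r1}, Remark~\ref{r1'} supply the almost-algebra versions of the descent statements needed).
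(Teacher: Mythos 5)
There is a genuine circularity at the heart of your proof of part (1). You propose to establish that $\mathfrak B$ is projective finite over $\mathfrak A$ by faithfully flat descent along $\mathfrak A \inj \mathfrak B$ itself, asserting that this map is ``faithfully flat (in fact Galois, by the obvious case of part (2))'' — but in this setting the faithful flatness of a Galois extension is a \emph{consequence} of its being \'etale finite (projective $\Rightarrow$ flat by Remark~\ref{r1}~(1), and faithful because $\mathfrak B$ is a faithful $\mathfrak A$-module), which is precisely what you are trying to prove. Invoking ``the obvious case of part (2)'' does not help: (2) with $\mathfrak A' = \mathfrak B$ only tells you $\mathfrak B \inj \mathfrak B \otimes_{\mathfrak A} \mathfrak B$ is Galois, and anyway (2) itself is proved using (1). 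The paper breaks this circle by the classical separability-idempotent argument: from the near-isomorphism $\mathfrak B_\ast\otimes_{\mathfrak A_\ast} \mathfrak B_\ast\to (\mathfrak B\otimes_{\mathfrak A} \mathfrak B)_\ast$ one extracts, for each $\eta\in\mathfrak m$, an element $e_\eta=\sum b_i\otimes b'_i$ with $e_\eta^2=\eta e_\eta$, annihilating $\ker\mu_{\mathfrak B}$ and mapping to $\eta\cdot 1$; using $t_{\mathfrak B/\mathfrak A}=\sum_\gamma\gamma$ one then writes $\eta b=\sum t_{\mathfrak B/\mathfrak A}(bb_i)b'_i$, which exhibits maps $\mathfrak B_\ast\to\mathfrak A_\ast^{n(\eta)}\to\mathfrak B_\ast$ composing to $\eta\,\mathrm{id}$. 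This gives projective finiteness \emph{directly}, without any prior flatness hypothesis; only afterwards does the paper use faithfully flat descent to $\mathfrak B$ to read off the rank $\vert G\vert$ and the trace formula, exactly as you do. So your descent calculation for the rank and the trace is fine, but it must be preceded by the direct construction, not by a descent step that presupposes its own conclusion.

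Once (1) is repaired, your arguments for (2) and (3) go through. Your treatment of (3) is in fact a genuine variant of the paper's: you take $H$-invariants on the left tensor factor of $\mathfrak B\otimes_{\mathfrak A}\mathfrak B\cong\prod_G\mathfrak B$ and use that $(-)^H$ commutes with $\otimes_{\mathfrak A}\mathfrak B$ because $\mathfrak B$ is $\mathfrak A$-flat, whereas the paper verifies the isomorphism $\mathfrak C\otimes_{\mathfrak A}\mathfrak B\to\prod_{G/H}\mathfrak B$ after the faithfully flat base change $\otimes_{\mathfrak C}\mathfrak B$, writing it as a composition of four tautological isomorphisms. Both are valid; your route isolates more explicitly the point about invariants commuting with flat base change, while the paper's is shorter and never mentions invariants. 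Two small inaccuracies worth correcting in your write-up of (3): by \eqref{eqg} the first-factor $G$-action permutes the $\prod_G$ components by \emph{right} translation, not left (though this does not change the cosets in the end), and the ``rank multiplicativity'' you invoke is really just descent along $\mathfrak A\inj\mathfrak B$ applied to $\mathfrak C\otimes_{\mathfrak A}\mathfrak B\cong\prod_{G/H}\mathfrak B$ — you need the isomorphism first, so the order of your two claims in (3) should be reversed.
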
  

\begin{proof} $(1)$ Le point est que $\mathfrak B$ est un $\mathfrak A$-module projectif fini, comme nous allons le voir (il s'agit d'un d\'ecalque de l'argument classique  en th\'eorie de Galois des anneaux).
 Notons $t_{\mathfrak B/\mathfrak A}: \mathfrak B_\ast\to \mathfrak B_\ast^G = \mathfrak A_\ast$ 
la somme des conjugu\'es. Voyons $\mathfrak B$ comme $\mathfrak B\otimes_{\mathfrak A} \mathfrak B$-alg\`ebre via le premier facteur de la d\'ecomposition (autrement dit, via le morphisme de multiplication).  Comme $\mathfrak B_\ast\otimes_{\mathfrak A_\ast} \mathfrak B_\ast\to (\mathfrak B\otimes_{\mathfrak A} \mathfrak B)_\ast$ est un presque-isomorphisme, on obtient, pour tout $\eta\in \mathfrak m$, un \'el\'ement $e_\eta \in \mathfrak B_\ast\otimes_{\mathfrak A_\ast} \mathfrak B_\ast$ tel que $e_\eta^2= \eta e_\eta$, qui annule le noyau de la multiplication  $\mathfrak B_\ast\otimes_{\mathfrak A_\ast} \mathfrak B_\ast\to \mathfrak B_\ast$ et que celle-ci envoie sur $ \eta 1_{\mathfrak B_\ast}$.  En d\'eveloppant $e_\eta = $  \begin{small}$\displaystyle \sum_{i=1}^{i=n(\eta)}\,$\end{small}   $ b_i\otimes b'_i$, on obtient $\sum \gamma(b_i) b'_i = 0$ si $\gamma\neq 1_G, $ et $\; \sum  b_i  b'_i = \eta 1_{\mathfrak B}$, d'o\`u $ \eta b = \sum t_{\mathfrak B/\mathfrak A}(bb_i)b'_i$ pour tout $b\in \mathfrak B_\ast $; en d'autres termes, le compos\'e 
\[\mathfrak B_\ast \stackrel{b\mapsto (t_{\mathfrak B/\mathfrak A}(bb_i))}{\to} \mathfrak A_\ast^{n(\eta)} \stackrel{(a_i) \mapsto \sum a_ib'_i}{\to} \mathfrak B_\ast\]
 est $ \eta \,id_{\mathfrak B_\ast}$, ce qui montre que $\mathfrak B$ est projectif fini.  
  Par descente fid\`element plate de $\mathfrak B$ \`a $\mathfrak A$, on voit aussi que $\mathfrak B$ est de rang $\vert G\vert$, et que
 $Tr_{\mathfrak B/\mathfrak A} = t_{\mathfrak B/\mathfrak A}^a$.  

  \smallskip \noindent $(2)$ Si $\mathfrak B$ est extension galoisienne de $\mathfrak A$ de groupe $G$, on a vu qu'elle est (fid\`element) plate, donc $\mathfrak B' :=  \mathfrak B\otimes_{\mathfrak A}\mathfrak A'$ est une extension de $\mathfrak A'$. On a $\mathfrak B' \otimes_{\mathfrak A'}\mathfrak B' \cong (\mathfrak B \otimes_{\mathfrak A}\mathfrak B)\otimes_{\mathfrak A} \mathfrak A' \cong $ \begin{small} $\displaystyle  \prod_{  G }$ \end{small}  $\mathfrak B'  $, d'o\`u aussi   $\mathfrak B'\cong (\mathfrak B' \otimes_{\mathfrak A'}\mathfrak B' )^{G\times 1} \cong  $ \begin{small} $\displaystyle\prod_{  G }$ \end{small}  $\mathfrak B'  \cong  \mathfrak B'^G\otimes_{\mathfrak A'}\mathfrak B' $  et   $(\mathfrak B')^G =   \mathfrak A'$ par platitude fid\`ele de $\mathfrak B'$ sur $\mathfrak A'$. On obtient la r\'eciproque par descente fid\`element plate.  
    
   \smallskip \noindent $(3)$ Apr\`es tensorisation $\otimes_{\mathfrak C} \mathfrak B$, ce morphisme canonique est  le compos\'e des isomorphismes de $\mathfrak C$-alg\`ebres 
    
  \centerline{$\mathfrak B \otimes_{\mathfrak C} (\mathfrak C\otimes_{\mathfrak A} \mathfrak B)\;  \;\stackrel{\sim}{\to}  \;  \;\mathfrak B \otimes_{\mathfrak A}\mathfrak B\; \stackrel{\sim}{\to}\;$  \begin{small} $\displaystyle  \prod_{  G }$ \end{small}  $\mathfrak B \;\stackrel{\sim}{\to}  \;   $  \begin{small} $\displaystyle  \prod_{  G/H }$ \end{small}  $(\mathfrak B \otimes_{\mathfrak C}\mathfrak B)  \;\stackrel{\sim}{\to}  \;  \mathfrak B \otimes_{\mathfrak C} $  \begin{small} $\displaystyle  \prod_{  G/H }$ \end{small}  $\mathfrak B,$ } 
  \noindent donc est un isomorphisme par descente fid\`element plate de $\mathfrak B $ \`a $\mathfrak C$ (\cf rem. \ref{r1'} $(3)$). En outre, $\mathfrak C$ est \'etale finie de rang $\vert G/H\vert$ sur $\mathfrak A$ par descente fid\`element plate de $\mathfrak B $ \`a $\mathfrak A$.\end{proof}
  
   \subsubsection{} Nous ferons usage de la construction suivante, standard dans le cas connexe (\cf \eg \cite[5.3.9]{Sz}) mais valable en g\'en\'eral\footnote{comme nous l'a signal\'e O. Gabber.}. 
   
 \begin{lemma}\label{L0} Soit $R\inj S$ une extension \'etale finie d'anneaux, de rang constant $r$. Il existe une extension galoisienne $R\inj T$ de groupe $\mathfrak S_r$ se factorisant par $S$, telle que $S\inj T$ soit une extension galoisienne de groupe $\mathfrak S_{r-1}$.
 \end{lemma}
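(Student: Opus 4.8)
We may assume $r\ge 1$ (if $r=0$ then $S=R=0$). The plan is to take for $T$ the ``ordered configuration algebra'' of $S$ over $R$, whose spectrum parametrises ordered $r$-tuples of pairwise distinct points of $\Spec S$ lying over one point of $\Spec R$, and then to establish both required Galois properties by reduction to geometric fibres. Since $R\inj S$ is étale finie, the non-ramification decomposition $S\otimes_R S\cong S\times\mathfrak C$ with first projection $\mu_S$ provides an idempotent $e_\Delta\in S\otimes_R S$, fixed by the flip $a\otimes b\mapsto b\otimes a$, such that $\mu_S$ restricts to an isomorphism $(S\otimes_R S)e_\Delta\iso S$. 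For $1\le i<j\le r$ let $\iota_{ij}\colon S\otimes_R S\to S^{\otimes_R r}$ be the $R$-algebra map placing the two tensor factors in positions $i$ and $j$ (and $1$ in the other positions); set $e_{ij}:=\iota_{ij}(1-e_\Delta)$, $e:=\prod_{i<j}e_{ij}$ and $T:=e\cdot S^{\otimes_R r}$, a direct factor of the étale finie $R$-algebra $S^{\otimes_R r}$, hence itself étale finie over $R$. Permuting the tensor positions makes $\mathfrak S_r$ act on $S^{\otimes_R r}$; flip-invariance of $e_\Delta$ gives $\sigma(e_{ij})=e_{\{\sigma i,\sigma j\}}$, so $\sigma(e)=e$ and $\mathfrak S_r$ acts on $T$. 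Multiplying the inclusion of $S$ into the first tensor position by $e$ gives an $R$-algebra map $j\colon S\to T$ fixed by the stabiliser $\mathfrak S_{r-1}:=\mathrm{Stab}_{\mathfrak S_r}(1)$, so that $\mathfrak S_{r-1}$ acts on $T$ over $j(S)$.

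Next I would compute the geometric fibres of $\Spec T$. After base change along a geometric point $\Spec\Omega\to\Spec R$, the étale finie algebra $S$ becomes $\Omega^F$ for a finite set $F$ of cardinal $r$, so that $S^{\otimes_R r}$ becomes $\Omega^{F^r}$ and $T$ becomes $\Omega^B$, where $B$ is the set of injections $\{1,\dots,r\}\hookrightarrow F$, equivalently of bijections. Thus $T$ is étale finie over $R$ of constant rank $r!$ (in particular $T\neq 0$, as $r\ge 1$), the map $\Spec T\to\Spec S$ induced by $j$ sends $\phi\in B$ to $\phi(1)$, and $\mathfrak S_r$ acts on the fibres $B$ by $\phi\mapsto\phi\circ\sigma^{-1}$, freely and transitively. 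Likewise $S^{\otimes_R r}$ is étale finie over $S$ through its first tensor factor (being a base change to $S$ of $S^{\otimes_R(r-1)}$), so $T$ is étale finie over $S$ via $j$, with fibre over a point $f_0\in F$ of $\Spec S$ equal to $\{\phi\in B:\phi(1)=f_0\}$, of cardinal $(r-1)!$, on which $\mathfrak S_{r-1}$ acts freely and transitively. In particular $T$ has constant rank $(r-1)!\ge 1$ over $S$, so $j\colon S\to T$ is fidèlement plat, hence injective, and the structure map $R\to T$ factors through $j$: $T$ is an extension of $S$.

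It remains to conclude. The canonical $T$-algebra maps $T\otimes_R T\to\prod_{\mathfrak S_r}T$ and $T\otimes_S T\to\prod_{\mathfrak S_{r-1}}T$ of \S\ref{eg} are maps between étale finie $T$-algebras (their sources, viewed as $T$-algebras through the second tensor factor, being base changes of $T$ over $R$, resp.\ over $S$), hence isomorphisms as soon as they are so on every geometric fibre of $\Spec T$. A geometric point of $\Spec T$ lies over some $\Spec\Omega\to\Spec R$ and corresponds to a bijection $x_0\in B$; on the corresponding fibres the two maps become the comparison bijections $\mathfrak S_r\iso B$, resp.\ $\mathfrak S_{r-1}\iso\{\phi\in B:\phi(1)=x_0(1)\}$, attached to the free transitive actions found above. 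Consequently $t\in T$ lies in $T^{\mathfrak S_r}$ (resp.\ in $T^{\mathfrak S_{r-1}}$) if and only if $t\otimes 1=1\otimes t$ in $T\otimes_R T$ (resp.\ in $T\otimes_S T$), and by descente fidèlement plate along $R\inj T$ (resp.\ along $j\colon S\inj T$) this forces $t\in R$ (resp.\ $t\in j(S)$). Hence $R\inj T$ is galoisienne de groupe $\mathfrak S_r$, se factorisant par $S$, and $S\inj T$ is galoisienne de groupe $\mathfrak S_{r-1}$, which proves the lemma. The crux --- the step that replaces the usual reduction to a connected Galois closure --- is the identification of the geometric fibres of $\Spec T$ with sets of bijections, together with the resulting torsor computation; everything else is étale-algebra formalism and faithfully flat descent.
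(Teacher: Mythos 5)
Your construction $T = e\cdot S^{\otimes_R r}$ is exactly the algebra of functions on the paper's $Z$ (complement of the partial diagonals in the $r$-fold fiber product $Y\times_X\cdots\times_X Y$), and you verify the two Galois structures by the same reduction to geometric fibres that the paper indicates. The proof is correct and takes essentially the paper's approach, merely spelling out the idempotent bookkeeping and the faithfully flat descent step that the paper leaves implicit.
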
 

   Posons $X= {\Spec}\, R, \, Y= {\Spec}\, S, \, Z =$ compl\'ementaire dans $Y\times_X Y \times_X \cdots \times_X Y$ ($r$ facteurs) des diagonales partielles. Ces derni\`eres sont ouvertes et ferm\'ees dans le produit fibr\'e puisque $Y$ est \'etale sur $X$, donc il en est de m\^eme de $Z$ qui est par suite \'etale fini sur $X$ (et aussi sur $Y$ via la premi\`ere projection). Par ailleurs, $\mathfrak S_r$ agit par permutation des facteurs, et on v\'erifie sur les fibres que le morphisme de rev\^etements \'etales $\mathfrak S_r \times Z \to Z\times_X Z$ (\resp $\mathfrak S_{r-1} \times Z \to Z\times_Y Z$) est un isomorphisme.  \qed
   
   Nous aurons aussi besoin du lemme suivant
   
         \begin{lemma}\label{L3} Soit $G$ un groupe fini d'automorphismes d'un anneau $S$, et soit $R:= S^G $ l'anneau des invariants. Soit $R\subset S'\subset S$ une extension interm\'ediaire stable sous $G$, et galoisienne sur $R$ de groupe $G$. Alors $S= S'$.
         \end{lemma}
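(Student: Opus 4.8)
\emph{Proof strategy.} The plan is to run the classical ``averaging'' argument of Galois theory of rings: I will produce, for every $s\in S$, an explicit formula expressing $s$ as an $R$-linear combination of elements of $S'$, which immediately forces $s\in S'$ and hence $S=S'$.

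First I would extract a ``Galois system'' from the hypothesis that $R\inj S'$ is galoisienne de groupe $G$. By the definition recalled in \ref{eg}, the canonical map $S'\otimes_R S'\to\prod_{\gamma\in G}S'$, $b\otimes b'\mapsto(\gamma(b)b')_{\gamma}$, is an isomorphism; let $\sum_{i=1}^{n}x_i\otimes y_i\in S'\otimes_R S'$ be the preimage of the tuple whose $\gamma=1_G$ coordinate equals $1$ and all of whose other coordinates vanish. This yields the identities $\sum_i\gamma(x_i)y_i=\delta_{\gamma,1_G}$ in $S'$ for every $\gamma\in G$, where $\gamma(x_i)\in S'$ makes sense precisely because $S'$ is stable under $G$.

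Next, since $S'$ is $G$-stable the $G$-action on $S$ restricts to the given one on $S'$, so I can form $t(s):=\sum_{\gamma\in G}\gamma(s)$ for $s\in S$; being $G$-invariant, $t(s)$ lies in $S^G=R$. The key computation is then, for arbitrary $s\in S$,
\[ \textstyle\sum_i t(x_i s)\,y_i \;=\; \sum_{\gamma\in G}\sum_i\gamma(x_i)\gamma(s)\,y_i \;=\; \sum_{\gamma\in G}\gamma(s)\Bigl(\sum_i\gamma(x_i)y_i\Bigr)\;=\;s, \]
using commutativity of $S$ together with the identities from the previous step. Since each $t(x_i s)$ lies in $R\subseteq S'$ and each $y_i$ lies in $S'$, the right-hand side exhibits $s$ as an element of $S'$; as $s$ was arbitrary, $S=S'$.

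I do not expect a genuine obstacle here: the argument is elementary, and the two points that merely require care are getting the variance right when reading the Galois system off the isomorphism of \ref{eg}, and noticing that the trace $t$ takes values inside $S'$ only by virtue of the hypothesis $R=S^G$ (if one knew just $S'^G=R$, a priori $S^G$ need not be contained in $S'$ and the conclusion could fail). An equivalent but slightly longer route would be to base-change the Galois isomorphism $S'\otimes_R S'\cong\prod_G S'$ along $S'\inj S$ and read off the same reconstruction identity.
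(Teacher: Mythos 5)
Your proof is correct, and it takes a genuinely different route from the one in the paper. The paper argues by descent: it lifts the orthogonal idempotents $e_\gamma$ of $S'\otimes_R S'\cong\prod_G S'$ into $S\otimes_R S'$, uses the fact that $G\times 1$ permutes them simply transitively to write $S\otimes_R S'\cong\prod_G S''$ for a single factor $S''$, identifies $S''$ with $(S\otimes_R S')^{G\times 1}=S^G\otimes_R S'=S'$ via flatness, and concludes $S\otimes_R S'=S'\otimes_R S'$, then $S=S'$ by faithfully flat descent. You instead run the classical Chase--Harrison--Rosenberg reconstruction argument: you extract the separability system $\sum_i x_i\otimes y_i$ with $\sum_i\gamma(x_i)y_i=\delta_{\gamma,1_G}$ from the Galois isomorphism, and use the trace $t(s)=\sum_\gamma\gamma(s)\in S^G=R$ to write any $s\in S$ as $\sum_i t(x_is)\,y_i\in S'$. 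Your computation is correct (the double sum and the Kronecker delta collapse exactly as you wrote), and you correctly flag the two places where care is needed: $G$-stability of $S'$ to make $\gamma(x_i)\in S'$, and the hypothesis $R=S^G$ (not merely $S'^G=R$) to guarantee $t(S)\subset R$. The trade-off is that your argument is more explicit and avoids the need to analyze the $G\times 1$-action on $S\otimes_R S'$ and to invoke faithfully flat descent, whereas the paper's argument is more structural and dovetails with the idempotent bookkeeping already set up around Prop.~\ref{P2}; both are standard tools in Galois theory of commutative rings and yield the result with comparable effort here.
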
     

  Les idempotents $e_g$ donnant lieu \`a la d\'ecomposition $\;  S'\otimes_R S' \cong  $ 
  {\begin{small}$\displaystyle{\prod_{\gamma \in G}}$\end{small}} 
  $\, S'$ d\'ecomposent aussi 
 $\displaystyle S \otimes_R S' \cong $  {\begin{small}$\displaystyle{\prod_{\gamma \in G}}$\end{small}} $\, e_\gamma(S \otimes_R S' ) \cong $    {\begin{small}$\displaystyle{\prod_{\gamma \in G}}$\end{small}} $\, S''$ (puisque que les $e_\gamma$ sont permut\'es par $G$). Compte tenu de ce que $S'$ est fid\`element plat sur $R$ (\cf prop. \ref{P2}), on a 
 $ S'' \cong  (S \otimes_R S')^{G\times 1}  =  S^G \otimes_{R} S' = S'$, d'o\`u $S'\otimes_R S'= S \otimes_R S' $,  puis $S= S'$.  
\qed

  \newpage
  \bigskip  \section{  La cat\'egorie bicompl\`ete des alg\`ebres de Banach uniformes.} 
    
 \medskip Dans cet article, le r\^ole principal est jou\'e par les alg\`ebres commutatives sur un corps complet non-archim\'edien, compl\`etes pour une norme multiplicative pour les puissances. Nous passons ici en revue leurs propri\'et\'es g\'en\'erales et esquissons, dans ce contexte, un dictionnaire entre le langage de l'analyse fonctionnelle et celui de l'alg\`ebre commutative.

  \subsection{Alg\`ebres de Banach} Nous renvoyons \`a \cite{BGR} comme r\'ef\'erence g\'en\'erale.
   
    \subsubsection{}\label{algn} Soit $\sK$ un corps complet pour une valeur absolue non archim\'edienne non triviale, de corps r\'esiduel $k$.   On note ${\sK}^{\s}$ son anneau (\resp  ${\sK}^{\ss}$ son id\'eal) de valuation, de sorte que $k= {\sK}^{\s}/{\sK}^{\ss}$.
    
    Dans cet article, toutes les $\sK$-alg\`ebres sont commutatives associatives unif\`eres (l'alg\`ebre nulle n'est pas exclue). Si $\sA$ est une $\sK$-alg\`ebre, une {\it semi-norme} $  \sA \stackrel{\vert\;\vert}{\to} \R\,$ de $\sK$-alg\`ebre v\'erifie $\vert a+b\vert\leq \max (\vert a\vert, \vert b\vert), \,\vert ab\vert \leq \vert a\vert \vert b\vert $ avec \'egalit\'e si $a\in \sK\,$, et $\vert 1_\sA\vert = 1$ si $\sA\neq 0$. C'est une {\it norme} si elle ne s'annule pas sur $\sA\setminus 0$. 
    On note $ \sA_{\vert\,\vert \leq r}$, ou plus simplement $ \sA_{\leq r}$, le sous-groupe additif de $\sA$ form\'e des  $a\in \sA \mid\, \vert a\vert \leq r\}$; de m\^eme  $ \sA_{< r}:= \{a\in \sA \mid\, \vert a\vert < r\}$. La boule unit\'e $ \sA_{\leq 1} $ est une sous-$\sK^{\s}$-alg\`ebre ouverte.  
    
    Toute $\sK$-alg\`ebre $\sA$ munie d'une norme multiplicative est int\`egre.
      
   \smallskip  La cat\'egorie des {\it $\sK$-alg\`ebres norm\'ees} a pour morphismes les {\it homomorphismes continus} d'alg\`ebres: ce sont les homomorphismes $\sA \stackrel{\phi}{\to} \sB$ born\'es sur $\sA_{\leq 1}$, \ie ceux pour lesquels $\vert\vert \phi\vert\vert := \sup_{a\in \sA \setminus 0}\, \frac{\vert \phi(a)\vert}{\vert a\vert} < \infty$. On dit que $\phi$ est {\it isom\'etrique}, ou que $\sA$ est une {\it sous-alg\`ebre norm\'ee} de $\sB$, si $\vert\phi(a)\vert= \vert a \vert$ pour tout $a\in \sA$.  
   
   Cette cat\'egorie admet des sommes amalgam\'ees: le s\'epar\'e de $\sB\otimes_\sA\, \sC$ pour la semi-norme produit tensoriel.
     
 \smallskip  La cat\'egorie $\,\sK\hbox{-}{\bf Ban}$ des {\it $\sK$-alg\`ebres de Banach} est la sous-cat\'egorie pleine form\'ee des alg\`ebres norm\'ees compl\`etes.
  Une $\sK$-alg\`ebre de Banach $\sA$ \'etant fix\'ee, les $\sK$-alg\`ebres de Banach $\sB$ munies d'un morphisme $\sA \stackrel{\phi}{\to} \sB$ sont appel\'ees {\it $\sA$-alg\`ebres de Banach}. Elles forment de mani\`ere \'evidente une cat\'egorie $\,\sA\hbox{-}{\bf Ban}$.
      
    \subsubsection{}\label{pt}   La cat\'egorie $\,\sK\hbox{-}{\bf Ban}$ admet un objet initial $\sK$ et un objet final $0$, des sommes amalgam\'ees $\sB\hat\otimes_\sA\, \sC$ ({\it produit tensoriel compl\'et\'e} \cite[3.1.1, prop. 2]{BGR}), et donc des colimites finies.      Le foncteur $ - \hat\otimes_\sK \sA$ est adjoint \`a gauche du foncteur oubli $\,\sA\hbox{-}{\bf Ban}\to \,\sK\hbox{-}{\bf Ban}$. 

Elle admet des produits fibr\'es $\sB \times_\sA\, \sC$, et donc des limites finies.
    
    Si $I$ est un id\'eal ferm\'e de $\sB$ et $J$ un id\'eal ferm\'e de $\sC$, et si $K$ est la fermeture de l'image de $I\otimes \sC + \sB \otimes J$ dans $\sB\hat\otimes_\sA\, \sC$, alors le morphisme canonique \begin{equation}\label{es} (\sB\hat\otimes_\sA\, \sC)/K\to (\sB/I)\hat\otimes_\sA\, (\sC/J)\end{equation} est une isom\'etrie (\cf \cite[2.1.8 prop. 6]{BGR} et sa preuve). 
    
    Par ailleurs, si $\sC$ est {\it projectif fini} sur $\sA$ et si sa norme induit la topologie canonique de $\sA$-module projectif fini, alors $\sB\otimes_\sA \sC$ est complet (cela suit de \cite[lemma 2.2.12]{KL}). Ce n'est plus vrai en g\'en\'eral si $\sC$ est fini non projectif. 
      
 \subsubsection{Remarques}\label{r.1} $(1)$ Si $\sC'$ est une sous-$\sA$-alg\`ebre norm\'ee de $\sC$, $\sB \hat\otimes_\sA\, \sC'$ n'est pas n\'ecessairement une sous-alg\`ebre norm\'ee de $\sB \hat\otimes_\sA\, \sC$.
 
\smallskip\noindent $(2)$ Supposons que $\sC$ admette une base orthogonale $ c_1, \ldots, c_n, \ldots$ sur $\sA$. Tout \'el\'ement $d\in \sB \hat\otimes_\sA\, \sC$ s'\'ecrit alors de mani\`ere unique $d = \sum b_j \otimes c_j$, et on a $\vert d\vert = \max \vert b_j\vert \vert c_j\vert$. En effet, pour toute autre expression $d = \sum b'_i \otimes c'_i$, on aura $c'_i = \sum a_{ij}c_j$, d'o\`u  $\vert c'_i \vert = \max \vert a_{ij}\vert \vert c_j\vert$, et $b_j = \sum a_{ij} b'_i$, d'o\`u $\max \vert b_j\vert \vert c_j\vert \leq \max  \vert a_{ij}\vert \vert c_j \vert \vert b'_i\vert \leq \max \vert c'_i \vert \vert b'_i\vert$. 
 On en d\'eduit que $\sB \stackrel{b\mapsto b\otimes 1}{\to} \sB \hat\otimes_\sA\, \sC$ est isom\'etrique.         
   
     \subsubsection{}\label{mb}   Si $\sA$ est une $\sK$-alg\`ebre norm\'ee, on note  $\sA^{\s}$ l'ensemble des {\it \'el\'ements dont les puissances restent born\'ees}. C'est une sous-$\sK^{\s}$-alg\`ebre ouverte de $\sA$ qui contient $\sA_{\leq 1}$, et qui ne d\'epend que de la classe d'\'equivalence de la norme. Sa formation est fonctorielle et commute \`a la compl\'etion.
      On a $(\sA \times \sB)^{\s}= \sA^{\s}\times \sB^{\s}$ (le produit \'etant muni de la norme supremum).  
     
     On note $\sA^{\ss}$ l'ensemble des {\it \'el\'ements topologiquement nilpotents}. C'est un id\'eal ouvert de $\sA^{\s}$,  et $\sA^{\ss}= \sqrt{\sA^{\ss}}$.      Si $\sA$ est compl\`ete, $\sA^{\ss}$ est contenu dans le radical de Jacobson de $\sA^{\s}$, car pour tout $(a,b) \in \sA^{\ss} \times \sA^{\s},\; 1-ab$ est inversible d'inverse $\sum a^n b^n$. En particulier, un \'el\'ement de $  \sA^{\s}$ est inversible si et seulement si son image dans $\sA^{\s}/\sA^{\ss}$ est inversible. Les id\'eaux maximaux de $\sA$ sont ferm\'es \cite[1.2.4/5]{BGR}.
     
     La r\`egle $\sA \mapsto \sA^{\s}/\sA^{\ss}$ d\'efinit un foncteur des $\sK$-alg\`ebres norm\'ees vers les $k$-alg\`ebres {\it r\'eduites} (\ie sans \'element nilpotent non nul).  
          
   \subsubsection{}  Pour toute $\sK$-alg\`ebre de Banach $\sA$, on note  $\sA\langle T\rangle$ le compl\'et\'e de l'alg\`ebre des polyn\^omes $\sA[T]$ pour la norme supremum des coefficients ({\it norme de Gauss}, qui est multiplicative si celle de $\sA$ l'est). Alors $\sA\langle T\rangle^{\s}$ est l'alg\`ebre des s\'eries convergentes \`a coefficients dans $\sA^{\s}$. Le couple $(\sA\langle T\rangle, T)$ est universel parmi les couples $(\sB , b\in \sB^{\s})$ o\`u $\sB$ est une $\sA$-alg\`ebre de Banach  (\cf \cite[1.4.3, cor. 2]{BGR}).       
     
     \subsubsection{Remarque}\label{r.3} Pour traiter des questions d'isom\'etrie, il est commode de se placer dans la sous-cat\'egorie non pleine de $\sK$-$\bf{Ban}$ ayant m\^eme objets et pour morphismes ceux de norme $\leq 1$, o\`u $\hat\otimes$ est encore un coproduit. Dans cette cat\'egorie, le couple $(\sA\langle T\rangle, T)$ est universel parmi les couples $(\sB , b\in \sB_{\leq 1})$.
     
   Si $\sA\to \sB$ est un morphisme dans cette cat\'egorie, le morphisme canonique $\sB \hat\otimes_\sA \sA\langle T\rangle \to \sB\langle T\rangle $ est isom\'etrique: l'inverse est donn\'e par la propri\'et\'e universelle de $\sB\langle T\rangle $. En combinant ceci \`a \eqref{es}, on trouve que pour tout id\'eal ferm\'e $J$ de $\sA\langle T\rangle$ le morphisme canonique 
   \begin{equation}\label{et}  \sB\langle T\rangle / J\sB\langle T\rangle^- \to \sB\hat\otimes_\sA (\sA\langle T\rangle/J)  \end{equation}
est isom\'etrique, $J\sB\langle T\rangle^-$ d\'esignant la fermeture de l'id\'eal $J\sB\langle T\rangle$.       
  
     \subsection{Normes spectrales} 
  
     \subsubsection{}\label{nsp} La {\it semi-norme spectrale}\footnote{Comme dans le cas archim\'edien, cette semi-norme a en effet une interpr\'etation {``spectrale"}, du moins dans le cas complet: c'est le supremum des semi-normes multiplicatives born\'ees, qui sont les points du spectre de Berkovich $\mathcal M(\sA)$.} associ\'ee \`a une norme $\vert\;\vert$ de $\sK$-alg\`ebre $\sA$ est d\'efinie par 
     \begin{equation}\label{e0}\,\displaystyle\vert a \vert_{sp} :=\lim_m\, \vert a^m\vert ^{\frac{1}{m}}\,\end{equation} (ce n'est pas n\'ecessairement une norme: on peut avoir $\vert a\vert_{sp} = 0$ pour $a$ non nul).
 On a $ \vert \; \vert_{sp} \leq \vert \;\vert $. Si $  \vert \; \vert_{sp} = \vert \;\vert $, c'est-\`a-dire si $\vert\;\vert$ est multiplicative pour les puissances, on dit que $\vert \;\vert $ est une {\it norme spectrale}, ou que $\sA$ est une {\it $\sK$-alg\`ebre norm\'ee spectrale}.
   Dans ce cas,  $\sA^{\s}$ co\"{\i}ncide avec $\sA_{\leq 1}$, et $\sA^{\ss}$ avec $\sA_{< 1}$.  
   
   Une norme de $\sK$-alg\`ebre $\sA$ est multiplicative si et seulement si elle est spectrale et $\sA^{\s}/\sA^{\ss}$ est int\`egre \cite[1.5.3]{BGR}\footnote{si $\vert \sK \vert \neq \vert \sA \vert$, il existe de telles alg\`ebres qui ne sont pas des corps valu\'es et telles que $\sA^{\s}/\sA^{\ss}$ soit un corps \cite[1.7.1]{BGR}.}. 
    
\subsubsection{}\label{ns}  Une $\sK$-alg\`ebre norm\'ee $\sA$ est dite {\it uniforme} si $\sA^{\s}$ est born\'e. Il revient au m\^eme de dire que la norme est \'equivalente \`a sa semi-norme spectrale associ\'ee. Il est clair, en effet, que cette derni\`ere condition entra\^{\i}ne que $\sA^{\s}$ est born\'e, puisque $\sA^{\s}$ est contenue dans la boule unit\'e de la semi-norme spectrale; pour la r\'eciproque, on observe que $\, \vert a\vert \leq \vert a\vert_{sp}\sup_{b\in \sA^{\s}}\,\vert b\vert $.

\smallskip Toute alg\`ebre norm\'ee uniforme $\sA$ est r\'eduite. Si $ \vert \sK\vert$ est dense dans $\R_+$, on a $ \sA^{\ss}=  \sK^{\ss}\sA^{\s}= ( \sA^{\ss})^2 $.

 \subsubsection{Exemples}\label{E1}  
$(1)$ Toute $\sK$-alg\`ebre affino\"{\i}de (classique, \ie quotient de $\sK\langle T_1, \ldots, T_n\rangle$) qui est r\'eduite est uniforme \cite[6.2.1]{BGR}. En fait, dans ce cas, la structure alg\'ebrique d\'etermine la topologie et aussi la norme spectrale \cite[6.1.3, prop. 2]{BGR}. 

\smallskip \noindent $(2)$ Si $\sA$ est uniforme (\resp spectrale), il en est de m\^eme de $\sA\langle T\rangle$: $\sA\langle T\rangle^{\s}= \sA^{\s}\langle T\rangle$ est born\'e (\resp est la boule unit\'e).

\subsubsection{} Un homomorphisme $\phi: \sA\to \sB$ entre $\sK$-alg\`ebres norm\'ees uniformes est continu si et seulement si $\phi(\sA^{\s})\subset \sB^{\s}$ (et dans le cas o\`u les normes sont spectrales, si et seulement si $\vert\vert \phi \vert\vert \leq 1$).
 Il en d\'ecoule que la norme spectrale d'une alg\`ebre norm\'ee uniforme est l'unique norme spectrale compatible avec la topologie.
 
On en d\'eduit qu'une $\sK$-alg\`ebre $\sA$ admet au plus une norme spectrale pour laquelle elle est compl\`ete: en effet, si $\vert\;\vert_1, \vert\; \vert_2$ sont deux telles normes, on peut supposer, en rempla\c cant la premi\`ere par le supremum des deux, que le morphisme identique $(\sA, \vert\;\vert_1)\to (\sA, \vert\;\vert_2)$ est continu. Par le th\'eor\`eme de l'image ouverte de Banach \cite[I.3.3, th. 1]{B1}, $\vert\;\vert_1$ est \'equivalente \`a $  \vert\; \vert_2$, donc ce morphisme est bicontinu et l'on conclut de ce qui pr\'ec\`ede que $\vert\;\vert_1=  \vert\; \vert_2$.

  \smallskip  La compl\'etion d'une alg\`ebre norm\'ee uniforme est une alg\`ebre de Banach uniforme. Il suit du th\'eor\`eme de l'image ouverte qu'une alg\`ebre de Banach est uniforme si et seulement si elle est compl\`ete pour la semi-norme spectrale (qui est alors une norme).   

 \subsubsection{}\label{abu} On note $\,\sK\hbox{-}{\bf uBan}$ (\resp $\,\sA\hbox{-}{\bf uBan}$)  la sous-cat\'egorie pleine de $\,\sK\hbox{-}{\bf Ban}$ (\resp \resp $\,\sA\hbox{-}{\bf Ban}$) form\'ee des alg\`ebres de Banach uniformes. Le plongement de $\,\sK\hbox{-}{\bf uBan}$ dans $\,\sK\hbox{-}{\bf Ban}$
  admet un adjoint \`a gauche ({\it uniformisation}), donn\'e par le compl\'et\'e s\'epar\'e relativement \`a la semi-norme spectrale associ\'ee; par d\'efinition, l'uniformis\'ee $\sA^u$ de $\sA$ est donc spectrale. Le morphisme $\sA \to \sA^u$ (unit\'e d'ajonction) est un isomorphisme si et seulement si $\sA$ est uniforme.
      
     La cat\'egorie $\,\sK\hbox{-}{\bf uBan}$ admet un objet initial $\sK$ et un objet final $0$, des sommes amalgam\'ees $\sB\hat\otimes^u_\sA\, \sC$ (uniformisation du produit tensoriel compl\'et\'e), et donc des colimites finies. En tant qu'adjoint \`a gauche, l'uniformisation pr\'eserve les sommes amalgam\'ees, \ie transforme $\hat\otimes$ en $\hat\otimes^u$.  Si $\sA$ est de Banach uniforme, le foncteur $ - \hat\otimes^u_\sK \sA$ est adjoint \`a gauche du foncteur oubli $\,\sA\hbox{-}{\bf uBan}\to \,\sK\hbox{-}{\bf uBan}$\footnote{Signalons que $\sB\hat\otimes_\sA\, \sC$ peut ne pas \^etre r\'eduite m\^eme si $\sA, \sB, \sC$ sont des corps de car. $0$, \cf \cite{FM}.}. 
     
      Si $\sA\to \sB$ est un morphisme dans $\,\sK\hbox{-}{\bf uBan}$ et $J$ un id\'eal ferm\'e de $\sA\langle T\rangle$,  le morphisme canonique 
   \begin{equation}\label{eu}  (\sB\langle T\rangle / J\sB\langle T\rangle)^u \to \sB\hat\otimes^u_\sA (\sA\langle T\rangle/J)  \end{equation}
est isom\'etrique (on peut munir $\sA$ et $\sB$ de leur norme spectrale, ainsi que les deux membres de la formule \eqref{et}).

     \subsubsection{}\label{i} Tout produit fibr\'e (fini) d'alg\`ebres de Banach uniformes, muni de la norme supremum, est uniforme; donc la cat\'egorie des $\sK$-alg\`ebres de Banach uniformes admet des limites finies.  
          
     \smallskip  Tout facteur d'une alg\`ebre de Banach uniforme est uniforme (en fait, dans une alg\`ebre norm\'ee $\sA$, tout idempotent non nul $e$ est de norme $\geq 1$, et de norme $1$ si $\sA$ est spectrale);
      plus pr\'ecis\'ement, toute d\'ecomposition de $\sA= \sB\times \sC$ en tant qu'alg\`ebre provient d'une d\'ecomposition dans la cat\'egorie des alg\`ebres norm\'ees uniformes.   

      \subsubsection{Remarque}\label{r.2} Si $\sC$ admet une base orthogonale sur $\sA$, et si $\sB$ est spectrale, alors $\sB \stackrel{b\mapsto b\otimes 1}{\to} \sB \hat\otimes^u_\sA\, \sC$ est isom\'etrique. En effet, $\vert b\otimes 1\vert_{sp} = \lim_m\, \vert b^m\otimes 1\vert ^{\frac{1}{m}}= \lim_m\, \vert b^m \vert ^{\frac{1}{m}}= \vert b\vert$ d'apr\`es la rem. \ref{r.1} $2)$.

 \subsubsection{Exemple.}\label{E0}  Soit $\sL$ une extension finie galoisienne de $\sK$ de groupe $G$, munie de l'unique valeur absolue qui prolonge celle de $\sK$ (en particulier, $G$ agit isom\'etriquement). Alors $\sL\otimes_\sK \sL$ munie de la (semi-)norme produit tensoriel est une $\sK$-alg\`ebre de Banach uniforme de dimension finie.  
  L'homomorphisme canonique $\sL\otimes_\sK \sL \to $ {\begin{small}$\displaystyle{\prod_{\gamma \in G}}$\end{small}}$\sL$  est continu et bijectif, donc un isomorphisme d'alg\`ebres de Banach, de sorte que
 {$\sL\hat\otimes^u_\sK \sL \cong $ {\begin{small}$\displaystyle{\prod_{\gamma \in G}}$\end{small}}$\sL$}. 
 L'alg\`ebre de Banach $\sL\otimes_\sK \sL  $ est spectrale si et seulement si  $\sL\otimes_\sK \sL \to $ {\begin{small}$\displaystyle{\prod_{\gamma \in G}}$\end{small}}$\sL$ est isom\'etrique, ce qui \'equivaut \`a dire que l'extension $\sL^{{\s}a}$ de $ \sK^{{\s}a}$ est galoisienne dans le cadre  $(\sK^{\s}, \sK^{{\ss}})$; dans le cas de valuation discr\`ete, ceci a lieu si et seulement si l'extension de corps valu\'es $\sL/\sK$ est non ramifi\'ee. 
 
 Le cas d'une extension infinie, beaucoup plus d\'elicat, est \'etudi\'e en d\'etail dans \cite{FM}.
 
\bigskip{\it Dans toute la suite, on suppose que le corps r\'esiduel $k$ est de caract\'eristique $p> 0$.}

\medskip 
    \subsection{Dictionnaire.}  
    
    \subsubsection{}\label{ferm} Commen\c cons par rappeler quelques op\'erateurs de fermeture. Soit $R\inj S$ une extension d'anneaux commutatifs unitaires. 
    
    Un \'el\'ement $s\in S$ est dit entier (\resp presque entier\footnote{on verra dans le sorite \ref{S2} que cette terminologie, loin d'entrer en conflit avec celle de la presque-alg\`ebre, consonne.})
     sur $R$ si ses puissances engendrent (\resp sont contenues dans) un sous-$R$-module de type fini de $S$.  On note $R^+_S$ (\resp $R^\ast_S$) leur ensemble. C'est un sous-anneau de $S$, la {\it fermeture int\'egrale} (\resp {\it compl\`ement int\'egrale}) de $R$ dans $S$. On dit que $R$ est int\'egralement ferm\'e (\resp compl\`etement int\'egralement ferm\'e) dans $S$ si $R= R^+_S$ (\resp $ R= R^\ast_S$) (\cf \eg \cite{GH}) . 
   Alors que $R^+_S$ est int\'egralement ferm\'e dans $S$, $R^\ast_S$ n'est pas n\'ecessairement compl\`etement int\'egralement ferm\'e dans $S$ (cf. rem. \ref{r2} $(2)$ ci-dessous). On a bien s\^ur $R^+_S \subset R^\ast_S$ (avec \'egalit\'e si $R$ est noeth\'erien). 
    
    \smallskip Soit $p$ un nombre premier. Un \'el\'ement $s\in S$ est $p$-radiciel sur $R$ s'il existe $n\in \N$ tel que $s^{p^n}\in R$. On dit que $R$ est $p$-radiciellement ferm\'e dans $S$ si tout \'el\'ement $p$-radiciel est dans $R$; autrement dit, si $(s\in S, \, s^p\in R) \Rightarrow s\in R$. La {\it fermeture $p$-radicielle} de $R$ dans $S$, not\'ee $R^\dagger_S$, est le plus petit anneau interm\'ediaire $p$-radiciellement ferm\'e dans $S$. C'est la r\'eunion croissante des sous-anneaux $R_i$ d\'efinis inductivement par $R_0= R, \, R_{i+1} =$ sous-anneau de $S$ engendr\'e par les \'el\'ements $p$-radiciels de $R_i$ (\cf \cite{ADR}). On a bien s\^ur $R^\dagger_S \subset R^+_S$, et $R$ est $p$-radiciellement ferm\'e dans $S$ si et seulement si $R= R^\dagger_S$.
    
    Lorsque $R$ est de car. $p$, ou bien lorsque $S= R[\frac{1}{p}]$, l'ensemble des \'el\'ements $p$-radiciels de $S$ sur $R$ est un sous-anneau de $S$ \cite{Ro}, qui co\"{\i}ncide donc avec $R^\dagger_S$.  
    
   \smallskip   \'Etant donn\'e un carr\'e commutatif     \[  \xymatrix @-1pc { R  \ar[d]    \,  \ar@{^{(}->}[r] & S  \ar[d]      \\   R'    \,  \ar@{^{(}->}[r] & S'    }  \] 
 on a des homomorphismes naturels $R^\dagger_S \to R'^\dagger_{S'},\, R^+_S \to R'^+_{S'},\, R^\ast_S \to R'^\ast_{S'}$ (pour le premier, on construit pas \`a pas $R_i \to R'_i$).
    
   La fermeture int\'egrale de $R$ dans $S$ commute \`a la localisation \cite[V, \S 1, n. 5, prop. 16]{B2}, et il en est de m\^eme de la fermeture $p$-radicielle comme on le v\'erifie imm\'ediatement. Ce n'est pas vrai en revanche pour la fermeture compl\`etement int\'egrale \cite[V, \S 1, ex. 12]{B2}. 
    
    \subsubsection{} Voici un fragment de dictionnaire entre le langage de l'analyse fonctionnelle et celui de l'alg\`ebre commutative.

  Fixons $\varpi\in {\sK}^{\ss}\setminus 0$, et notons $\Gamma \subset  \R$ le groupe de valuation correspondant: $\vert \varpi \vert^\Gamma = \vert \sK^\times \vert$. Pour tout $s\in \Gamma$, soit $\varpi_s$ un \'el\'ement de $\sK^\times$ de norme $\vert \varpi \vert^s$ (les consid\'erations qui suivent ne d\'ependent pas de tels choix). 
   Un compl\'et\'e-s\'epar\'e ($\varpi$-adique ou selon une semi-norme) sera affubl\'e d'un chapeau.
  
   \begin{sorite}\label{s1}  \begin{enumerate}  

 \item Soit $\sA$ une $\sK$-alg\`ebre norm\'ee telle que $\vert \sK\vert$ soit dense dans $ \vert \sA\vert$. Pour tout $a\in \sA$, on a \begin{equation}\;\vert a\vert = \vert \varpi\vert^r,\,\;{\rm{\it avec}}\;\;  r:= \sup \{s\in \Gamma \mid \varpi_{-s} a \in \sA_{\leq 1}\}.\end{equation}
 En particulier, la topologie de $\sA_{\leq 1}$ est la topologie $\varpi$-adique, et $\sA$ est de Banach si et seulement si $\sA_{\leq 1}$ est $\varpi$-adiquement compl\`ete.

 \item   R\'eciproquement, soit $\,{\mathfrak{A}}\,$ une ${\sK}^{\s}$-alg\`ebre plate (\ie sans $\varpi$-torsion), et posons 
\[  \sA := {\mathfrak{A}}[\frac{1}{\varpi}] .\]
  Pour tout $a\in \sA  $, posons 
\begin{equation}\,{}^{{}^{{\mathfrak{A}}}}\vert a\vert  := \vert \varpi\vert^r,\,\;{\rm{\it avec}}\;\;  r:= \sup \{s\in \Gamma \mid \varpi_{-s} a \in {\mathfrak{A}} \} \in \R \cup \{+\infty\}.\end{equation}
   \begin{enumerate}
 \item Cela d\'efinit une semi-norme sur $\sA$, qui est une norme si et seulement si aucun \'el\'ement de ${\mathfrak{A}}\setminus 0$ n'est infiniment $\varpi$-divisible; $\vert \sK\vert$ est dense dans $ {}^{{}^{{\mathfrak{A}}}}\vert \sA\vert $.    
 
 En outre $\hat{\mathfrak A}$ est aussi une $\sK^{\s}$-alg\`ebre plate,  $\,\hat{\sA}  =  \hat{\mathfrak A}[\frac{1}{\varpi}],$ et  $\hat\sA_{\leq 1} =\widehat{\sA_{\leq 1}}$.  
 
\item 
 $\;\;\;\;\;\;\;\; \displaystyle{\sA_{< 1}= \sK^{\ss}{\mathfrak{A}}}$,

 $\;\;\;\;\;\;\;\; \displaystyle{\sA_{\leq 1}= {\mathfrak{A}}}\;\;$ si $\Gamma$ est discret,

$\;\;\;\;\;\;\;\; \displaystyle{\sA_{\leq 1}=  {\mathfrak{A}}_\ast}\;$ en g\'en\'eral\footnote{Notation emprunt\'ee \`a la presque-alg\`ebre lorsque $\Gamma$ est dense dans $\R$, auquel cas $\sK^{\ss}$ est un id\'eal idempotent de $\sK^{\s}$, \cf \S \ref{pa}; sinon, $\mathfrak{A}_\ast= \mathfrak{A}$.}, avec 
 $\;{\mathfrak{A}}_\ast :=   {\rm{Hom}}_{\sK^{\s}}(\sK^{\ss}, {\mathfrak{A}}) = \displaystyle\bigcap_{s\in \Gamma_{>0}}\,\varpi_{-s} {\mathfrak{A}}.$ 
  
\item Un homomorphisme $\phi: \sA= {\mathfrak{A}}[\frac{1}{\varpi}] \to \sA'= {\mathfrak{A}}'[\frac{1}{\varpi}]$ est continu si $\phi({\mathfrak{A}})\subset {\mathfrak{A}}'$, et est isom\'etrique si en outre $\phi^{-1}{\mathfrak{A}}'= {\mathfrak{A}}$. Plus pr\'ecis\'ement, un homomorphisme continu $\phi$ est isom\'etrique si et seulement si $\phi(\sA_{\leq 1})\subset \sA'_{\leq 1}$ et l'homomorphisme induit $\sA_{\leq 1}/\varpi \to \sA'_{\leq 1}/\varpi$ est injectif.

\item Si $  {\mathfrak{A}}\subset {\mathfrak{A}}'$ est une sous-alg\`ebre telle que le quotient ${\mathfrak A}'/\mathfrak A$ soit sans $\varpi$-torsion,  la restriction de $ \,{}^{{}^{{\mathfrak{A}'}}}\vert  \;\vert $ \`a $\mathfrak A$ est $\,{}^{{}^{{\mathfrak{A}}}}\vert \;\vert $.  

Si $\mathfrak I$ est un id\'eal de $\mathfrak A$ tel que ${\mathfrak A}/\mathfrak I$ soit sans $\varpi$-torsion, $ \,{}^{{}^{{\mathfrak{A}}/\mathfrak I}}\vert  \;\vert $ est la semi-norme quotient de $\,{}^{{}^{{\mathfrak{A}}}}\vert \;\vert $. R\'eciproquement, pour tout id\'eal $\sI$ de $\sA$, alors $(\sA/\sI)_{\leq 1}  = (\sA_{\leq 1}/\sI_{\leq 1})_\ast$.  
 
\item Soient $\,{\mathfrak{B}},{\mathfrak{C}} \,$ deux ${\mathfrak{C}}$-alg\`ebres sans $\varpi$-torsion. Alors la semi-norme associ\'ee \`a $(\mathfrak B\otimes_{\mathfrak A}\mathfrak C)/( \varpi^\infty$-torsion$)$ est la semi-norme produit tensoriel (o\`u $ \varpi^\infty$-torsion d\'esigne la torsion $\varpi$-primaire). R\'eciproquement, si $\sB, \sC$ sont deux $\sA$-alg\`ebres norm\'ees, on a 

 $\;\;\;\;\;\;\;\; \displaystyle  {(\sB \otimes_\sA\, \sC)}_{\leq 1}  =  (\sB_{\leq 1}\otimes_{\sA_{\leq 1}} \sC_{\leq 1})/( \varpi^\infty$-tors$)\,$ si $\Gamma$ est discret,

$\;\;\;\;\;\;\;\; \displaystyle {(\sB \otimes_\sA\, \sC)}_{\leq 1}  =  ((\sB_{\leq 1}\otimes_{\sA_{\leq 1}} \sC_{\leq 1})/( \varpi^\infty$-tors$))_\ast  \;$ sinon.

De m\^eme pour les produits tensoriels compl\'et\'es.  \end{enumerate}

\item Le passage au quotient modulo $\sA^{\ss}$ (\resp \`a la compl\'etion) induit une bijection entre les sous-$\sK^{\s}$-alg\`ebres $\mathfrak B$ ouvertes [compl\`etement] int\'egralement ferm\'ees dans $\sA^{\s}$ et les sous-$k$-alg\`ebres [compl\`etement] int\'egralement ferm\'ees de la $k$-alg\`ebre r\'eduite $\sA^{\s}/\sA^{\ss}$ (\resp les sous-$\sK^{\s}$-alg\`ebres ouvertes int\'egralement ferm\'ees dans $\hat\sA^{\s}$).
  
\newpage \medskip  Supposons  $\vert \sK \vert$ dense dans $\vert \sA \vert\,$ (ce qui est le cas si $\Gamma$ est dense dans $\R$). 
 
 \item Pour $\mathfrak A$  comme ci-dessus et $\sA = \mathfrak A[\frac{1}{\varpi}]$, et avec les notations de \ref{ferm}:
 \[ \mathfrak A_\sA^\dagger := \{a\in \sA\mid \exists n, \, a^{p^n}\in \mathfrak A\}, \;\;  \mathfrak A_\sA^+ , \;\; \mathfrak A_\sA^\ast :=   \{a\in \sA\mid  \exists m \,\forall n ,\, \varpi^m a^n \in \mathfrak A\},\]
on a 
\begin{equation}\label{1eq}  \mathfrak A_\sA^\dagger \subset \mathfrak A_\sA^+ \subset \mathfrak A_\sA^\ast =   ( \mathfrak A_\ast)_\sA^\ast = \sA^{\s}\end{equation}  et 
 $   (\mathfrak A_\sA^\dagger)_\ast =  (\sA^{\s})_\ast $ est la boule unit\'e de la semi-norme spectrale associ\'ee \`a $\vert\,\vert$.  
 
  En particulier, si $\sA_{\leq 1}$ est noeth\'erienne, sa fermeture int\'egrale dans $\sA$ est \'egale \`a $\sA^{\s}$.

\item  Les conditions suivantes sont \'equivalentes: \begin{enumerate}

  \item  $\vert\;\vert$ est spectrale, 
  
\item   $ {\sA_{\leq 1}}=\sA^{\s}$,
  
  \item $ {\sA_{\leq 1}}= ({\sA_{\leq 1}})_\sA^\ast \;\;$  (${\sA_{\leq 1}}$ est compl\`etement int\'egralement ferm\'e dans $\sA$),
  
\item $ {\sA_{\leq 1}}= ({\sA_{\leq 1}})_\sA^+ \;\;$    ($ {\sA_{\leq 1}}$ est int\'egralement ferm\'e dans $\sA$),

\item  $ {\sA_{\leq 1}}= ({\sA_{\leq 1}})_\sA^\dagger \;\;$  (${\sA_{\leq 1}}$ est $p$-radiciellement ferm\'e dans $\sA$), 

  \item (si $\vert \varpi \vert \geq \vert p\vert$ et $\frac{1}{p}\in \Gamma $) l'homomorphisme ${\sA_{\leq 1}}/\varpi_{\frac{1}{p}} \stackrel{x\mapsto x^p}{\to}\, {\sA_{\leq 1}}/\varpi  \,$ est {injectif}.
 \end{enumerate}  
\item Si $\sA$ est uniforme, on a 
\begin{equation}\label{eq1}     \sA^{\s} = (\sA^{\s})_\ast 
= (\sA_{\leq 1})^\ast_\sA. \end{equation} 
 et 
 \begin{equation}\label{eq1'}     \widehat{\sA^{\s}} = (\hat\sA)^{\s}. \end{equation} 
   \end{enumerate}
  \end{sorite}
    
    \begin{proof} $(1)$ L'\'egalit\'e $\;\vert a\vert = \vert \varpi\vert^r$ est claire si $\vert a\vert \in \vert\sK^\times\vert$; elle vaut donc pour tout $a\in \sA$ compte tenu de la densit\'e de $\vert\sK\vert$ dans $ \vert \sA\vert$.
        
 \smallskip \noindent $(2)$ $(a)$ Que ${}^{{}^{B}}\vert\;\vert$ soit une semi-norme non-archim\'edienne d\'ecoule directement de ce que ${\mathfrak{A}}$ est une $\sK^{\s}$-alg\`ebre. C'est une norme si ${\mathfrak{A}}$ est $\varpi$-adiquement s\'epar\'ee. 
 
 Par platitude de $\mathfrak A$,  $  \mathfrak A/\varpi^{n-1}\stackrel{\cdot \varpi}{\to} \mathfrak A/\varpi^n$ est injectif pour tout $n$, d'o\`u, en passant \`a la limite, le fait que 
 $\hat{\mathfrak A}$ soit sans $\varpi$-torsion. 
 
 Les \'egalit\'es $\,\hat{\sA}  =   \hat{\mathfrak A}[\frac{1}{\varpi}],\;  {\hat\sA}_{\leq 1} =\widehat{\sA_{\leq 1}}$, de m\^eme que les points   $(a)$ $(b)$ et la premi\`ere assertion de $(c)$ r\'esultent de la d\'efinition de la semi-norme (en notant que  $\sup \{  s\in \Gamma_{>0}\} > 0$ si $\Gamma$ est discret, $=0$ sinon). On a  ${}^{{}^{{\mathfrak{A}}}}\vert\;\vert =  {}^{{}^{\sA_{\leq 1}}}\vert\;\vert$. 
 
 Compl\'etons la preuve de $(c)$. Puisque $\vert \sK\vert $ est dense dans $\sA$, $\phi$ est isom\'etrique si et seulement si $\sA_{\leq 1} \stackrel{\phi}{\to} \sA'_{\leq 1} \cap \phi(\sA)  $ est un isomorphisme. Comme l'injectivit\'e de $\sA_{\leq 1}/\varpi\to \sA'_{\leq 1}/\varpi$ implique aussi celle de $\phi$, on peut supposer $\phi$ injectif (et le noter comme une inclusion pour all\'eger). Il suffit donc de s'assurer que $\sA'_{\leq 1}\cap \frac{1}{\varpi} \sA_{\leq 1}  = \sA_{\leq 1} \Rightarrow   \sA'_{\leq 1}\cap \sA =  \sA_{\leq 1}   $ (la r\'eciproque \'etant banale). Or pour tout $n\geq 1$,  $\sA'_{\leq 1}\cap \frac{1}{\varpi} \sA = \sA_{\leq 1} \Rightarrow  \varpi^n\sA'_{\leq 1}\cap \frac{1}{\varpi} \sA_{\leq 1}  = \varpi^n\sA'_{\leq 1}\cap   \sA_{\leq 1} $, d'o\`u aussi par r\'ecurrence   $\sA'_{\leq 1}\cap \frac{1}{\varpi^n} \sA = \sA_{\leq 1} $.
 
 La premi\`ere assertion de $(d)$ r\'esulte de $(c)$ puisqu'on a  ${\mathfrak A}' \cap {\mathfrak A}[\frac{1}{\varpi}]= \mathfrak A $. Pour la seconde, en notant $\bar a $ l'image d'un \'el\'ement $a\in \sA$ modulo ${\mathfrak I}[\frac{1}{\varpi}]$, on observe que  ${\mathfrak A}  \cap {\mathfrak I}[\frac{1}{\varpi}]= \mathfrak I $   de sorte que $  \sup \{s\in \Gamma \mid \varpi_{-s}  \bar a \in  \mathfrak{A}/\mathfrak I  \}  = \sup_{i\in {\mathfrak I}[\frac{1}{\varpi}]} \sup \{s\in \Gamma \mid \varpi_{-s}  a + i  \in  \mathfrak{A}  \}  $. Pour l'assertion r\'eciproque, on note que $\sA_{\leq 1}/ \sI_{\sI}$ est sans $\varpi$-torsion et on applique le point $(b)$.
 
Pour $(e)$, consid\'erons une somme $\sum b_n  \otimes c_n$ avec $\vert b_n \vert\vert c_n\vert \leq 1$. Dans le cas discret (\resp non discret), on la r\'e\'ecrit comme $\sum \varpi_{r_n} b_n  \otimes \varpi^{-1}_{r_n} c_n$ avec ${r_n}$ \'egal \`a (\resp inf\'erieur \`a, mais arbitrairement proche de) la valuation de $y_n$.

       \smallskip\noindent     $(3)$ Le cas de la compl\'etion se d\'eduit du cas de la r\'eduction modulo $\sA^{\ss}$. Celle-ci induit une bijection entre sous $\sK^{\s}$-alg\`ebres ouvertes $\mathfrak B$ de $\sA^{\s}$ et sous-$k$-alg\`ebres $\bar{\mathfrak B}$ de $\sA^{\s}/\sA^{\ss}$. Le point est de montrer que $\mathfrak B$ et $\bar{\mathfrak B}$ sont simultan\'ement int\'egralement ferm\'ees. Cela d\'ecoule imm\'ediatement de l'observation suivante:
            
      \smallskip      {\it  soient $S$ un anneau, $I$ un id\'eal, $R$ un sous-anneau de $S$ contenant $I$, $s$ un \'el\'ement de $S$ et $\bar s$ son image dans $S/I$.  Alors $s$ est entier sur $R$ si et seulement si $\bar s$ est entier sur $R/I$ .}
            
        \smallskip    En effet, la r\'eduction modulo $I$ induit une bijection entre sous-$R$-modules $M$ de type fini de $S$ contenant $R$ et sous-$R/I$-modules $\bar M$ de type fini de $S/I$ contenant $R/I$.  Que $s$ soit  entier sur $R$ signifie que ses puissances  engendrent un tel module $M,$
              et de m\^eme modulo $I$ (le m\^eme argument vaut en rempla\c cant {``int\'egralement ferm\'e"} par {``compl\`etement int\'egralement ferm\'e"}).   
              
                \smallskip \noindent     $(4)$ Rappelons que $\mathfrak A_\sA^\ast $ est  l'ensemble des \'el\'ements de $\sA$ dont les puissances sont contenues dans un sous-$\mathfrak A$-module de type fini. Puisque \^etre contenu dans un sous-$\mathfrak A$-module de type fini de $\sA$, c'est \^etre born\'e, on a $\mathfrak A_\sA^\ast   = \sA^{\s}$.

  La semi-norme $\vert \; \vert^\dagger $  associ\'ee \`a $\mathfrak A_\sA^\dagger$ est donn\'ee par 
   $\;\vert a\vert^\dagger   = \vert \varpi\vert^{r^\dagger}\,\;{\rm{avec}}\;\;  r^\dagger := \sup \{s\in \Gamma  \mid \exists n, \, \varpi_{-sp^n} a^{p^n} \in  \sA_{\leq 1} \}$. D'o\`u 
   $ \;\vert a\vert^\dagger  = {\rm{inf}}\, \vert a^{p^n}\vert^{\frac{1}{p^n}} $, et par le lemme de Fekete, 
   $ \;\vert a\vert^\dagger  = {\rm{lim}}\, \vert a^{p^n}\vert^{\frac{1}{p^n}}   = \vert a\vert_{sp}$.
   
  La formule $ \;\vert a\vert^\dagger  = {\rm{inf}}\, \vert a^{p^n}\vert^{\frac{1}{p^n}} $ montre aussi que $(\mathfrak A_\sA^\dagger)_\ast$ contient la boule unit\'e de $\sA$ pour la semi-norme {spectrale} $\vert\,\vert^\dagger$. Par d\'efinition de $\sA^{\s}$, et compte tenu de $(2b)$, on a donc $\sA^{\s}  \subset (\mathfrak A_\sA^\dagger)_\ast$, et en derni\`ere instance  $\sA^{\s}_\ast  = (\mathfrak A_\sA^\dagger)_\ast$.

          \smallskip\noindent     $(5)$  Les \'equivalences $(a)  \Leftrightarrow (b)    \Leftrightarrow (c) \Leftrightarrow (d) \Leftrightarrow (e)  $ r\'esultent de $(2b)$ et $(4)$,         
            et si $\vert \varpi \vert \geq \vert p\vert$ et $\frac{1}{p}\in \Gamma $, on a  $(e)\Leftrightarrow (f)$. 
                    
                    \smallskip\noindent     $(6)$ On peut supposer $\vert\; \vert$ spectrale. Alors $(2b)$ et $(5b)$ entra\^{\i}nent que   $(\sA^{\s})_\ast = \sA^{\s}$. L'autre \'egalit\'e suit de \eqref{1eq}, de m\^eme que  $\widehat{\sA^{\s}} = (\hat\sA)^{\s}$ via le point $(2a)$ (avec $\vert\;\vert_{sp}$).
                       \end{proof}

  \subsubsection{Remarques}\label{r2}  $(1)$ $\sA^{\s}/\varpi_{ {\frac{1}{p}}}\stackrel{x\mapsto x^p}{\to}\, \sA^{\s}/\varpi \,$ est toujours injectif 
   m\^eme si $\sA$ n'est pas uniforme.

   \smallskip\noindent $(2)$  Au point $(3)$, il se peut que les inclusions $\mathfrak A_\sA^+ \subset \mathfrak A_\sA^\ast = \sA^{\s}\subset  \sA_{\vert\,\vert_{sp} \leq 1}$ soient strictes, comme le montre l'exemple $\mathfrak A = \sK^{\s}[\varpi T, \ldots, \varpi^{i+1} T^{p^i}, \ldots] \subset  \sK^{\s}[T]$: on a $\vert T^m \vert = \vert \varpi\vert ^{-1-[\log_p m]},\;  \sA = \sK[T],\; \sA^{\s} = \mathfrak A_\sA^\ast = \sK^{\s} + \sK^{\ss}T\sK^{\s}[T]\, $ et $ \, \sA_\sA^{\s\ast}    =  \mathfrak A_\sA^{\ast\ast} =  \sK^{\s}[T]$, tandis que $  \sA_\sA^{\sm o+}=   \sA^{\s}$. 
     On voit l\`a un exemple de fermeture compl\`etement int\'egrale $\mathfrak A_\sA^\ast$ qui n'est pas compl\`etement int\'egralement ferm\'ee, et un exemple o\`u $\sA^{\s}$ n'est pas la boule unit\'e de la norme spectrale.
            
   \subsubsection{} Tirons-en les premiers fruits, en supposant $\vert\sK \vert$ dense dans $\R_+$.
    
\begin{cor}\label{C1}  Le foncteur $\sB \mapsto \sB^{\s}$ induit une \'equivalence de la cat\'egorie des $\sK$-alg\`ebres de Banach uniformes  vers celle des $\sK^{\s}$-alg\`ebres ${\mathfrak{B}}$ v\'erifiant les  propri\'et\'es suivantes: 
\begin{enumerate} \item $\mathfrak B$ est plate sur $\sK^{\s}$,
\item $\mathfrak B$ est $\varpi$-adiquement compl\`ete,
\item $\mathfrak B$ est  [$p$-radiciellement/int\'egralement/compl\`etement int\'egralement] ferm\'ee dans ${\mathfrak{B}}[\frac{1}{\varpi}]$, 
\item ${\mathfrak{B}}= {\rm{Hom}}_{\sK^{\s}}(\sK^{\ss}, \mathfrak B)$.\end{enumerate} 
Un quasi-inverse est donn\'e par ${\mathfrak{B}}\mapsto ({\mathfrak{B}}[\frac{1}{\varpi}], \,{}^{{}^{{\mathfrak{B}}}}\vert \;\vert )$ (\`a valeurs dans les alg\`ebres spectrales).
Idem pour les $\sA$-alg\`ebres de Banach uniformes (\resp $\sA^{\s}$-alg\`ebres v\'erifiant $(1)$ \`a $(4)$). \qed\end{cor}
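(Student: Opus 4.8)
The plan is to play the functors $F\colon\sB\mapsto\sB^{\s}$ and $G\colon\mathfrak B\mapsto(\mathfrak B[\frac{1}{\varpi}],{}^{\mathfrak{B}}\vert\;\vert)$ off against one another; essentially everything needed has been packaged into sorite \ref{s1}, and throughout one has $\vert\sK\vert$ dense in $\R_+$, with $\varpi\in\sK^{\ss}\setminus 0$ fixed as there. First I would check that $F$ sends a uniform Banach $\sK$-algebra $\sB$ into the prescribed category. Flatness of $\sB^{\s}$ over the valuation ring $\sK^{\s}$ is torsion-freeness, which is clear since $\sB^{\s}\subset\sB$ and $\sB$ is a $\sK$-vector space; this is $(1)$. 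As $\sB$ is complete, \eqref{eq1'} gives $\widehat{\sB^{\s}}=(\widehat\sB)^{\s}=\sB^{\s}$, which is $(2)$; in particular $\sB^{\s}$ is $\varpi$-adically separated. Since $\sB^{\s}\supset\sB_{\leq 1}$ and $\sB_{\leq 1}[\frac{1}{\varpi}]=\sB$, one has $\sB^{\s}[\frac{1}{\varpi}]=\sB$, so \eqref{1eq} applied with $\mathfrak A=\sB^{\s}$ gives $(\sB^{\s})^{\ast}_{\sB}=\sB^{\s}$: thus $\sB^{\s}$ is completely integrally closed in $\sB$, hence, since $R^{\dagger}_{S}\subset R^{+}_{S}\subset R^{\ast}_{S}$, also integrally closed and $p$-radicially closed, which is $(3)$ in each of its three readings. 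Finally $(\sB^{\s})_{\ast}=\sB^{\s}$, again by \eqref{eq1}, which is $(4)$.

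Next I would check that $G$ takes values in spectral (in particular uniform) Banach algebras as soon as $\mathfrak B$ satisfies $(1)$--$(4)$; write $\sB:=\mathfrak B[\frac{1}{\varpi}]$. By $(2)$ the ring $\mathfrak B$ is $\varpi$-adically separated, so it has no nonzero infinitely $\varpi$-divisible element and ${}^{\mathfrak{B}}\vert\;\vert$ is a norm by sorite \ref{s1}$(2a)$. By sorite \ref{s1}$(2b)$ one has $\sB_{\leq 1}=\mathfrak B_{\ast}$, equal to $\mathfrak B$ by $(4)$; with $(2)$ this shows $\sB_{\leq 1}=\mathfrak B$ is $\varpi$-adically complete, so $\sB$ is a Banach algebra by sorite \ref{s1}$(1)$. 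Condition $(3)$ now says exactly that $\sB_{\leq 1}$ is ($p$-radicially / integrally / completely integrally) closed in $\sB$, so by the equivalences $(a)\Leftrightarrow(b)\Leftrightarrow(c)\Leftrightarrow(d)\Leftrightarrow(e)$ of sorite \ref{s1}$(5)$ the norm is spectral and $\sB^{\s}=\sB_{\leq 1}=\mathfrak B$; in particular $\sB$ is uniform, so $G$ indeed lands among the spectral algebras.

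It remains to see that the composites are canonically the identity and that $F,G$ are functorial. For $\mathfrak B$ in the target category, $F(G(\mathfrak B))=\sB^{\s}=\mathfrak B$ by the computation just made, and this is natural. For $\sB$ uniform Banach, $\sB^{\s}$ satisfies $(1)$--$(4)$, so the previous paragraph identifies $G(F(\sB))=(\sB^{\s}[\frac{1}{\varpi}],{}^{\sB^{\s}}\vert\;\vert)$ with $\sB^{\s}[\frac{1}{\varpi}]=\sB$ carrying a spectral norm whose unit ball is $(\sB^{\s})_{\ast}=\sB^{\s}$; since $\sB^{\s}$ is a bounded open neighbourhood of $0$ in $\sB$, that norm induces the topology of $\sB$, hence by uniqueness of the spectral norm compatible with a given topology (\S\ref{ns}) it is the spectral norm of $\sB$, and the identity is an isomorphism $\sB\iso G(F(\sB))$ in $\sK\hbox{-}{\bf uBan}$. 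Functoriality of $F$ is that of $(\,)^{\s}$ recalled in \ref{mb}, and functoriality of $G$ is sorite \ref{s1}$(2c)$: a $\sK^{\s}$-algebra homomorphism $\mathfrak B\to\mathfrak B'$ carries $\mathfrak B$ into $\mathfrak B'$, hence induces a continuous homomorphism $\sB\to\sB'$; naturality of the two isomorphisms is clear from the constructions.

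Finally the relative statement follows formally: $F(\sA)=\sA^{\s}$, so $F$ and $G$ intertwine the forgetful functors $\sA\hbox{-}{\bf uBan}\to\sK\hbox{-}{\bf uBan}$ and $\sA^{\s}\hbox{-}{\bf Alg}\to\sK^{\s}\hbox{-}{\bf Alg}$ up to the above canonical isomorphisms, so that an $\sA$-algebra structure on $\sB$ is exactly an $\sA^{\s}$-algebra structure on $\sB^{\s}$, whence the induced equivalence on the comma categories. I expect the only real work here to be bookkeeping --- systematically tracking ``unit ball'', $\mathfrak B_{\ast}$, $\sB^{\s}$ and the three integral closures, and invoking the right clause of sorite \ref{s1} at each step --- with no genuine analytic obstacle beyond what sorite \ref{s1} already contains.
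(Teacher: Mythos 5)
Your proof is correct and is essentially the proof the paper intends: the paper leaves Cor.\ \ref{C1} as an immediate consequence of Sorite \ref{s1} (ending it with an unaccompanied $\Box$), and your write-up does exactly the required bookkeeping, invoking \ref{s1}(2a)--(2c), (2e), (4), (5), (6) at the appropriate points and then treating the relative (``Idem'') case by the standard undercategory argument.

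One small point worth being explicit about, in your verification of condition (3) for $F(\sB)=\sB^{\s}$: formula \eqref{1eq} with $\mathfrak A=\sB^{\s}$ identifies $(\sB^{\s})^{\ast}_{\sB}$ with the power-bounded elements of $\sB$ \emph{for the norm ${}^{\sB^{\s}}|\;|$}, not a priori for the original norm; one must add that ${}^{\sB^{\s}}|\;|$ is equivalent to the original norm (its unit ball $(\sB^{\s})_{\ast}$ is a bounded open neighbourhood of $0$, since $\sB^{\s}\subset(\sB^{\s})_{\ast}\subset\varpi^{-1}\sB^{\s}$ and $\sB$ is uniform), so that ``power-bounded'' agrees for the two norms. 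Alternatively one can bypass \eqref{1eq} here and argue directly: if $\varpi^m b^n\in\sB^{\s}$ for all $n$ then the sequence $(b^n)$ lies in the bounded set $\varpi^{-m}\sB^{\s}$, hence $b\in\sB^{\s}$. Either way the gap is cosmetic and the argument stands.
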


Sous $(4)$, les trois interpr\'etations de $(3)$ \'equivalent \`a: ${}^{\mathfrak B}\vert\;\vert$ est spectrale (points $(2)(b)$ et $(5)$ du sorite).

\begin{cor}\label{C1'}  Soit $\sB$ une $\sK$-alg\`ebre de Banach uniforme. Il y a bijection entre les idempotents de $\sB$, ceux de $\sB^{\s}$, ceux de $\sB^{\s}/\varpi$, et ceux de $\sB^{\s}/\sB^{\ss}$.
\end{cor}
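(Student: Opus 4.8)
The plan is to read the statement as a chain of natural reduction maps
\[ \{\text{idempotents of }\sB\}\ \longrightarrow\ \{\text{idempotents of }\sB^{\s}\}\ \longrightarrow\ \{\text{idempotents of }\sB^{\s}/\varpi\sB^{\s}\}\ \longrightarrow\ \{\text{idempotents of }\sB^{\s}/\sB^{\ss}\} \]
and to prove that each of the three arrows is a bijection; the corollary then follows by composition. The first arrow is immediate: an idempotent $e$ of $\sB$ satisfies $e^{n}=e$ for all $n\geq 1$, so its powers are bounded and $e\in\sB^{\s}$; conversely every idempotent of $\sB^{\s}\subset\sB$ is one of $\sB$.

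For the second arrow I would invoke Corollaire~\ref{C1}: since $\sB$ is uniform, $\sB^{\s}$ is $\varpi$-adically complete, i.e.\ $\sB^{\s}\iso\lim_{n}\sB^{\s}/\varpi^{n}\sB^{\s}$. For $n\geq 1$ the kernel of the transition map $\sB^{\s}/\varpi^{n+1}\sB^{\s}\to\sB^{\s}/\varpi^{n}\sB^{\s}$ is the square-zero ideal $\varpi^{n}\sB^{\s}/\varpi^{n+1}\sB^{\s}$ (because $\varpi^{2n}\in\varpi^{n+1}\sB^{\s}$), and idempotents lift uniquely along a nilpotent ideal; so each transition map is bijective on idempotents. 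Since an idempotent of an inverse limit of rings is nothing but a compatible family of idempotents at finite level, passing to the limit shows that reduction mod.\ $\varpi$ induces a bijection on idempotents.

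For the third arrow, recall that $\sB$ is uniform and $\vert\sK\vert$ is dense in $\R_{+}$, so $\sB^{\ss}=\sK^{\ss}\sB^{\s}$ (\ref{ns}); hence $\varpi\sB^{\s}\subset\sB^{\ss}$ and the kernel of $\sB^{\s}/\varpi\sB^{\s}\twoheadrightarrow\sB^{\s}/\sB^{\ss}$ is the ideal $\mathfrak n:=\sB^{\ss}/\varpi\sB^{\s}$. The point is that $\mathfrak n$ is a nil ideal: every $\lambda\in\sK^{\ss}$ has a power with $\vert\lambda^{N}\vert\leq\vert\varpi\vert$, i.e.\ $\lambda^{N}\in\varpi\sK^{\s}$ by density of $\vert\sK^{\times}\vert$, so for $b=\lambda c$ with $c\in\sB^{\s}$ one has $b^{N}\in\varpi\sB^{\s}$ (equivalently: a topologically nilpotent $b$ has $\vert b^{n}\vert_{sp}\to 0$, whence $b^{n}\in\varpi\sB^{\s}$ for $n\gg 0$). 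Idempotents lift uniquely along a nil ideal — existence by the usual polynomial sharpening of an arbitrary lift $a$ of $\bar e$ (the defect $a^{2}-a$ being nilpotent), uniqueness because two lifts $e,e'$ satisfy $(e-e')^{3}=e-e'$ with $e-e'$ nilpotent, forcing $e=e'$. So the third arrow is a bijection as well, and composing the three maps proves the corollary.

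I do not expect a genuine obstacle here: the only step that needs a moment's care is the nilpotence of $\mathfrak n=\sB^{\ss}/\varpi\sB^{\s}$ in the third arrow, which rests precisely on topological nilpotence becoming honest nilpotence after killing $\varpi$ (equivalently, on the density of the value group), the rest being the two standard idempotent-lifting lemmas.
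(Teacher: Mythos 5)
Your proof is correct and follows exactly the paper's (very terse) argument: the three arrows, $\varpi$-adic completeness of $\sB^{\s}$ for the second, and the kernel of $\sB^{\s}/\varpi\to\sB^{\s}/\sB^{\ss}$ being a nil ideal for the third. Your treatment of the first arrow ($e^n=e$ so powers are trivially bounded) is even a touch more direct than the paper's, which reduces to the spectral case and invokes the fact that idempotents have norm $1$ there; the rest is a faithful filling-in of the details the paper only sketches.
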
 
 
On peut supposer $\sB$ spectrale. Pour la premi\`ere bijection, voir \ref{i}. Pour la seconde, le point est que $\sB^{\s}$ est $\varpi$-adiquement compl\`ete; pour la troisi\`eme, que le noyau de $\sB^{\s}/\varpi\to \sB^{\s}/\sB^{\ss}$ est un nil-id\'eal. \qed

 \begin{lemma}\label{L4} Soient $\sA$ une $\sK$-alg\`ebre de Banach uniforme et $\mathfrak m$ un id\'eal idempotent de $  \sA^{\s}$ tel que $\sK^{\ss}\mathfrak m= \mathfrak m$.   Consid\'erons l'endofoncteur des $\sA^{\s}$-alg\`ebres donn\'e par les presque-\'el\'ements du localis\'e, dans le cadre $(\sA^{\s}, \mathfrak m)$\footnote{On passe du cadre initial  $(\sK^{\s}, \sK^{\ss})$ \`a $(\sA^{\s}, \mathfrak m)$ via $(\sA^{\s}, \sK^{\ss}\sA^{\s}= \sA^{\ss})$, le sens de {``presque"} \'etant le m\^eme dans $(\sK^{\s}, \sK^{\ss})$ et dans $(\sA^{\s}, \sK^{\ss}\sA^{\s} )$ (\cf \S \ref{rec}). }: 
\begin{equation}\mathfrak B \mapsto (\mathfrak B^a)_\ast =  {\rm{Hom}}_{\sA^{\s a}}(\sA^{\s a}, \mathfrak B^a) = {\rm{Hom}}_{\sA^{\s}}(\tilde{\mathfrak m},\mathfrak B) .\end{equation} 
 Ce foncteur respecte chacune des conditions $(1)$ \`a $(4)$ du corollaire pr\'ec\'edent.
 En particulier, le foncteur des presque-\'el\'ements est adjoint \`a droite de $(\,)^a$ sur la cat\'egorie des $\sA^{\s}$-alg\`ebres v\'erifiant $(1)$ \`a $(4)$, donc commute aux limites. 
\end{lemma}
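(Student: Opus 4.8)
The plan is to treat the four conditions separately, the first three being formal once one observes that the functor in question, $H:=\operatorname{Hom}_{\sA^{\s}}(\tilde{\mathfrak m},-)$ (equal by definition to $(\,)^a$ followed by $(\,)_\ast$), is a right adjoint of $-\otimes_{\sA^{\s}}\tilde{\mathfrak m}$, hence left exact and compatible with arbitrary limits, and that $\tilde{\mathfrak m}$ is flat over $\sA^{\s}$; the essential point is condition $(3)$, where the uniformity of $\sA$ enters. For $(1)$: if $\mathfrak B$ has no $\varpi$-torsion, neither does $\operatorname{Hom}_{\sA^{\s}}(-,\mathfrak B)$ (a homomorphism killed by $\varpi$ has image killed by $\varpi$), so $H(\mathfrak B)$ is $\sK^{\s}$-flat. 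For $(4)$: the hypothesis $\sK^{\ss}\mathfrak m=\mathfrak m$ forces $\sK^{\ss}\tilde{\mathfrak m}=\tilde{\mathfrak m}$, and since $\sK^{\ss}$ is $\sK^{\s}$-flat and $\tilde{\mathfrak m}$ is $\varpi$-torsion free the multiplication $\sK^{\ss}\otimes_{\sK^{\s}}\tilde{\mathfrak m}\to\tilde{\mathfrak m}$ is an isomorphism; the adjunction $\operatorname{Hom}_{\sK^{\s}}\bigl(\sK^{\ss},\operatorname{Hom}_{\sA^{\s}}(\tilde{\mathfrak m},\mathfrak B)\bigr)\cong\operatorname{Hom}_{\sA^{\s}}\bigl(\sK^{\ss}\otimes_{\sK^{\s}}\tilde{\mathfrak m},\mathfrak B\bigr)$ then yields $\operatorname{Hom}_{\sK^{\s}}(\sK^{\ss},H(\mathfrak B))=H(\mathfrak B)$, so that $(4)$ holds for $H(\mathfrak B)$ — indeed for every object in the image of $H$.

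For $(2)$: since $H$ commutes with limits and $\mathfrak B$ is $\varpi$-complete, $H(\mathfrak B)=\varprojlim_n H(\mathfrak B/\varpi^n)$. Applying the left exact $H$ to $0\to\mathfrak B\xrightarrow{\varpi^n}\mathfrak B\to\mathfrak B/\varpi^n\to 0$ and using $(1)$ for $H(\mathfrak B)$ produces monomorphisms $H(\mathfrak B)/\varpi^nH(\mathfrak B)\hookrightarrow H(\mathfrak B/\varpi^n)$. Passing to $\varprojlim$, the canonical arrow $H(\mathfrak B)\to\varprojlim_n H(\mathfrak B)/\varpi^nH(\mathfrak B)$, composed with $\varprojlim_n\bigl(H(\mathfrak B)/\varpi^n\hookrightarrow H(\mathfrak B/\varpi^n)\bigr)$, is the identity of $H(\mathfrak B)$; a short chase using these monomorphisms shows that arrow is also surjective, so $H(\mathfrak B)$ is $\varpi$-adically complete.

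For $(3)$, the crux: as $H(\mathfrak B)$ already satisfies $(1)$, $(2)$, $(4)$, its three variants of $(3)$ are equivalent (Sorite \ref{s1}$(5)$, as in the remark after Corollary \ref{C1}), so it is enough to show $H(\mathfrak B)$ is $p$-radicially closed in $H(\mathfrak B)[\frac1\varpi]$. Put $\sB:=\mathfrak B[\frac1\varpi]$, a \emph{uniform} Banach algebra with $\mathfrak B=\sB^{\s}$, and $\tilde{\mathfrak M}:=\tilde{\mathfrak m}\otimes_{\sA^{\s}}\sA$. Because $\mathfrak B\hookrightarrow\sB$, the natural ring map $H(\mathfrak B)[\frac1\varpi]\to\operatorname{Hom}_{\sA}(\tilde{\mathfrak M},\sB)$ is injective, so any $x\in H(\mathfrak B)[\frac1\varpi]$ with $x^p\in H(\mathfrak B)$ may be viewed as an $\sA$-linear map $\tilde{\mathfrak M}\to\sB$, and writing $x=\varpi^{-N}f$ with $f\in H(\mathfrak B)$ shows $x(t)\in\sB$ for every $t\in\tilde{\mathfrak m}$. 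Now flatness and idempotency of $\tilde{\mathfrak m}$ give (as in \cite{GR1}) a canonical isomorphism $\delta\colon\tilde{\mathfrak m}\xrightarrow{\sim}\tilde{\mathfrak m}^{\otimes_{\sA^{\s}}p}$ compatible with the ring structure of $H(\mathfrak B)=(\mathfrak B^a)_\ast$; hence for $t\in\tilde{\mathfrak m}$ the element $\Delta_p(t):=\delta^{-1}(t\otimes\cdots\otimes t)$ lies in $\tilde{\mathfrak m}$ and $x(t)^p=x^p(\Delta_p(t))\in\mathfrak B=\sB^{\s}$. Since $\sB$ is uniform, $\sB^{\s}$ is $p$-radicially closed in $\sB$ (Sorite \ref{s1}$(5)$), so $x(t)\in\sB^{\s}=\mathfrak B$ for all $t\in\tilde{\mathfrak m}$; as $\tilde{\mathfrak m}$ generates $\tilde{\mathfrak M}$ over $\sA$, this forces $x\in\operatorname{Hom}_{\sA^{\s}}(\tilde{\mathfrak m},\mathfrak B)=H(\mathfrak B)$. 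This is the one step using uniformity essentially; the only delicate bookkeeping I anticipate is the compatibility of $\delta$ with the product on $(\mathfrak B^a)_\ast$.

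Finally, knowing that $H$ respects $(1)$–$(4)$, the adjunction $(\,)^a\dashv(\,)_\ast$ between $\sA^{\s}$-algebras and $(\sA^{\s},\mathfrak m)^a$-algebras restricts to the full subcategory of those $\sA^{\s}$-algebras satisfying $(1)$–$(4)$; a right adjoint preserves limits, so the functor of almost elements commutes with limits on that subcategory.
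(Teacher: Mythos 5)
Your proof is correct and follows essentially the paper's own argument: left exactness of $H=\operatorname{Hom}_{\sA^{\s}}(\tilde{\mathfrak m},-)$ together with the injectivity $H(\mathfrak B)/\varpi^n\hookrightarrow H(\mathfrak B/\varpi^n)$ handles $(1)$--$(2)$, the tensor--hom adjunction with $\sK^{\ss}\otimes_{\sK^{\s}}\tilde{\mathfrak m}\cong\tilde{\mathfrak m}$ handles $(4)$, and the pointwise evaluation identity $f(\eta)^p=f^p(\eta^p)$ handles $(3)$. The only stylistic divergence is in $(3)$, where the paper argues directly from condition $(3)$ on $\mathfrak B$ alone by evaluating at $\eta^p\in\tilde{\mathfrak m}$, whereas you detour through the uniformity of $\mathfrak B[\frac1\varpi]$ via the Sorite --- note that your $\Delta_p(t)=\delta^{-1}(t\otimes\cdots\otimes t)$ is simply $t^p$.
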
 

\begin{proof} $(1)$: si $\mathfrak B$ est sans $\varpi$-torsion, il en est de m\^eme de ${\rm{Hom}}_{\sA^{\s}}(\tilde{\mathfrak m}, \mathfrak B)$.
  
   \noindent    $(2)$ est plus d\'elicat car le foncteur $\mathfrak B \mapsto (\mathfrak B^a)_\ast $, qui commute aux limites, ne commute pas au quotient par $\varpi^n$ en g\'en\'eral. On a toutefois un morphisme canonique $u: \widehat{(\mathfrak B^a)_\ast} \to (\hat{\mathfrak B}^a)_\ast  $ obtenu en prenant la limite des morphismes $ {\rm{Hom}}_{\sA^{\s}}(\tilde{\mathfrak m},\mathfrak B)/ \varpi^n \to   {\rm{Hom}}_{\sA^{\s}}(\tilde{\mathfrak m},\mathfrak B/ \varpi^n)$. Ces derniers sont injectifs (le noyau de $ {\rm{Hom}}_{\sA^{\s}}(\tilde{\mathfrak m},\mathfrak B)  \to   {\rm{Hom}}_{\sA^{\s}}(\tilde{\mathfrak m},\mathfrak B/ \varpi^n)$ est l'image de 
 $  {\rm{Hom}}_{\sA^{\s}}(\tilde{\mathfrak m}, \varpi^n \sA^{\s} \otimes_{\sA^{\s}}\mathfrak B) = \varpi^n {\rm{Hom}}_{\sA^{\s}}(\tilde{\mathfrak m},\mathfrak B)  \to   {\rm{Hom}}_{\sA^{\s}}(\tilde{\mathfrak m},\mathfrak B )$), donc $u$ est injectif. Du triangle commutatif 
       \[  \xymatrix @-1pc { {\rm{Hom}}_{\sA^{\s}}(\tilde{\mathfrak m},\mathfrak B)  \ar[d]    \,  \ar@{^{}->}[r] & \lim {\rm{Hom}}_{\sA^{\s}}(\tilde{\mathfrak m},\mathfrak B)/ \varpi^n  \ar[d]^{u}      \\    {\rm{Hom}}_{\sA^{\s}}(\tilde{\mathfrak m}, \lim \mathfrak B/ \varpi^n)    \, = & \lim  {\rm{Hom}}_{\sA^{\s}}(\tilde{\mathfrak m},\mathfrak B/ \varpi^n)   }  \] on d\'eduit que 
 si $\mathfrak B$ est $\varpi$-adiquement compl\`ete, $u$ est inverse \`a gauche du  morphisme de compl\'etion $(\mathfrak B^a)_\ast  \mapsto \widehat{(\mathfrak B^a)_\ast}$. C'est donc un isomorphisme.
      
    \noindent    $(3)$:  soit $f\in {\rm{Hom}}_{\sA^{\s}}(\tilde{\mathfrak m}, \mathfrak B)[\frac{1}{\varpi}]$ tel que $\varpi f, f^p\in  {\rm{Hom}}_{\sA^{\s}}(\tilde{\mathfrak m}, \mathfrak B) $. Alors pour tout $\eta \in \tilde{\mathfrak m}$, on a $\varpi f(\eta), f(\eta)^p\in \mathfrak B$. 
    Par hypoth\`ese ceci implique $f(\eta)\in \mathfrak B$. On conclut que $f\in {\rm{Hom}}_{\sA^{\s}}(\tilde{\mathfrak m}, \mathfrak B) $.
      
  \noindent    $(4)$:  ${\rm{Hom}}_{\sK^{\s}}(\sK^{\ss}, {\rm{Hom}}_{\sA^{\s}}(\tilde{\mathfrak m}, \mathfrak B)) = {\rm{Hom}}_{\sA^{\s}}(\sK^{\ss}\otimes_{\sK^{\s}} \sA^{\s}, {\rm{Hom}}_{\sA^{\s}}(\tilde{\mathfrak m}, \mathfrak B)) =
   {\rm{Hom}}_{\sA^{\s}}(\sK^{\ss}\otimes_{\sK^{\s}} \tilde{\mathfrak m},  \mathfrak B)  =  {\rm{Hom}}_{\sA^{\s}}(\tilde{\mathfrak m}, \mathfrak B)$.
   \end{proof}

\subsubsection{Remarques}\label{234} $(1)$  En revanche, l'adjoint \`a gauche $(\;)_{!!}$ de $(\,)^a$ ne respecte pas les conditions $(2)$ \`a $(4)$. Mais on peut le modifier en un adjoint \`a gauche $(\;)_{\hat{!!}}$ de $(\; )^a$ sur la cat\'egorie des $\sA^{\s}$-alg\`ebres $\mathfrak B$ v\'erifiant $(1)$ \`a $(4)$. Il suffit de 

 $a)$ quotienter $\mathfrak B_{!!}$ par la torsion $\varpi$-primaire (pour assurer $(1)$),  
 
 $b)$ compl\'eter pour assurer $(2)$, 
 
 $c)$ passer \`a la fermeture [$p$-radicielle/int\'egrale] dans le $\frac{1}{\varpi}$-localis\'e (ce qui assure $(3)$), 
 
 $d)$ prendre les presque-\'el\'ements dans le cadre $(\sK^{\s}, \sK^{{\ss}})$ (ce qui assure $(4)$),
 
\noindent compte tenu de ce que chacune de ces op\'erations est un r\'eflecteur (d'apr\`es le sorite \ref{s1} $(4)$, elles ne se d\'etruisent pas l'une l'autre - les deux derni\`eres reviennent \`a passer \`a l'anneau des \'el\'ements de puissances born\'ees dans le $\frac{1}{\varpi}$-localis\'e).

\smallskip\noindent $(2)$  De m\^eme, le coproduit de la cat\'egorie des $\sA^{\s}$-alg\`ebres v\'erifiant $(1)$ \`a $(4)$ n'est pas $\otimes$ (qui ne respecte pas $(2)$ ni $(3)$ ni $(4)$) mais le bifoncteur donn\'e par le compl\'et\'e $\mathfrak B \hat\otimes^u_{\mathfrak A} \mathfrak C$ de la fermeture compl\`etement int\'egrale de  $(\mathfrak B  \otimes_{\mathfrak A} \mathfrak C )/( \varpi^\infty$-tors$)\,$  dans $(\mathfrak B  \otimes_{\mathfrak A} \mathfrak C)[\frac{1}{\varpi}]$.

\subsubsection{}\label{recc}  L'extension de la presque-alg\`ebre au contexte topologique pose probl\`eme:
 la cat\'egorie des modules topologiques et  homomorphismes continus n'est pas ab\'elienne. Pour les alg\`ebres de Banach uniformes, on peut n\'eanmoins en esquisser un avatar \`a partir des r\'esultats pr\'ec\'edents:  la  cat\'egorie $\sA^{\hat a}\hbox{-}{\bf uBan}$ des {\it $\sA^{\hat a}$-alg\`ebres de Banach uniformes} a pour objets les $\sK$-alg\`ebres de Banach uniformes, not\'ees $\sB^{\hat a}$, et les morphismes $\sB^{\hat a}\to \sC^{\hat a}$ sont les morphismes $\sB^{\s a} \to \sC^{\s a}\,$ (dans le cadre $( \sA^{\s}, \mathfrak m)$). 

Si le cadre est $(\sK^{\s}, \sK^{{\ss}})$, cette cat\'egorie est bien entendu isomorphe \`a celle des $\sK$-alg\`ebres de Banach uniformes.
 
  On a un foncteur \'evident $(\;)^{\hat a}:  \sA \hbox{-}{\bf uBan}\to \sA^{\hat a}\hbox{-}{\bf uBan}$  qui est l'identit\'e sur les objets. D'apr\`es le lemme pr\'ec\'edent,  $(\;)^{\hat a}$ admet un adjoint \`a droite $(\;)_{\hat\ast}$ dont l'unit\'e d'adjonction est 
    \begin{equation}\label{e3}  \sB \to (\sB^{\hat a})_{\hat\ast} =  (\sB^{{\s} a})_\ast[\frac{1}{\varpi}] = {\rm{Hom}}_{\sA^{\s}}(\tilde{\mathfrak m},\sB^{\s})[\frac{1}{\varpi}]  = {\rm{Hom}}_\sA^{cont}(\tilde{\mathfrak m}\otimes\sA, \sB). \end{equation} 
   Ce morphisme est injectif si $\mathfrak m$ agit fid\`element sur $\sB$. 
  On a $((\sB^{\hat a})_{\hat\ast} )^{\s} =  {\rm{Hom}}_{\sA^{\s}}(\tilde{\mathfrak m}, \sB^{\s})$. 
  
  D'autre part, $(\;)^{\hat a}$ admet un adoint \`a gauche encore not\'e $(\;)_{\hat{!!}}$.
   
  \smallskip  Comme on le verra ci-dessous (\S \ref{E4'}), l'inclusion dans l'alg\`ebre des presque-\'el\'ements 
    \[(\sB^{\hat a})_{\hat\ast} \subset  (\sB^a)_\ast :=  {\rm{Hom}}_{\sA^{\s} }(\tilde{\mathfrak m} , \sB ) =  {\rm{Hom}}_{\sA }(\tilde{\mathfrak m}\otimes \sA, \sB )\]
     n'est pas une \'egalit\'e en g\'en\'eral.  On a toutefois:
    
    \begin{lemma}\label{L5} 
    Soit $\sB \stackrel{\phi}{\to} \sC$ un morphisme de $\sA$-alg\`ebres de Banach uniformes, et soit $\sB^{\s} \stackrel{\phi^{\s}}{\to} \sC^{\s}$ sa restriction. 
     Alors, dans le cadre $( \sA^{\s}, \mathfrak m)$,  $\phi$ est un presque-isomorphisme si et seulement si $\phi^{\s}$ l'est (c'est-\`a-dire si $\phi^{\hat a}$ est un isomorphisme).      \end{lemma}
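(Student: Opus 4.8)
The plan is to treat the two implications separately; $(\Leftarrow)$ is formal, while $(\Rightarrow)$ carries all the content.

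For $(\Leftarrow)$: since $\sB$ and $\sC$ are uniform we have $\sB=\sB^{\s}[\tfrac1\varpi]$, $\sC=\sC^{\s}[\tfrac1\varpi]$ and $\phi=\phi^{\s}[\tfrac1\varpi]$, so that $\Ker\phi=(\Ker\phi^{\s})[\tfrac1\varpi]$ and $\Coker\phi=(\Coker\phi^{\s})[\tfrac1\varpi]$ are annihilated by $\mathfrak m$ as soon as $\Ker\phi^{\s}$ and $\Coker\phi^{\s}$ are; this is just the fact that localisation commutes with $(\;)^a$.

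For $(\Rightarrow)$, assume $\phi$ is a presque-isomorphism, i.e. $\phi^a$ is an isomorphism in the frame $(\sA^{\s},\mathfrak m)$. Since $\Ker\phi^{\s}=\sB^{\s}\cap\Ker\phi\subseteq\Ker\phi$ is annihilated by $\mathfrak m$, the map $\phi^{\s}$ is almost injective, and only its almost surjectivity remains. I would pass through presque-elements. On the one hand, applying $(\;)_\ast$ to the isomorphism $\phi^a$ yields an isomorphism $\phi_\ast\colon\sB_\ast={\rm Hom}_{\sA^{\s}}(\tilde{\mathfrak m},\sB)\iso\sC_\ast={\rm Hom}_{\sA^{\s}}(\tilde{\mathfrak m},\sC)$. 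On the other hand, since $(\;)_\ast$ is fully faithful (right adjoint to the localisation $(\;)^a$, with invertible co-unit), what we want --- namely that $\phi^{\s a}$ be an isomorphism --- is equivalent to ${\rm Hom}_{\sA^{\s}}(\tilde{\mathfrak m},\sB^{\s})\to{\rm Hom}_{\sA^{\s}}(\tilde{\mathfrak m},\sC^{\s})$ being an isomorphism, hence, by lemma \ref{L4} and corollary \ref{C1}, to the induced morphism $(\phi^{\hat a})_{\hat\ast}\colon(\sB^{\hat a})_{\hat\ast}\to(\sC^{\hat a})_{\hat\ast}$ of uniform Banach algebras being an isomorphism. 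Now $(\sB^{\hat a})_{\hat\ast}={\rm Hom}_{\sA^{\s}}(\tilde{\mathfrak m},\sB^{\s})[\tfrac1\varpi]$ is exactly the subalgebra of $\sB_\ast$ consisting of the maps with \emph{bounded} image in $\sB$, and $(\phi^{\hat a})_{\hat\ast}$ is the restriction of the isomorphism $\phi_\ast$; its injectivity is therefore immediate, and everything reduces to its surjectivity.

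The surjectivity amounts to: the unique $g\in\sB_\ast$ with $\phi_\ast(g)=\gamma$, for $\gamma\in(\sC^{\hat a})_{\hat\ast}$ (i.e. of bounded image), is itself of bounded image in $\sB$. This is the crux, and where I expect the real work to be. The key point is that, after normalising all norms to be spectral, $\mathfrak m=\sK^{\ss}\mathfrak m\subseteq\sK^{\ss}\sA^{\s}=\sA^{\ss}\subseteq\sA_{<1}$ is a \emph{bounded} subset of $\sA$, hence so is $\tilde{\mathfrak m}$ (whose image in $\sA^{\s}$ is $\mathfrak m^2=\mathfrak m$); this lets one topologise $\sB_\ast$ and $\sC_\ast$ by uniform convergence on $\tilde{\mathfrak m}$, for which the maps of bounded image form a Banach space and $\phi_\ast$ is contracting, so the open mapping theorem makes $\phi_\ast$ bicontinuous and gives $\Vert g\Vert\le C\Vert\gamma\Vert$ with $C$ independent of $\gamma$; in particular $g$ has bounded image, so $g\in(\sB^{\hat a})_{\hat\ast}$ and $\gamma=(\phi^{\hat a})_{\hat\ast}(g)$, proving the lemma. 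The main obstacle is to justify the use of the open mapping theorem cleanly --- that $\phi_\ast$ is genuinely an isomorphism of the relevant complete spaces (every $\sA^{\s}$-linear map $\tilde{\mathfrak m}\to\sB$ of bounded image going to one, and conversely) --- which rests on completeness of $\sB$, flatness of $\mathfrak m$ (so $\tilde{\mathfrak m}\cong\mathfrak m$), and boundedness of $\mathfrak m$ inside $\sA^{\ss}$. An equivalent route would be to prove directly the elementary-looking but delicate statement that if $b\in\sB$ satisfies $\phi(b)\in\sC^{\s}$ and $\mu\in\mathfrak m$ then $\mu b\in\sB^{\s}$ --- using that $\mu b$ depends only on $\phi(b)$, since $\Ker\phi$ is annihilated by $\mathfrak m$, together with the isomorphism $\sC[\tfrac1\mu]\iso\sB[\tfrac1\mu]$ induced by $\phi$ --- and to deduce, via $\tilde{\mathfrak m}=\mathfrak m\tilde{\mathfrak m}$, that $g$ takes its values in $\sB^{\s}$.
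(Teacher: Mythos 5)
Your treatment of the formal direction and of the almost-injectivity of $\phi^{\s}$ is fine, and your reduction of the almost-surjectivity to the two alternative statements you formulate is correct. However, neither alternative is actually closed, and the open-mapping step in your main route \emph{does not go through as written}. You apply the open mapping theorem to $\phi_\ast\colon(\sB^{\hat a})_{\hat\ast}\to(\sC^{\hat a})_{\hat\ast}$, but that theorem requires a priori \emph{surjectivity} of the map between the two Banach spaces, which is exactly what you are trying to prove; knowing bijectivity of $\phi_\ast$ only between the larger, non-Banach spaces $\sB_\ast$ and $\sC_\ast$ (topologised by the seminorms $\lvert g(\mu)\rvert$, $\mu\in\tilde{\mathfrak m}$) gives, via closed graph, only seminorm-by-seminorm bounds $\lvert g(\mu)\rvert\le C_\mu\Vert\gamma\Vert$, with no uniformity in $\mu$; the Banach-space open mapping theorem cannot be applied, and Fr\'echet-type variants do not give the needed uniform bound either. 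You correctly flag this as the main obstacle, but it is not merely a matter of ``justifying cleanly'': as stated the step fails.

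The ``elementary-looking'' claim you propose as an alternative --- $\phi(b)\in\sC^{\s}$ and $\mu\in\mathfrak m$ imply $\mu b\in\sB^{\s}$ --- is indeed the right target and is true, but the sketch through the ring isomorphism $\sB[\frac1\mu]\cong\sC[\frac1\mu]$ does not prove it, because that isomorphism carries no topological information and the unit balls on either side need not correspond. A functional-analytic input is still required. One route that does close: for fixed $\mu\in\mathfrak m$ the map $\sC\to\phi(\sB)$, $c\mapsto\mu c$, is well-defined (since $\coker\phi$ is $\mathfrak m$-torsion), and has closed graph when $\phi(\sB)\cong\sB/\ker\phi$ is given the quotient Banach norm (the inclusion $\phi(\sB)\inj\sC$ being continuous and injective); hence it is bounded, say by $C$. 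Applying this with $c=\phi(b)^m$ and using once more that $\mathfrak m$ kills $\ker\phi$ yields, for any further $\mu'\in\mathfrak m$, the bound $\lvert(\mu\mu'b)^m\rvert\le(\lvert\mu\rvert\lvert\mu'\rvert)^{m-1}\lvert\mu'\rvert\,C\to0$ since $\mu,\mu'\in\sA^{\ss}$ are topologically nilpotent in $\sB$; so $\mu\mu' b\in\sB^{\ss}$, and $\mathfrak m=\mathfrak m^2$ gives the statement. Note that the constant $C$, whose possible largeness is precisely the issue in your approach, is here tamed not by an a priori bound but by the power trick.

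For comparison, the paper does not use the right adjoint $(\;)_{\hat\ast}$ at all: it argues through the \emph{left} adjoint $(\;)_{\hat{!!}}$ of Remark \ref{234}, writing $\phi^{\s}$ into a commutative square whose other vertical arrow is $(\phi^{\s a})_{\hat{!!}}$ and whose bottom row $(\sC^{\s a})_{!!}\to\sC^{\s}$ is a presque-isomorphism. So your proposal and the paper's proof are in a sense dual to one another. In both cases the underlying difficulty is the same --- relating the almost-class of $\sB$ (which carries no boundedness information) to the topologically-defined subring $\sB^{\s}$ --- and some Baire-category input is unavoidable; the point is to apply it to a map whose bijectivity is already known, which your first route does not manage to do.
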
  
    
        \begin{proof} Il est clair que si $(\phi^{\s})^a$ est un isomorphisme, il en est de m\^eme de $\phi^{\hat a}$, et que si $\phi^{\hat a}$ est un monomorphisme, il en est de m\^eme de $(\phi^{\s})^a$. Prouvons que si $\phi^{\hat a} = (\phi^{\s})^a[\frac{1}{\varpi}]$ est un isomorphisme, $(\phi^{\s}) $ est presque surjectif. 
      
    Si $\phi^{\hat a}= \phi^{\s a}$ est un isomorphisme, il en est de m\^eme de $(\phi^a)_{\hat{!!}} = (\phi^{\s a})_{\hat{!!}}[\frac{1}{\varpi}]$ (\cf rem. \ref{234}). 
     On a donc un diagramme commutatif 
           \[  \xymatrix @-1pc {  (\sB^{\s a})_{!!}   \ar[d]    \,  \ar@{^{}->}[r] &  (\sB^{\s a})_{\hat{!!}}   \ar[d]  \,  \ar@{^{}->}[r] &   \sB^{\s}   \ar[d]^{\phi^{\s}} \\     (\sC^{\s a})_{!!}  \,  \ar@{^{}->}[r] &   (\sC^{\s a})_{\hat{!!}}  \,  \ar@{^{}->}[r] &   \sC^{\s}        }  \]
   dans lequel la fl\`eche verticale du milieu est un isomorphisme, et le compos\'e des fl\`eches du bas est un presque-isomorphisme. Cela implique que $\phi^{\s}$ est presque surjectif.    \end{proof}

    \smallskip 
    \subsection{Extensions enti\`eres d'alg\`ebres norm\'ees uniformes.}
     
       \subsubsection{} Soit $\sA$ une $\sK$-alg\`ebre norm\'ee uniforme. Soit $\sB$ une extension de $\sA$, munie d'une norme prolongeant celle de $\sA$. On a $  \sA\cap \sB^{\s} = \sA^{\s}$.
       
    \begin{lemma}\label{L6}  
    \begin{enumerate}
    \item  La fermeture compl\`etement int\'egrale de $\sA^{\s}$ dans $\sB$ est contenue dans $\sB^{\s}$.  En particulier, $\sA^{\s}$ est compl\`etement int\'egralement ferm\'e dans $\sA$.
    
    \smallskip Supposons qu'un groupe fini $\,G\,$ agisse sur $\sB$ et que $\sB^G=\sA$. 
    
     \item Supposons que $G$ pr\'eserve la norme (ce qui est le cas en particulier si celle-ci est spectrale compl\`ete). Alors $ \sB^{\sG}  = \sA^{\s}$, $\sB$ est entier sur $\sA$, $\sB^{\s}$ est entier sur $\sA^{\s}$, et la fermeture int\'egrale de $\sA^{\s}$ dans $\sB$ est  $\sB^{\s}$.
    
    \item Supposons en outre $\sB $ soit uniforme. 
     Alors $\vert\;\vert_{sp}$ est l'unique norme spectrale $G$-invariante sur $\sB$ \'etendant la norme spectrale de $\sA$. 
       \item Supposons de surcro\^{\i}t que la norme de $\sB$ soit spectrale, et soit $g$ un \'element de $\sA$. Si la multiplication par $g$ dans $\sA$ est injective (\resp isom\'etrique), il en est de m\^eme de la multiplication par $g$ dans $\sB$.  
    \end{enumerate}  
    \end{lemma}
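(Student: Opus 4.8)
The plan is to treat the four assertions in order, using repeatedly that $\sA^{\s}$ is the subring of power-bounded elements (hence insensitive to replacing the norm by an equivalent one, and equal to the unit ball when the norm is spectral), together with the Galois-theoretic characteristic polynomial $\chi_b(X):=\prod_{\gamma\in G}(X-\gamma(b))$. Item $(1)$ is the warm-up: since $\sA$ is uniform, $\sA^{\s}$ is bounded in $\sA$, hence in $\sB$, so every finitely generated $\sA^{\s}$-submodule of $\sB$ is bounded; an element of $\sB$ whose powers lie in such a module therefore has bounded powers, i.e.\ lies in $\sB^{\s}$. Taking $\sB=\sA$ gives $(\sA^{\s})^{\ast}_{\sA}=\sA^{\s}$.

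For $(2)$, the parenthetical remark is the uniqueness of a complete spectral norm (\S\ref{ns}): for $\gamma\in G$ the norm $b\mapsto\vert\gamma(b)\vert$ is again complete and spectral on $\sB$, hence equals $\vert\;\vert$. Granting that $G$ preserves the norm, it stabilises $\sB^{\s}$, and $(\sB^{\s})^{G}=\sB^{\s}\cap\sB^{G}=\sB^{\s}\cap\sA=\sA^{\s}$ by the relation $\sA\cap\sB^{\s}=\sA^{\s}$ noted above. For any $b\in\sB$ the polynomial $\chi_b$ is monic with coefficients in $\sB^{G}=\sA$ (symmetric functions of the $\gamma(b)$) and $\chi_b(b)=0$, so $\sB$ is integral over $\sA$; if moreover $b\in\sB^{\s}$, each $\gamma(b)$ lies in $\sB^{\s}$, so those coefficients lie in the ring $\sB^{\s}$ as well, hence in $\sB^{\s}\cap\sA=\sA^{\s}$, and $\sB^{\s}$ is integral over $\sA^{\s}$. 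Since $(1)$ gives $(\sA^{\s})^{+}_{\sB}\subseteq(\sA^{\s})^{\ast}_{\sB}\subseteq\sB^{\s}$, we conclude $(\sA^{\s})^{+}_{\sB}=\sB^{\s}$.

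For $(3)$, the seminorm $\vert\;\vert_{sp}$ of $\sB$ is a norm (as $\sB$ is uniform), is power-multiplicative, is $G$-invariant because $\vert\;\vert$ is, and restricts to the spectral norm of $\sA$ — so such a norm exists. For uniqueness, let $\vert\;\vert'$ be a $G$-invariant spectral norm on $\sB$ extending the spectral norm of $\sA$, and apply $(2)$ to $\sA$ equipped with its spectral norm and $\sB$ equipped with $\vert\;\vert'$: for a spectral norm the unit ball is the subring of power-bounded elements, and that subring is unchanged if one replaces a norm by an equivalent one, so the unit ball of $(\sB,\vert\;\vert')$ equals the integral closure of $\sA^{\s}$ in $\sB$, which by $(2)$ read for the original norms is also the unit ball of $(\sB,\vert\;\vert_{sp})$. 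A spectral norm being determined by its unit ball (sorite~\ref{s1}$(1)$), we get $\vert\;\vert'=\vert\;\vert_{sp}$.

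For $(4)$, by the dictionary (sorite~\ref{s1}) it suffices to prove $gb\in\sB^{\s}\Rightarrow b\in\sB^{\s}$ in the isometric case, and $gb=0\Rightarrow b=0$ in the injective case. The key identity is $\chi_{gb}(X)=g^{n}\chi_b(X/g)$: writing $\chi_b(X)=X^{n}+\sum_{i<n}c_iX^{i}$ with $c_i\in\sB^{G}=\sA$, one gets $\chi_{gb}(X)=X^{n}+\sum_{i<n}g^{\,n-i}c_i\,X^{i}$. If $gb\in\sB^{\s}$, part $(2)$ applied to $gb$ shows each coefficient $g^{\,n-i}c_i$ of $\chi_{gb}$ lies in $\sA^{\s}$; since multiplication by $g$ is isometric on $\sA$, stripping off one factor of $g$ at a time forces $c_i\in\sA^{\s}$, whence $\chi_b$ has coefficients in $\sA^{\s}$, and then $\vert b\vert^{n}=\vert b^{n}\vert\le\max_{i<n}\vert c_i\vert\,\vert b\vert^{i}$ (power-multiplicativity) gives $\vert b\vert\le1$, i.e.\ $b\in\sB^{\s}$. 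If instead $gb=0$, then $\chi_{gb}(X)=X^{n}$, so $g^{\,n-i}c_i=0$ for all $i<n$; injectivity of multiplication by $g$ on $\sA$ gives $c_i=0$, hence $b^{n}=0$, and $b=0$ since $\sB$, being uniform, is reduced. The routine part of all this is $(1)$ and the integrality statements in $(2)$; the real bookkeeping is in $(3)$ — recognising that a single application of $(2)$, read for two different norm structures on $\sB$, pins down the unit ball and hence the norm — and in the isometric half of $(4)$, where the isometry hypothesis is exactly what permits descending the coefficients of $\chi_{gb}$ back into $\sA^{\s}$.
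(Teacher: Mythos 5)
Your argument is correct, and for parts $(1)$ and $(2)$ it tracks the paper's proof closely (you spell out the characteristic-polynomial argument where the paper instead cites Bourbaki, {\it Alg.\ comm.}\ V~\S1~n.~9 prop.~22, for integrality over invariants, plus point $(1)$). For $(3)$ and $(4)$, however, your route genuinely differs from the paper's. The paper applies the explicit norm formula $\max_{\gamma}\vert\gamma(b)\vert' = \max_i\vert a_i\vert^{1/i}$ from \cite[3.1.2, prop.~1]{BGR}, reads off $\vert b\vert'=\vert b\vert_{sp}$ directly for $(3)$, and substitutes $b\leftarrow ab$ to obtain $(4)$ in one line. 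You instead reuse $(2)$ for two different norm structures on $\sB$ to identify both unit balls with the (norm-independent) integral closure of $\sA^{\s}$ in $\sB$, then conclude via ``a spectral norm is determined by its unit ball.'' This works, but note that sorite~\ref{s1}$(1)$, which you cite, is stated under the hypothesis that $\vert\sK\vert$ is dense in $\vert\sA\vert$, a hypothesis not in force in Lemma~\ref{L6}; the conclusion you need is nonetheless true for power-multiplicative norms, by the elementary observation that $\vert b\vert'^m\le\vert\lambda\vert^n \iff b^m/\lambda^n\in\sB_{\le1} \iff \vert b\vert''^m\le\vert\lambda\vert^n$ for all $m,n\in\N$ and $\lambda\in\sK^{\times}$, which pins down $\vert b\vert'=\vert b\vert''$ by density of $\mathbf{Q}$. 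The same caveat applies to your reduction of isometry in $(4)$ to the implication $gb\in\sB^{\s}\Rightarrow b\in\sB^{\s}$: that implication does force $\vert gb\vert=\vert b\vert$ on a spectral algebra, but again via power-multiplicativity (pass to $b^m$ and rescale by $\sK^{\times}$) rather than by a direct reading of sorite~\ref{s1}. So: the paper's BGR formula gives $(3)$ and $(4)$ in a single stroke, while your reuse of $(2)$ is a nice economy that avoids the external formula at the cost of the small supplementary argument just described.
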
   
 
\begin{proof}  $(1)$ Soit $b\in \sB$ tel qu'il existe un sous-$\sA^{\s}$-module de type fini de $\sB$ qui contient toutes les puissances de $b$. Comme $\sA^{\s}$ est born\'e et $\sA\to \sB$ est continu, ce sous-module est born\'e, donc $b\in \sB^{\s}$. 
 
\smallskip\noindent  $(2)$ Par hypoth\`ese, $G$ pr\'eserve $\sB^{\s}$. On a $\sB^{\sG}  = \sA\cap \sB^{\s} = \sA^{\s}$. Par ailleurs, tout anneau muni d'une action d'un groupe fini est entier sur l'anneau des invariants \cite[V, \S 1, n. 9, prop. 22]{B2}. En particulier, $\sB$ est entier sur $\sA$ et $\sB^{\s}$ est entier sur $\sA^{\s}$. Compte tenu du point $(1)$, cela entra\^{\i}ne que $\sB^{\s}$ est la fermeture int\'egrale de $\sA^{\s}$ dans $\sB$.  

Notons par ailleurs que si la norme est spectrale compl\`ete, elle est unique avec cette propri\'et\'e (\cf \ref{ns}), donc pr\'eserv\'ee par $G$.
 
 \smallskip\noindent  $(3)$ $\vert\;\vert_{sp}$ est une norme spectrale $G$-invariante \'etendant la norme spectrale de $\sA$. Soit $\vert\;\vert'$ une autre telle norme.   Soit $b\in \sB$ et consid\'erons le polyn\^ome 
 
  {\begin{small}$\displaystyle{\prod_{\gamma \in G}}$\end{small}  $\,  (T- \gamma(b))= T^d + $
   {\begin{small}$\displaystyle{\sum_{i=1}^{i=d} }$\end{small}} 
    $  a_i T^{d-i} \, \in \sA[T].$ }
  D'apr\`es \cite[3.1.2, prop.1]{BGR}, on a  
  \begin{equation}\label{e4}  \max_\gamma \, \vert \gamma(b)\vert'  = \max \vert a_i\vert^{\frac{1}{i}}, \end{equation}  d'o\`u $\max_\gamma \, \vert \gamma(b)\vert'  =  \max_\gamma \, \vert \gamma(b)\vert_{sp}   $, et on conclut que $\vert\;\vert'= \vert\;\vert_{sp}$ par $G$-invariance.
    
   \smallskip\noindent  $(4)$ d\'ecoule de la m\^eme formule: si $b\in \sB$ est remplac\'e par $ab$ ($a\in \sA$), les $a_i$ le sont par $a^i a_i $, de sorte que $\vert ab\vert =\max \vert a^i a_i\vert^{\frac{1}{i}}$.
 \end{proof}

   \subsubsection{Remarque}\label{r3}  Sans l'hypoth\`ese d'invariance de la norme dans $(2)$, $\sB^{\s}$ peut ne pas \^etre entier sur $\sA^{\s}$. Sous l'hypoth\`ese $(2)$, $\sB $ n'est pas n\'ecessairement fini sur $\sA $, et m\^eme s'il l'est, $\sB^{\s}$ n'est pas n\'ecessairement fini sur $\sA^{\s}$, comme le montrent les exemples ci-dessous.    
    
       \subsubsection{Exemples prophylactiques}
    $(1)$ \label{E2} Soit $V$ un anneau de valuation discr\`ete complet de car. $\neq 2$, d'uniformisante $\varpi$. Soient $A = V[[T]]$ et $B$ la fermeture int\'egrale de $ A$ dans une extension finie galoisienne de groupe $G$ du corps de fractions $Q( A)$ de $ A$; $B$ est alors une extension finie de $A$.  
      Munissons $\sA = A[\frac{1}{\varpi}]$, \resp $\sB = B[\frac{1}{\varpi}]$, de la norme $\,{}^{{}^{A}}\vert \;\vert $ (norme multiplicative $\varpi$-adique), \resp $\,{}^{{}^{B}}\vert \;\vert $.  Ce sont des alg\`ebres de Banach uniformes, on a $\sA^{\s} = A, \, \sB^{\s}= B$, et $G$ agit par isom\'etries.

    Soit par exemple $ B=  A[U]/ (U^2- TU +\varpi)$, de sorte que $\sB = \sA[\sqrt{T^2-4\varpi}]$. Le polyn\^ome $U^2- TU +\varpi$ est irr\'eductible sur $Q( A)$, mais se factorise dans $\widehat{Q(A)}$, et m\^eme dans $\widehat{A[\frac{1}{T}]}$: les racines sont $\; \displaystyle u= 
    \frac{\varpi}{T} +     \sum_1^\infty\,     \frac{(1\cdot 3\cdot 5\cdots (2n-1)) 2^{n}\varpi^{n+1}}{(n+1)!\, T^{2n+1}}\;$ et $\, T-u$. Ainsi $B$ admet deux normes multiplicatives induisant celle de $A$, pour lesquelles $U$, \resp $T-U$, est une unit\'e. L'alg\`ebre $ \sB'  := B[\frac{1}{\varpi}]$ munie de cette derni\`ere norme (non $G$-invariante ni compl\`ete) contient   $\frac{U}{\varpi} = \frac{1}{T-U}$ qui est de norme $1$ mais n'est pas entier sur $A$.  La topologie de $(\sB')^{\s}$ est la topologie $\varpi$-adique, tandis que celle induite sur $B$ n'est pas $\varpi$-adique, mais $U$-adique (moins fine).

    \medskip  \noindent  $(2)$ \label{E3} 
    Supposons $\sK$ parfait de caract\'eristique $2$.  Soit $\sA =   \widehat{ \sK^{\s}[[T^{{\,\frac{1}{2^\infty}}} ]]}\otimes_{\sK^{\s}} \sK $, et soit $\sB $ la cl\^oture radicielle de l'extension quadratique $\sA[U]/(U^2-TU-1)$. Enti\`ere sur $\sA$, elle est contenue dans $\frac{1}{T}(\sA + \sA U)$. On calcule
   \[ U^{\frac{1}{2}}= T^{-\frac{1}{2}}(U-1),\, 
\;\ldots\;,  U^{\frac{1}{2^i}}= T^{-\frac{1}{2}-\frac{1}{4}\cdots -\frac{1}{2^i} }(U - \alpha_i),\;\; \alpha_i\in \sA, \]
 d'o\`u 
 $\;{\displaystyle{ \sB /\sA = \bigcup_i \, T^{-\frac{1}{2}-\frac{1}{4}\cdots -\frac{1}{2^i} } U \sA =  \frac{U}{T} \;T^{\frac{1}{2^\infty}}\sA}},$ qui n'est pas de type fini sur $\sA$. Donc 
 $\sB$ ne l'est pas non plus. Par ailleurs, une r\'etraction $\sA$-lin\'eaire de $\sA \to  \sB$ enverrait $U^{\frac{1}{2^i}}$ sur $1+ a_i\in \sA$ avec $a_i= T^{\frac{1}{2^i}}(1+a_{i-1})$. Or  $ a_0= T^{\frac{1}{2}}(1+a_1)= T^{\frac{1}{2}}+ \cdots   + T^{\frac{1}{2^{i-1}}} + T^{\frac{1}{2^{i }}}(1+a_i)$  et pour  $i\to \infty$, la s\'erie ne converge pas dans $\sA$.
      Une telle r\'etraction n'existe donc pas.

      \smallskip \noindent   $(3)$\label{E4}   M\^eme si $\sB$ est finie sur $\sA$ et si $ \sA\to \sB$ est scind\'ee,  
      $\sB^{\s}$ n'est pas n\'ecessairement finie sur   $\sA^{\s}$, et l'application $\sA^{\s}$-lin\'eaire $\sA^{\s}\to \sB^{\s}$ n'est pas n\'ecessairement scind\'ee, comme l'illustre un autre
 avatar du paradoxe de Z\'enon: prenons pour $ \sA $ le corps $\sK$ pr\'ec\'edent lui-m\^eme, et pour $\sB$  l'extension quadratique $\sK[U]/(U^2-\varpi U- 1) $, o\`u $\varpi \in \sK^{\sm oo}\setminus 0$. Alors $\sB^{\s}$ est la fermeture int\'egrale de $\sK^{\s}$ dans $\sB$, et c'est aussi la cl\^oture radicielle de $\sK^{{\s}} [U]$,  
  mais le $\sK^{\s} $-module
   ${\displaystyle{ \sB^{\s}/\sK^{\s} =   \frac{U}{\varpi} \;\varpi^{\frac{1}{2^\infty}}\sK^{\s} } }$
    n'est pas de type fini,
  et il n'y a pas de r\'etraction $\sK^{\s}$-lin\'eaire de $\sK^{\s} \to  \sB^{\s}$.     
    Pour un exemple diadique de m\^eme farine, voir \cite[6.4.1]{BGR}.

         \subsubsection{} Passons aux extensions \'etales finies d'alg\`ebres de Banach uniformes.
        
        \begin{lemma}\label{L6'} Soit $\sA$ une $\sK$-alg\`ebre de Banach uniforme. L'oubli de la norme induit un foncteur pleinement fid\`ele

      \smallskip  \centerline{$\{\sA$-alg\`ebres de Banach uniformes \'etales finies$\} \to  \{\sA$-alg\`ebres \'etales finies$\}$.}
        \end{lemma}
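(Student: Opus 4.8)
The plan is to establish faithfulness and fullness separately; faithfulness is immediate, since a morphism in $\sA\hbox{-}{\bf uBan}$ is by definition a continuous $\sA$-algebra homomorphism and hence is determined by its underlying map of sets, so the forgetful functor is injective on Hom-sets. The substance is fullness, which amounts to the claim that every homomorphism between the underlying $\sA$-algebras of two objects of $\sA\hbox{-}{\bf uBan}$ that are étale finite over $\sA$ is automatically continuous (and then it is automatically a morphism of Banach algebras, target and source being complete, and a morphism in $\sA\hbox{-}{\bf uBan}$, both being uniform).

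So I would fix finite étale uniform Banach $\sA$-algebras $\sB,\sC$ and an $\sA$-algebra homomorphism $f\colon\sB\to\sC$. Since $\sB$ is étale finite over $\sA$ it is in particular a finite $\sA$-module, so I would choose module generators $b_1,\dots,b_n$ and form the $\sA$-linear surjection $\pi\colon\sA^n\to\sB$, $(a_i)\mapsto\sum_i a_ib_i$. As $\sA\to\sB$ is continuous and the norm of $\sB$ is submultiplicative and ultrametric, $\pi$ is bounded; hence $\ker\pi$ is closed in the complete module $\sA^n$, $\sA^n/\ker\pi$ is a $\sK$-Banach module, and the induced bijective $\sA$-linear map $\sA^n/\ker\pi\to\sB$ is continuous between Banach spaces. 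By Banach's open mapping theorem \cite[I.3.3, th. 1]{B1} it is a topological isomorphism; equivalently, the quotient seminorm $\|b\|_q:=\inf\{\max_i|a_i|:\sum_i a_ib_i=b\}$ is equivalent to the norm of $\sB$.

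Granting this, for $b\in\sB$ and any representation $b=\sum_i a_ib_i$ one has $f(b)=\sum_i a_if(b_i)$, whence $|f(b)|_\sC\le C_0\max_i|a_i|$ for a constant $C_0$ independent of $b$ (depending only on the $|f(b_i)|_\sC$ and on $\|\sA\to\sC\|$); taking the infimum over representations gives $|f(b)|_\sC\le C_0\|b\|_q\le C_0C'|b|_\sB$, so $f$ is bounded, hence a morphism in $\sA\hbox{-}{\bf Ban}$ and, $\sB$ and $\sC$ being uniform, a morphism in $\sA\hbox{-}{\bf uBan}$ lying over $f$. This proves fullness, and together with faithfulness shows the forgetful functor is fully faithful. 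The only non-formal input is the open mapping theorem, and the sole hypothesis it genuinely consumes is the completeness of $\sA^n/\ker\pi$; I would expect the one point needing care to be the verification that $\pi$ is bounded (hence $\ker\pi$ closed) — everything else is formal. I also note that the argument uses only that $\sB$ is module-finite over $\sA$: neither projectivity nor unramifiedness enters, and uniformity is used only to place the conclusion in $\sA\hbox{-}{\bf uBan}$.
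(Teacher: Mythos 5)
Your proof is correct, and it takes a genuinely different route from the paper's. The paper factors $f\colon\sB\to\sC$ through its image $\sB'$, observes that $\sB'$ is a direct factor of $\sB$ (using that quotient and image are reduced and that $\Spec\sB'$ is open and closed in $\Spec\sB$, which ultimately relies on the étale hypothesis), and then invokes the uniqueness of the complete spectral norm established in \ref{ns} to conclude that the resulting isomorphisms are isometric. You instead bypass the étale structure entirely: you pick module generators $b_1,\dots,b_n$ of $\sB$ over $\sA$, note that the evaluation map $\pi\colon\sA^n\to\sB$ is bounded (hence has closed kernel), and apply Banach's open mapping theorem to conclude that the quotient norm from $\sA^n$ is equivalent to the given norm on $\sB$; automatic boundedness of $f$ then drops out. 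The verification that $\pi$ is bounded, which you flagged as the point needing care, does go through: $|\sum a_ib_i|_\sB\le\|\sA\to\sB\|\,(\max_i|b_i|)\max_i|a_i|$, and closedness of $\ker\pi$ follows since $\sB$ is Hausdorff. What your route buys is a more elementary and strictly more general statement: every $\sA$-algebra homomorphism from a module-finite Banach $\sA$-algebra to any normed $\sA$-algebra is bounded; neither projectivity nor unramifiedness nor uniformity is consumed (uniformity enters only in situating the conclusion inside $\sA\hbox{-}{\bf uBan}$, as you note). What the paper's route buys is the stronger conclusion that the relevant morphisms are actually isometric for the spectral norms, not merely bounded — a refinement your argument does not by itself recover, though it is not needed for the lemma as stated.
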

    
   \begin{proof} La fid\'elit\'e est claire. Pour la pl\'enitude, observant qu'un quotient $\sB'$ d'une $\sA$-alg\`ebre \'etale finie $\sB$ est facteur direct ($\Spec\,\sB'$ est ouvert et ferm\'e dans $\Spec \,\sB$), on se ram\`ene au cas des isomorphismes, qui sont isom\'etriques en vertu de l'unicit\'e de la norme spectrale compl\`ete. \end{proof}
   
   \subsubsection{Remarque.} C'est en fait une \'equivalence de cat\'egories, voir \cite[prop. 2.8.16 $(b)$]{KL} (compte tenu de ce que $\sB$ est une $\sA$-alg\`ebre finie projective en tant que module, elle est munie d'une classe d'\'equivalence canonique de normes compl\`etes; le point d\'elicat est de v\'erifier qu'elles sont uniformes). Nous n'aurons pas besoin de ce fait.

    \smallskip
      \subsection{Monomorphismes (et recadrage).} 
     \subsubsection{} Les monomorphismes de $\sK\hbox{-}\bf uBan$ sont les homomorphismes continus injectifs. Il est clair que tout morphisme injectif $\phi: \sA\to \sB$
      est un monomorphisme.  Pour la r\'eciproque, on peut supposer $\sA$ non nulle; alors $\sK\oplus \ker \phi \subset \sA$ est une sous-alg\`ebre de Banach de $\sA$. On conclut en consid\'erant les deux morphismes $\sK\oplus  \ker \phi \to \sA$ qui envoient $\ker \phi$ sur lui-m\^eme et sur $0$ respectivement.
     
     Cela d\'ecoule aussi formellement du fait que $\sK\hbox{-}\bf uBan$ admet des {\it objets libres}:  $S\mapsto \sK\langle T_s\rangle_{s\in S}$ est adjoint \`a gauche du foncteur oubli $\sK\hbox{-}{\bf uBan}\to {\bf Ens}$.

         \subsubsection{}\label{E6} Supposons $\vert \sK\vert$ dense dans $\R_+$. Les recadrages du type consid\'er\'e au \S \ref{recc} donnent parfois naissance \`a des monomorphismes d'alg\`ebres de Banach uniformes, \cf \eqref{e3}. Examinons en d\'etail le cas, fondamental dans la suite, du passage du cadre $(\sK^{\s}, \sK^{\ss})$ au cadre 
         \[  (\mathfrak V = \sK^{\s}\langle T^{\e}\rangle := \widehat{\sK^{\s}[T^{\e}]} , \;\; \mathfrak m =   T^{\e}{\mathfrak V}^{\ss}  = (  T)^{\e}\sK^{\ss}{\mathfrak V}^{\ss}  ) .\]
      On a $ \sK\langle T^{\e}\rangle^{\s} = \sK^{\s}\langle T^{\e}\rangle$, et $\mathfrak m $ est plat donc $\mathfrak m= \tilde{\mathfrak m}$.  
           
           Si $\sB$ est une $\sK$-alg\`ebre de Banach uniforme, la donn\'ee d'un morphisme $ \sK\langle T^{\e} \rangle^{\s}\to \sB$ \'equivaut \`a celle d'une suite $(g^{\frac{1}{p^m}})$ compatible de racines $p^m$-i\`emes d'un \'el\'ement $g\in \sB^{\s}$ ($g^{\frac{1}{p^m}}$ sera l'image de $T^{\frac{1}{p^m}}$).
         
   \begin{lemma}\label{L7}   Soient $\sB$ une $\sK\langle T^{\e}\rangle$-alg\`ebre de Banach uniforme, et $g$ l'image de $T$. \begin{enumerate} \item L'homomorphisme canonique $(\sB^{\s a})_\ast \to \sB^{\s}[\frac{1}{g}]$ est injectif, et son image est $\varpi$-adiquement compl\`ete et \'egale \`a 
          \begin{equation}\label{e5}   g^{\f}  \sB^{\s} := \cap\,   g^{-\frac{1}{p^m}}\sB^{\s}.\end{equation}
          \item    L'homomorphisme $\sB^{\s} \to   (\sB^{\s a})_\ast$ est injectif si et seulement si $g$ n'est pas diviseur de z\'ero dans $\sB$.            
   \item   L'homomorphisme 
$ (\sB^{\s}_\ast)/\varpi  \to  (\sB^{\s}/\varpi)_\ast  $  est injectif.
   \end{enumerate}    \end{lemma}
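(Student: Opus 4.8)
The heart of the matter is $(1)$, which comes down to an explicit computation of the module of almost-elements $(\sB^{\s a})_\ast = {\rm Hom}_{\mathfrak V}(\tilde{\mathfrak m},\sB^{\s}) = {\rm Hom}_{\mathfrak V}(\mathfrak m,\sB^{\s})$ (recall $\mathfrak m$ is flat, so $\tilde{\mathfrak m}=\mathfrak m$). The plan is first to note that $\mathfrak m = T^{\e}\mathfrak V^{\ss}$ is, up to $\varpi$-adic (= Gauss-norm) closure, the ideal of $\mathfrak V$ generated by the elements $\lambda T^{\frac{1}{p^j}}$, $\lambda\in\sK^{\ss}\setminus 0$, $j\in\N$; since $\sB^{\s}$ is $\varpi$-adically complete and separated, a $\mathfrak V$-linear map $f:\mathfrak m\to\sB^{\s}$ is determined by, and freely assignable through, its values $f(\lambda T^{\frac{1}{p^j}})$ subject to the evident relations. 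Using the compatibility $T^{\frac{1}{p^j}} = (T^{\frac{1}{p^{j+1}}})^{p}$ in $\mathfrak V$ (hence $g^{\frac{1}{p^j}} = (g^{\frac{1}{p^{j+1}}})^{p}$ in $\sB^{\s}$), one checks that the ratios $f(\lambda T^{\frac{1}{p^j}})/(\lambda g^{\frac{1}{p^j}})$ all coincide with a single element $x\in\sB^{\s}[\frac{1}{g}]$, independent of $\lambda$ and $j$; conversely $f$ is recovered from $x$, and $f$ takes values in $\sB^{\s}$ iff $\lambda g^{\frac{1}{p^j}}x\in\sB^{\s}$ for all $\lambda$ and $j$. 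Now the key point: $\lambda g^{\frac{1}{p^j}}x\in\sB^{\s}$ for all $\lambda\in\sK^{\ss}\setminus 0$ means $g^{\frac{1}{p^j}}x\in{\rm Hom}_{\sK^{\s}}(\sK^{\ss},\sB^{\s})$, which by uniformity of $\sB$ (sorite \ref{s1}, \eqref{eq1}) equals $\sB^{\s}$ itself. Hence $(\sB^{\s a})_\ast\cong\bigcap_{j} g^{-\frac{1}{p^j}}\sB^{\s} = g^{\f}\sB^{\s}$, a $\mathfrak V$-subalgebra of $\sB^{\s}[\frac{1}{g}]$ containing $\sB^{\s}$, and the canonical homomorphism of the statement is precisely this inclusion, so it is injective. (Equivalently one may phrase this through Hom--tensor adjunction: $\mathfrak m\cong T^{\e}\mathfrak V\otimes_{\mathfrak V}\mathfrak V^{\ss}$ up to closure, so $(\sB^{\s a})_\ast\cong{\rm Hom}_{\mathfrak V}(T^{\e}\mathfrak V,{\rm Hom}_{\sK^{\s}}(\sK^{\ss},\sB^{\s}))={\rm Hom}_{\mathfrak V}(T^{\e}\mathfrak V,\sB^{\s})=g^{\f}\sB^{\s}$, the last equality being the ``clean case'' $\pi^{\e}\mathfrak V$.)

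For the $\varpi$-adic completeness of the image I would appeal to Lemma \ref{L4}: the framework here is of the type $(\sA^{\s},\mathfrak m)$ with $\sA=\sK\langle T^{\e}\rangle$, $\mathfrak m$ an idempotent ideal of $\sA^{\s}=\mathfrak V$ with $\sK^{\ss}\mathfrak m=\mathfrak m$; since $\sB$ is Banach uniform, $\sB^{\s}$ satisfies conditions $(1)$--$(4)$ of corollaire \ref{C1}, hence so does $(\sB^{\s a})_\ast$, in particular $(2)$. Transported through the identification above, this says that $g^{\f}\sB^{\s}$ is $\varpi$-adically complete, finishing $(1)$; one also records that $\sB^{\s}[\frac{1}{g}]\subset\sB[\frac{1}{g}]=\sB^{\s}[\frac{1}{\varpi}][\frac{1}{g}]$.

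For $(2)$: the kernel of $\sB^{\s}\to(\sB^{\s a})_\ast$ is $\{b\in\sB^{\s}\mid\mathfrak m b=0\}$. Since $\lambda g\in\mathfrak m$ for $\lambda\in\sK^{\ss}$ and $\sB^{\s}$ is $\varpi$-torsion-free, this is contained in $\{b\mid gb=0\}$; conversely, if $gb=0$ then $(g^{\frac{1}{p^j}}b)^{p^j}=gb^{p^j}=0$, so $g^{\frac{1}{p^j}}b$ is nilpotent, hence zero because $\sB$ --- and therefore $\sB^{\s}$ --- is reduced (\ref{ns}), whence (by continuity) $\mathfrak m b=0$. So the kernel is exactly the $\varpi$-torsion-free module $\{b\in\sB^{\s}\mid gb=0\}$, which vanishes iff $g$ is a non-zero-divisor in $\sB^{\s}$; and because $\sB=\sB^{\s}[\frac{1}{\varpi}]$ with $\sB^{\s}$ $\varpi$-torsion-free, this is equivalent to $g$ being a non-zero-divisor in $\sB$. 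For $(3)$: applying ${\rm Hom}_{\mathfrak V}(\mathfrak m,-)$ to $0\to\sB^{\s}\xrightarrow{\varpi}\sB^{\s}\to\sB^{\s}/\varpi\to0$ and using that multiplication by $\varpi$ identifies $\varpi\sB^{\s}$ with $\sB^{\s}$ ($\sB^{\s}$ being $\varpi$-torsion-free), one gets ${\rm Hom}_{\mathfrak V}(\mathfrak m,\varpi\sB^{\s})=\varpi\,{\rm Hom}_{\mathfrak V}(\mathfrak m,\sB^{\s})$, so the kernel of $(\sB^{\s a})_\ast\to(\sB^{\s}/\varpi)_\ast$ is $\varpi(\sB^{\s a})_\ast$; this is the asserted injectivity, and is the same injectivity already invoked in the proof of Lemma \ref{L4}$(2)$.

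The step I expect to be delicate is the first one of $(1)$: correctly matching the framework ideal $\mathfrak m=T^{\e}\mathfrak V^{\ss}$ --- which is strictly smaller than $T^{\e}\mathfrak V$ and is only the closure of the ideal generated by the $\lambda T^{\frac{1}{p^j}}$ --- with the naive intersection $g^{\f}\sB^{\s}$, checking that the extra $\sK^{\ss}$-factor gets absorbed solely by the uniformity of $\sB$, and verifying that the map so produced is indeed the canonical one into $\sB^{\s}[\frac{1}{g}]$. Once this identification and the completion bookkeeping are in place, the completeness assertion and parts $(2)$, $(3)$ are essentially formal.
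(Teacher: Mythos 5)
Your proof is correct and follows essentially the same route as the paper --- compute $(\sB^{\s a})_\ast = {\rm Hom}_{\mathfrak V}(\mathfrak m,\sB^{\s})$ via explicit generators of $\mathfrak m$, absorb the $\sK^{\ss}$-factor using uniformity of $\sB$ (sorite \ref{s1}, \eqref{eq1}), and appeal to Lemma \ref{L4} for $\varpi$-adic completeness --- your treatment simply makes explicit the uniformity step that the paper's terse colimit description of $\mathfrak m$ (written with powers of $T$ alone rather than $\varpi T$) leaves implicit. One minor quibble: $\mathfrak m = T^{\e}\mathfrak V^{\ss}$ is already, not merely up to $\varpi$-adic closure, the $\mathfrak V$-ideal generated by the $\lambda T^{\frac{1}{p^j}}$ ($\lambda\in\sK^{\ss}\setminus 0$, $j\in\N$), so the appeal to completeness of $\sB^{\s}$ at the outset is unnecessary (and harmless).
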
 
    \begin{proof} $(1)$ On a $\displaystyle{ \mathfrak m = \varinjlim_{\cdot T^{  \frac{1}{p^m} - \frac{1}{p^{m+1}} } }    \mathfrak V},\, $ d'o\`u
    \begin{equation}\label{e6} \displaystyle{(\sB^{\s a})_\ast =  {\rm{Hom}}_{\mathfrak V}( {\mathfrak m},\sB^{\s})= \varprojlim_{\cdot T^{  \frac{1}{p^m} - \frac{1}{p^{m+1}} } }   \sB^{\s}  }.\end{equation} 
      En particulier, $(\sB^{\s a})_\ast$ est r\'eduite tout comme $\sB^{\s}$. Ceci entra\^ine que la multiplication par $g$ dans $(\sB^{\s a})_\ast$ est presque injective: si $gb=0$, alors $(g^{\frac{1}{p^n}} b)^{p^n}= 0$, donc  $g^{\frac{1}{p^n}} b =0$. Enfin, comme $(\sB^{\s a})_\ast$ est sans $\mathfrak m$-torsion, on conclut que $(\sB^{\s a})_\ast \to \sB^{\s}[\frac{1}{g}]$ est injectif, et la formule  \eqref{e6} montre que 
     \begin{equation}\label{e6'} (\sB^{\s a})_\ast \cong g^{\f}  \sB^{\s} \end{equation}
      qui est $\varpi$-adiquement compl\`ete d'apr\`es le lemme \ref{L4}. 
      $(2)$ en d\'ecoule. 
      
      \smallskip \noindent $(3)$  Comme 
      %Cet homomorphisme s'\'ecrit $ ( \varprojlim_{\cdot g^{  \frac{1}{p^m} - \frac{1}{p^{m+1}} }}  \sB^{\s})/\varpi  \to  \varprojlim_{\cdot g^{  \frac{1}{p^m} - \frac{1}{p^{m+1}} }}(\sB^{\s}/\varpi)$, et comme 
      $  \sB^{\s}$ est sans $\varpi$-torsion, on a une suite exacte 

\noindent $   0\to \varprojlim_{\cdot g^{  \frac{1}{p^m} - \frac{1}{p^{m+1}} }}  \sB^{\s}\stackrel{\cdot \varpi}{\to}  \varprojlim_{\cdot g^{  \frac{1}{p^m} - \frac{1}{p^{m+1}} }}  \sB^{\s} \to  \varprojlim_{\cdot g^{  \frac{1}{p^m} - \frac{1}{p^{m+1}} }}(\sB^{\s}/\varpi) \, . $ \end{proof}

Pour comparer $  (g^{\f}  \sB^{\s})[\frac{1}{\varpi}] $ \`a  $g^{\f}  \sB$ (qui la contient), d\'efinissons, pour tout \'el\'ement $b\in g^{\f}  \sB$, la quantit\'e
    \begin{equation}\label{e7}  \,\displaystyle \vert b \vert_{g^{\f}  \sB}   := \sup_m \,\vert g^{\frac{1}{p^m}}b\vert_{\sB } \in \R\cup \{+\infty\} .\end{equation} 
 Elle v\'erifie les propri\'et\'es d'une norme except\'e qu'elle peut prendre la valeur $+\infty$. 
L'ensemble des $b\in g^{\f}  \sB$ pour lesquels $\vert b \vert_{g^{\f}  \sB} < \infty $ forme une alg\`ebre de Banach uniforme (spectrale si $\sB$ l'est). L'alg\`ebre spectrale associ\'ee n'est autre que $\sB^{\hat a}_{\hat \ast}$ (\cf \ref{recc}): le sous-anneau $(g^{\f}  \sB)^{\s} $ form\'e des \'el\'ements dont les puissances sont born\'ees eu \'egard \`a $\vert \; \vert_{g^{\f}  \sB} $ v\'erifie
 \begin{equation}\label{e8}   (g^{\f}  \sB)^{\s} = g^{\f}  \sB^{\s}  ,\end{equation}  
  car $\;\, b\in (g^{\f}\sB)^{\s} \Leftrightarrow  \sup_{\ell, m} \vert (g^{\frac{1}{p^m}}b)^\ell\vert <\infty      \Leftrightarrow  b \in  g^{\f}\sB^{\s}$.   
  
  \begin{lemma}\label{L8} Si $g$ est inversible dans $\sB$, alors $\sB^{\s} =  g^{\f}\sB^{\s} $.
  \end{lemma}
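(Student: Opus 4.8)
The plan is to prove the two inclusions in \eqref{e5} separately; the inclusion $\sB^{\s}\subseteq g^{\f}\sB^{\s}$ is automatic and the entire content lies in $g^{\f}\sB^{\s}\subseteq\sB^{\s}$. For the easy direction: $g^{\frac{1}{p^m}}$ is the image of $T^{\frac{1}{p^m}}\in\sK\langle T^{\e}\rangle^{\s}$, hence lies in $\sB^{\s}$, so $g^{\frac{1}{p^m}}\sB^{\s}\subseteq\sB^{\s}$, i.e. $\sB^{\s}\subseteq g^{-\frac{1}{p^m}}\sB^{\s}$, for every $m$, whence $\sB^{\s}\subseteq\bigcap_m g^{-\frac{1}{p^m}}\sB^{\s}=g^{\f}\sB^{\s}$ (this is in any case built into Lemma \ref{L7}, since $g^{\f}\sB^{\s}=(\sB^{\s a})_\ast$ receives $\sB^{\s}$). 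One may also assume $\sB\neq 0$, the case $\sB=0$ being trivial.

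For the converse I would argue throughout with the spectral seminorm $\vert\,\vert_{sp}$, which, $\sB$ being a uniform Banach algebra, is in fact a norm, is submultiplicative and power-multiplicative, and satisfies $\sB^{\s}=\{b\in\sB:\vert b\vert_{sp}\leq 1\}$ (cf. \S\ref{ns}, \S\ref{nsp}). Let $x\in g^{\f}\sB^{\s}$, so that $g^{\frac{1}{p^m}}x\in\sB^{\s}$, that is $\vert g^{\frac{1}{p^m}}x\vert_{sp}\leq 1$, for every $m$. Since $g$ is invertible in $\sB$, so is $g^{\frac{1}{p^m}}$ (its $p^m$-th power being $g$); writing $x=g^{-\frac{1}{p^m}}\cdot\big(g^{\frac{1}{p^m}}x\big)$ and using submultiplicativity and power-multiplicativity of $\vert\,\vert_{sp}$ gives $\vert x\vert_{sp}\leq\vert g^{-\frac{1}{p^m}}\vert_{sp}\cdot\vert g^{\frac{1}{p^m}}x\vert_{sp}\leq\vert g^{-\frac{1}{p^m}}\vert_{sp}=\vert g^{-1}\vert_{sp}^{1/p^m}$. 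As $g^{-1}\neq 0$ and $\vert\,\vert_{sp}$ is a norm, $0<\vert g^{-1}\vert_{sp}<\infty$, so $\vert g^{-1}\vert_{sp}^{1/p^m}\to 1$ as $m\to\infty$; letting $m\to\infty$ yields $\vert x\vert_{sp}\leq 1$, i.e. $x\in\sB^{\s}$. This proves $g^{\f}\sB^{\s}\subseteq\sB^{\s}$ and hence the lemma.

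The only step worth a moment's attention is the passage to the spectral (semi)norm, which is what turns membership in $\sB^{\s}$ into the inequality $\vert\,\vert_{sp}\leq 1$ and makes the norm power-multiplicative; once that is in place the bound $\vert x\vert_{sp}\leq\vert g^{-1}\vert_{sp}^{1/p^m}\to 1$ is forced, and no completeness, Mittag--Leffler, or almost-algebra input is called for. Heuristically, the statement merely records that when $g$ is a unit, passing to the limit $m\to\infty$ costs no denominator.
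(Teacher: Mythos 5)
Your argument is correct and captures exactly the idea of the paper's own proof, merely phrased directly in terms of the spectral norm. The paper picks elements $\lambda_m\in\sK^{\s}$ with $\vert\lambda_m\vert\to 1$ and $\lambda_m g^{-\frac{1}{p^m}}\in\sB^{\s}$ (possible precisely because $\vert g^{-\frac{1}{p^m}}\vert_{sp}=\vert g^{-1}\vert_{sp}^{1/p^m}\to 1$), writes $\lambda_m b=(\lambda_m g^{-\frac{1}{p^m}})(g^{\frac{1}{p^m}}b)\in\sB^{\s}$, and concludes $b\in\sB^{\s}$ since $\sB^{\s}=(\sB^{\s})_\ast$; you instead run the same computation at the level of $\vert\cdot\vert_{sp}$ and pass to the limit in $\mathbf R$. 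Same mechanism, cosmetically different packaging.
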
 
  
  \begin{proof} Par uniformit\'e, il existe alors une suite $\lambda_m \in \sK^{\s}$, de norme tendant vers $1$, telle que $\lambda_m g^{-\frac{1}{p^m}}\in \sB^{\s}$. Tout $b\in g^{\f}\sB^{\s} $ v\'erifie alors $\lambda_m b\in  \sB^{\s} $, d'o\`u $b\in \sB^{\s}$. 
   \end{proof}
  
  \subsubsection{Remarques}  $(1)$ Si la multiplication par $g$ est isom\'etrique, $\sB^{\s}[\frac{1}{g}] \cap \sB = \sB^{\s}$, les id\'eaux $g^{\frac{1}{p^m}}\sB^{\s}$ de $\sB^{\s}$ sont ferm\'es,  $\sB^{\s} \inj  g^{\f}  \sB^{\s}$ est isom\'etrique, et  $\vert \; \vert_{g^{\f}  \sB} $   est une norme qui fait de $g^{\f}\sB$ une alg\`ebre de Banach uniforme.  On a alors 
  
  \begin{equation}\label{asteq}   g^{\f} \sB =  g^{\f} \sB^{\s}[\frac{1}{p}] = \sB_\ast   \end{equation}
  dans le cadre $( \sK^{\s}[ T^{\e}],\,    (  T)^{\e}\sK^{\ss}\sK^{\s}[ T^{\e}] )$.
    En revanche, si  la multiplication par $g$ n'est pas isom\'etrique et m\^eme si $g$ est non-diviseur de z\'ero, non seulement l'inclusion  $\sB^{\s} \inj  g^{\f}  \sB^{\s}$ n'est pas n\'ecessairement isom\'etrique, mais $g^{\f}  \sB^{\s}[\frac{1}{\varpi}] $ peut \^etre distincte de $g^{\f}  \sB$ comme le montre l'exemple ci-dessous.

 \smallskip \noindent $(2)$ L'inclusion $\sB^{\s} \subset  g^{\f}\sB^{\s} $ est en g\'en\'eral stricte, m\^eme si la multiplication par $g$ est isom\'etrique, comme le montre l'exemple du compl\'et\'e de $ \sK + T_1^{\e} \sK\langle  T_1^{\e}, T_2^{\e}\rangle$ dans $\sK\langle  T_1^{\e}, T_2^{\e}\rangle , \; g= T_1$.

        \subsubsection{Exemple prophylactique}\label{E4'}  Supposons $\sK$ parfait de car. $p$. La $\sK^{\s}$-alg\`ebre parfaite $\sK^{\s} \langle T_1^{\e} , (\varpi^m  T_1^{\frac{1}{p^m}}T_2)^{\e} \rangle_{m\in \N}$ est de la forme $\sB^{\s}$ pour une $\sK\langle  T_1^{\e} \rangle$-alg\`ebre de Banach uniforme $\sB$. On a $\sB =T_1^{\f}\sB^{\s}[\frac{1}{\varpi}]$, distinct de $ T_1^{\f}\sB  = \sK\langle  T_1^{\e}, T_2^{\e}\rangle$.

             \subsubsection{} Supposons $g$ non-diviseur de z\'ero. Rappelons que la fermeture compl\`etement int\'egrale de $\sB^{\s}$ dans $\sB^{\s}[\frac{1}{g}]$ (\resp $\sB[\frac{1}{g}] = \sB^{\s}[\frac{1}{\varpi g}]$) est   
        \begin{equation}\label{e9} \sB^{{\s}\,\ast}_{\sB^{\s}[\frac{1}{g}]} := \{b\in \sB^{\s}[\frac{1}{g}]\mid \exists m \,\forall n ,\, g^m b^n \in \sB^{\s}\}\end{equation}
         \begin{equation}\label{e9'} (\resp \;\;\; \sB^{{\s}\,\ast}_{\sB[\frac{1}{g}]} := \{b\in \sB[\frac{1}{g}]\mid \exists m \,\forall n ,\, (\varpi g)^m b^n \in \sB^{\s}\}),\end{equation}
         et remarquons en passant que si $G$ est un groupe d'automorphismes de $\sB^{\s}$ qui fixe $g$, on a 
              \begin{equation}\label{e10} (\sB^{{\s}\,\ast}_{\sB[\frac{1}{g}]} )^G =  \sB^{{\s}G\, \ast}_{\sB^{G}[\frac{1}{g}]}. \end{equation} 
           
          \begin{sorite}\label{S2} Soient $\sB$ une $\sK\langle T^{\e}\rangle$-alg\`ebre de Banach uniforme (voire une alg\`ebre norm\'ee uniforme), et $g$ l'image de $T$. 
        Les conditions suivantes sont \'equivalentes:
          \begin{enumerate} 
     \item  $ g^{\f}\sB^{\s} =  (g^{\f}\sB^{\s})^{ \dagger}_{\sB^{\s}[\frac{1}{g}]}\,$ ($ g^{\f}\sB^{\s} $ est $p$-radiciellement ferm\'e dans $ \sB^{\s}[\frac{1}{g}]$),
  \item $ g^{\f}\sB^{\s} =  (g^{\f}\sB^{\s})^{ \dagger}_{\sB[\frac{1}{g}]}\,$ ($ g^{\f}\sB^{\s} $ est $p$-radiciellement ferm\'e dans $ \sB[\frac{1}{g}]$),
    \item $g^{\f}\sB^{\s} =  (\sB^{\s})^\ast_{\sB^{\s}[\frac{1}{g}]}$,        
    \item $g^{\f}\sB^{\s} =  (\sB^{\s})^\ast_{\sB[\frac{1}{g}]}$,        
     \item $g^{\f}\sB^{\s} $ est compl\`etement int\'egralement ferm\'e dans $\sB^{\s}[\frac{1}{g}]\,$.  
      \item $g^{\f}\sB^{\s} $ est compl\`etement int\'egralement ferm\'e dans $\sB[\frac{1}{g}]\,$.  
     \end{enumerate}  \end{sorite}  

      \begin{proof} On a  $g^{\f}\sB^{\s} \subset \sB^{{\s}\,^\ast}_{\sB^{\s}[\frac{1}{g}]}  \subset \sB^{{\s}\,^\ast}_{\sB[\frac{1}{g}]}\subset (g^{\f}\sB^{{\s}})^\ast_{\sB[\frac{1}{g}]}$ et $(6)$ implique toutes les autres conditions. Il reste \`a prouver $(1)\Rightarrow (2) \Rightarrow (6)$. 
      
   \smallskip\noindent   $(1)\Rightarrow (2) $: on peut supposer 
       $\sB$ spectrale. Soit $b\in \sB
         [\frac{1}{g}] $ tel que $b^p \in g^{\f}\sB^{\s}$. Soit $m$ tel que $g^m b \in \sB$. On a $(g^m b)^p\in \sB^{\s}$, d'o\`u $g^m b\in \sB^{\s}$.
          Donc $b\in \sB^{\s}[\frac{1}{g}]$ compte tenu du point $(5)$ du sorite \ref{s1}, et $(1)$ implique $b \in g^{\f}\sB^{\s}$. 
         
       \smallskip\noindent   $(2)\Rightarrow (6) $:   Soit $b\in   (g^{\f}\sB^{{\s}})^\ast_{\sB[\frac{1}{g}]}$: il existe $m$ tel que pour tout $n$, $(\varpi g)^m b^{p ^n} \in  g^{\f}\sB^{\s}$, donc $\varpi_{\frac{m}{p^n}} g^{\frac{m}{p^n}} b  \in  g^{\f}\sB^{\s}$ d'apr\`es $(1)$. Ceci valant pour tout $n$, on conclut que $b \in g^{\f}\sB^{\s}$ compte tenu du point $(6)$ du sorite \ref{s1}.        \end{proof}  
       
           \subsubsection{\it Remarque.} Ce dernier argument montre plus g\'en\'eralement que pour anneau $R$  contenant un suite compatible $t^{\frac{1}{p^m}}$ de racines d'un non-diviseur de z\'ero $t$,  $R^\ast_{R[\frac{1}{t}]}  =  t^{\f} R^\dagger_{R[\frac{1}{t}]} $.

     \smallskip\subsection{Epimorphismes (et localisation).}  
      \subsubsection{}\label{id}
    Dans $\sK\hbox{-}\bf uBan$, tout morphisme d'image dense est un \'epimorphisme. 
      Un \'epimorphisme $\phi$ est dit {\it extr\'emal} si dans toute factorisation $\phi= \mu \circ \lambda $ o\`u $\mu$ est un monomorphisme, $\mu$ est un isomorphisme. Dans $\sK\hbox{-}\bf uBan$, tout morphisme $ \sA\stackrel{\phi}{\to} \sB$ admet une factorisation canonique $\sA \stackrel{\psi}{\to} (\sA/ \ker \phi)^u \stackrel{\chi}{\to}  \sB$ et $\psi$ est  \'epimorphisme extr\'emal (il n'est toutefois pas clair que $\chi$ soit un monomorphisme, ni donc que tout \'epimorphisme extr\'emal soit de la forme $\psi$). Tout morphisme $\phi$ surjectif est un \'epimorphisme extr\'emal, car la norme de $\sB$ est alors \'equivalente \`a la semi-norme quotient de la norme de $\sA$ en vertu du th\'eor\`eme de l'image ouverte de Banach.
     
          \subsubsection{}\label{loc}  Passons \`a un autre cas, la {\it localisation uniforme} utilis\'ee en g\'eom\'etrie analytique sur $\sK$. Si $f_1, \ldots, f_n, f \in \sA$ engendrent l'id\'eal unit\'e, on note $(fU-f_i)_i^-$ l'adh\'erence de l'id\'eal engendr\'e par les $fU-f_i$ et on pose
     \begin{equation}   \sA\{\frac{f_1, \ldots, f_n}{f}\} := \sA\langle U_1, \ldots, U_n\rangle/ (fU_i-f_i)_i^- .  \end{equation} 
     C'est la $\sA$-alg\`ebre de Banach universelle $\sB$ pour laquelle l'image de $f$ est inversible et les $f_i/f$ sont contenus dans $\sB^{\s}$, \cf \cite[6.1.4]{BGR}\footnote{Cette r\'ef\'erence se place dans le contexte o\`u $\sA$ est affino\"{\i}de, de sorte que l'id\'eal $(fU-f_i)_i$ est ferm\'e. Quitte \`a substituer $(fU-f_i)_i^-$, les arguments de \loccit valent pour une $\sK$-alg\`ebre de Banach quelconque.}. Le morphisme $\sA \to \sA\{\frac{f_1, \ldots, f_n}{f}\} $ est un \'epimorphisme de $\sK$-alg\`ebres de Banach car $\sA[\frac{1}{f}] \to   \sA\{\frac{f_1, \ldots, f_n}{f}\}$ est d'image dense (noter que $\sA  {\to} \sA[ U_1, \ldots, U_n]/ (fU_i-f_i)_i$ se factorise \`a travers un isomorphisme $\sA[\frac{1}{f}] \stackrel{\sim}{\to} \sA[ U_1, \ldots, U_n]/ (fU_i-f_i)_i$ puisque $f_1, \ldots, f_n, f$ engendrent $\sA$). 
     
 D\'etaillons le cas fondamental pour la suite o\`u $i=1$ et $f_1=1$:
      \begin{equation}   \sA\{\frac{1}{f}\} := \sA\langle U\rangle/ (fU-1)^- .  \end{equation} 
    C'est donc la $\sA$-alg\`ebre de Banach universelle $\sB$ pour laquelle l'image de $f$ est inversible d'inverse contenu dans $\sB^{\s}$.
   On a $  \sA\{\frac{1}{f}\} = 0$ si $f\in \sA^{\ss}\,$ ($fU-1$ est alors inversible). 
    
  \begin{lemma}\label{L10}\cite[prop. 2.3]{M}   Si $\sA$ est uniforme, l'id\'eal $(fU-1)$ de $\sA\langle U\rangle$ est ferm\'e. 
  \end{lemma}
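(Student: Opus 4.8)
The plan is to deduce the closedness of $I:=(fU-1)\sA\langle U\rangle$ from the fact that the $\sA\langle U\rangle$-linear endomorphism $m\colon g\mapsto (fU-1)g$ of $\sA\langle U\rangle$ is \emph{bounded below}. First, $m$ is continuous and injective: if $(fU-1)(\sum_{n\geq 0} g_n U^n)=0$, comparison of coefficients gives $g_0=0$ and $g_n=fg_{n-1}$ for $n\geq 1$, hence $g_n=f^n g_0=0$; so $fU-1$ is a regular element of $\sA\langle U\rangle$ for \emph{any} coefficient algebra. A continuous injection between Banach spaces has closed image if and only if it is bounded below: if $\Vert m(g)\Vert\geq c\Vert g\Vert$ then $\Im m$ is complete, hence closed; conversely if $\Im m$ is closed it is a Banach space, $m$ is a continuous bijection onto it, and its inverse is continuous by Banach's open mapping theorem \cite[I.3.3, th. 1]{B1}. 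So it suffices to bound $m$ below.

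To do so, endow $\sA$ with its spectral norm, which is legitimate since $\sA$ is uniform (\ref{ns}); then the Gauss norm of $\sA\langle U\rangle$ is again spectral by Ex.~\ref{E1}$(2)$, so $\Vert\sum a_n U^n\Vert=\max_n \vert a_n\vert_{sp}$ and this norm is power-multiplicative. Assume first that $\vert f\vert_{sp}\leq 1$, i.e.\ $f\in\sA^{\s}$ --- this is the case one actually needs later, where $f=g\in\sA^{\s}$. Given $g=\sum_{n\geq 0}g_n U^n$, put $g_{-1}=0$ and $c_n:=fg_{n-1}-g_n$, so that $(fU-1)g=\sum_n c_n U^n$ and $\delta:=\Vert(fU-1)g\Vert=\sup_n\vert c_n\vert_{sp}$. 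From $g_n=fg_{n-1}-c_n$ and $\vert f\vert_{sp}\leq 1$ one gets by induction on $n$ that $\vert g_n\vert_{sp}\leq\delta$ for all $n$; therefore $\Vert g\Vert\leq\delta=\Vert(fU-1)g\Vert$, so $m$ is bounded below (by $1$) and the lemma follows in this case.

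The inductive estimate collapses when $\vert f\vert_{sp}>1$, and handling that is the main obstacle; the statement in full generality is \cite[prop.~2.3]{M}. Two partial reductions are available. If $f$ is invertible in $\sA$, one can factor $fU-1=f\cdot(U-f^{-1})$ with $f$ a unit of $\sA\langle U\rangle$ and reduce to dividing by the degree-one factor $U-f^{-1}$, which is treated by the Weierstrass division algorithm in $\sA\langle U\rangle$ when $\vert f^{-1}\vert_{sp}\leq 1$. Alternatively, one can reconstruct $g$ from its top coefficient: if $M$ is the largest index with $\vert g_M\vert_{sp}=\Vert g\Vert$, then $fg_M=c_{M+1}+g_{M+1}$ forces $\vert fg_M\vert_{sp}<\vert g_M\vert_{sp}$ unless $\Vert g\Vert\leq\delta$, which settles the case $\vert f\vert_{sp}\geq 1$ whenever the spectral norm of $\sA$ is multiplicative. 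Combining these into the general uniform situation, where $\sA$ need not be a domain and $f$ need be neither a unit nor power-bounded, is the delicate point, and it is there that uniformity of $\sA$ is genuinely used.
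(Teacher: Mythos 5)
The paper itself gives no proof of Lemma~\ref{L10}: the statement is attributed to Mihara \cite[prop.~2.3]{M}, and the only content of the ``proof'' is the remark that the argument uses the Gelfand transform and is rewritten more transparently in Kedlaya--Liu \cite[2.8.8]{KL}. So there is no in-paper argument to compare yours against; but the hint ``Gelfand transform'' names exactly the ingredient your attempt is missing.

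Your overall strategy --- multiplication by $fU-1$ is injective and continuous, and closedness of the image is equivalent to a lower bound via the open mapping theorem --- is correct, as is your forward-induction estimate $\Vert(fU-1)g\Vert\geq\Vert g\Vert$ when $\vert f\vert_{sp}\leq 1$. Your ``second partial reduction'' correctly identifies the complementary top-coefficient argument for $\vert f\vert_{sp}\geq 1$ over a \emph{multiplicatively} normed ring. But you then stop, noting that ``combining these \dots is the delicate point,'' and that is precisely where the proof is incomplete: you have the local pieces but not the gluing. The missing step is to run the estimate pointwise over the Berkovich spectrum. Give $\sA$ its spectral norm. For each $x\in\mathcal M(\sA)$, the completed residue field $\mathcal H(x)$ carries a multiplicative norm, and evaluation $\sA\langle U\rangle\to\mathcal H(x)\langle U\rangle$ sends $fU-1$ to $f(x)U-1$. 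Your two estimates then give $\Vert(f(x)U-1)g(x)\Vert_{\mathcal H(x)\langle U\rangle}\geq\Vert g(x)\Vert_{\mathcal H(x)\langle U\rangle}$ for \emph{every} $x$, whether $\vert f(x)\vert\leq 1$ or $\vert f(x)\vert>1$. Uniformity of $\sA$ is what globalizes this: the spectral norm on $\sA$ is $\vert a\vert_{sp}=\sup_{x\in\mathcal M(\sA)}\vert a(x)\vert$, so the Gauss norm on $\sA\langle U\rangle$ satisfies $\Vert\sum h_nU^n\Vert=\sup_n\sup_x\vert h_n(x)\vert=\sup_x\Vert\sum h_n(x)U^n\Vert_{\mathcal H(x)\langle U\rangle}$, and the pointwise lower bounds therefore yield $\Vert(fU-1)g\Vert\geq\Vert g\Vert$ globally. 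Without this Gelfand-transform step, your argument is complete only in the special case $\vert f\vert_{sp}\leq 1$ (which, incidentally, is not always the one needed in \S\ref{loc}, since $f=g/\lambda$ there need not be power-bounded).
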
  
  L'argument, qui utilise la transform\'ee de Gelfand, est r\'e\'ecrit de mani\`ere peut-\^etre plus transparente dans \cite[2.8.8]{KL}.\qed

   \smallskip     Fixons $\lambda\in \sK^{\s}$ tel que $g:= \lambda f$ soit dans $\sA^{\s}$. Voyons 
           \begin{equation}   \sA\{\frac{\lambda}{g}\} :=  \sA\{\frac{1}{f}\}    \end{equation} 
          comme $\sK\langle T\rangle$-alg\`ebre de Banach ($T\mapsto g$), et notons $\iota_{  \lambda/g}: \sA \to  \sA\{ \frac{\lambda}{g}\}$ le morphisme canonique. On a une isom\'etrie (\cf formule \eqref{et})
      \begin{equation}\label{e11} \sA\{ \frac{\lambda}{g}\} \cong \sA\hat\otimes_{\sK\langle T\rangle} {\sK\langle T\rangle} \{\frac{\lambda}{T}\} , \end{equation} 
 et $ \sA\{ \frac{\lambda}{g}\}=0$ si $g\in \lambda \sA^{\ss}$.  
  Les propri\'et\'es suivantes sont faciles \`a v\'erifier:
   
    - si $\vert \lambda\vert \leq \vert \mu\vert$, on a un morphisme canonique $\sA\{ \frac{\lambda}{g}\rangle \to \sA\{ \frac{\mu}{g}\}$, et le morphisme canonique $\sA\{ \frac{\mu}{g}\} \to \sA\{ \frac{\lambda}{g}\}\{ \frac{\mu}{g}\}$ est un isomorphisme.

   - si $\vert g-h \vert < \vert \lambda\vert $, on a un isomorphisme canonique $\sA\{ \frac{\lambda}{g}\}\cong \sA\{ \frac{\lambda}{h}\}$
   (consid\'erer le  d\'eveloppement 
  $\displaystyle \frac{1}{h}-\frac{1}{g} = \sum_1^\infty \lambda^{-m-1} (g-h)^m (\frac{\lambda}{g})^{m+1}$). 
  
  - $\displaystyle \forall a\in \sA,\; \vert \iota_{  \lambda/g}(a)\vert = \inf_m \vert(\frac{g}{\lambda})^m a\vert_\sA$.
  
  \subsubsection{Exemples}\label{E7} $(1)$ Pour l'alg\`ebre spectrale $\sA= \sK\langle T\rangle$,  on a $\, \sK\langle T, \frac{\lambda}{T}\rangle  := \sA\{ \frac{\lambda}{T}\} = \sA\{ \frac{\lambda}{T}\}^u$: c'est l'alg\`ebre spectrale des fonctions analytiques sur la {``couronne ferm\'ee"} de rayons $(\lambda, 1)$, et  $\sK\langle T, \frac{\lambda}{T}\rangle^{\s} =\sK^{\s}\langle T, \frac{\lambda}{T}\rangle  $ est la sous-alg\`ebre des fonctions born\'ees par $1$. La norme n'est multiplicative que si $\lambda =0$ ou $\vert \lambda \vert = 1\,$ (notons qu'en g\'en\'eral, si $\sA$ est une $\sK$-alg\`ebre de Banach multiplicativement norm\'ee, $  \sA\{ \frac{1}{g}\}= \sA\{ \frac{1}{g}\}^u$ est une sous-alg\`ebre de Banach du compl\'et\'e $ \widehat{Q(\sA)}$ du corps de fractions de $\sA$).
  
\noindent \smallskip $(2)$ Les alg\`ebres de Banach $\sK\langle T, \frac{\lambda}{T}\rangle$ et $\sK\langle T, \frac{\lambda^2}{T^2}\rangle$ sont isomorphes mais non isom\'etriques: la premi\`ere est spectrale, mais pas la seconde (la norme de $ \frac{\lambda}{T}$ est $\vert\lambda\vert^{-1}>1$).     
          
  \medskip  Il peut arriver que $ \sA\{ \frac{\lambda}{g}\}$ ne soit pas uniforme, \cf \cite[\S 3]{M} (ceci n'arrive pas si $\sA$ est affino\"{\i}de r\'eduite munie de sa norme spectrale, \cite[7.2.3, prop. 4]{BGR}).

   Quoi qu'il en soit,  $\sA \to \sA\{\frac{\lambda}{g}\}^u$ est un \'epimorphisme dans $\sK\hbox{-}\bf uBan$, car l'image de $\sA[\frac{1}{g}]$ est dense dans $\sA\{\frac{\lambda}{g}\}^u$. {\it Le foncteur $\sA \to \sA\{\frac{\lambda}{T\cdot 1}\}^u$ est adjoint de l'inclusion dans $\sK\langle T\rangle\hbox{-}\bf uBan$ de la sous-cat\'egorie pleine form\'ee dans alg\`ebres $\sB$ telles que $T.1$ soit inversible et contenu dans $\sB^{\s}$}. 
   
   Si $\vert\sK\vert$ est dense dans $\vert\sA\vert$, on a, avec le sorite \ref{s1} (2)(4) 
   \begin{equation}\label{e13}  ( \sA\{ \frac{\lambda}{g}\}^u)_{\leq 1} = ( \sA\{ \frac{\lambda}{g}\}^u)^{\s} =  \sA\{ \frac{\lambda}{g}\}^{\s}_\ast =  \sA^{\s}\langle\frac{\lambda}{g}\rangle^\ast_{\sA^{\s}\langle\frac{\lambda}{g}\rangle[\frac{1}{\varpi}] } =  \widehat{\sA^{\s}[ \frac{\lambda}{g}]^\ast_{\sA[ \frac{\lambda}{g}]}} ,  
    \end{equation}
 ainsi que
     \begin{equation}\label{e14}   \sA\{ \frac{\lambda}{g}\}^{\s}  =  \widehat{\sA^{\s}[ \frac{\lambda}{g}]^+_{\sA[ \frac{\lambda}{g}]}} \end{equation}
    (\cf \cite[2.5.9 (d)]{KL}; c'est cet anneau qui intervient dans la th\'eorie des espaces de Huber).
    
  \begin{lemma}\label{L11}  On suppose $g\in \sA$ non diviseur de z\'ero. Soit $\iota:  \sA\inj \sA' $ un monomorphisme d'alg\`ebres de Banach tel que $\sA'$ soit contenue dans $\sA[\frac{1}{g}]$ (en tant que $\sA$-alg\`ebre).  Alors le morphisme induit $ \sA\{ \frac{\lambda}{g}\}\to  \sA'\{ \frac{\lambda}{g}\}$ est un isomorphisme d'alg\`ebres de Banach.
    \end{lemma}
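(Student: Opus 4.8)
The plan is to prove that the canonical continuous morphism $\kappa \colon \sB := \sA\{\frac{\lambda}{g}\} \to \sB' := \sA'\{\frac{\lambda}{g}\}$ is an isomorphism; since a continuous bijective morphism of Banach algebras is automatically bicontinuous by Banach's open mapping theorem, it is enough to check that $\kappa$ is bijective. As $\iota$ is a monomorphism we may regard $\sA \subseteq \sA' \subseteq \sA[\frac{1}{g}]$ as honest subring inclusions (note $g$ is a non-zero-divisor in $\sA$, hence in $\sA'$), so $\sA'[\frac{1}{g}] = \sA[\frac{1}{g}]$ is a common subring whose image is dense in both $\sB$ and $\sB'$ by \ref{loc}, and the composite $\sA[\frac{1}{g}] \to \sB \to \sB'$ coincides with the localization map $\sA'[\frac{1}{g}] \to \sB'$; hence $\kappa$ has dense image. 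It therefore remains to show that the two algebra seminorms $|\cdot|_\sB$ and $|\cdot|_{\sB'}$ that $\sB$ and $\sB'$ induce on $\sA[\frac{1}{g}]$ are equivalent: then the two Hausdorff completions coincide and $\kappa$ is the resulting isomorphism. The inequality $|\cdot|_{\sB'} \le C_1 |\cdot|_\sB$ follows from continuity of $\kappa$; the content is the reverse one.

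The decisive point is a ``uniform denominator'' bound obtained from the open mapping theorem. Write $\sA' = \bigcup_{N \ge 0} \sA'_N$ with $\sA'_N := \{z \in \sA' : g^N z \in \sA\}$, an increasing exhaustion since $\sA' \subseteq \sA[\frac{1}{g}]$. Each $\sA'_N$ is a Banach space for $\|z\|_N := \max(|z|_{\sA'}, |g^N z|_\sA)$ --- if $(z_k)$ is $\|\cdot\|_N$-Cauchy then $z_k \to z$ in $\sA'$ and $g^N z_k$ converges in $\sA$ to an element which is forced to be $g^N z$ --- and the inclusion $(\sA'_N, \|\cdot\|_N) \hookrightarrow \sA'$ is continuous. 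By the standard fact (a consequence of the open mapping theorem, obtained by applying it to the sum map $\bigoplus_N (\sA'_N, \|\cdot\|_N) \to \sA'$ after rescaling the norms) that such an exhaustion of a Banach space stabilizes, there exist an integer $N$ and a constant $C$ with $g^N \sA' \subseteq \sA$ and $|g^N z|_\sA \le C |z|_{\sA'}$ for all $z \in \sA'$.

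Granting this, take $y \in \sA[\frac{1}{g}]$ and, for $\varepsilon > 0$, a representative $\sum_k c_k U^k \in \sA'\langle U\rangle$ of the image of $y$ in $\sB' = \sA'\langle U\rangle/(fU-1)^-$ (where $U \leftrightarrow \lambda/g$, $f = g/\lambda$, $c_k \in \sA'$, $|c_k|_{\sA'} \to 0$) with $\sup_k |c_k|_{\sA'} \le |y|_{\sB'} + \varepsilon$. Then $g^N c_k \in \sA$ with $|g^N c_k|_\sA \le C |c_k|_{\sA'}$, so $\sum_k (g^N c_k) U^{N+k}$ lies in $\sA\langle U\rangle$ with Gauss norm $\le C \sup_k |c_k|_{\sA'}$; multiplying the congruence $\sum_k c_k U^k \equiv y$ by $(fU)^N \equiv 1 \pmod{fU-1}$ gives $\sum_k (g^N c_k) U^{N+k} \equiv \lambda^N y$, and this congruence holds already modulo $(fU-1)^-$ in $\sA\langle U\rangle$ --- one checks it first for polynomial representatives, where an induction on the degree returns the auxiliary factor to $\sA[U]$, and then passes to the limit, using the bound $|g^N z|_\sA \le C|z|_{\sA'}$ to keep the convergence of coefficients inside $\sA$. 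Hence $|\lambda^N y|_\sB \le C \sup_k |c_k|_{\sA'} \le C(|y|_{\sB'} + \varepsilon)$, and letting $\varepsilon \to 0$ yields $|y|_\sB \le |\lambda|^{-N} C\, |y|_{\sB'}$, which completes the comparison. The main obstacle is precisely this last inequality: a priori the denominators of representatives of elements of $\sA[\frac{1}{g}]$ coming from $\sA'$ are unbounded, and it is the Baire-category/open-mapping argument above that makes them uniform.
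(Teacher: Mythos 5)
Your proof is correct and uses essentially the same key idea as the paper's (which is due to O.~Gabber): a Baire-category/open-mapping argument producing a uniform $N$ and a constant $C$ such that $g^N\sA'\subseteq\sA$ and $|g^N z|_\sA\le C|z|_{\sA'}$ --- the paper phrases this via the closed graphs $V_m\subset\sA'\times\sA$ of multiplication by $g^m$ and a Baire argument on the surjection $\hat\oplus V_m\to\sA'$, while you use the equivalent exhaustion $\sA'=\bigcup\sA'_N$. Having the bound, the paper concludes more economically by noting that multiplication by $g^N$ descends to a continuous map $\sA'\{\frac{\lambda}{g}\}\to\sA\{\frac{\lambda}{g}\}$ whose composite with the continuous multiplication by $g^{-N}$ on $\sA\{\frac{\lambda}{g}\}$ gives an inverse of the canonical morphism; your hands-on comparison of the two quotient seminorms on $\sA[\frac{1}{g}]$ via power-series representatives is a correct but more laborious route to the same conclusion.
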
 
  
  \begin{proof} (d'apr\`es O. Gabber). Pour tout $n\in \N$, soit $V_n$ le sous-espace ferm\'e de $\sA' \times \sA$ form\'e des couples $(a', a = g^n a')$. Via la projection selon la premi\`ere composante, l'espace de Banach $\hat\oplus V_m$ se surjecte sur $\sA'$. D'apr\`es le th\'eor\`eme de l'image ouverte et le th\'eor\`eme de Baire, il existe $n$ tel que le sous-espace ferm\'e $\oplus_{m\leq n} V_m$ se surjecte ouvertement sur $\sA'$. Donc la multiplication par $g^n$ induit une application continue $\sA' \to \sA$, d'o\`u aussi une application continue $\sA'\{ \frac{\lambda}{g}\} \to \sA\{ \frac{\lambda}{g}\}$. Composant avec la multiplication par $g^{-n}$ dans $\sA\{ \frac{\lambda}{g}\}$, on obtient un inverse continu du morphisme canonique $\sA\{ \frac{\lambda}{g}\} \to \sA'\{ \frac{\lambda}{g}\}$.    
     \end{proof}  
     
    \begin{lemma}\label{L11'} On suppose $\sA$ uniforme, et $g$ non-diviseur de z\'ero. Alors $\displaystyle \sA^{\s}\to \lim_{\lambda \to 0}\, \sA\{ \frac{\lambda}{g}\}^{  \sm o}$ est injectif. 
    \end{lemma}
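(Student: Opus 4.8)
The plan is to reduce to the spectral case and then invoke the Berkovich-spectral description of the spectral norm. First I would replace the norm of $\sA$ by its equivalent spectral (power-multiplicative) norm $\vert\;\vert_{sp}$: by \S\ref{mb} this affects neither $\sA^{\s}$ nor the closed ideal $(gU-\lambda)^-$ of $\sA\langle U\rangle$, hence neither the rings $\sA\{\frac{\lambda}{g}\}$ and $\sA\{\frac{\lambda}{g}\}^{\s}$ nor the homomorphisms $\iota_{\lambda/g}$, so injectivity of $\sA^{\s}\to\lim_{\lambda\to 0}\sA\{\frac{\lambda}{g}\}^{\s}$ is unchanged; we may thus assume $\vert\;\vert=\vert\;\vert_{sp}$. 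Since each $\sA\{\frac{\lambda}{g}\}$ is a quotient of $\sA\langle U\rangle$ by a closed ideal its norm is genuine, so an element $a\in\sA^{\s}$ is killed by $\sA^{\s}\to\lim$ exactly when $\iota_{\lambda/g}(a)=0$ for every $\lambda\in\sK^{\ss}\setminus 0$; by the formula $\vert\iota_{\lambda/g}(a)\vert=\inf_m\vert g^m a\vert\,\vert\lambda\vert^{-m}$ recalled in \S\ref{loc}, this means $\inf_m\vert g^m a\vert\,\vert\lambda\vert^{-m}=0$ for all such $\lambda$.

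I would then argue by contraposition. Suppose $a\neq 0$. As $g$ is a non-zero-divisor, $ag\neq 0$, hence $\vert ag\vert_{sp}>0$. Writing $\vert\;\vert_{sp}$ as the supremum of the bounded multiplicative seminorms on $\sA$, i.e. of the points $v\in\mathcal M(\sA)$ of the Berkovich spectrum, I pick $v$ with $\vert ag\vert_v>0$; then $\vert a\vert_v>0$ and $\vert g\vert_v>0$ by multiplicativity. Choosing $\lambda\in\sK^{\ss}\setminus 0$ with $\vert\lambda\vert\leq\vert g\vert_v$ (possible since $0<\vert g\vert_v\leq\vert g\vert<\infty$ and the valuation of $\sK$ is non-trivial), one gets for every $m$
\[\vert g^m a\vert\,\vert\lambda\vert^{-m}=\vert g^m a\vert_{sp}\,\vert\lambda\vert^{-m}\geq\vert g^m a\vert_v\,\vert\lambda\vert^{-m}=\vert a\vert_v\,(\vert g\vert_v/\vert\lambda\vert)^m\geq\vert a\vert_v>0,\]
so $\inf_m\vert g^m a\vert\,\vert\lambda\vert^{-m}\geq\vert a\vert_v>0$, i.e. $\iota_{\lambda/g}(a)\neq 0$, and $a$ is not in the kernel. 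This is the whole argument.

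The one genuine point, I expect, is exhibiting a \emph{single} bounded multiplicative seminorm $v$ that is simultaneously nonzero on $a$ and on $g$: this is precisely what forces recourse to the Berkovich spectrum rather than a soft manipulation of the norm, and it works only because $ag\neq 0$ already suffices, via $\vert ag\vert_v=\vert a\vert_v\vert g\vert_v$. The reduction to the spectral norm and the final chain of inequalities are routine. Completeness of $\sA$ enters only through this spectral interpretation (valid in the complete case); if $\sA$ were merely a uniform normed algebra one would first pass to $\hat\sA$, which is uniform Banach, into which $\sA$ embeds isometrically and in which $g$ remains a non-zero-divisor on the subalgebra $\sA$ — all that the argument actually uses.
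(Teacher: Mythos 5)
Your argument is correct and follows essentially the same route as the paper's: reduce to the spectral norm, invoke the formula $\vert\iota_{\lambda/g}(a)\vert=\inf_m\vert(g/\lambda)^m a\vert$, and exhibit a single small enough $\lambda$ (depending on $a$) for which the infimum stays strictly positive. You make explicit the Berkovich-spectral step implicit in the paper's terser bound (picking $v\in\mathcal M(\sA)$ with $\vert ag\vert_v>0$ and taking $\vert\lambda\vert\le\vert g\vert_v$, where the paper instead thresholds at $\vert ga\vert$ and leaves the lower bound on $\inf_m$ to the reader); this is the same idea, carried out with the spectrum stated rather than tacit.
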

    
   \begin{proof}  En effet, on peut supposer $\vert \;\vert$ spectrale. Pour tout $a\in \sA^{\s}\setminus 0$ et tout $\lambda\in \sK^{\s}\setminus 0$, on a $\vert a\vert_{\sA\{\frac{\lambda}{g}\}} =    \displaystyle \inf_{m}  {\vert (\frac{g}{\lambda})^m a\vert} $. Si $\vert\lambda\vert \leq  \vert ga\vert, \,  \displaystyle \inf_{m}  {\vert (\frac{g}{\lambda})^m a\vert} \geq 1$, donc  l'image de $\,a\,$ dans $\sA\{\frac{\lambda}{g}\}^{\s}$ n'est pas nulle. \end{proof}

  \subsubsection{Remarques}\label{r4} $(1)$ $\iota_{\lambda/g}$ est injectif si et seulement si $ \displaystyle \inf_m \vert(\frac{g}{\lambda})^m a\vert_\sA = 0 \Rightarrow a= 0$.  C'est le cas en particulier si la multiplication par $g$ dans $\sA$ est isom\'etrique. 
   En revanche, la multiplication par $g$ dans $\sA\{ \frac{\lambda}{g}\}^{\s}$ n'est pas isom\'etrique si $\vert \lambda \vert < 1$, mais les morphismes $\sA\{ \frac{\lambda}{g}\}^u\to  \sA\{ \frac{\mu}{g}\}^u$ sont tous injectifs (comme on le voit en prenant $\mu=1$ et en utilisant la densit\'e de $\sA[\frac{1}{g}]$).  
  
 \smallskip\noindent $(2)$    Le compl\'et\'e de $\sK^{\s}[T_1, T_2, \frac{T_2T_1^i}{\varpi^i}]_{i\geq 1}$ est la boule unit\'e $\sA^{\s}$ d'une $\sK$-alg\`ebre de Banach uniforme $\sA$ sans $T_1$-torsion, mais on a  
 $\iota_{\lambda/T_1} (T_2)=0$ si $\vert\lambda \vert > \vert \varpi\vert$, puisque $T_2$ est alors infiniment divisible par $\varpi$ dans $\sK^{\s}[T_1, T_2, \frac{T_2T_1^i}{\varpi^i}, \frac{\lambda}{T_1}]$.

     \smallskip\subsection{Produits (et transform\'ee de Gelfand).}\label{prodG}

      \subsubsection{} Les produits infinis d'alg\`ebres de Banach n'existent pas en g\'en\'eral: si $\sA$ n'est pas uniforme et si $a_n$ est une suite non born\'ee d'\'el\'ements de $\sA^{\s}$, la donn\'ee des morphismes $\sK \stackrel{1\mapsto a_n}{\to} \sA  $ ne peut \'equivaloir \`a celle d'un morphisme de $\sK$ vers une alg\`ebre de Banach.  
      
   En revanche, du fait que les morphismes d'alg\`ebres munis d'une norme spectrale v\'erifient $\vert \vert \phi\vert \vert \leq 1$, 
      la cat\'egorie des $\sK$-alg\`ebres de Banach uniformes a des produits quelconques (le produit vide \'etant l'alg\`ebre nulle): 
    \begin{equation}\label{e15} \prod^u \sA^\alpha:= \{  (a^\alpha) \in \prod \sA^\alpha,  \; \vert (a^\alpha)\vert :=\sup \vert  a^{\alpha}\vert_{sp}  < \infty\}.\end{equation}    
    On a    \begin{equation}\label{e16} (\prod^u \sA^\alpha)^{\s} =  \prod \sA^{\alpha{\s}}. \end{equation}    
     
    \subsubsection{Exemple prophylactique.}\label{E40} On peut se demander si toute $\sK$-alg\`ebre de Banach spectrale $\sA$ se d\'ecompose 
       en produit uniforme $ \sA = \prod^u \, \sA^\alpha$
        de $\sK$-alg\`ebres de Banach spectrales {``connexes"}\footnote{Une alg\`ebre de Banach uniforme est connexe (\ie n'a d'autre idempotent que $0$ et $1$) si et seulement si son spectre de Berkovich $\mathcal M(\sA)$ est connexe, \cf  \cite[cor. 7.4.2]{Be}.}.  
        
      Il n'en est rien. Soit en effet ${\rm{B}}(\sA)$ l'alg\`ebre de Boole des idempotents de $\sA$ (ou de $\sA^{\s}$, c'est la m\^eme chose). Une telle d\'ecomposition, qui \'equivaut \`a $\sA^{\s} = \prod  \, \sA^{\alpha {\s}}$,  a lieu si et seulement si ${\rm{B}}(\sA)$ est complet (toute famille a un supremum) et atomique (tout \'el\'ement non nul est le supremum d'une famille d'\'el\'ements minimaux non nuls).  
   
          Inversement, partons d'une alg\`ebre de Boole ${\rm{B}}$ et construisons la $\sK$-alg\`ebre de Banach uniforme  
      $\,\sA :=( \sK\langle T_b\rangle_{b\in {\rm{B}}}/(T_b T_{b'} - T_{b\wedge b'},\, T_b + T_{\neg b'} -1))^u  \,$  topologiquement engendr\'ee par les idempotents $T_b$. On a ${\rm{B}}(\sA)\cong {\rm{B}}$. Si ${\rm{B}}$ est par exemple une alg\`ebre de Boole libre infinie - jamais compl\`ete ni atomique -, $\sA$ n'admet pas de d\'ecomposition en produit uniforme de facteurs connexes.
          En s'appuyant sur \cite[cor. 9.2.7]{Be} (voir aussi \cite{M2}), on peut d'ailleurs montrer que le spectre de Berkovich $\mathcal M(\sA)$ s'identifie au spectre maximal de ${\rm{B}}$ (qui lui-m\^eme s'identifie \`a l'espace profini des composantes connexes de $\Spec \, \sA$), et $\sA$ \`a l'alg\`ebre des fonctions continues born\'ees \`a valeurs dans $\sK$ sur cet espace.

 \subsubsection{}\label{trG} La {\it transform\'ee de Gelfand} d'une $\sK$-alg\`ebre de Banach uniforme $\sA \neq 0$ est  \begin{equation}\label{e19}\displaystyle \Gamma\sA := \prod^u_{x\in \mathcal M(\sA)} \, \mathcal H{(x)},\end{equation}
  le produit \'etant pris sur les points du spectre de Berkovich, c'est-\`a-dire les semi-normes multiplicatives born\'ees, et $\mathcal H{(x)}$ d\'esignant le compl\'et\'e du corps r\'esiduel de $x$, \cf \cite[cor. 1.3.2]{Be}.
      On a un monomorphisme canonique $\sA \to \Gamma\sA$, qui est isom\'etrique si et seulement si $\sA$ est spectrale \cite[1.3.2]{Be}.  
   Le spectre $\mathcal M(\Gamma \sA)$ est le compactifi\'e de Stone-Cech de $\mathcal M(\sA)$ discr\'etis\'e \cite[1.2.3]{Be}, et l'application $\mathcal M(\Gamma \sA)\to \mathcal M(\sA)$ est la surjection canonique. 
  
  Cette construction d\'efinit un endofoncteur $\Gamma$ de $\sK\hbox{-}{\bf uBan}$: tout morphisme $\sA\stackrel{f}{\to} \sB$ induit un homomorphisme {``diagonal"} continu $\displaystyle\mathcal H{(x)} \to \prod^u_{f_\ast(y)= x} \mathcal H{(y)}$
  et on obtient $\Gamma(f)$ en prenant le produit uniforme lorsque $x$ parcourt $\mathcal M(\sA)$. C'est un foncteur fid\`ele (puisque $\sB \to \Gamma\sB$ est un monomorphisme), non plein. 
 
 On d\'eduit de l\`a qu'un homomorphisme continu $\sA \to \sB$ d'alg\`ebres de Banach spectrales est isom\'etrique si le morphisme associ\'e $\mathcal M(\sB)\to \mathcal M(\sA)$ est surjectif (la r\'eciproque n'est pas vraie comme le montre l'exemple du plongement de $\sK\langle T\rangle$ dans le compl\'et\'e de son corps de fractions).

  \newpage
 \subsection{Limites.}\label{ulim}        
         
   \subsubsection{}  La cat\'egorie des $\sK$-alg\`ebres de Banach uniformes a des \'egalisateurs: les sous-alg\`ebres repr\'esentant ces \'egalisateurs au sens usuel sont ferm\'ees. Comme elle a des produits, elle est {\it compl\`ete}, \ie admet toutes les (petites) limites \cite[V.2 cor. 2]{mL}.
   
  Dans le cas d'un syst\`eme projectif $(\sA^{\alpha})$  de $\sK$-alg\`ebres de Banach uniformes index\'e par un ensemble ordonn\'e $(\{\alpha\}, \leq)$ (non n\'ecessairement filtrant), la limite s'obtient comme  {\it limite {uniforme}}
   d\'efinie par
    \begin{equation}\label{e17} {\rm{{ulim}}}\, \sA^{\alpha} := \{a = (a^{\alpha})\in  { \rm{lim}}\,  \, \sA^{\alpha}, \;  \vert a \vert :=\sup \vert  a^{\alpha}\vert_{sp}   < \infty\}.\end{equation} 
     C'est bien une alg\`ebre de Banach uniforme (et m\^eme spectrale), et on a (compte tenu de ce que $(\;)^{\s}$ est la boule unit\'e pour la norme spectrale)   
      \begin{equation}\label{e18}( {\rm{{ulim}}}\, \sA^{\alpha})^{\s} = { \rm{lim}}\,  \,\sA^{\alpha {\s}}.\end{equation} 
      En particulier, les conditions $(1)$ \`a $(4)$ du cor. \ref{C1} sont pr\'eserv\'ees par passage \`a la limite. 
      
 \subsubsection{Exemples}\label{E8} $(1)$ Soit $(r_i)$ une suite de r\'eels positifs tendant vers $1$ en croissant. Alors les alg\`ebres de fonctions analytiques en la variable $T$ sur les {``disques ferm\'es"} de rayon $r^i$ forment un syst\`eme projectif de $\sK$-alg\`ebres de Banach uniformes, de limite {uniforme} $\sK^{\s}[[T]]\otimes_{\sK^{\s}} \sK$, l'alg\`ebre des fonctions analytiques born\'ees sur le {``disque unit\'e ouvert"}.
    
   \smallskip\noindent  $(2)$ Pour $\vert \varpi\vert < 1$, les alg\`ebres $\sK\langle T, \frac{\varpi^{j}}{T}\rangle $ de fonctions analytiques sur les {``couronnes ferm\'ees"} de rayons $(\vert \varpi\vert^j ,1)$  forment un syst\`eme projectif de $\sK$-alg\`ebres de Banach uniformes, de limite {uniforme} $\sK\langle T\rangle $: c'est le cas le plus simple du th\'eor\`eme d'extension de Riemann non archim\'edien (\cf \S 4).

  \begin{lemma}\label{L13} Soit $(\sA^{\alpha})$ un syst\`eme projectif de $\sK$-alg\`ebres de Banach uniformes, de limite uniforme $\sA$. Alors l'homomorphisme canonique 
  $\sA^{\s}/\varpi \to \lim ( \sA^{\alpha\sm o}/\varpi)$  est injectif. \end{lemma}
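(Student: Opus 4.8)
The plan is to unwind both sides via the explicit description of the uniform limit and then use that multiplication by the nonzero scalar $\varpi$ is injective on any $\sK$-algebra. First I would invoke \eqref{e18}, which gives $\sA^{\s}=\lim\sA^{\alpha\s}$, the honest (set- or ring-theoretic) limit of the system of unit balls; the morphism to be studied is then the limit of the reduction maps $\sA^{\alpha\s}\to\sA^{\alpha\s}/\varpi$, followed by the passage to the quotient mod.\ $\varpi$ on the source. An element in its kernel is therefore a compatible family $a=(a^{\alpha})\in\lim\sA^{\alpha\s}$ with $a^{\alpha}\in\varpi\sA^{\alpha\s}$ for every $\alpha$, and the goal is to show $a\in\varpi\sA^{\s}$.

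Next, for each $\alpha$ write $a^{\alpha}=\varpi b^{\alpha}$ with $b^{\alpha}\in\sA^{\alpha\s}$. Since $\sA^{\alpha}$ is a $\sK$-vector space, multiplication by $\varpi$ is bijective on it, hence injective on the subgroup $\sA^{\alpha\s}$, so $b^{\alpha}$ is \emph{uniquely} determined. The key step is that $(b^{\alpha})$ is again a compatible family: for any transition homomorphism $\phi\colon\sA^{\alpha}\to\sA^{\beta}$ of the system --- which carries $\sA^{\alpha\s}$ into $\sA^{\beta\s}$, being a continuous homomorphism of uniform algebras --- applying $\phi$ to $a^{\alpha}=\varpi b^{\alpha}$ and comparing with $a^{\beta}=\varpi b^{\beta}$ yields $\varpi\,\phi(b^{\alpha})=\varpi b^{\beta}$, whence $\phi(b^{\alpha})=b^{\beta}$ by injectivity of $\varpi$ on $\sA^{\beta\s}$. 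Thus $b:=(b^{\alpha})$ lies in $\lim\sA^{\alpha\s}=\sA^{\s}$ and satisfies $\varpi b=a$, so $a\in\varpi\sA^{\s}$ and its class in $\sA^{\s}/\varpi$ already vanishes; this proves injectivity.

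I do not expect a genuine obstacle: the mechanism is simply that $\varpi$-torsion-freeness of each $\sA^{\alpha\s}$ makes the preimages $b^{\alpha}$ canonical, so compatibility is automatic rather than requiring any choice. The only points deserving a word of care are the appeal to \eqref{e18} to replace the uniform limit by the set-theoretic limit of the $\sA^{\alpha\s}$, and the remark that the transition maps, being morphisms of uniform Banach algebras, preserve the subrings $(\;)^{\s}$, which is what keeps the whole argument inside these subrings. By contrast, the companion assertion --- surjectivity of $\sA^{\s}/\varpi\to\lim(\sA^{\alpha\s}/\varpi)$ --- is the delicate one, governed by the $\lim^{1}$ of the tower $(\varpi\sA^{\alpha\s})$, and is precisely what can fail in general and motivates the perfectoid/Riemann-extension hypotheses of \S 4; but that is not what is asked here.
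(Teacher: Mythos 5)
Your proof is correct and is essentially the same argument as the paper's, just unwound element-wise: the paper simply applies $\lim$ to the exact sequence $0\to\sA^{\alpha\s}\stackrel{\cdot\varpi}{\to}\sA^{\alpha\s}\to\sA^{\alpha\s}/\varpi$ (exact by $\varpi$-torsion-freeness) and uses left-exactness of $\lim$ together with $\sA^{\s}=\lim\sA^{\alpha\s}$, which is precisely what your explicit construction of the unique compatible family $(b^{\alpha})$ is verifying by hand.
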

  
  \begin{proof} En effet, comme $  \sA^{\alpha\sm o}$ est sans $\varpi$-torsion, on a une suite exacte 
$  0\to \lim \sA^{\alpha\sm o}\stackrel{\cdot \varpi}{\to} \lim \sA^{\alpha\sm o} \to \lim (\sA^{\alpha\sm o}/\varpi)   \,$. \end{proof}

 \begin{lemma}\label{L14}    Soit $(\sA^{\alpha})$ un syst\`eme projectif de $\sK\langle T^{\e}\rangle$-alg\`ebres de Banach uniformes. La limite uniforme est canoniquement une $\sK\langle T^{\e}\rangle$-alg\`ebre de Banach uniforme. Notons $g^{\frac{1}{m}}$ l'image de $T^{\frac{1}{m}}$ dans chaque $\sA^{\alpha}$.
      On a un isomorphisme canonique 
     \[ g^{\f} \lim \sA^{\alpha{\s}} \cong \lim   g^{\f}\sA^{\alpha {\s}}.\]
     \end{lemma}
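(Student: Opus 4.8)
Le plan est de ramener l'\'enonc\'e \`a l'identification $g^{\f}\sB^{\s} \cong (\sB^{\s a})_\ast$ d\'ej\`a obtenue au lemme \ref{L7} pour une $\sK\langle T^{\e}\rangle$-alg\`ebre de Banach uniforme $\sB$ (dans le cadre $(\mathfrak V = \sK^{\s}\langle T^{\e}\rangle,\, \mathfrak m = T^{\e}\mathfrak V^{\ss})$ de \ref{E6}, o\`u $\mathfrak m = \tilde{\mathfrak m}$), puis \`a invoquer la commutation du foncteur des presque-\'el\'ements aux limites projectives.

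D'abord, je v\'erifierais que la limite uniforme $\sA := {\rm{ulim}}\, \sA^{\alpha}$ est elle-m\^eme canoniquement une $\sK\langle T^{\e}\rangle$-alg\`ebre de Banach uniforme. Les morphismes structuraux $\sK\langle T^{\e}\rangle \to \sA^{\alpha}$ \'etant compatibles aux morphismes de transition, ils se factorisent par un morphisme $\sK\langle T^{\e}\rangle \to \lim \sA^{\alpha}$; comme $T^{\frac{1}{p^m}}\in \sK\langle T^{\e}\rangle^{\s}$, son image $g^{\frac{1}{p^m}}$ dans chaque $\sA^{\alpha}$ est de norme spectrale $\leq 1$, de sorte que l'\'el\'ement $(g^{\frac{1}{p^m}})_\alpha \in \lim \sA^{\alpha}$ a pour norme $\sup_\alpha \vert g^{\frac{1}{p^m}}\vert_{sp}\leq 1 < \infty$ et appartient donc \`a $\sA$. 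En notant $g$ l'image de $T$ dans $\sA$, on a $\sA^{\s} = \lim \sA^{\alpha{\s}}$ d'apr\`es \eqref{e18}, donc le membre de gauche de l'isomorphisme annonc\'e est $g^{\f}\sA^{\s}$.

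Ensuite, je remarquerais que le foncteur $\sB \mapsto (\sB^{\s a})_\ast = {\rm{Hom}}_{\mathfrak V}(\tilde{\mathfrak m}, \sB^{\s})$, d\'efini sur les $\sK\langle T^{\e}\rangle$-alg\`ebres de Banach uniformes, est le compos\'e du foncteur $(\;)^{\s}$ et du foncteur $M \mapsto {\rm{Hom}}_{\mathfrak V}(\tilde{\mathfrak m}, M)$ sur les $\mathfrak V$-modules. Le premier transforme les limites uniformes en limites projectives ordinaires d'apr\`es \eqref{e18} (la limite uniforme \'etant la limite cat\'egorique, \cf \ref{ulim}), et le second commute \`a toutes les limites projectives (foncteur ${\rm{Hom}}$ au d\'epart d'un objet fixe) --- c'est aussi la commutation aux limites du lemme \ref{L4}. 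En appliquant ceci au syst\`eme $(\sA^{\alpha})$, de limite uniforme $\sA$, et en combinant avec l'isomorphisme canonique (naturel en $\sB$) $g^{\f}\sB^{\s} \cong (\sB^{\s a})_\ast$ du lemme \ref{L7} (formule \eqref{e6'}), on obtient la cha\^ine
\[ g^{\f}\Big(\lim \sA^{\alpha{\s}}\Big) = g^{\f}\sA^{\s} \;\cong\; (\sA^{\s a})_\ast \;\cong\; \lim\, (\sA^{\alpha{\s} a})_\ast \;\cong\; \lim\, g^{\f}\sA^{\alpha{\s}} , \]
d'o\`u l'isomorphisme canonique cherch\'e.

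Le seul point demandant un peu de soin --- et ce qui se rapproche le plus d'un obstacle --- est de s'assurer que l'isomorphisme $g^{\f}\sB^{\s}\cong (\sB^{\s a})_\ast$ du lemme \ref{L7} est bien naturel en $\sB$, afin de pouvoir l'appliquer de fa\c con compatible \`a tout le syst\`eme; mais cela r\'esulte de sa construction, puisque $g^{\f}\sB^{\s}$ est r\'ealis\'e comme un sous-$\mathfrak V$-module de $\sB^{\s}[\frac{1}{g}]$ et que l'identification \`a ${\rm{Hom}}_{\mathfrak V}(\tilde{\mathfrak m},\sB^{\s})$ provient de la description \eqref{e6} comme t\'elescope. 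Le reste \'etant formel, je n'anticipe pas de difficult\'e v\'eritable : le contenu du lemme est essentiellement que le ph\'enom\`ene d\'elicat de non-commutation des limites entre alg\`ebres perfecto\"{\i}des et alg\`ebres de Banach uniformes (mentionn\'e dans l'introduction) n'affecte pas l'op\'eration $g^{\f}(\,\cdot\,)$, celle-ci \'etant elle-m\^eme un adjoint \`a droite, donc compatible aux limites.
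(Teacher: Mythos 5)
Your proof is correct and follows the first of the two arguments the paper itself gives: identify $g^{\f}\sB^{\s}$ with $(\sB^{\s a})_\ast = {\rm Hom}_{\mathfrak V}(\tilde{\mathfrak m},\sB^{\s})$ via \eqref{e6'}, use \eqref{e18} to pass from uniform limits to ordinary projective limits of the $\sA^{\alpha\s}$, and invoke the commutation of $(\;)^a$ and $(\;)_\ast$ with limits (the paper works in the cadre $(\sK^{\s}[T^{\e}], T^{\e}\sK^{\ss}[T^{\e}])$ rather than its completion, but this is immaterial). The paper also notes an even more direct alternative --- the elementary interchange of limits $\lim_m\lim_\alpha g^{-\frac{1}{p^m}}\sA^{\alpha\s} = \lim_\alpha\lim_m g^{-\frac{1}{p^m}}\sA^{\alpha\s}$ --- which you could mention, but your argument is the paper's.
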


 \begin{proof} Cela d\'ecoule, dans le cadre $( \sK^{\s}[ T^{\e}],  T^{\e}\sK^{\ss}[ T^{\e}])$, de ce que $( )^a$ et $( )_\ast$ pr\'eservent les limites; alternativement, de l'interversion de limites $  \lim_m \lim_\alpha g^{-\frac{1}{p^m}}\sA^{\alpha\sm o}=  \lim_\alpha \lim_m g^{-\frac{1}{p^m}}\sA^{\alpha\sm o} $.  \end{proof}

 \noindent Ce lemme s'applique notamment au cas du syst\`eme de localisations $(\sA\{ \frac{\varpi^j}{g}\}^u)$  (qui contiennent $\frac{1}{g}$) 
et montre, compte tenu du lemme \ref{L8},  que la limite uniforme $\tilde \sA$ v\'erifie $\tilde \sA^{\s} = g^{\f}\tilde \sA^{\s}$.

\newpage\subsection{Colimites.}\label{{ucolim}}  
             \subsubsection{}  Soit $(\sB_{\alpha})$ un syst\`eme inductif de $\sK$-alg\`ebres de Banach uniformes index\'e par un ensemble $(\{\alpha\}, \leq)$ {\it filtrant}.  On munit $\rm{colim}\, \sB_{\alpha} $ de la semi-norme (spectrale) limite $\vert(b_{\alpha})\vert = { \rm{lim}}_\alpha\, \vert b_{\alpha}\vert_{sp}$ (bien d\'efinie en vertu de la monotonie), et on d\'efinit la {\it colimite {uniforme}} comme \'etant le compl\'et\'e s\'epar\'e $\, \rm{{ucolim}}\, \sB_{\alpha} \,$ de cette colimite semi-norm\'ee. Munie de la norme induite, c'est une alg\`ebre de Banach uniforme (et m\^eme spectrale). 
     
   On v\'erifie imm\'ediatement que la colimite filtrante {uniforme} est une colimite filtrante dans $\sK\hbox{-}\bf uBan$, et on a $(\rm{ucolim}\, \sB_{\alpha} )^{\s}=  \widehat{\rm{colim}\, \sB_{\alpha}^{\s}} $ (avec la notation du sorite \ref{s1}, on a ${}^{\rm{colim}\, \sB_{\alpha}^{\s}}\vert\,b\vert=  { \rm{lim}}_\alpha\, \vert b_{\alpha}\vert_{sp}$).  
    
  Si les normes sont spectrales et les morphismes $ \sB_{\alpha}\to \sB_{\beta}$ isom\'etriques, il en est de m\^eme de $ \sB_{\alpha}\to { \rm{{ucolim}}}\,\sB_{\beta}$.
    
    \subsubsection{}\label{coco} Comme $\sK\hbox{-}\bf uBan$ admet des colimites filtrantes et des sommes amalgam\'ees $\hat\otimes^u$ (donc tous les co\'egalisateurs puisqu'il y a un objet initial), elle est {\it cocompl\`ete}, \ie admet toutes les (petites) colimites (\cf \cite[IX th. 1, V th. 2]{mL}).    
    
   Les  ${ \rm{{ucolim}}}$ commutent aux $\hat\otimes^u$, donc \`a la localisation uniforme $(\,)\{\frac{\lambda}{g}\}^u$ en vertu de la formule \eqref{e11}. 

 \subsubsection{Exemples}\label{E9}  $(1)$ Si $\sK$ est de caract\'eristique $p>0$, le compl\'et\'e de sa cl\^oture radicielle est $\, \rm{{ucolim}}\, \sK^{\frac{1}{p^i}}$.

 \smallskip\noindent  $(2)$ Soient $\sA$ une $\sK$-alg\`ebre de Banach spectrale, et $g$ un \'el\'ement de $\sA^{\s}$. Voyons $\sA$ comme $\sK\langle T\rangle$-alg\`ebre via $T \mapsto g$. Notons que $\sK\langle T^{\frac{1}{p^{i+1}}}\rangle$ admet la base orthogonale $1, T^{\frac{1}{p^{i+1}}}, \ldots, T^{\frac{p-1}{p^{i+1}}}$ sur $\sK\langle T^{\frac{1}{p^{i}}}\rangle$. On pose
   \begin{equation}\label{eqX}  \displaystyle \sA\langle g^{\frac{1}{p^{i }}}\rangle := \sA\hat\otimes^u_{\sK\langle T\rangle} \sK\langle T^{\frac{1}{p^{i }}}\rangle,\;\; \sA\langle g^{\e}\rangle := {\rm{ucolim}}\,\sA\langle g^{\frac{1}{p^{i }}}\rangle =  \sA\hat\otimes^u_{\sK\langle T\rangle} \sK\langle T^{\e}\rangle.\end{equation}
 Alors d'apr\`es la rem. \ref{r.2}, $\sA\langle g^{\frac{1}{p^{i }}}\rangle \to \sA\langle g^{\frac{1}{p^{i +1}}}\rangle$ est isom\'etrique, et il en est donc de m\^eme de $\sA\to \sA\langle g^{\frac{1}{p^{\infty }}}\rangle$.  En outre, on a 
   \begin{equation}\label{eqY} \displaystyle \sA\langle g^{\frac{1}{p^{i }}}\rangle = (\sA\langle T^{\frac{1}{p^{i }}}\rangle/ (T-g))^u\end{equation} (\cf formule \eqref{eu}).

    \smallskip\noindent    $(3)$ Toute $\sK$-alg\`ebre de Banach uniforme $\sB$ est colimite (uniforme) filtrante de $\sK$-alg\`ebres affino\"{\i}des r\'eduites $\sB_\alpha$, en telle mani\`ere que ${\rm{colim}}\, \sB_\alpha\to \sB$ est bijectif \cite[2.6.2]{KL}\label{indub}. Il suffit, pour tout sous-ensemble fini $\alpha \subset \sB^{\s}$, de prendre pour $\sB_\alpha$ l'alg\`ebre affino\"{\i}de r\'eduite image dans $\sB$ de l'objet libre $\sK\langle T_s\rangle_{s\in \alpha} $ sur $\alpha$. L'homomorphisme ${\rm{colim}}\,\sB_\alpha^{\s}   {\to} \sB^{\s} $ est clairement surjectif, et il est injectif puisque chaque $\sB_\alpha^{\s} \to \sB^{\s} $ l'est. D'o\`u le r\'esultat  en compl\'etant puis inversant $\varpi$.
  
  Notons toutefois que les $\sK$-alg\`ebres affino\"{\i}des r\'eduites sont par d\'efinition topologiquement de pr\'esentation finie. Mais elles ne sont pas de pr\'esentation finie dans $\sK\hbox{-}\bf uBan$ au sens cat\'egorique, \ie ne commutent pas aux colimites filtrantes, comme on le voit d\'ej\`a avec $\sK\langle T\rangle $: on a $\;{\rm{colim}}\, {\rm{Hom}}(\sK\langle T\rangle, \sB_\alpha) = {\rm{colim}}\, \sB_\alpha^{\s},\; {\rm{Hom}}(\sK\langle T\rangle, {\rm{ucolim}}\,\sB_\alpha) = \widehat{{\rm{colim}}\, \sB_\alpha^{\s}} $. 
  
\smallskip\noindent    $(4)$ Les alg\`ebres $\sK\langle \frac{T}{\varpi^i}  \rangle $ de fonctions analytiques sur les {``disques  ferm\'es"} de rayon $\vert \varpi\vert^i  $ forment un syst\`eme inductif de $\sK$-alg\`ebres de Banach uniformes. La colimite au sens usuel est l'alg\`ebre des germes de fonctions analytiques en l'origine. La semi-norme limite est donn\'ee par $\vert \sum a_j T^j\vert  :={ \rm{lim}_i \max_j} \vert a_j \varpi^{ij}\vert = \vert a_0\vert $ (compte tenu de ce que pour $i>>0$, $\max_j$ est atteint pour le premier $j$ tel que $a_j\neq 0$). La colimite {uniforme} est donc $\sK$, les applications $\sK\langle \frac{T}{\varpi^i}  \rangle \to \sK$ \'etant donn\'ees par l'\'evaluation en $0$. 
  
 \smallskip Ce dernier exemple est un cas particulier du r\'esultat suivant (non utilis\'e dans la suite de l'article, mais qu'on pourra mettre en regard du th. \ref{T5} - colimite {\it vs.} limite, voisinages tubulaires de $f=0$ {\it vs.} compl\'ementaires de voisinages tubulaires de $g=0$).
 
 \begin{prop}\label{P2'} Soient $\sB$ une $\sK$-alg\`ebre de Banach uniforme, et $f\in \sB^{\s}$. On a un isomorphisme canonique  
 \begin{equation}\label{eqZ} {\rm{ucolim}}\, \sB\{\frac{f}{\varpi^i}\}^u \stackrel{\sim}{\to}  (\sB/f\sB)^u   \end{equation} 
 (o\`u $(\;)^u$ d\'esigne le compl\'et\'e-s\'epar\'e pour la semi-norme spectrale associ\'ee).
 \end{prop}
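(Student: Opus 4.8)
The plan is to identify both sides of \eqref{eqZ} with the universal uniform $\sB$-algebra in which $f$ maps to $0$, and then to match the displayed map with the resulting canonical isomorphism. First I would record the relevant universal properties. For each $i$, by the description of the uniform localization in \S\ref{loc} together with the adjunction of \S\ref{abu}, the algebra $\sB\{\frac{f}{\varpi^i}\}^u$ is the universal uniform $\sB$-algebra $\sC$ for which $f/\varpi^i\in\sC^{\s}$; since $\sC$ is uniform, $\sC^{\s}$ is the unit ball of $\vert\;\vert_{sp}$, so this condition reads $\vert f\vert_{sp}\leq\vert\varpi\vert^i$ in $\sC$. The transition maps $\sB\{\frac{f}{\varpi^i}\}^u\to\sB\{\frac{f}{\varpi^{i+1}}\}^u$ are the canonical ones (they exist since $f/\varpi^i=\varpi\cdot(f/\varpi^{i+1})$ is again power-bounded), and the image of $\sB$ is dense in each term, because $\sB[\frac{1}{\varpi^i}]=\sB$ has dense image in $\sB\{\frac{f}{\varpi^i}\}$ (\cf \S\ref{loc}). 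Since filtered uniform colimits compute the categorical colimit in $\sK\hbox{-}{\bf uBan}$ (\S\ref{{ucolim}}), for every uniform $\sK$-Banach algebra $\sC$ one gets
\[ \mathrm{Hom}_{\sB}\bigl({\rm ucolim}_i\,\sB\{\tfrac{f}{\varpi^i}\}^u,\ \sC\bigr)=\varprojlim_i\,\{\phi\colon\sB\to\sC\mid\vert\phi(f)\vert_{sp}\leq\vert\varpi\vert^i\}=\{\phi\colon\sB\to\sC\mid\phi(f)=0\}, \]
the last equality because a uniform $\sC$ is reduced with $\vert\;\vert_{sp}$ a norm, so $\vert\phi(f)\vert_{sp}=0$ forces $\phi(f)=0$. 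On the other hand, $(\sB/f\sB)^u$ manifestly corepresents the same functor $\sC\mapsto\{\phi\colon\sB\to\sC\mid\phi(f)=0\}$ on $\sK\hbox{-}{\bf uBan}$: any continuous $\sB$-homomorphism from $\sB$ to a uniform algebra killing $f$ kills the closed ideal $\overline{f\sB}$, hence factors uniquely through the spectral completion $(\sB/f\sB)^u$.

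By Yoneda the two objects are then canonically isomorphic, compatibly with the structure maps from $\sB$ (which on both sides correspond to $\mathrm{id}_\sB$). To see that this canonical isomorphism is exactly \eqref{eqZ}, I would argue as follows. The map of \eqref{eqZ} is the one induced on the colimit by the family $\sB\{\frac{f}{\varpi^i}\}^u\to(\sB/f\sB)^u$ obtained, through the universal property of $\sB\{\frac{f}{\varpi^i}\}^u$, from the canonical $\sB$-homomorphism $\sB\to(\sB/f\sB)^u$ (under which $f$, a fortiori $f/\varpi^i$, maps to $0\in((\sB/f\sB)^u)^{\s}$); under the identifications above this is precisely the isomorphism. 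Equivalently, one may exhibit the inverse directly: in $\sB\{\frac{f}{\varpi^i}\}^u$ the image of $f=\varpi^i\cdot(f/\varpi^i)$ has spectral norm $\leq\vert\varpi\vert^i$, so its image in ${\rm ucolim}_i\,\sB\{\frac{f}{\varpi^i}\}^u$ has seminorm $\leq\lim_i\vert\varpi\vert^i=0$, \ie is $0$; hence $\sB\to{\rm ucolim}_i\,\sB\{\frac{f}{\varpi^i}\}^u$ factors as $\sB\to(\sB/f\sB)^u\to{\rm ucolim}_i\,\sB\{\frac{f}{\varpi^i}\}^u$, and the two round-trip composites, being $\sB$-algebra endomorphisms of $(\sB/f\sB)^u$ \resp of ${\rm ucolim}_i\,\sB\{\frac{f}{\varpi^i}\}^u$ that restrict to the identity on the dense image of $\sB$, are the identity.

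The only genuinely delicate ingredient is the bookkeeping with the uniform localizations $\sB\{\frac{f}{\varpi^i}\}^u$ — the universal property recalled in \S\ref{loc} and the density of the image of $\sB$, which together let one read off both the transition maps and the spectral seminorm of $f$; granting these, the whole proof is a formal manipulation of adjunctions, the content of \eqref{eqZ} being that imposing $\vert f\vert_{sp}\leq\vert\varpi\vert^i$ for every $i$ in a uniform algebra amounts to imposing $f=0$.
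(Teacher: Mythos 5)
Your proof is correct and follows essentially the same route as the paper: you show that both sides corepresent the functor $\sC\mapsto\{\phi\colon\sB\to\sC\mid\phi(f)=0\}$ on $\sK\hbox{-}\bf uBan$, and your "equivalently, one may exhibit the inverse directly" paragraph is precisely the paper's argument (image of $f$ in the ucolim is $0$, hence a factoring $\sB\to(\sB/f\sB)^u\to{\rm{ucolim}}$; density/epimorphism of $\sB$ into each side gives that the round trips are the identity). The paper phrases the density step as "les fl\`eches partant de $\sB$ sont des \'epimorphismes" and records $\sB\{\frac{f}{\varpi^i}\}/f=\sB/f$ explicitly to produce the arrow to $(\sB/f\sB)^u$, but this is the same content as your appeal to the universal property of $\sB\{\frac{f}{\varpi^i}\}^u$.
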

       
          \begin{proof}  Comme la norme de l'image $f$ dans  $\sB\{\frac{f}{\varpi^i}\}^u$ est $\leq \vert \varpi\vert^i$, l'image de $f$ dans ${\rm{ucolim}}\, \sB\{\frac{f}{\varpi^i}\}^u$ est nulle. Par ailleurs, $\sB\{\frac{f}{\varpi^i}\}/ f = \sB\langle U\rangle / (f, \varpi^iU-f)= \sB/f$. On en d\'eduit un diagramme
        \[  \xymatrix{  \sB  \ar[d] \ar[rd] \\  {{\rm{ucolim}}\, \sB\{\frac{f}{\varpi^i}\}^u }  \ar@<2pt>[r] & (\sB/ f \sB)^u   \ar@<2pt>[l] } \] 
               o\`u les deux triangles commutent. Pour conclure, il suffit de voir que les fl\`eches partant de $\sB$ sont des \'epimorphismes. Or c'est clair pour $\sB \to (\sB/f\sB)^u$ (\'epimorphisme extr\'emal non n\'ecessairement surjectif) et $\sB\to \sB\{\frac{f}{\varpi^i}\} $, et les colimites pr\'eservent les \'epimorphismes.
                    \end{proof}  
          
          \begin{cor} Soient $\sA$ une $\sK$-alg\`ebre de Banach uniforme, et $g\in \sA^{\s}$. On un isomorphisme canonique  
 \begin{equation}\label{ec1} {\rm{ucolim}}\, \sA\langle T^{\e}\rangle \{\frac{T-g}{\varpi^i}\}^u \stackrel{\sim}{\to} \sA\langle g^{\e}\rangle . \end{equation} 
          \end{cor}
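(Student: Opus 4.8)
The plan is to deduce this from Proposition~\ref{P2'}. I would apply that proposition to $\sB:=\sA\langle T^{\e}\rangle=\sA\hat\otimes^u_\sK\sK\langle T^{\e}\rangle$, which is uniform (spectral if $\sA$ is) by \ref{E1}$(2)$ and \ref{abu}, and to $f:=T-g$, which makes sense and lies in $\sB^{\s}=\sA^{\s}\langle T^{\e}\rangle$ precisely because $g\in\sA^{\s}$. This produces a canonical isomorphism
\[ {\rm ucolim}_i\,\sA\langle T^{\e}\rangle\{\frac{T-g}{\varpi^i}\}^u\;\iso\;\bigl(\sA\langle T^{\e}\rangle/(T-g)\sA\langle T^{\e}\rangle\bigr)^u, \]
whose source is the left-hand side of \eqref{ec1} (it is $\sB\{\frac{f}{\varpi^i}\}^u$ in the notation of \S\ref{loc}, with $\sB=\sA\langle T^{\e}\rangle$ and $f=T-g$). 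It therefore suffices to identify the target with $\sA\langle g^{\e}\rangle$.

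For this, I would observe, exactly as in \ref{E9}$(2)$, that $\sA\langle T^{\e}\rangle={\rm ucolim}_i\,\sA\langle T^{\frac{1}{p^i}}\rangle$ (using that $\sK\langle T^{\e}\rangle={\rm ucolim}_i\,\sK\langle T^{\frac{1}{p^i}}\rangle$ and that ${\rm ucolim}$ commutes with $\hat\otimes^u$, \S\ref{coco}). Since quotient by a closed ideal is a pushout and uniformization is a left adjoint, both commute with filtered colimits; hence
\[ \bigl(\sA\langle T^{\e}\rangle/(T-g)\sA\langle T^{\e}\rangle\bigr)^u={\rm ucolim}_i\,\bigl(\sA\langle T^{\frac{1}{p^i}}\rangle/(T-g)\bigr)^u={\rm ucolim}_i\,\sA\langle g^{\frac{1}{p^i}}\rangle=\sA\langle g^{\e}\rangle, \]
where the middle equality is \eqref{eqY} and the outer ones are the definitions \eqref{eqX}. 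Alternatively, one can see the identification in one stroke from the universal property: $\sK\langle T\rangle$ being the free uniform Banach $\sK$-algebra on one generator, both $(\sA\langle T^{\e}\rangle/(T-g))^u$ and $\sA\hat\otimes^u_{\sK\langle T\rangle}\sK\langle T^{\e}\rangle=\sA\langle g^{\e}\rangle$ co-represent the functor sending a uniform Banach algebra $D$ to the set of pairs consisting of a morphism $\varphi\colon\sA\to D$ together with a compatible system of $p$-power roots of $\varphi(g)$ in $D^{\s}$.

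There is no genuine obstacle here: the whole weight lies in Proposition~\ref{P2'}, and what remains is bookkeeping. The only two points deserving a line of care are the matching of the localization conventions noted above, and the routine verification that the ideal $(T-g)$, extended along the transition maps $\sA\langle T^{\frac{1}{p^i}}\rangle\to\sA\langle T^{\frac{1}{p^{i+1}}}\rangle$, assembles in the colimit to the closed ideal $(T-g)\sA\langle T^{\e}\rangle$; both follow at once from \eqref{eqX}, \eqref{eqY} and \S\ref{coco}.
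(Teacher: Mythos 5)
Your proof is correct and essentially matches the paper's: both rest entirely on Proposition~\ref{P2'} together with the formulas \eqref{eqX}–\eqref{eqY}. The only difference is organizational — the paper applies Proposition~\ref{P2'} at each finite level $\sA\langle T^{\frac{1}{p^j}}\rangle$ and then interchanges the two uniform colimits, whereas you apply it once at the $T^{\e}$ level and then identify $(\sA\langle T^{\e}\rangle/(T-g))^u$ with $\sA\langle g^{\e}\rangle$ directly (your universal-property "one-stroke" argument is a clean way to do this and avoids the colimit-interchange bookkeeping).
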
 
              
            \begin{proof} D'apr\`es \eqref{eqY} et \eqref{eqZ}, on a 
          $ \,{\rm{ucolim}}_i\, \sA\langle T^{\frac{1}{p^{j }}}\rangle\{\frac{T-g}{\varpi^i}\}^u \stackrel{\sim}{\to}   \sA\langle g^{\frac{1}{p^{j }}}\rangle , \,$
             d'o\`u le r\'esultat en passant \`a la colimite uniforme sur $j$ et intervertissant avec la colimite uniforme sur $i$.
                  \end{proof}

 \newpage
   \section{ La cat\'egorie bicompl\`ete des alg\`ebres perfecto\"{\i}des.} 
       
   \medskip   
              Il s'agit d'alg\`ebres de Banach uniformes $\sA$ dont l'endomorphisme de Frobenius sur la r\'eduction modulo $p$ de $\sA^{\s}$ est surjectif.  
  Nous passons en revue les propri\'et\'es g\'en\'erales de ces alg\`ebres, notamment le basculement et la pr\'eservation par extension finie \'etale. Nous verrons aussi que les limites et colimites sont repr\'esentables, et que le plongement de la cat\'egorie des alg\`ebres perfecto\"{\i}des dans celles des alg\`ebres de Banach uniformes poss\`ede un adjoint \`a droite. Les colimites sont donc les colimites uniformes, mais les limites ne sont pas les limites uniformes en g\'en\'eral.
     
 \medskip {\it On note $F$ l'endomorphisme de Frobenius $x\mapsto x^p$ de tout anneau commutatif de caract\'eristique $p>0 $}.

  \medskip\subsection{Corps perfecto\"{\i}des}  
 
   \subsubsection{} Soit $\sK$ un corps complet pour une valeur absolue {\it{non archim\'edienne non discr\`ete}}, de corps r\'esiduel $k$ de caract\'eristique $p>0$ .   
  Suivant \cite[\S 3]{S1}, on dira que  $\sK$ est un {\it corps perfecto\"{\i}de} s'il v\'erifie l'une des conditions \'equivalentes suivantes:
  
   $i)$ $\,\sK^{\s}/p\stackrel{F}{\to} \sK^{\s}/p$ est {surjectif},   
   
      $ii)$ $\,\sK^{\s}/\varpi\stackrel{F}{\to} \sK^{\s}/\varpi$ est  surjectif (pour tout $\varpi\in \sK^{\ss}\setminus 0$ tel que $p\sK^{\s}\subset \varpi\sK^{\s}$)
   
    $iii)$ $ \sigma: \sK^{\s}/\varpi_{\frac{1}{p}} \stackrel{ x\mapsto x^p}{\to} \sK^{\s}/\varpi$  est bijectif (pour tout $\varpi_{\frac{1}{p}}$ de norme $\vert\varpi\vert^{\frac{1}{p}}$).
      
    \smallskip   
 La topologie de $\sK^{\s}$ est alors la topologie $\varpi$-adique, le groupe de valuation $\Gamma$ relatif \`a $\varpi$ est $p$-divisible (donc dense dans $\R$), et $k$ est parfait.  Si $\,\sK$ est perfecto\"{\i}de de caract\'eristique $p$, il est parfait. 
     
       \smallskip\noindent  {\bf N. B.} Si l'\'ecriture abr\'eg\'ee $\sK^{\s}/\varpi$ semble sans ambigu\"{\i}t\'e, il convient en revanche de ne pas confondre le $ \sK^{\s}/\varpi $-module plat $\,\sK^{\ss}/\varpi \sK^{\ss}\cong \sK^{\ss}\otimes_{\sK^{\s}}\sK^{\s}/\varpi \,$ avec son quotient non plat $\,\sK^{\ss}/\varpi \sK^{\s}$, qui est l'id\'eal maximal de $\sK^{\s}/\varpi $.  
       
        \subsubsection{Exemples} $(1)$ {\it Le corps perfecto\"{\i}de cyclotomique $\hat K_\infty$}\label{E10} Soit de nouveau $K_0$ le corps de fractions de l'anneau de Witt $W({k})$ d'un corps parfait $k$. On fixe une fois pour toutes un syst\`eme compatible $(\zeta_{p^i})_{i \in \N}$ de racines primitives $p^i$-i\`emes de $1$ dans une cl\^oture alg\'ebrique (d'o\`u un isomorphisme $\Z_p\cong {\rm{lim}} \, \mu_{p^i}$). Consid\'erons la tour cyclotomique 
  \[K_i := K_0(\zeta_{p^i})=  K_0 \otimes_{\Q_p} \, \Q_p(\zeta_{p^i}).\] 
    On a  $ K_i^{\s}=  W({k})[\zeta_{p^i}]$, d'uniformisante $\zeta_{p^i}-1$,  et $\; K_i^{\s} \equiv (K_{i+1}^{\s})^p \;\mod \, p$.
        Le compl\'et\'e $\hat K_\infty$ de $K_\infty := \cup K_i$ est donc un corps perfecto\"{\i}de. Son groupe de valuation est  $  \frac{1}{p-1}  \Z[\frac{1}{p}]$. 
    
   \smallskip\noindent $(2)$ Soit $\pi$ une uniformisante de $K_0$.    On a  $ K_0(\pi^{\frac{1}{p^i}})^{\s}=  W({k})[ \pi^{\frac{1}{p^i}}]$,  et $\; K_0(\pi^{\frac{1}{p^i}})^{\s} \equiv (K_0(\pi^{\frac{1}{p^{i+1}}})^{\s})^p \;\mod \, p$.
        Le compl\'et\'e de $ K_0(\pi^{\e})$ est donc un corps perfecto\"{\i}de.  

\smallskip Cet argument \'el\'ementaire est de port\'ee limit\'ee: le cas du compositum de ces deux exemples lui \'echappe.  On a toutefois le r\'esultat suivant (que nous n'utiliserons que marginalement).

 {\begin{prop}\cite[prop. 6.6.6\footnote{Voir aussi l'article pr\'ecurseur \cite{FM}.}]{GR1}\label{P3} Soient $\sK$ un corps complet pour une valeur absolue non archim\'edienne non discr\`ete et  $\sK^s$ une cl\^oture s\'eparable. Alors $\sK$ est un corps perfecto\"{\i}de si et seulement s'il est profond\'ement ramifi\'e, \ie $\Omega_{(\sK^s)^{\s}/\sK^{\s}}$ est annul\'e par $\sK^{\ss}$. 
\end{prop}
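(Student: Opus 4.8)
Cette proposition --- dont le cas cyclotomique est d\^u \`a Tate, le cas discr\`etement valu\'e \`a Coates--Greenberg, et le cas g\'en\'eral \`a Gabber--Ramero --- pourrait s'\'etablir comme suit. On commence par une r\'eduction. Puisque $(\sK^s)^{\s}=\varinjlim_L L^{\s}$, o\`u $L$ parcourt les extensions s\'eparables finies de $\sK$ contenues dans $\sK^s$, et puisque les diff\'erentielles de K\"ahler commutent aux colimites filtrantes, on a $\Omega_{(\sK^s)^{\s}/\sK^{\s}}\cong\varinjlim_L\Omega_{L^{\s}/\sK^{\s}}$; de plus chaque $\Omega_{L^{\s}/\sK^{\s}}$ est de $\varpi$-torsion (il s'annule apr\`es inversion de $\varpi$, car $\Omega_{L/\sK}=0$). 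Dire que $\sK^{\ss}$ annule $\Omega_{(\sK^s)^{\s}/\sK^{\s}}$ \'equivaut donc \`a: pour toute extension s\'eparable finie $L/\sK$ et tout $a\in\sK^{\ss}$, il existe une extension s\'eparable finie $M\supseteq L$ telle que $a$ annule l'image de $\Omega_{L^{\s}/\sK^{\s}}$ dans $\Omega_{M^{\s}/\sK^{\s}}$ --- annulateur qui rel\`eve de la notion de diff\'erente $\mathfrak d_{M/L}$. L'\'enonc\'e se ram\`ene ainsi \`a une assertion quantitative de d\'ecroissance des diff\'erentes.

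\emph{Sens direct.} Supposons $\sK$ perfecto\"{\i}de. Si $\sK$ est de caract\'eristique $p$, il est parfait, donc $\sK^s=\overline{\sK}$, l'anneau $\overline{\sK}^{\s}$ est parfait (pour $x\in\overline{\sK}^{\s}$, $x^{1/p}\in\overline{\sK}^{\s}$), et $\Omega_{(\sK^s)^{\s}/\sK^{\s}}=0$ puisque $dx=p\,(x^{1/p})^{p-1}d(x^{1/p})=0$. Supposons donc $\sK$ de caract\'eristique $0$; soit $\sK^\flat$ le corps bascul\'e, perfecto\"{\i}de de caract\'eristique $p$ donc parfait, et fixons $\varpi\in\sK^{\ss}$ avec $p\in\varpi\sK^{\s}$ ainsi qu'un $\varpi^\flat$ compatible, d'o\`u $\sK^{\s}/\varpi\cong(\sK^\flat)^{\s}/\varpi^\flat$. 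Pour $L/\sK$ s\'eparable finie --- elle-m\^eme perfecto\"{\i}de, une extension finie d'un corps perfecto\"{\i}de l'\'etant --- l'\'equivalence de basculement des alg\`ebres \'etales finies fournit $L^\flat/\sK^\flat$ s\'eparable finie et un presque-isomorphisme $L^{\s}/\varpi\cong(L^\flat)^{\s}/\varpi^\flat$ au-dessus de la base, d'o\`u $\Omega_{L^{\s}/\sK^{\s}}\otimes_{\sK^{\s}}\sK^{\s}/\varpi\cong\Omega_{(L^\flat)^{\s}/(\sK^\flat)^{\s}}\otimes(\sK^\flat)^{\s}/\varpi^\flat$, module presque nul puisque $(L^\flat)^{\s}$ est \`a nouveau un anneau parfait, donc $\Omega_{(L^\flat)^{\s}/(\sK^\flat)^{\s}}=0$. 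Ainsi $\Omega_{L^{\s}/\sK^{\s}}$ est \`a la fois $\varpi$-divisible et de $\varpi$-torsion; le point restant --- qu'un tel module, vu dans la colimite sur $L$, soit annul\'e par tout $a\in\sK^{\ss}$ --- est pr\'ecis\'ement l'argument de Tate sur la d\'ecroissance des diff\'erentes: adjoindre les racines $p$-i\`emes it\'er\'ees $\beta^{1/p^n}$ d'un g\'en\'erateur $\beta\in L^{\s}$ maintient l'extension dans une tour explicite dont les diff\'erentes au-dessus de $\sK$ se calculent et tendent vers $0$, du fait que $\sK^{\s}/p$ est semi-parfait.

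\emph{Sens r\'eciproque.} Supposons $\sK$ profond\'ement ramifi\'e; on montre que $F\colon\sK^{\s}/p\to\sK^{\s}/p$ est surjectif. Soit $x\in\sK^{\s}$: si $x$ est une puissance $p$-i\`eme dans $\sK$, c'est termin\'e (sa racine $p$-i\`eme, de norme $\leq1$, est dans $\sK^{\s}$); sinon, en caract\'eristique $0$ le polyn\^ome $T^p-x$ est s\'eparable, donc $L:=\sK(x^{1/p})$ est une extension s\'eparable finie non triviale contenant $x^{1/p}\in L^{\s}$, et l'hypoth\`ese de ramification profonde --- appliqu\'ee le long d'une sur-extension finie $M\supseteq L$ assez grande pour que $\mathfrak d_{M/L}$ soit $p$-petite --- permet de descendre presque $x^{1/p}$ modulo $p$ \`a $\sK^{\s}$, gr\^ace \`a la presque-platitude fid\`ele de $\sK^{\s}\to M^{\s}$ (et \`a la presque-surjectivit\'e de $\mathrm{Tr}_{M/\sK}$ sur les entiers, \cf rem. \ref{r1'} $(2)$). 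En caract\'eristique $p$, le raisonnement est dual: si $\sK$ n'\'etait pas parfait, un \'el\'ement $t\in\sK^{\ss}$ qui n'est pas une puissance $p$-i\`eme engendrerait une extension d'Artin--Schreier dont la diff\'erente au-dessus de $\sK$ reste minor\'ee uniform\'ement dans toute sur-extension finie, contredisant la ramification profonde. Le passage entre la propri\'et\'e perfecto\"{\i}de et la d\'ecroissance quantitative des diff\'erentes --- et, en sens oppos\'e, entre une diff\'erente persistante et la non-perfection --- est le c\oe ur du probl\`eme et l'obstacle principal; tout le reste n'est que formalisme de colimites et dictionnaire de basculement.
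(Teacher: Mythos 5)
Le texte ne d\'emontre pas cette proposition~: il la cite telle quelle de \cite[prop.~6.6.6]{GR1}. Il n'y a donc pas de preuve interne \`a laquelle comparer votre esquisse, qui suit n\'eanmoins la ligne classique (Tate, Coates--Greenberg, Gabber--Ramero). Cela dit, plusieurs passages de votre argumentation sont des renvois plut\^ot que des d\'emonstrations, et l'un d'eux semble \'egar\'e.

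Dans le sens direct (car.~$0$), ce que vous d\'eduisez du basculement est que $\Omega_{L^{\s}/\sK^{\s}}/\varpi$ est presque nul, donc que $\Omega_{L^{\s}/\sK^{\s}}$ est presque $\varpi$-divisible, puis vous ajoutez qu'il est de $\varpi$-torsion. Or {\og $\varpi$-divisible et de $\varpi$-torsion\fg} n'entra\^{\i}ne pas {\og presque nul\fg} ({\it cf.} le module $\sK/\sK^{\s}$), et cette divisibilit\'e se transmet \`a la colimite sans y forcer l'annulation par $\sK^{\ss}$. Vous le reconnaissez en d\'ef\'erant \`a {\og l'argument de Tate sur la d\'ecroissance des diff\'erentes\fg}, mais c'est pr\'ecis\'ement l\`a que r\'eside tout le contenu~: ce qui est r\'eellement n\'ecessaire est la presque-\'etalit\'e de $L^{\s}$ sur $\sK^{\s}$ (c'est-\`a-dire une minoration quantitative de la trace, ou l'estimation de la diff\'erente de la tour $\sK(\beta^{1/p^n})$), dont la nullit\'e presque totale de $\Omega_{L^{\s}/\sK^{\s}}$ tout entier --- et non de sa seule r\'eduction mod~$\varpi$ --- d\'ecoule directement. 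L'observation de $\varpi$-divisibilit\'e est donc un d\'etour~: elle n'\'economise rien.

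Dans le sens r\'eciproque en caract\'eristique~$p$, l'argument par extensions d'Artin--Schreier est \`a c\^ot\'e du sujet~: si $t$ n'est pas une puissance $p$-i\`eme, l'extension qu'il engendre naturellement, $\sK(t^{1/p})$, est purement ins\'eparable et n'appara\^{\i}t pas dans $\sK^s$. Pour la relier \`a des extensions {\it s\'eparables} et \`a leurs diff\'erentes, il faut un argument suppl\'ementaire (typiquement~: presque-surjectivit\'e de la trace $\Rightarrow$ presque-surjectivit\'e du Frobenius sur $\sK^{\s}/\varpi$ $\Rightarrow$ $\sK$ parfait), qui est d'ailleurs le pendant exact de ce que vous faites en car.~$0$. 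Tel quel, affirmer que {\og la diff\'erente au-dessus de $\sK$ reste minor\'ee uniform\'ement dans toute sur-extension finie\fg} n'est pas \'etabli et ne d\'ecoule pas du calcul standard des diff\'erentes d'Artin--Schreier. Le sens r\'eciproque en car.~$0$ est lui aussi tr\`es allusif~: la descente de $x^{1/p}$ modulo~$p$ via la trace presque surjective demande d'\^etre \'ecrite (par exemple \`a la Tate, par approximation successive), et la r\'ef\'erence \`a la remarque~\ref{r1'}~$(2)$ du texte pr\'esuppose d\'ej\`a que l'extension est presque \'etale finie, ce que la ramification profonde ne donne pas imm\'ediatement pour une sur-extension $M$ arbitraire.
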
} 
   
\begin{cor}\label{C2} La compl\'etion $\sL$ de toute extension alg\'ebrique d'un corps perfecto\"{\i}de $\sK$ est un corps perfecto\"{\i}de.  \end{cor}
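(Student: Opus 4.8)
The plan is to read the statement off the criterion of Proposition~\ref{P3}. The observation that makes everything routine is that a perfectoid field admits \emph{no} proper purely inseparable algebraic extension: if $\car\sK=0$ this is empty, and if $\car\sK=p$ then $\sK$ is perfect. Hence every algebraic extension of $\sK$ is separable. First I would reduce to the case of a finite extension. Writing $\sL=\widehat{\sL_0}$ with $\sL_0/\sK$ algebraic, $\sL_0$ is the directed union of its finite subextensions $L_\alpha/\sK$ (directed, since the compositum of two finite subextensions is one), and each $L_\alpha$ is already complete, being a finite-dimensional $\sK$-Banach space. Thus $\sL_0^{\s}=\operatorname{colim}_\alpha L_\alpha^{\s}$; fixing $\varpi\in\sK^{\ss}\setminus 0$ with $p\sK^{\s}\subseteq\varpi\sK^{\s}$, and using the identity $(\hat\sA)^{\s}=\widehat{\sA^{\s}}$ of \eqref{eq1'} together with the $\varpi$-torsion-freeness of each $L_\alpha^{\s}$, one obtains
\[ \sL^{\s}/\varpi\sL^{\s}\;=\;\widehat{\sL_0^{\s}}/\varpi\;=\;\sL_0^{\s}/\varpi\;=\;\operatorname{colim}_\alpha\bigl(L_\alpha^{\s}/\varpi\bigr). \]
Since $x\mapsto x^p$ commutes with filtered colimits and a filtered colimit of surjections is a surjection, $\sL$ is perfectoid as soon as every $L_\alpha$ is; and $\sL$ is visibly complete, of residue characteristic $p$, with value group containing the dense group $\Gamma$.

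It remains to see that a finite extension $L$ of a perfectoid field $\sK$ is perfectoid. Being separable, $L$ embeds into a separable closure $\sK^s$ of $\sK$, and then $\sK^s$ is also a separable closure of $L$. By Proposition~\ref{P3}, applied this time to $L$, it suffices to prove that $\Omega_{(\sK^s)^{\s}/L^{\s}}$ is killed by $L^{\ss}$. The cotangent exact sequence attached to $\sK^{\s}\subseteq L^{\s}\subseteq(\sK^s)^{\s}$,
\[ (\sK^s)^{\s}\otimes_{L^{\s}}\Omega_{L^{\s}/\sK^{\s}}\longrightarrow\Omega_{(\sK^s)^{\s}/\sK^{\s}}\longrightarrow\Omega_{(\sK^s)^{\s}/L^{\s}}\longrightarrow 0, \]
exhibits $\Omega_{(\sK^s)^{\s}/L^{\s}}$ as a quotient of $\Omega_{(\sK^s)^{\s}/\sK^{\s}}$, which is killed by $\sK^{\ss}$ by Proposition~\ref{P3} applied to $\sK$ itself. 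Finally $\sK^{\ss}$ generates $L^{\ss}$ as an ideal of $L^{\s}$, because the value group of $\sK$ is dense in $\R$ (so $\sK^{\ss}$ contains elements of arbitrarily small positive valuation); hence any module killed by $\sK^{\ss}$ is killed by $L^{\ss}$. So $L$ is deeply ramified, and therefore perfectoid.

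For a corollary of a theorem of this strength there is no genuine obstacle; the two points that reward attention are precisely those used above. The first is that a \emph{purely inseparable} step in $L/\sK$ --- which would be truly delicate, as the ``paradoxe de Z\'enon'' examples earlier in the paper illustrate, $L^{\s}$ then failing even to be module-finite over $\sK^{\s}$ --- simply cannot occur over a perfectoid base, so Proposition~\ref{P3} applies verbatim. The second is the commutation of $\varpi$-adic reduction with the colimit in the first step, which rests on $\varpi$-torsion-freeness and on $(\hat\sA)^{\s}=\widehat{\sA^{\s}}$. I would present the colimit reduction as above, so that the proof invokes Proposition~\ref{P3} only as a black box; alternatively, since the Gabber--Ramero theory underlying Proposition~\ref{P3} already establishes that deep ramification is stable under completed algebraic extensions, one may dispense with the reduction and conclude at once.
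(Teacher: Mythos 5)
Your proof is correct and follows the same route as the paper: reduce to finite subextensions by a filtered colimit argument, then invoke Proposition~\ref{P3} for the finite case. Where the paper merely asserts that for finite extensions ``le r\'esultat suit imm\'ediatement de la proposition,'' you have supplied the intended justification — the cotangent exact sequence for $\sK^{\s}\subseteq L^{\s}\subseteq(\sK^s)^{\s}$ together with the observation that $\sK^{\ss}$ generates $L^{\ss}$ — which is exactly right.
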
 

\begin{proof} Remarquons que $\sK$ est parfait. Pour les extensions finies $\sL$, le r\'esultat suit imm\'ediatement de la proposition. On obtient le cas g\'en\'eral par passage \`a la colimite des sous-extensions finies. \end{proof}

   \medskip\subsection{Alg\`ebres perfecto\"{\i}des} 
  \subsubsection{}\label{algperf}  Suivant \cite[\S 5]{S1}, on dira qu'une alg\`ebre de Banach {uniforme} $\,\sA\,$ sur le corps perfecto\"{\i}de $\,\sK\,$ est une {\it $\sK$-alg\`ebre perfecto\"{i}de} si elle v\'erifie l'une des conditions \'equivalentes suivantes:
  
   $i)$ $\,\sA^{\s}/p\stackrel{F}{\to} \sA^{\s}/p$ est {surjectif},   
   
      $ii)$ $\,\sA^{\s}/p\stackrel{F}{\to} \sA^{\s}/p$ est presque {surjectif} dans le cadre $(\sK^{\s}, \sK^{{\ss}})$,
   
    $iii)$ $\,\sA^{\s}/\varpi\stackrel{F}{\to} \sA^{\s}/\varpi$ est {surjectif},  
   
$iv)$ $\sA^{\s}/\varpi_{ {\frac{1}{p}}}  \stackrel{x\mapsto x^p}{\to} \sA^{\s}/\varpi$ est bijectif (avec $\vert \varpi_{ {\frac{1}{p}}}\vert^p=\vert\varpi\vert$, \cf rem. \ref{r2}), 
   
   $v)$ Frobenius induit un isomorphisme (\resp presque-isomorphisme) de 
      $\sK^{\s}$-alg\`ebres
     \[\sA^{\s}/\varpi_{ {\frac{1}{p}}}\otimes_{\sK^{\s}/\varpi_  {\frac{1}{p}}, \sigma} \sK^{\s}/\varpi = \sA^{\s}/\varpi \otimes_{\sK^{\s}/\varpi , F} \sK^{\s}/\varpi  \stackrel{\cong}{\to}  \sA^{\s}/\varpi. \]

  Les morphismes d'alg\`ebres perfecto\"{\i}des sont les homomorphismes continus d'alg\`ebres, de sorte que la cat\'egorie $\sK\hbox{-}{\bf Perf}$ des $\sK$-alg\`ebres perfecto\"{\i}des est une sous-cat\'egorie pleine de $\sK\hbox{-}\bf uBan$. Elle admet un objet initial $\sK$ et un objet final $0$.  
  
  \smallskip      Si $\sA$ est une $\sK$-alg\`ebre de Banach fix\'ee, les $\sK$-alg\`ebres perfecto\"{\i}des $\sB$ munies d'un morphisme $\sA \stackrel{\phi}{\to} \sB$ sont appel\'ees {\it $\sA$-alg\`ebres perfecto\"{\i}des}.
   Elles forment de mani\`ere \'evidente une cat\'egorie $\sA\hbox{-}{\bf Perf}$.

 \subsubsection{Remarques}\label{r5} $(1)$ Si $\sA$ est une $\sK$-alg\`ebre perfecto\"{\i}de multiplicativement norm\'ee (donc int\`egre), le compl\'et\'e de son corps de fractions est un corps perfecto\"{\i}de.  

\smallskip\noindent  $(2)$ Si $\car\, \sK = p>0$, une $\sK$-alg\`ebre de Banach uniforme $\sA$ est perfecto\"{\i}de si et seulement si elle est parfaite (\ie $F$ est bijectif).

Par ailleurs, une $\sK$-alg\`ebre de Banach $\sA$ est uniforme si et seulement si elle est r\'eduite et $\sA^p$ est ferm\'e \cite[lem. 3.5]{G}\footnote{L'hypoth\`ese {``r\'eduite"}, n\'ecessaire, est oubli\'ee dans l'\'enonc\'e de \loccit, mais utilis\'ee dans la preuve sous la forme que $a \mapsto \vert a^{p^m}\vert^{\frac{1}{pm}}$ est une norme.}, autrement dit si l'endomorphisme de Frobenius est injectif et d'image ferm\'ee.  On en d\'eduit qu'une $\sK$-alg\`ebre de Banach $\sA$ est perfecto\"{\i}de si et seulement si elle est parfaite (l'uniformit\'e est automatique).  

      Il r\'esulte alors du cor. \ref{C1} que le foncteur $\sA \mapsto \sA^{\s}$ induit une \'equivalence de la cat\'egorie des $\sK$-alg\`ebres perfecto\"{\i}des  vers celle des $\sK^{\s}$-alg\`ebres ${\mathfrak{A}}$ parfaites, plates, compl\`etes, et telles que  ${\mathfrak{A}}= {\mathfrak{A}}_\ast$.
      
            \medskip En toute caract\'eristique, on a (\cf \cite[5.6]{S1}):
      
      \begin{lemma}\label{L14'} Le foncteur $\sA \mapsto \sA^{\s}$ induit une \'equivalence de la cat\'egorie des $\sK$-alg\`ebres perfecto\"{\i}des  vers celle des $\sK^{\s}$-alg\`ebres ${\mathfrak{A}}$ plates, compl\`etes, telles que  ${\mathfrak{A}}= {\mathfrak{A}}_\ast$ et que $\,{\mathfrak{A}}/\varpi_{\frac{1}{p}}\stackrel{x\mapsto x^p}{\to}\,{\mathfrak{A}}/\varpi$ soit {bijectif}.  
      \end{lemma}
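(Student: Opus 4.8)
The plan is to exhibit the two functors as quasi-inverse to each other, relying heavily on the equivalence of Corollaire~\ref{C1} which already handles conditions $(1)$ (flatness), $(2)$ ($\varpi$-adic completeness) and the ``almost'' condition $(4)$ ($\mathfrak A = \mathfrak A_\ast$), together with the dictionary of the Sorite~\ref{s1}. Indeed, a $\sK$-algebra perfecto\"{\i}de is first of all a uniform Banach algebra, so $\sA \mapsto \sA^{\s}$ already lands, by Corollaire~\ref{C1}, in the category of flat, $\varpi$-adically complete $\sK^{\s}$-algebras $\mathfrak B$ with $\mathfrak B = \mathfrak B_\ast$ that are completely integrally closed in $\mathfrak B[\frac{1}{\varpi}]$. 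The only thing to add is the translation of the perfecto\"{\i}d condition, and here the natural choice is the bijectivity of $\mathfrak B/\varpi_{\frac{1}{p}} \xrightarrow{x\mapsto x^p} \mathfrak B/\varpi$, which is precisely condition $iv)$ of \S\ref{algperf} read on $\mathfrak B = \sA^{\s}$.

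\smallskip First I would check that, for a uniform Banach algebra $\sA$, the condition ``$\sA$ is perfecto\"{\i}de'' is equivalent to ``$\sA^{\s}/\varpi_{\frac{1}{p}} \xrightarrow{x\mapsto x^p} \sA^{\s}/\varpi$ is bijective'': this is exactly the equivalence of $i)$--$iv)$ of \S\ref{algperf}, already granted. Next, the functor $\mathfrak B \mapsto (\mathfrak B[\frac{1}{\varpi}], {}^{{}^{\mathfrak B}}\vert\;\vert)$ of Corollaire~\ref{C1} produces, from any $\mathfrak B$ satisfying $(1)$--$(4)$, a uniform (indeed spectral) Banach algebra $\sA$ with $\sA^{\s} = \mathfrak B$; I then observe that $\sA$ is perfecto\"{\i}de precisely because $\mathfrak B/\varpi_{\frac{1}{p}} \xrightarrow{x\mapsto x^p} \mathfrak B/\varpi$ is bijective, invoking again $iv)$. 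This shows the two functors are mutually quasi-inverse on the nose: on objects $(\sA^{\s})[\frac{1}{\varpi}] \cong \sA$ with the spectral norm by the first item of the Sorite~\ref{s1}$(6)$, and $(\mathfrak B[\frac{1}{\varpi}])^{\s} = \mathfrak B$ since $\mathfrak B$ is completely integrally closed with $\mathfrak B = \mathfrak B_\ast$ (Sorite~\ref{s1}$(5)(6)$), and morphisms correspond by functoriality of $(\;)^{\s}$ and of $(\;)[\frac{1}{\varpi}]$ together with the characterization of continuous homomorphisms between uniform algebras in \ref{ns}.

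\smallskip The main subtlety — and the step I expect to require the most care — is that condition $iv)$ of \S\ref{algperf} is stated for $\sA^{\s}$ a priori \emph{as a $\sK^{\s}$-algebra coming from a uniform Banach algebra}, whereas here one must know that \emph{any} abstract $\mathfrak B$ satisfying $(1)$--$(4)$ together with the bijectivity of $\mathfrak B/\varpi_{\frac{1}{p}} \to \mathfrak B/\varpi$ arises this way, i.e. that $\sA := \mathfrak B[\frac{1}{\varpi}]$ with its canonical norm is \emph{uniform}. This is where I would use condition $(3)$ of Corollaire~\ref{C1} in its ``completely integrally closed'' (equivalently $p$-radiciellement ferm\'e) form: the bijectivity mod $\varpi$ of the $p$-th power map forces $\mathfrak B$ to be $p$-radiciellement ferm\'e in $\mathfrak B[\frac{1}{\varpi}]$, hence by Sorite~\ref{s1}$(5)$ the norm ${}^{{}^{\mathfrak B}}\vert\;\vert$ is spectral, hence $\sA$ is uniform and $\sA^{\s} = \mathfrak B$; then $iv)$ applies and $\sA$ is perfecto\"{\i}de. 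Conversely the fact that $\sA^{\s}$ for $\sA$ perfecto\"{\i}de satisfies all four conditions is immediate from Corollaire~\ref{C1} plus \eqref{eq1'}. Finally, the $\sA$-relative statement follows verbatim by carrying a fixed base algebra $\sA$ through the same arguments, exactly as in the last sentence of Corollaire~\ref{C1}.
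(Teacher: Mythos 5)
Your proof is correct and follows essentially the same route as the paper: reduce via Corollaire~\ref{C1} to checking that a $\mathfrak{B}$ satisfying conditions $(1)$, $(2)$, $(4)$ together with bijectivity of $x\mapsto x^p: \mathfrak{B}/\varpi_{\frac{1}{p}}\to\mathfrak{B}/\varpi$ gives a \emph{uniform} Banach algebra $\mathfrak{B}[\frac{1}{\varpi}]$, which the paper deduces from Sorite~\ref{s1}~$(5f)$ (injectivity $\Leftrightarrow$ spectrality). You route through $(5e)$ ($p$-radicial closedness) as an intermediary, but that is just the chain of equivalences inside Sorite~\ref{s1}~$(5)$, so the argument is the same.
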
  
      
      \begin{proof} Compte tenu du cor. \ref{C1}, il reste \`a v\'erifier que si $\mathfrak A$ v\'erifie ces conditions, l'alg\`ebre de Banach $\mathfrak A[\frac{1}{\varpi}]$ est uniforme. Mais cela suit du sorite \ref{s1} $(5f)$. 
      \end{proof}  

On a aussi \'equivalence avec la cat\'egorie analogue de $\sK^{\s a}$-alg\`ebres.
 
    \subsubsection{Exemples.}\label{E11} 
  $(1)$ L'exemple standard est  $\sK\langle T^{\e}\rangle$,  le compl\'et\'e de $\cup \sK[T^{\frac{1}{p^i}}] $.  On a  $\sK\langle T^{\e}\rangle   = {\rm{{ucolim}}}\, \sK[T^{\frac{1}{p^i}}]$ et $\sK\langle T^{\e}\rangle^{\s}=  \sK^{\s}\langle T^{\e}\rangle$, le compl\'et\'e $\varpi$-adique de $\cup \sA^{\s}[T^{\frac{1}{p^i}}] $. Plus g\'en\'eralement, si $\sA$ est perfecto\"{\i}de, $\sA\langle T^{\e}\rangle $ l'est aussi.   Le couple $(  \sA\langle T^{\e}\rangle, (T^{\e}))$ est universel parmi les couples $(  \sB , (g^{\e}))$ o\`u $\sB$ est une $\sA$-alg\`ebre perfecto\"{\i}de et $(g^{\e})$ une suite compatible dans $\sB$ de racines $p^m$-i\`emes d'un \'el\'ement $g\in \sB^{\s}$.

  \smallskip \noindent $(2)$ {\it la $\hat K_\infty$-alg\`ebre perfecto\"{\i}de $\hat A_\infty$}
    Reprenons les notations de l'ex.  \ref{E10} $(1)$.
  Soit $A_i : = K_i^{\s}[[T_1^{{\,\frac{1}{p^i}}},\ldots, T_n^{{\,\frac{1}{p^i}}}]]\otimes_{K_i^{\s}} K_i$ la $K_i$-alg\`ebre de Banach form\'ee des s\'eries \`a coefficients born\'es dans $K_i$, munie de la norme supremum des coefficients (qui est multiplicative et induit la topologie $\varpi$-adique). Pour lorsque $i\in \N$ varie, elles forment un sys\`eme inductif de $K_0$-alg\`ebres de Banach uniformes dont les morphismes de transition sont isom\'etriques.
 
   On a $A_0^{\s}= A := W(k)[[T_1,\ldots, T_n]]$ et
pour tout $i$, $A_i^{\s} = K_i^{\s}[[T_1^{{\,\frac{1}{p^i}}},\ldots, T_n^{{\,\frac{1}{p^i}}}]]$ est un anneau noeth\'erien r\'egulier local complet de dimension $n+1$. L'unique id\'eal premier de $A_i^{\s}$ au-dessus de  $ \,p\,$ est engendr\'e par $\zeta_{p^i}-1$.
  En outre, $A_i^{\s}\equiv  (A_{i+1}^{\s})^p \;\mod\, p$.  
 
 Le compl\'et\'e $ {\displaystyle{\hat A_\infty : =  {\rm{{ucolim}}}\,   A_i}}$ de $A_\infty := \cup A_i$ est donc une $\hat K_\infty$-alg\`ebre perfecto\"{\i}de, dont l'anneau des \'el\'ements born\'es est contenu dans $ \hat A_\infty^{\s} = \widehat{\hat K_\infty^{\s}[[T_1^{{\,\frac{1}{p^\infty}}},\ldots, T_n^{{\,\frac{1}{p^\infty}}}]]}$ (mais non \'egale: $\sum \varpi_{\frac{1}{p^n}} T^n$ n'est pas 
 dans $\hat A_\infty^{\s}$); on a en fait un isomorphisme canonique 
 \begin{equation}\label{e20} \hat A_\infty^{\s} \cong W(k[[T_1^{\e},\ldots, T_n^{\e}]]) \hat\otimes \hat K_\infty^{\s}.\end{equation}
 En effet, on a un morphisme canonique $W(k[[T_1^{\e},\ldots, T_n^{\e}]]) \hat\otimes \hat K_\infty^{\s} \to \hat A_\infty^{\s}$ entre $W(k)$-modules $p$-adiquement complets sans torsion, qui est un isomorphisme modulo $p$.
 
   \smallskip \noindent $(3)$ Pour tout corps perfecto\"{\i}de $\sK$, $\widehat{\sK^{\s}[[T_1^{{\,\frac{1}{p^\infty}}},\ldots, T_n^{{\,\frac{1}{p^\infty}}}]]}[\frac{1}{p}]$ est perfecto\"{\i}de: la surjectivit\'e de $F$ modulo $p$ se v\'erifie imm\'ediatement. 
     
     \begin{prop}\label{P4} Supposons $\sK$ de caract\'eristique $p>0$.
   L'inclusion des $\sK$-alg\`ebres perfecto\"{\i}des dans les $\sK$-alg\`ebres de Banach uniformes admet un adjoint \`a gauche: $\displaystyle{\sA \mapsto {\rm{{ucolim}}}\, \sA^{\frac{1}{p^i}}= {\rm{{ucolim_F}}}\, \sA}$, la compl\'etion de la cl\^oture radicielle. Elle admet aussi un adjoint \`a droite: ${\displaystyle \sA \mapsto  {\rm{{ulim_F}}}\, \sA }$. 
   \end{prop}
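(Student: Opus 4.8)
Le plan est le suivant. Puisque $\car\sK=p$, la rem.~\ref{r5}(2) assure qu'une $\sK$-algèbre de Banach uniforme est perfectoïde si et seulement si elle est parfaite; il suffit donc de munir l'inclusion des $\sK$-algèbres de Banach uniformes parfaites dans les $\sK$-algèbres de Banach uniformes d'un adjoint à gauche et d'un adjoint à droite.

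Pour l'adjoint à gauche, je poserais ${\rm{{ucolim_F}}}\,\sA:={\rm{ucolim}}_i\,\sA^{\frac{1}{p^i}}$, complété de la clôture radicielle $\sA^{{\rm parf}}:={\rm{colim}}(\sA\stackrel{F}{\to}\sA\stackrel{F}{\to}\cdots)$ — licite puisque $\sA$, étant uniforme, est réduite — munie de la semi-norme $\vert x^{\frac{1}{p^i}}\vert_{sp}:=\vert x\vert_{sp}^{\frac{1}{p^i}}$, pour laquelle les morphismes de transition $\sA^{\frac{1}{p^i}}\inj\sA^{\frac{1}{p^{i+1}}}$ sont isométriques; une colimite uniforme d'algèbres de Banach uniformes l'étant, ${\rm{{ucolim_F}}}\,\sA$ est uniforme. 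Deux points seraient alors à vérifier. Premièrement, que ${\rm{{ucolim_F}}}\,\sA$ est perfectoïde: sur $\sA^{{\rm parf}}$, $F$ est bijectif, et $F$ comme $F^{-1}$ sont additifs (car.~$p$) avec $\vert F(y)\vert_{sp}=\vert y\vert_{sp}^p$ et $\vert F^{-1}(y)\vert_{sp}=\vert y\vert_{sp}^{\frac{1}{p}}$, donc uniformément continus; ils se prolongent ainsi au complété et y restent mutuellement inverses par densité, de sorte que ${\rm{{ucolim_F}}}\,\sA$ est parfaite. Deuxièmement, la propriété universelle: étant donné $\sB$ perfectoïde et un morphisme $f\colon\sA\to\sB$ (nécessairement de norme $\leq1$ entre algèbres spectrales), l'application $x^{\frac{1}{p^i}}\mapsto F_\sB^{-i}(f(x))$ est bien définie (elle respecte $x^{\frac{1}{p^i}}=(x^p)^{\frac{1}{p^{i+1}}}$), est un homomorphisme d'anneaux prolongeant $f$, de norme $\leq1$ puisque $\vert F_\sB^{-i}(f(x))\vert_{sp}=\vert f(x)\vert_{sp}^{\frac{1}{p^i}}\leq\vert x^{\frac{1}{p^i}}\vert_{sp}$; elle se prolonge donc au complété en un morphisme ${\rm{{ucolim_F}}}\,\sA\to\sB$, visiblement l'unique au-dessus de $f$.

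Pour l'adjoint à droite, je poserais ${\rm{{ulim_F}}}\,\sA:=\varprojlim_{x\mapsto x^p}\sA=\{(a_i)_{i\geq0}\mid a_i\in\sA,\ a_{i+1}^p=a_i\}$, muni des opérations composante par composante (licites en car.~$p$, $F$ y étant additif) et de la norme $\vert(a_i)\vert:=\vert a_0\vert_{sp}$. La relation $a_{i+1}^p=a_i$ entraîne $\vert a_i\vert_{sp}=\vert a_0\vert_{sp}^{\frac{1}{p^i}}$; cette norme est donc multiplicative pour les puissances, la projection ${\rm{{ulim_F}}}\,\sA\to\sA$, $(a_i)\mapsto a_0$, est isométrique, et l'endomorphisme de Frobenius $(a_i)\mapsto(a_i^p)$, d'inverse le décalage $(a_i)\mapsto(a_{i+1})$, est bijectif et injectif ($\sA$ étant réduite): ${\rm{{ulim_F}}}\,\sA$ est donc parfaite, donc perfectoïde — pourvu qu'elle soit complète. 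C'est là le point à vérifier: pour une suite de Cauchy $(a^{(n)})_n$ on a $\vert a^{(n)}_i-a^{(m)}_i\vert_{sp}^{p^i}=\vert(a^{(n)}_i-a^{(m)}_i)^{p^i}\vert_{sp}=\vert a^{(n)}_0-a^{(m)}_0\vert_{sp}\to0$ ($F$ additif), donc chaque suite de composantes est de Cauchy dans~$\sA$, de limite $a_i$ vérifiant $a_i=\lim_n(a^{(n)}_{i+1})^p=a_{i+1}^p$, et $a^{(n)}\to(a_i)$. La propriété universelle s'obtient comme plus haut: pour $\sB$ perfectoïde et $h\colon\sB\to\sA$, l'application $\tilde h(b):=(h(F_\sB^{-i}(b)))_{i\geq0}$ est à valeurs dans ${\rm{{ulim_F}}}\,\sA$, est un homomorphisme d'anneaux avec $\vert\tilde h(b)\vert=\vert h(b)\vert_{sp}\leq\vert b\vert_{sp}$, se projette sur $h$, et est l'unique tel, tout homomorphisme d'anneaux commutant à~$F$ et la $i$-ème projection valant la $0$-ème composée avec $F^{-i}$.

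L'obstacle principal est de garantir que ces deux constructions restent dans $\sK\hbox{-}{\bf Perf}$. Du côté colimite, le complété d'un anneau normé parfait pourrait a priori cesser d'être parfait, ce que l'on écarte grâce à la continuité uniforme de $F^{-1}$ pour la semi-norme spectrale. Du côté limite, la difficulté est la complétude de $\varprojlim_{x\mapsto x^p}\sA$, qui repose sur l'identité de Frobenius pour propager la condition de Cauchy de la composante d'indice~$0$ à toutes les composantes. Je soulignerais enfin que ${\rm{{ulim_F}}}\,\sA$, munie de la norme $\vert a_0\vert_{sp}$, n'est pas la limite uniforme de la tour de Frobenius — celle-ci porte la norme spectrale $\sup_i\vert a_i\vert_{sp}=\max(\vert a_0\vert_{sp},1)$ pour $a_0\neq0$, qui ne lui est pas équivalente —, ce qui est le mécanisme concret derrière le fait que les limites dans $\sK\hbox{-}{\bf Perf}$ ne sont pas les limites uniformes en général.
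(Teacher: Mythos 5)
Votre preuve est correcte et nettement plus d\'etaill\'ee que celle du texte, qui se borne \`a renvoyer \`a \cite[prop.~5.9]{S1} pour l'adjoint \`a gauche et \`a indiquer que l'adjoint \`a droite ``en d\'ecoule'' (avec un renvoi \`a \cite[\S 6.3]{GR2}). Vous fournissez ce que le papier laisse implicite~: pour la colimite, l'observation cruciale que $F$ et $F^{-1}$ sont uniform\'ement continus pour la semi-norme spectrale (additivit\'e en car.~$p$ et $\vert F^{\pm1}(y)\vert_{sp}=\vert y\vert_{sp}^{p^{\pm1}}$), donc se prolongent au compl\'et\'e en endomorphismes mutuellement inverses; pour la limite, l'identit\'e de Frobenius $(a_i^{(n)}-a_i^{(m)})^{p^i}=a_0^{(n)}-a_0^{(m)}$ qui propage la condition de Cauchy, et la v\'erification explicite des deux propri\'et\'es universelles. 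Deux points m\'eriteraient d'\^etre pr\'ecis\'es. D'une part, la structure de $\sK$-alg\`ebre sur $\varprojlim_F\sA$ n'est pas enti\`erement ``composante par composante''~: si les op\'erations d'anneau le sont, l'action de $\sK$ transite par l'identification $\sK\cong\varprojlim_F\sK$ (licite car $\sK$ est parfait), de sorte que $\lambda$ agit par $(\lambda^{1/p^i}a_i)_i$~; c'est avec cette action que votre norme $\vert(a_i)\vert=\vert a_0\vert_{sp}$ est bien $\sK$-multiplicative, et que la v\'erification de la $\sK$-lin\'earit\'e de $\tilde h$ devrait \^etre faite. D'autre part, votre remarque finale pr\^ete \`a confusion~: la tour $\sA\xleftarrow{F}\sA\xleftarrow{F}\cdots$ n'est pas un syst\`eme projectif dans $\sK\hbox{-}{\bf uBan}$ ($F$ n'\'etant que Frobenius-semi-lin\'eaire sur $\sK$), donc il n'existe pas de ``limite uniforme de la tour de Frobenius'' au sens du texte \`a laquelle comparer votre norme~; et le papier affirme au \S\ref{lp} qu'en caract\'eristique~$p$ les limites perfecto\"ides co\"incident \emph{bel et bien} avec les limites uniformes des vrais diagrammes de $\sK\hbox{-}{\bf Perf}$. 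Votre calcul $\sup_i\vert a_i\vert_{sp}=\max(\vert a_0\vert_{sp},1)$ est juste, mais cette fonction n'est pas une semi-norme de $\sK$-alg\`ebre~: l'obstruction est la $\sK$-lin\'earit\'e, non une manifestation du ph\'enom\`ene de car.~$0$ que le texte a en vue.
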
 
   
 \begin{proof} $(1)$ \cf \cite[prop. 5.9]{S1}.
   \noindent $(2)$ en d\'ecoule (\cf \cite[\S 6.3]{GR2}).
 \end{proof}

  \subsection{Basculement.} 
   \subsubsection{} On suppose $\car \sK = 0$. L'application multiplicative  $\displaystyle  \lim_{x\to x^p} \sK^{\s}  \to     \lim_{F} \sK^{\s}/ \varpi  $ \'etant bijective, on obtient \`a la limite une application multiplicative continue 
  $\displaystyle {\footnotesize{\#}}  :  \lim_{F} \sK^{\s}/ \varpi  \to \sK^{\s}$.  
  Il existe $\varpi^\flat\in \lim_{F} \sK^{\s}/ \varpi $ tel que $\vert {\footnotesize{\#}}(\varpi^\flat)\vert  = \vert \varpi \vert$. 
   
  On v\'erifie \cite[lem. 3.4]{S1} que $ \displaystyle \sK^{\flat {\s}} :=  \lim_{F} \sK^{\s}/ \varpi $ est l'anneau de valuation du corps perfecto\"{\i}de de caract\'eristique $p$
\begin{equation}\displaystyle \sK^\flat :=  (\lim_{F} \sK^{\s}/ \varpi )[\frac{1}{\varpi^\flat}],\;\;\; \vert x^\flat\vert  := \vert{\footnotesize{\#}}(x^\flat)\vert,\end{equation}  
 et que ${\footnotesize{\#}}$ induit un isomorphisme $\, 
 \sK^{\flat {\s}}/\varpi^\flat \cong \sK^{\s}/\varpi$.
  
  \subsubsection{Remarque.} Dans l'autre sens, il n'y a pas de choix canonique d'un corps perfecto\"{\i}de $\sK$ de car. $0$ tel que $\sK^\flat$ soit un corps perfecto\"{\i}de de car. $p$ donn\'e (\`a {``isomorphisme de Frobenius"} pr\`es, ces corps $\sK$ correspondent aux points de degr\'e $1$ de la courbe de Fargues-Fontaine \cite{FF}).
 
     \subsubsection{}\label{basc}  Soit $\sA$ une $\sK$-alg\`ebre de Banach uniforme.  
     L'application multiplicative  $\displaystyle   \lim_{x\to x^p} \sA^{\s}     \to  \lim_{F} \sA^{\s}/ \varpi \, $ est un hom\'eomorphisme \cite[th. 6.3.92]{GR2}, ce qui fournit une application multiplicative continue  
    \begin{equation}\displaystyle {\footnotesize{\#}}: \lim_{F} \sA^{\s}/ \varpi   \to \sA^{\s} .\end{equation}
     En outre,  $\displaystyle \lim_{F} \sA^{\s}/ \varpi  $ est canoniquement une $\sK^{\flat\sm o}$-alg\`ebre parfaite $\varpi^\flat$-adiquement compl\`ete. La norme associ\'ee n'est autre que autre que 
    \begin{equation}\vert a^\flat\vert : = \vert{\footnotesize{\#}}(a^\flat)\vert.    \end{equation}
     On obtient ainsi une $\sK^\flat$-alg\`ebre de Banach perfecto\"{\i}de (la {\it bascul\'ee} de $\sA$)       \begin{equation}\,\displaystyle \sA^\flat := (\lim_{F} \sA^{\s}/ \varpi) [\frac{1}{\varpi^\flat}]    \end{equation} qui v\'erifie $\displaystyle \sA^{\flat {\s}} =  \lim_{F} \sA^{\s}/ \varpi  $, et ${\footnotesize{\#}}$ s'\'etend en une application multiplicative $\sA^\flat\to \sA$. 
         Ces constructions sont fonctorielles en $\sA$; on obtient ainsi un foncteur 
         \begin{equation} \flat:\,  \sK\hbox{-}{\bf{uBan}}  \to  \sK^\flat\hbox{-}{\bf{Perf}}.\end{equation}
 S'il n'y a pas d'ambigu\"{\i}t\'e, on notera aussi $\flat$ sa restriction \`a $ \sK\hbox{-}{\bf{Perf}}$.
 
       Par ailleurs, puisque $\sA$ est uniforme,  le noyau de $F^n$ sur $\sA^{\s}/\varpi$ est $\varpi^{\frac{1}{p^n}}(\sA^{\s}/\varpi)$ (par le point $(5f)$ du sorite \ref{s1} appliqu\'e \`a la norme spectrale). Donc le noyau de l'homomorphisme   $\, \displaystyle \lim_{F} (\sA^{\s}/ \varpi)  \to \sA^{\s}/ \varpi \,$  induit par ${\footnotesize{\#}}$  est $\, \displaystyle \lim_{F}  \varpi^{\frac{1}{p^n}}(\sA^{\s}/\varpi) = \varpi^\flat \lim_{F}  (\sA^{\s}/\varpi) \,$ (noter que $\varpi^{\frac{1}{p^n}}a_n$ ne d\'etermine pas $a_n$, mais que $F$ l\`eve l'ambig\"uit\'e).  
       On conclut:
     
     \begin{prop}\label{P5}  $\sA^\flat$ est perfecto\"{\i}de, et ${\footnotesize{\#}}$ induit un homomorphisme injectif     \begin{equation} \sA^{\flat \sm o}/\varpi^\flat \inj \sA^{\s}/\varpi.    \end{equation}
      C'est un isomorphisme si et seulement si $\sA$ est perfecto\"{\i}de.
    \qed \end{prop}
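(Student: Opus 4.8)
The plan is to read the statement off from the computation carried out just above, supplemented by the structural description of the tilt in \S\ref{basc}. First I would dispose of the claim that $\sA^\flat=\sA^{\flat{\s}}[\frac{1}{\varpi^\flat}]$ is perfecto\"{\i}de. By \S\ref{basc} the ring $\sA^{\flat{\s}}=\lim_F\sA^{\s}/\varpi$ is $\varpi^\flat$-adiquement compl\`ete and is parfaite, Frobenius being bijective on any inverse limit taken along $F$; it is moreover plate sur $\sK^{\flat{\s}}$, i.e.\ sans $\varpi^\flat$-torsion, which is a routine check with the spectral norm (from $\varpi^\flat\cdot(a_n)_n=0$, the product being componentwise, one gets $\varpi_{1/p^n}\tilde a_n\in\varpi\sA^{\s}$ for lifts $\tilde a_n$, hence $\vert\tilde a_n\vert\leq\vert\varpi\vert^{1-1/p^n}$, and telescoping $\tilde a_m\equiv\tilde a_{m+k}^{p^k}\bmod\varpi$ forces $a_m=0$ for all $m$). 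Since $\car\sK^\flat=p$, $\sA^\flat$ is then a $\sK^\flat$-alg\`ebre de Banach with unit ball $\sA^{\flat{\s}}$, and it is parfaite, hence perfecto\"{\i}de by rem.\ \ref{r5} $(2)$ (equivalently by lemme \ref{L14'}).

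Next, the homomorphism induced by ${\footnotesize{\#}}$ modulo $\varpi$ is simply the projection $\theta\colon\lim_F\sA^{\s}/\varpi\to\sA^{\s}/\varpi$ onto the component of index $0$, because $\tilde a_m^{p^m}\equiv\tilde a_0\bmod\varpi$ for any compatible family of lifts. The paragraph immediately preceding the statement already identifies $\ker\theta$ with $\varpi^\flat\sA^{\flat{\s}}$ (using $\ker(F^n\colon\sA^{\s}/\varpi\to\sA^{\s}/\varpi)=\varpi_{1/p^n}(\sA^{\s}/\varpi)$, which comes from sorite \ref{s1} $(5f)$ for the spectral norm). Hence $\theta$ factors through an injection $\bar\theta\colon\sA^{\flat{\s}}/\varpi^\flat\inj\sA^{\s}/\varpi$; this is the second assertion, and nothing further is needed.

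Finally, for the equivalence I would observe that $\theta$, hence $\bar\theta$, is compatible with Frobenius (one checks $\theta(F(a))=a_0^{p}=F(\theta(a))$). If $\sA$ is perfecto\"{\i}de then $F$ is surjective on $\sA^{\s}/\varpi$ (condition $iii)$ of \ref{algperf}), so every element lifts to a compatible family under the $F$-tower, i.e.\ $\theta$ and $\bar\theta$ are surjective; being also injective, $\bar\theta$ is an isomorphism. Conversely, if $\bar\theta$ is an isomorphism then $F$ is surjective on $\sA^{\s}/\varpi$ because it is bijective on $\sA^{\flat{\s}}/\varpi^\flat$ (as $\sA^{\flat{\s}}$ is parfaite), so $\sA$ is perfecto\"{\i}de. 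The only step carrying real content, the kernel identification $\ker\theta=\varpi^\flat\sA^{\flat{\s}}$, has already been settled before the proposition, so the proof amounts to assembling these remarks.
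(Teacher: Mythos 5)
Your proof is correct and follows the same route the paper implicitly takes: the substance is the kernel identification $\ker\theta=\varpi^\flat\sA^{\flat\s}$ carried out in the paragraph just before the proposition (using sorite \ref{s1}$(5f)$), and the rest — $\sA^\flat$ being a perfect complete flat $\sK^{\flat\s}$-algebra hence perfecto\"{\i}de in characteristic $p$, and the equivalence via surjectivity of $F$ on $\sA^{\s}/\varpi$ — assembles exactly as you do. You merely spell out in full the torsion-free check and the lifting argument for surjectivity of $\theta$, which the paper leaves tacit by placing \qed immediately after the statement.
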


       \subsubsection{Exemples}\label{E12} $(1)$ $ \sK\langle T^{\e}\rangle^{\flat}$ est canoniquement isomorphe \`a $ \sK^{\flat}\langle T^{\e}\rangle$ (les \'el\'ements not\'es $T$ se correspondant par ${\footnotesize{\#}}$).
       
   \smallskip\noindent    $(2)$ {\it La $\hat K_\infty^\flat$-alg\`ebre perfecto\"{\i}de $A_\infty^\flat$}.  Reprenons l'ex. \ref{E10}, \ref{E11} $(2)$. Posons $ \varpi^\flat := (\zeta_{p^i}) -1$. Alors $\hat K_\infty^{\flat \sm o}$ s'identifie au compl\'et\'e $\varpi^\flat$-adique de la cl\^oture radicielle de $ {k[[ \varpi^\flat ]]}$, et   $\hat A_\infty^{\flat {\s}} $ au 
 compl\'et\'e $\varpi^\flat$-adique 
   de la cl\^oture radicielle de $k[[\varpi^\flat, T_1, \ldots, T_n]]$ (comme on le voit en r\'eduisant mod. $\varpi^\flat$). Les \'el\'ements de $\,\hat A_\infty^\flat$ et de $\hat A_\infty$ not\'es $T_i$ se correspondent par ${\footnotesize{\#}}$. 
    
       \smallskip\noindent    $(3)$ Le bascul\'e de $\widehat{\sK^{\s}[[T_{\leq n}^{\e}]]}[\frac{1}{p}]$ est $\widehat{\sK^{\flat \s}[[T_{\leq n}^{\e}]]}[\frac{1}{\varpi^\flat}]$.
       
   \subsubsection{Remarques}\label{r6}    \smallskip\noindent  $(1)$ {\it Si $\sA$ est perfecto\"{\i}de, ${\footnotesize{\#}}(\sA^{\flat\sm o})$ engendre un sous-$\sK^{\s}$-module (alg\`ebre) dense de $\sA^{\s}$}.  
    En effet, soit $\mathfrak A$ l'adh\'erence de ce sous-module. L'image de ${\mathfrak A} \to \sA^{\s}/\varpi \cong \sA^{\flat \sm o}/\varpi^\flat$ est $\sA^{\s}/\varpi $, donc $\mathfrak A = \sA^{\s}$ d'apr\`es le lemme de Nakayama pour les modules complets.
 
   \smallskip\noindent    $(2)$
      Si $\sA$ est int\`egre, il en est de m\^eme de $\sA^\flat$. En effet, si $a^\flat, b^\flat\in \sA^\flat$ v\'erifient $a^\flat  b^\flat=0$, on a $  {\footnotesize{\#}}(a^\flat){\footnotesize{\#}}(b^\flat)  =  {\footnotesize{\#}} (a^\flat  b^\flat ) = 0 $ dans $\sA$,  d'o\`u $  {\footnotesize{\#}}(a^\flat)=0$ ou $  {\footnotesize{\#}}(b^\flat)=0$, et finalement $  a^\flat =0$ ou $  b^\flat =0$. 
      
   \smallskip\noindent    $(3)$ Si la norme de $\sA$ est multiplicative, il en est de m\^eme de celle de $\sA^\flat$. Et r\'eciproquement si $\sA$ est perfecto\"{\i}de, comme il suit du point $(1)$ (ou de l'int\'egrit\'e de $\sA^{\s}/\sA^{\ss} \cong \sA^{\flat\s}/\sA^{\flat\ss}$).
   
      \smallskip\noindent    $(4)$  Les \'el\'ements idempotents de $\sA$ sont en bijection avec ceux de $\sA^{\s}/\varpi$ et aussi avec ceux de $\sA^\flat$ (via $e= e^2 \in \sA^{\s} \mapsto \bar e \in \sA^{\s}/\varpi \mapsto e^\flat := (\ldots, \bar e, \bar e)\in \sA^{\flat\s} \mapsto \# e^\flat = e$).
 
        \smallskip\noindent    $(5)$   Un morphisme $\sA \stackrel{\phi}{\to} \sB$ d'alg\`ebres perfecto\"{\i}de est isom\'etrique si et seulement si $\phi^\flat$ l'est. En effet, la condition se traduit par l'injectivit\'e de $\phi^{\s} $ mod. $\varpi$, qui s'identifie \`a $\phi^{\flat\s}$ mod. $\varpi^\flat$.

  \subsubsection{}\label{bascu} Pla\c cons-nous dans le cadre $(\sK^{\s}, \sK^{{\ss}})$. Voici le premier th\'eor\`eme fondamental de la th\'eorie perfecto\"{\i}de (\cf \cite[th. 5.2, rem. 5.18, th. 3.7]{S1}).
    
    \begin{thm}\label{T3}   
   \begin{enumerate}\item
   Le basculement $\sA \mapsto \sA^\flat$ induit une \'equivalence de cat\'egories entre $\sK$-alg\`ebres perfecto\"{\i}des et $\sK^\flat$-alg\`ebres perfecto\"{\i}des (ind\'ependante du choix de $(\varpi,\,\varpi^\flat)$). 
   \item Le passage \`a $(\;)^{\s}$ suivi de la r\'eduction mod. $\varpi$ (\resp $\varpi^\flat$) induit une \'equivalence entre chacune d'elles et la cat\'egorie des $ \sK^{\sm o\, a}/\varpi $-alg\`ebres plates $\bar{\mathfrak{A}}$ dont le Frobenius induit un isomorphisme $\;\bar{\mathfrak{A}}/ \varpi_{\frac{1}{p} }\otimes_{\sK^{\s}/\varpi_{\frac{1}{p}}, \sigma} \sK^{\s}/\varpi \to \bar{\mathfrak{A}}$. \qed
   \end{enumerate} 
\end{thm}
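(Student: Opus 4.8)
The plan is to deduce the tilting equivalence from the integral description of perfectoid algebras (Lemma~\ref{L14'}) and to isolate the one genuinely hard input, a deformation-theoretic statement borrowed from \cite{S1}. I would first prove (2), and then (1) follows formally. By Lemma~\ref{L14'} — in the almost-algebra form indicated just after its proof — the functor $\sA\mapsto\sA^{\s a}$ is an equivalence of $\sK\hbox{-}{\bf Perf}$ with the category of $\sK^{\s a}$-algebras $\mathfrak A$ that are flat, $\varpi$-adically complete, and such that $F$ induces an isomorphism $\mathfrak A/\varpi_{\frac{1}{p}}\otimes_{\sK^{\s}/\varpi_{\frac{1}{p}},\,\sigma}\sK^{\s}/\varpi\iso\mathfrak A/\varpi$ (the clause $\mathfrak A=\mathfrak A_\ast$ of Lemma~\ref{L14'} being automatic in the almost setting); similarly for $\sK^\flat\hbox{-}{\bf Perf}$ with $\varpi^\flat$ in place of $\varpi$. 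So it suffices to show that reduction mod $\varpi$ (\resp mod $\varpi^\flat$) is an equivalence of each of these two categories with the category $\sC$ of flat $\sK^{\s a}/\varpi$-algebras $\bar{\mathfrak A}$ for which the displayed Frobenius map is an isomorphism. The key observation is that $\sC$ does not see the characteristic of $\sK$: the canonical isomorphism $\sK^{\flat\s a}/\varpi^\flat\iso\sK^{\s a}/\varpi$ induced by ${\footnotesize{\#}}$ (Proposition~\ref{P5} applied to $\sK$ itself, \cf \S\ref{basc}) makes it literally the same category whether described over $\sK^{\s a}/\varpi$ or over $\sK^{\flat\s a}/\varpi^\flat$.

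That reduction mod $\varpi$ lands in $\sC$ is immediate from the definition of a perfectoid algebra; the substance — and the \emph{main obstacle} — is the quasi-inverse. Given $\bar{\mathfrak A}\in\sC$, one must lift it, uniquely up to unique isomorphism, to a flat and $\varpi$-adically complete $\sK^{\s a}$-algebra, and lift morphisms likewise. This is a deformation problem: the twisted-Frobenius condition defining $\sC$ forces the cotangent complex $L_{\bar{\mathfrak A}/(\sK^{\s}/\varpi)}$ to be almost zero, so $\bar{\mathfrak A}$ lifts step by step, uniquely up to unique isomorphism, to flat $\sK^{\s a}/\varpi^n$-algebras $\mathfrak A_n$ carrying the analogous relative-Frobenius condition; then $\mathfrak A:=\lim_n\mathfrak A_n$ is a perfectoid $\sK^{\s a}$-algebra reducing to $\bar{\mathfrak A}$ mod $\varpi$, and the same almost-vanishing gives uniqueness of lifts of morphisms, hence fullness and faithfulness. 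This is precisely the content of \cite[th. 5.2, rem. 5.18]{S1}, from which I would take the cotangent-complex computation and the inductive lifting verbatim; the only repackaging needed is that, by Lemma~\ref{L4}, Lemma~\ref{L14'} and Corollary~\ref{C1}, the whole argument can be run inside the categories of $\sK^{\s a}$- and $\sK^{\flat\s a}$-algebras introduced above.

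It remains to assemble (1). Composing the equivalence $\sK\hbox{-}{\bf Perf}\iso\sC$ with the inverse of $\sK^\flat\hbox{-}{\bf Perf}\iso\sC$ gives an equivalence $\sK\hbox{-}{\bf Perf}\iso\sK^\flat\hbox{-}{\bf Perf}$; since for perfectoid $\sA$ one has $\sA^{\flat\s}/\varpi^\flat\iso\sA^{\s}/\varpi$ via ${\footnotesize{\#}}$ (Proposition~\ref{P5}) and $\lim_F(\sA^{\s}/\varpi)=\sA^{\flat\s}$ by construction (\S\ref{basc}), tracing the definitions shows that this equivalence sends $\sA$ to $\sA^\flat$ — which is (1). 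Independence of the choices $(\varpi,\varpi^\flat)$ follows because every ingredient is visible in choice-free data: the underlying multiplicative monoid of $\sA^\flat$ is $\lim_{x\to x^p}\sA^{\s}$, homeomorphic to $\lim_F\sA^{\s}/\varpi$ for \emph{every} admissible $\varpi$ (\S\ref{basc}), and the identification of the two presentations of $\sC$ uses only ${\footnotesize{\#}}$, hence depends only on $\sK$ and $\sK^\flat$. As flagged, the step I would not reprove is the almost-triviality of the cotangent complex and the consequent rigidity of flat lifts — that is where all the real work sits, and it is exactly what \cite{S1} supplies.
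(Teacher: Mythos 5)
Your proposal is correct, but it follows a route different from the one the paper itself sketches immediately after the statement. You prove~(2) first by packaging $\sK\hbox{-}{\bf Perf}$ through Lemma~\ref{L14'} as a category of almost $\sK^{\s a}$-algebras, observe that the target category $\sC$ of flat $\sK^{\s a}/\varpi$-algebras with the twisted-Frobenius isomorphism is literally the same for $\sK$ and for $\sK^\flat$ via ${\footnotesize{\#}}$, and then cite Scholze's cotangent-complex/deformation argument for the existence and rigidity of flat lifts from $\sK^{\s a}/\varpi$ to $\sK^{\s a}$. This is essentially Scholze's original proof, and the paper does cite \cite[th.\ 5.2, rem.\ 5.18]{S1} as the primary reference. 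However, the sketch the paper actually spells out in the lines following the statement is Fontaine's: one builds the quasi-inverse \emph{directly} by Witt vectors, $(\sA^\flat)^{\sharp\s}:=W(\sA^{\flat\s})\otimes_{W(\sK^{\flat\s})}\sK^{\s}$, and the quasi-inverse of reduction mod $\varpi$ is $\bar{\mathfrak A}\mapsto(\lim_F\bar{\mathfrak A})^{\sharp\s}$, with the verification relegated to \cite[\S6.3]{GR2}. The two routes have complementary merits: the Witt vector construction is explicit and choice-free from the outset (so the independence-of-$(\varpi,\varpi^\flat)$ claim is visible by construction, which is why the paper favors it here), whereas your deformation-theoretic packaging makes the uniqueness-up-to-unique-isomorphism of lifts structural and connects cleanly to the paper's own Lemmas~\ref{L4}, \ref{L14'} and Corollary~\ref{C1}. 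One small imprecision worth tightening: ``the clause $\mathfrak A=\mathfrak A_\ast$ being automatic in the almost setting'' should be phrased as ``each almost-isomorphism class admits a representative with $\mathfrak A=\mathfrak A_\ast$'' — it is a normalization of representatives, not a vacuous condition.
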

    
   Dans   \cite[th. 1.2, th. 3.2]{F}, J.-M. Fontaine esquisse une d\'emonstration par extension directe de ses constructions bien connues en th\'eorie de Hodge $p$-adique: $\sK^{\s}$ est quotient de $W(\sK^{\flat \s } )$, et si l'on pose  
      \begin{equation}\label{e21} (\sA^{\flat})^{\sharp \sm o} :=  W(\sA^{\flat {\s}} )\otimes_{W(\sK^{\flat {\s}} ) } \sK^{\s} , \; (\sA^{\flat})^\sharp := (\sA^{\flat})^{\sharp\sm o}[\frac{1}{\varpi}],\end{equation}
    on obtient un foncteur   $\sharp: \sK^\flat\hbox{-}{\bf{Perf}} \to \sK\hbox{-}{\bf{Perf}}$ quasi-inverse de $\flat$.
       En outre 
    \begin{equation} \label{e22}\displaystyle \mathfrak A \mapsto ( \lim_F \bar{\mathfrak{A}})^{\sharp\sm o},\;\; (\resp \;  \mathfrak A \mapsto \lim_F \bar{\mathfrak{A}}) \end{equation} 
         est quasi-inverse de la r\'eduction mod.   $\varpi $ (\resp $\varpi^\flat$). 
   Voir \cite[\S 6.3]{GR2} pour les d\'etails (ainsi que \cite{FF}, \cite{KL}).   
        
  \smallskip   Pour $\sA$ perfecto\"{\i}de fix\'ee, on d\'eduit de $(1)$ que le basculement induit une \'equivalence de cat\'egories entre $\sA$-alg\`ebres perfecto\"{\i}des et $\sA^\flat$-alg\`ebres perfecto\"{\i}des (nous l'utiliserons dans le cas $\sA =  \sK\langle T^{\e}\rangle $). 
    
      \subsubsection{}\label{nat}  Plus haut, on a associ\'e fonctoriellement \`a toute $\sK$-alg\`ebre de Banach uniforme $\sA$ une $\sK^\flat$-alg\`ebre perfecto\"{\i}de $\sA^\flat$ et un monomorphisme $\sA^{\flat\sm o}/\varpi^\flat\inj \sA^{\s}/\varpi$. 
      Posons  
       \begin{equation}\label{e23}\natural := \sharp \circ \flat\, : \;\; \sK\hbox{-}{\bf{uBan}}\to \sK\hbox{-}{\bf{Perf}}. \end{equation}
       
      \begin{prop}\label{P6} \begin{enumerate} 
      \item Le foncteur $\,\natural\,$ est adjoint \`a droite de l'inclusion $\iota$ des $\sK$-alg\`ebres perfecto\"{\i}des dans les $\sK$-alg\`ebres de Banach uniformes. En outre, pour les normes spectrales, la co\"unit\'e d'adjonction $\sA^\natural \to \sA
     $ est isom\'etrique; c'est un isomorphisme si et seulement si $\sA$ est perfecto\"{\i}de.
     \item Le foncteur $\flat$ est adjoint \`a droite de $\iota \circ \sharp$.  
     \end{enumerate}
      \end{prop}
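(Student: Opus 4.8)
The plan is to reduce both assertions to a single statement: for a perfecto\"{\i}de $\sK$-alg\`ebre $\sR$ and an \emph{arbitrary} uniform Banach $\sK$-alg\`ebre $\sA$, the basculement induces a bijection
\[  \flat\colon\; {\rm Hom}_{\sK\hbox{-}{\bf uBan}}(\sR,\sA)\; \iso\; {\rm Hom}_{\sK^\flat\hbox{-}{\bf Perf}}(\sR^\flat,\sA^\flat),\qquad f\mapsto f^\flat . \]
Granting this, part $(2)$ follows by taking $\sR=\sharp\sB$ and invoking the canonical isomorphism $(\sharp\sB)^\flat\cong\sB$ of the basculement equivalence (th. \ref{T3} $(1)$); and part $(1)$ follows since, $\sharp$ being an equivalence and $\sR\cong\sharp\sR^\flat=\sR^\natural$, one has ${\rm Hom}_{\sK\hbox{-}{\bf Perf}}(\sR,\sA^\natural)\cong{\rm Hom}_{\sK^\flat\hbox{-}{\bf Perf}}(\sR^\flat,\sA^\flat)$, which the displayed bijection matches naturally with ${\rm Hom}_{\sK\hbox{-}{\bf uBan}}(\sR,\sA)$. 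Unwinding, the corresponding co\"unit $\sA^\natural=\sharp\sA^\flat\to\sA$ is the morphism induced on $(\;)^{\s}$, then localized at $\varpi$, by Fontaine's homomorphism $W(\sA^{\flat\s})\otimes_{W(\sK^{\flat\s})}\sK^{\s}\to\sA^{\s}$ (\cf \eqref{e21}, \eqref{e22}). For the metric part I would give $\sA$ its spectral norm: as $\sA^\natural$ is perfecto\"{\i}de hence spectral, and a morphism of spectral algebras has operator norm $\leq 1$, sorite \ref{s1} $(2)(c)$ reduces isometry of $\sA^\natural\to\sA$ to injectivity of $(\sA^\natural)^{\s}/\varpi\to\sA^{\s}/\varpi$; but prop. \ref{P5} applied to the perfecto\"{\i}de algebra $\sA^\natural$ identifies $(\sA^\natural)^{\s}/\varpi$ with $\sA^{\flat\s}/\varpi^\flat$ and this arrow with the injection ${\footnotesize{\#}}\colon\sA^{\flat\s}/\varpi^\flat\inj\sA^{\s}/\varpi$ of prop. \ref{P5} for $\sA$, which is an isomorphism exactly when $\sA$ is perfecto\"{\i}de.

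Injectivity of the displayed bijection: if $f,f'\colon\sR\to\sA$ satisfy $f^\flat=f'^\flat$ then, ${\footnotesize{\#}}$ being functorial (\cf \ref{basc}), $f\circ{\footnotesize{\#}}_{\sR}={\footnotesize{\#}}_{\sA}\circ f^\flat={\footnotesize{\#}}_{\sA}\circ f'^\flat=f'\circ{\footnotesize{\#}}_{\sR}$ on $\sR^{\flat\s}$, so $f^{\s}$ and $f'^{\s}$ agree on ${\footnotesize{\#}}(\sR^{\flat\s})$, hence on the $\sK^{\s}$-subalgebra it generates, which is dense in $\sR^{\s}$ by rem. \ref{r6} $(1)$ (this is where the perfecto\"{\i}dicit\'e of $\sR$ is used). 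By continuity $f^{\s}=f'^{\s}$, whence $f=f'$ (cor. \ref{C1}).

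Surjectivity: given $g\colon\sR^\flat\to\sA^\flat$ I would again use the basculement equivalence: $\sharp(g)$ is a morphism $\sR^\natural\to\sA^\natural$, and composing with the canonical isomorphism $\sR\cong\sR^\natural$ (th. \ref{T3} $(1)$) and the co\"unit $\sA^\natural\to\sA$ above yields $f\colon\sR\to\sA$ in $\sK\hbox{-}{\bf uBan}$ (continuous, as it carries $\sR^{\s}$ into $\sA^{\s}$). To check $f^\flat=g$, one computes $f^{\flat\s}=\lim_F\bar f$, $\bar f\colon\sR^{\s}/\varpi\to\sA^{\s}/\varpi$; through the identifications $\sR^{\s}/\varpi\cong(\sR^\natural)^{\s}/\varpi\cong\sR^{\flat\s}/\varpi^\flat$ and $\sA^{\flat\s}/\varpi^\flat\inj\sA^{\s}/\varpi$ of prop. \ref{P5}, $\bar f$ factors as $\sR^{\flat\s}/\varpi^\flat\stackrel{g\bmod\varpi^\flat}{\longrightarrow}\sA^{\flat\s}/\varpi^\flat\inj\sA^{\s}/\varpi$, and applying $\lim_F$, together with the mutual compatibility of $\flat$, $\sharp$ and Fontaine's equivalences \eqref{e22} (th. \ref{T3}), gives $f^{\flat\s}=g^{\s}$, hence $f^\flat=g$.

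The hard part will be the bookkeeping in this last step: checking that Fontaine's construction $W(-)\otimes_{W(\sK^{\flat\s})}\sK^{\s}$ and the $\theta$-homomorphism are compatible with basculement and with the co\"unit, and in particular that $\lim_F$ of the injection ${\footnotesize{\#}}\colon\sA^{\flat\s}/\varpi^\flat\inj\sA^{\s}/\varpi$ is the identity of $\sA^{\flat\s}$. Everything rests in the end on th. \ref{T3} and prop. \ref{P5}; the one genuinely new point is that $\flat$ extends the basculement equivalence \emph{reflectively} to (not necessarily perfecto\"{\i}de) uniform Banach algebras, and the delicate verification is precisely the naturality of the $\theta$-homomorphism through a non-perfecto\"{\i}de $\sA$.
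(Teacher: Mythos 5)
Your proof is correct and rests on the same two pillars as the paper's (th.\ \ref{T3} and prop.\ \ref{P5}), but you reverse the logical order. The paper first asserts the adjunction $(1)$ tersely from $\natural\iota\cong id_{\sK\hbox{-}{\bf Perf}}$ and the explicit counit $\sA^\natural\to\sA$ (leaving the universal-property check implicit), then derives $(2)$ by a clean composition of adjunctions ($\flat\cong\flat_{\mid\sK\hbox{-}{\bf Perf}}\circ\natural$ with $\sharp\dashv\flat_{\mid\sK\hbox{-}{\bf Perf}}$). You instead establish $(2)$ directly as a bijection ${\rm Hom}_{\sK\hbox{-}{\bf uBan}}(\sR,\sA)\cong{\rm Hom}_{\sK^\flat\hbox{-}{\bf Perf}}(\sR^\flat,\sA^\flat)$, with an honest injectivity argument via the density of $\#(\sR^{\flat\s})$ (rem.\ \ref{r6} $(1)$) and a surjectivity argument through the $\theta$-map, then deduce $(1)$. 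This buys a more self-contained verification of the adjunction (the paper's one-liner is indeed elliptic), at the cost of carrying the Fontaine/$\theta$-compatibility bookkeeping you flag at the end; that verification does go through (the point is precisely that $\lim_F$ of the injection $\sA^{\flat\s}/\varpi^\flat\inj\sA^\s/\varpi$ is the canonical identification of $\sA^{\flat\s}$ with itself, which follows from perfectness and $\varpi^\flat$-completeness), but it would be worth writing out. Your treatment of the isometry of the counit via sorite \ref{s1} $(2)(c)$ and prop.\ \ref{P5} matches the paper's exactly.
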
 
      
      \begin{proof} $(1)$ L'adjonction vient de ce que $\natural \iota \cong  id_{\sK\hbox{-}{\bf{Perf}}}$, et l'isom\'etrie de la co\"unit\'e d'adjonction de ce que $\sA^{\natural\sm o}/\varpi \to \sA^{\s}/\varpi$ est injectif (prop. \ref{P5}). 
      
    \noindent  $(2)$ s'en d\'eduit par composition d'adjonctions, puisque $\flat \cong \flat_{\mid {\sK\hbox{-}{\bf{Perf}}}} \circ \natural $ et que $\flat_{\mid {\sK\hbox{-}{\bf{Perf}}}}$ admet comme adjoint \`a gauche son quasi-inverse $\sharp$.   \end{proof}

        En langage cat\'egorique, $\sK\hbox{-}{\bf{Perf}}$ est une {\it sous-cat\'egorie cor\'eflexive} (pleine) de $\sK\hbox{-}{\bf{uBan}}$ \cite{HS}. 
                
  \subsubsection{}  Une importante application du basculement concerne les produits tensoriels   (cf. \cite[prop. 6.18]{S1}):
     
    \begin{prop}\label{P7} Soient $\sK$ un corps perfecto\"{\i}de et $\sA$ une $\sK$-alg\`ebre perfecto\"{\i}de. Si $\sB,\sC$ sont deux $\sA$-alg\`ebres perfecto\"{\i}des, leur somme amalgam\'ee en tant qu'alg\`ebre perfecto\"{\i}de existe et co\"{\i}ncide avec la somme amalgam\'ee $\sB\hat\otimes^u_\sA\, \sC$ en tant qu'alg\`ebre de Banach uniforme. 
     L'homomorphisme canonique 
        \begin{equation}( \sB^{\s}\hat\otimes_{\sA^{\s}}\, \sC^{\s})_\ast \to ( \sB\hat\otimes_\sA\, \sC )^{\s} \end{equation} est un isomorphisme. En particulier, si $\sA,\sB,\sC$ sont munies de leur norme spectrale, on a  $\sB\hat\otimes_\sA\, \sC = \sB\hat\otimes^u_\sA\, \sC$, de boule unit\'e $(\sB^{\s}\hat\otimes_{\sA^{\s}}\, \sC^{\s})_\ast$.        
     \end{prop}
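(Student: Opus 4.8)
The plan is to reduce the entire proposition to a single flatness statement modulo $\varpi$, deduce the rest formally from it, and establish that statement by basculement. Fix $\varpi\in\sK^{\ss}\setminus 0$ with $p\sK^{\s}\subset\varpi\sK^{\s}$ and set $\bar{\mathfrak A}=\sA^{\s}/\varpi$, $\bar{\mathfrak B}=\sB^{\s}/\varpi$, $\bar{\mathfrak C}=\sC^{\s}/\varpi$ and $\bar{\mathfrak D}=\bar{\mathfrak B}\otimes_{\bar{\mathfrak A}}\bar{\mathfrak C}=\sB^{\s}/\varpi\otimes_{\sA^{\s}/\varpi}\sC^{\s}/\varpi$. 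The crux is:
\[ (\star)\qquad \bar{\mathfrak D}\ \text{est plate sur}\ \sK^{\s}/\varpi\ \text{dans le cadre}\ (\sK^{\s},\sK^{\ss}). \]
First I would observe that the Frobenius condition on $\bar{\mathfrak D}^a$ is automatic, irrespective of $(\star)$: writing $(\;)^{(1)}:=(\;)/\varpi_{\frac1p}\otimes_{\sK^{\s}/\varpi_{\frac1p},\,\sigma}\sK^{\s}/\varpi$, right-exactness of $\otimes$ and base change give $\bar{\mathfrak D}^{(1)}\cong\bar{\mathfrak B}^{(1)}\otimes_{\bar{\mathfrak A}^{(1)}}\bar{\mathfrak C}^{(1)}$, and the relative Frobenius sends this to $\bar{\mathfrak D}$ as $F_{\bar{\mathfrak B}}\otimes_{F_{\bar{\mathfrak A}}}F_{\bar{\mathfrak C}}$; each of $F_{\bar{\mathfrak A}},F_{\bar{\mathfrak B}},F_{\bar{\mathfrak C}}$ being an almost isomorphism by perfecto\"{\i}dit\'e de $\sA,\sB,\sC$, so is their tensor product (one inverts each factor). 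Granting $(\star)$, $\bar{\mathfrak D}^a$ is therefore a flat $\sK^{\s a}/\varpi$-algebra with bijective Frobenius, hence by th\'eor\`eme \ref{T3}(2) the mod-$\varpi$ incarnation of a perfecto\"{\i}de $\sK$-algebra $\sD$; the equivalence being compatible with the $\sA$-structure and $\bar{\mathfrak D}^a$ being the tensor product $\bar{\mathfrak B}^a\otimes_{\bar{\mathfrak A}^a}\bar{\mathfrak C}^a$, $\sD$ is the coproduct of $\sB$ and $\sC$ in $\sA\hbox{-}{\bf Perf}$.

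Next I would identify $\sD$ with $\sB\hat\otimes^u_\sA\sC$ and read off $\sD^{\s}$. By lemme \ref{L4} the ring $\mathfrak D:=(\sB^{\s}\hat\otimes_{\sA^{\s}}\sC^{\s})_\ast$ is $\varpi$-adiquement compl\`ete and satisfies $\mathfrak D=\mathfrak D_\ast$, and by $(\star)$ it is $\sK^{\s}$-plate and reduces modulo $\varpi$ to $\bar{\mathfrak D}^a$; so by lemme \ref{L14'} it is the ring of integers of a perfecto\"{\i}de algebra, which must be $\sD$, whence $\sD^{\s}=(\sB^{\s}\hat\otimes_{\sA^{\s}}\sC^{\s})_\ast$. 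On the other hand sorite \ref{s1}(2e) and sorite \ref{s1}(4) express $(\sB\hat\otimes_\sA\sC)^{\s}$ as the $p$-radiciel closure of $(\sB^{\s}\hat\otimes_{\sA^{\s}}\sC^{\s})/(\varpi^\infty\text{-torsion})$, completed and saturated by $(\;)_\ast$; since $(\star)$, together with the Frobenius computation above (which forces $\bar{\mathfrak D}$ to be almost r\'eduite), makes both the $\varpi^\infty$-torsion and the $p$-radiciel defect presque nuls, this is again $\mathfrak D$. In particular the ring of power-bounded elements of $\sB\hat\otimes_\sA\sC$ equals $\mathfrak D$, which is bounded; so $\sB\hat\otimes_\sA\sC$ --- a fortiori $\sB\hat\otimes^u_\sA\sC$ --- is already uniforme and coincides with $\sD$. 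As $\hat\otimes^u$ is the coproduct in $\sK\hbox{-}{\bf uBan}$ and $\sK\hbox{-}{\bf Perf}$ is a full subcategory of it, $\sB\hat\otimes^u_\sA\sC$ is then also the coproduct in $\sA\hbox{-}{\bf Perf}$, and for spectral norms $\sB\hat\otimes_\sA\sC=\sB\hat\otimes^u_\sA\sC$ with unit ball $(\sB^{\s}\hat\otimes_{\sA^{\s}}\sC^{\s})_\ast$. All of this is characteristic-free.

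It remains to prove $(\star)$, which I expect to be \textbf{the main obstacle}. Here I would pass to characteristic $p$ by basculement: $\sK^{\s}/\varpi\cong\sK^{\flat\s}/\varpi^\flat$, and for $\sA$ perfecto\"{\i}de the functor $\flat$ identifies $\sA\hbox{-}{\bf Perf}$ with $\sA^\flat\hbox{-}{\bf Perf}$ compatibly with the reduction of $(\;)^{\s}$ modulo $\varpi$ (\resp $\varpi^\flat$), by proposition \ref{P5}; so one may assume $\car\sK=p$, in which case $\sA^{\s},\sB^{\s},\sC^{\s}$ are parfaits (rem. \ref{r5}(2)). Then $(\star)$ follows from the almost flatness of $\bar{\mathfrak B}$ over $\bar{\mathfrak A}$ --- whence $\bar{\mathfrak D}=\bar{\mathfrak B}\otimes_{\bar{\mathfrak A}}\bar{\mathfrak C}$ is almost plate over $\bar{\mathfrak C}=\sC^{\s}/\varpi$, hence, $\bar{\mathfrak C}$ being plate sur $\sK^{\s}/\varpi$, plate over $\sK^{\s}/\varpi$ --- and this almost flatness is a feature of perfecto\"{\i}de algebras over a perfecto\"{\i}de base in characteristic $p$: the ideals that enter are generated by compatible systems of $p$-th power roots and are hence idempotents, so the relevant $\mathrm{Tor}$-modules, computed as filtered colimits, have transition maps eventually equal to multiplication by the $\varpi^{1/p^n}$ and therefore vanish in the limit modulo $\sK^{\ss}$-torsion. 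This is, in substance, \cite[prop. 6.18]{S1}, recast in the uniform-Banach framework (\cf aussi \cite[\S 6.3]{GR2}).
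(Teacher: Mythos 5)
Your overall strategy follows the paper's: reduce everything to the single assertion that $\bar{\mathfrak D}^a=\sB^{\s a}/\varpi\otimes_{\sA^{\s a}/\varpi}\sC^{\s a}/\varpi$ is flat over $\sK^{\s a}/\varpi$ (i.e.\ that $\sB^{\s}\hat\otimes_{\sA^{\s}}\sC^{\s}$ is almost $\varpi$-torsion free), then invoke th.~\ref{T3}(2) and lemme~\ref{L14'}; and in characteristic $p$ this reduction is where the real content lies. The problem is your proof of $(\star)$ in characteristic $p$.

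You assert that $\bar{\mathfrak B}^a$ is almost flat over $\bar{\mathfrak A}^a$ for an \emph{arbitrary} morphism $\sA\to\sB$ of perfecto\"{\i}de algebras in characteristic $p$, and then propagate flatness through $\bar{\mathfrak C}$ down to $\sK^{\s}/\varpi$. This intermediate claim is false. Take $\sA=\sK\langle T^{\e}\rangle$ and $\sB=\sK$, with the morphism $T^{1/p^n}\mapsto 0$; this is a map of perfecto\"{\i}de $\sK$-algebras by the universal property of $\sA$. Then $\bar{\mathfrak A}$ contains the non-zero-divisor $T$, so $0\to\bar{\mathfrak A}\xrightarrow{\cdot T}\bar{\mathfrak A}\to\bar{\mathfrak A}/(T)\to 0$ is a free resolution of $\bar{\mathfrak A}/(T)$; tensoring with $\bar{\mathfrak B}=\sK^{\s}/\varpi$, in which $T$ acts as $0$, one finds
\[
\mathrm{Tor}_1^{\bar{\mathfrak A}}\bigl(\bar{\mathfrak B},\,\bar{\mathfrak A}/(T)\bigr)\;=\;\ker\bigl(\sK^{\s}/\varpi\xrightarrow{\cdot 0}\sK^{\s}/\varpi\bigr)\;=\;\sK^{\s}/\varpi,
\]
which is not almost zero in the cadre $(\sK^{\s},\sK^{\ss})$. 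So the "almost flatness" you rely on simply does not hold, and the colimit/idempotent-ideal heuristic in your last sentences cannot repair it: what that reasoning controls is $\varpi$-torsion, not $\mathrm{Tor}_1$ against general $\bar{\mathfrak A}$-modules.

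Fortunately $(\star)$ itself is true, and is much weaker than what you tried to prove. In characteristic $p$ the ring $\sB^{\s}\hat\otimes_{\sA^{\s}}\sC^{\s}$ is perfect (Frobenius on $B\otimes_A C$ is $F_B\otimes_{F_A}F_C$, an isomorphism; the $\varpi$-adic completion is again perfect), and in any perfect $\sK^{\s}$-algebra the $\varpi$-torsion is $\sK^{\ss}$-torsion: if $\varpi x=0$ then $(\varpi^{1/p^n}x)^{p^n}=\varpi x\cdot x^{p^n-1}=0$, hence $\varpi^{1/p^n}x=0$ by reducedness. This gives "presque sans $\varpi$-torsion" directly, with no flatness over $\bar{\mathfrak A}$ needed — and this is exactly the route the paper takes (rem.~\ref{r5}(2) and lemme~\ref{L14'}). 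The rest of your argument (base change for Frobenius, identification of the boule unit\'e via sorite~\ref{s1}, the coproduct statement in $\sA\hbox{-}{\bf Perf}$) is fine once $(\star)$ is established correctly.
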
  
     
  \begin{proof} Il est clair que Frobenius induit une bijection $(\sB^{\s}\hat\otimes_{\sA^{\s}}\, \sC^{\s})/\varpi_{\frac{1}{p}} \to (\sB^{\s}\hat\otimes_{\sA^{\s}}\, \sC^{\s})/\varpi$. D'apr\`es le lemme \ref{L14'}, il suffit alors de montrer que $\sB^{\s}\hat\otimes_{\sA^{\s}}\, \sC^{\s} $ est presque sans $\varpi$-torsion.  
  
  Si $\car\, \sK =p$, cela d\'ecoule du fait que $\sB^{\s}\hat\otimes_{\sA^{\s}}\, \sC^{\s}$ est parfaite (\cf rem. \ref{r5} $(2)$).
  
  Si $\car\, \sK = 0$,  il suffit d'apr\`es l'\'equivalence $(2)$ du th. \ref{T3} de voir que  $\sB^{\s a}/\varpi  \otimes_{\sA^{\s a}/\varpi}\, \sC^{\s a}/\varpi$   est plate sur $\sK^{\s a}/\varpi $, ce qui s'obtient par basculement: $\sB^{\s a}/\varpi  \otimes_{\sA^{\s a}/\varpi}\, \sC^{\s a}/\varpi  \cong  \sB^{\flat\s a}/\varpi^\flat  \otimes_{\sA^{\flat\s a}/\varpi^\flat}\, \sC^{\flat\s a}/\varpi^\flat$. Cela montre aussi que les sommes amalgam\'ees commutent au basculement.
    La troisi\`eme assertion r\'esulte de la seconde en vertu du sorite \ref{s1}$(2e)(5)$. \end{proof} 
   
    Il s'ensuit que la cat\'egorie des alg\`ebres perfecto\"{\i}des admet des colimites finies, qui se calculent comme dans la cat\'egorie des alg\`ebres de Banach uniformes (ce r\'esultat sera renforc\'e au \S \ref{lcolp}). Elles commutent \`a l'\'equivalence de basculement.
  
  On d\'eduit aussi de la proposition que le foncteur $\,- \hat\otimes_\sK \sA\,$ est adjoint \`a gauche du foncteur oubli $\sA\hbox{-}{ \bf Perf} \to \sK\hbox{-}{ \bf Perf} $.

  \medskip \subsection{Extensions \'etales finies d'alg\`ebres perfecto\"{\i}des.}\label{eeap}
   
                 \subsubsection{} Soit $\sK$ un corps perfecto\"{\i}de, et pla\c cons-nous de nouveau dans le cadre $(\sK^{\s}, \sK^{{\ss}})$. Voici le second th\'eor\`eme fondamental de la th\'eorie perfecto\"{\i}de (\cf \cite[th. 7.9]{S1}\cite{KL}), qui g\'en\'eralise le {``th\'eor\`eme de puret\'e"}  de Faltings.

   \begin{thm}\label{T4}  Soient $\sK$ un corps perfecto\"{\i}de et $\sA$ une $\sK$-alg\`ebre perfecto\"{\i}de. 
      
     \medskip \noindent $(1)$  On a des \'equivalences de cat\'egories 
     
     \medskip\noindent   $   \{\sA^{\s a}$-alg\`ebres \'etales  finies$\}   \stackrel{\sim}{\to}$  
     
     \medskip\centerline{
     $\{\sA$-alg\`ebres perfecto\"{\i}des \'etales finies$\}   \stackrel{\sim}{\to}$ } 
     
        \medskip\rightline{ $\{ \sA$-alg\`ebres \'etales finies$\}  $ }    
     \medskip \noindent   induites par inversion de $\varpi$ et oubli de la norme respectivement.
        
              \medskip\noindent $(2)$ Ces \'equivalences sont compatibles au basculement.   \end{thm}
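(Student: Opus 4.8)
The plan is to deduce the two equivalences from four ingredients already in place: the tilting equivalence of Theorem~\ref{T3}, Grothendieck's equivalence theorem (\S\ref{eqrem}), the Galois machinery of \S\ref{eg} (Lemmas~\ref{L0} and~\ref{L3}, Proposition~\ref{P2}) together with Lemma~\ref{L6} on rings of invariants, and Proposition~\ref{P7} on completed tensor products of perfectoid algebras. The forgetful functor from perfectoid \'etale finite $\sA$-algebras to \'etale finite $\sA$-algebras is fully faithful by Lemma~\ref{L6'}, so the crux of the middle equivalence is its essential surjectivity: an \'etale finite $\sA$-algebra $\sB$ is perfectoid. As a module $\sB$ is projective finite over $\sA$, hence carries a canonical class of complete norms making it a uniform Banach algebra (remark after Lemma~\ref{L6'}); in characteristic $p$ this already forces $\sB$ perfect --- the relative Frobenius of an \'etale morphism is an isomorphism and $F_\sA$ is one, so $F_\sB$ is one --- hence perfectoid (remark~\ref{r5}$(2)$). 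For the first equivalence, the functor ``inverting $\varpi$'', $\mathfrak B\mapsto\mathfrak B_\ast[\frac{1}{\varpi}]$, and the functor $\sB\mapsto\sB^{\s a}$ are mutually quasi-inverse: an \'etale finite $\sA^{\s a}$-algebra $\mathfrak B$ is projective finite, hence $\varpi$-adically complete by Lemma~\ref{L1}, so $\mathfrak B_\ast$ satisfies the conditions of Lemma~\ref{L14'} and $\mathfrak B=\sB^{\s a}$ for $\sB:=\mathfrak B_\ast[\frac{1}{\varpi}]$. The real content is thus: $\sB$ \'etale finite over perfectoid $\sA$ implies $\sB$ perfectoid \emph{and} $\sB^{\s a}$ \'etale finite over $\sA^{\s a}$.

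I first treat characteristic $p$, where every such $\sB$ is already known to be perfectoid. Writing $r$ for the rank of $\sB$ over $\sA$, Lemma~\ref{L0} embeds $\sB$ into a finite \'etale extension $\sA\inj\sC$, Galois of group $\mathfrak S_r$, with $\sB\inj\sC$ Galois of group $\mathfrak S_{r-1}$; being projective finite over $\sA$, $\sC$ is a uniform Banach algebra on which $\mathfrak S_r$ acts isometrically for its spectral norm (a continuous automorphism of a uniform Banach algebra, being bicontinuous by the open mapping theorem, preserves the unique complete spectral norm), and $\sC^{\mathfrak S_r}=\sA$. Granting the claim --- \emph{for a Galois extension $\sA\inj\sC$ of a perfectoid algebra, of finite group $G$, $\sC^{\s a}$ is Galois over $\sA^{\s a}$ of group $G$ in the setting $(\sK^{\s},\sK^{\ss})$} --- and applying it both to $\sA\inj\sC$ and to $\sB\inj\sC$ (all algebras in sight being perfectoid in characteristic $p$), one has $\sB^{\s}=(\sC^{\s})^{\mathfrak S_{r-1}}$ by Lemma~\ref{L6}$(2)$, hence $\sB^{\s a}=(\sC^{\s a})^{\mathfrak S_{r-1}}$ since $(\;)^a$ commutes with limits; thus $\sC^{\s a}$ is Galois over both $\sA^{\s a}$ and $\sB^{\s a}$, and Proposition~\ref{P2}$(3)$ exhibits $\sB^{\s a}$, an intermediate extension of the Galois extension $\sC^{\s a}/\sA^{\s a}$, as \'etale finite over $\sA^{\s a}$.

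The remaining core is the boxed Galois claim in characteristic $p$. There $\sC$ is perfect, hence perfectoid, and $\sC\otimes_\sA\sC$ is complete (finite projective over $\sA$, \S\ref{pt}) and perfectoid; the Galois isomorphism $\sC\otimes_\sA\sC\iso\prod_G\sC$ is bicontinuous by the open mapping theorem, so applying $(\;)^{\s}$ and Proposition~\ref{P7}, which identifies $(\sC\otimes_\sA\sC)^{\s}$ with $(\sC^{\s}\hat\otimes_{\sA^{\s}}\sC^{\s})_\ast$, yields $(\sC^{\s}\hat\otimes_{\sA^{\s}}\sC^{\s})_\ast\cong\prod_G\sC^{\s}$. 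Passing to $(\;)^a$, one needs $\sC^{\s a}\otimes_{\sA^{\s a}}\sC^{\s a}$ to coincide with its $\varpi$-adic completion, which rests on $\sC^{\s a}$ being almost finite over $\sA^{\s a}$; granting that, one obtains $\sC^{\s a}\otimes_{\sA^{\s a}}\sC^{\s a}\iso\prod_G\sC^{\s a}$, which with $(\sC^{\s a})^G=\sA^{\s a}$ (Lemma~\ref{L6}$(2)$, $(\;)^a$ commuting with limits) is the asserted Galois-ness. Finally, the characteristic $0$ case of the whole statement --- and statement~$(2)$ --- follows by transport through Theorem~\ref{T3}: since $\sA^{\s a}/\varpi\cong\sA^{\flat\s a}/\varpi^\flat$ canonically, \S\ref{eqrem} makes \'etale finite algebras over $\sA^{\s a}$, over $\sA^{\flat\s a}$ and over their common reduction all correspond, compatibly with inverting $\varpi$ and with forgetting norms, so that $\flat$ carries the three categories over $\sA$ to those over $\sA^\flat$. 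The main obstacle is precisely the almost-finiteness of $\sC^{\s a}$ over $\sA^{\s a}$ --- equivalently, the almost-triviality of the different of this extension ---, the technical heart of the almost purity theorem, which is established as in \cite{S1}, \cite{KL}, \cite{GR1}; everything else is formal bookkeeping with the adjoint functors $(\;)^{\s}$, $(\;)^a$, $(\;)_\ast$ and the exactness properties recalled in \S\ref{pa}.
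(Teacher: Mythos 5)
Your proposal follows the same broad contours as the paper---first equivalence via the Galois reduction (Lemma~\ref{L0}, Proposition~\ref{P2}, Lemma~\ref{L6}), tilting for characteristic~$0$---but it has one genuine gap and one misattributed difficulty.

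The misattributed difficulty is in your boxed Galois step. You say that passing from $\sC^{\s a}\hat\otimes_{\sA^{\s a}}\sC^{\s a}\cong\prod_G\sC^{\s a}$ to the uncompleted statement ``rests on $\sC^{\s a}$ being almost finite over $\sA^{\s a}$,'' which you then present as the technical heart of almost purity, to be imported from \cite{S1}, \cite{KL}, \cite{GR1}. That inverts the logic of the paper's Proposition~\ref{P8'}$(2a)$: almost-finiteness is an \emph{output} of the Galois argument, not an input. The completed/uncompleted issue disappears once you reduce modulo~$\varpi$. Since $(\;)\otimes(\;)$ and its $\varpi$-adic completion agree modulo~$\varpi^n$, you get unconditionally that $\sC^{\s a}/\varpi$ is Galois of group~$G$ over $\sA^{\s a}/\varpi$, hence (Proposition~\ref{P2}$(1)$) projective finite and \'etale finite over $\sA^{\s a}/\varpi$; Grothendieck's remarkable equivalence (\S\ref{eqrem}) then lifts this to the unique $\varpi$-complete \'etale finite $\sA^{\s a}$-algebra with that reduction, and one identifies it with $\sC^{\s a}$. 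No external almost-finiteness is needed, and in particular nothing about the different.

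The genuine gap is in characteristic~$0$. You claim the whole statement ``follows by transport through Theorem~\ref{T3},'' but tilting only carries \emph{perfectoid} $\sA$-algebras to $\sA^\flat$-algebras. To transport an arbitrary \'etale finite $\sA$-algebra $\sB$, you would already need to know that $\sB$ is perfectoid --- or at least that $\sB^\flat$ (once defined) is \'etale finite over $\sA^\flat$ --- and this is precisely the content of $(\ast)_\sA$, the essential surjectivity of the forgetful functor that you singled out as ``the crux'' at the start. Your appeal to the remark after Lemma~\ref{L6'} gives uniformity at best, not perfectoidness, and the paper explicitly declines to use even that. The paper's characteristic-$0$ argument for $(\ast)_\sA$ (sketched in \S\ref{demo}) requires substantially more: reduction to Gelfand transforms and their perfectoid residue fields $\mathcal H(x)$, Elkik-type henselian approximation (\cite[prop.~5.4.53]{GR1} and [SGA~4]) relating \'etale finite covers of $\sA_\alpha$, $\mathcal H(x)$, $\sA_\alpha^\flat$, $\mathcal H(x)^\flat$, and compactness of $\mathcal M(\sA)$ together with the sheaf property of perfectoid algebras. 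None of this appears in your proposal, and it is not a piece of bookkeeping that can be absorbed into a citation to \cite{S1} after the fact: it is the reason characteristic $0$ is harder than characteristic~$p$ in this theorem.
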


   En particulier, toute extension \'etale finie $\sB$ de $\sA$ est perfecto\"{\i}de (la norme spectrale de $\sB$ \'etant l'unique norme spectrale compl\`ete, et aussi l'unique norme spectrale compatible avec la topologie canonique du $\sA$-module projectif fini $\sB$, \cf \ref{ns}), et tout $\sK$-automorphisme de la $\sK$-alg\`ebre $\sB$ qui fixe $\sA$ est continu, donc isom\'etrique eu \'egard aux normes spectrales.

 \subsubsection{Exemple prophylactique.}\label{E13} On ne peut s'affranchir de l'hypoth\`ese {``\'etale"} dans le point $(1)$ du th\'eor\`eme, comme le montre l'exemple suivant, o\`u l'on reprend les notations de \ref{E11} $(2)$ avec $p= n= 2$.
 
 Posons $g : = T_1 + T_2$ et consid\'erons la $\hat K_\infty$-alg\`ebre de Banach (multiplicativement norm\'ee) $\hat A_\infty[\sqrt g]$. Elle n'est pas perfecto\"{\i}de. En effet, tout \'el\'ement $a_1 + a_2 \sqrt g$ de $\hat A_\infty[\sqrt g]^{\s}$ v\'erifie  $2a_1, 2a_2, a_1^2 - a_2^2g \in \hat A_\infty^{\s}$ comme on le voit en calculant traces et norme; en particulier $a_1(0)\in \sK^{\s}$. L'\'el\'ement $f_1 := \frac{i+1}{2}(\sqrt{T_1+T_2} -\sqrt{T_1} -\sqrt{T_2}) $ est dans $\hat A_\infty[\sqrt g]^{\s} $ comme on le voit en calculant son carr\'e. Montrons que $f_1$ n'a pas de racine carr\'ee modulo $2$: si $b = a_1+a_2 \sqrt{g} \in \hat A_\infty[\sqrt g]^{\s}$ v\'erifie $  b^2 - f_1   \in     2\hat A_\infty[\sqrt g]^{\s}\subset \hat A_\infty^{\s}[\sqrt g]$, alors  $2a_1a_2 - \frac{i+1}{2} \in  \hat A_\infty^{\s},$ d'o\`u $\vert a_1(0)a_2(0)\vert = 2^{-{\frac{3}{2}}}$, ce qui contredit le fait que $a_1(0), 2a_2(0)\in \sK^{\s}$.
 
 \smallskip L'exemple \ref{E3} en car. $2$ est du m\^eme type. 
 
        \subsubsection{} Notons que les foncteurs en jeu sont pleinement fid\`eles. Commen\c cons par \'etablir la premi\`ere \'equivalence du th\'eor\`eme, c'est-\`a-dire:
                
           \begin{prop}\label{P8'} $(1)$ Si $\sA$ est une $\sK$-alg\`ebre perfecto\"{\i}de, toute $\sA^{\s a}$-alg\`ebre \'etale finie $\mathfrak B$ fournit, apr\`es inversion de $\varpi$, une $\sA$-alg\`ebre perfecto\"{\i}de $\sB$ (de mani\`ere fonctorielle si on munit $\sB$ de la norme spectrale), telle que $\sB^{\s a} \cong \mathfrak B$. 
           
           $(2)$ R\'eciproquement, si $\sB$ est une $\sA$-alg\`ebre perfecto\"{\i}de \'etale finie (\resp galoisienne de groupe $G$), alors $\sB^{\s a}$ est une $\sA^{\s a}$-alg\`ebre \'etale finie (\resp galoisienne de groupe $G$).  \end{prop}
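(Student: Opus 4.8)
\medskip

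The plan is to establish $(1)$ by realizing $\sB$ as $(\mathfrak B_\ast)[\frac{1}{\varpi}]$ and checking the perfecto\"{\i}de property through the ``\'equivalence remarquable'', and to establish $(2)$ by reducing to the Galois case via Lemme~\ref{L0} and then invoking Proposition~\ref{P2}. For $(1)$, by Lemme~\ref{L14'} (in its variant for $\sK^{\s a}$-alg\`ebres) it is enough to check that $\mathfrak B$, viewed as a $\sK^{\s a}$-alg\`ebre, is almost flat, $\varpi$-adically complete, and that $x\mapsto x^p$ is an almost isomorphism $\mathfrak B/\varpi_{\frac{1}{p}}\to\mathfrak B/\varpi$; then $\sB:=(\mathfrak B_\ast)[\frac{1}{\varpi}]$, equipped with its spectral norm, is the required perfecto\"{\i}de $\sA$-alg\`ebre, with $\sB^{\s a}\cong\mathfrak B$. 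Almost flatness holds because an \'etale finite $\sA^{\s a}$-alg\`ebre is almost projective finie, hence almost flat (rem.~\ref{r1}); $\varpi$-adic completeness follows from Lemme~\ref{L1}, since $\sA^{\s}$ is $\varpi$-adically complete. For the Frobenius condition, the ``\'equivalence remarquable'' (\S\ref{eqrem}) shows that $\bar{\mathfrak B}:=\mathfrak B/\varpi$ is \'etale finie over $\sA^{\s a}/\varpi$; the map $x\mapsto x^p$ is then Frobenius-semilinear over $\sigma\colon\sA^{\s a}/\varpi_{\frac{1}{p}}\iso\sA^{\s a}/\varpi$, an isomorphism precisely because $\sA$ is perfecto\"{\i}de (condition $(v)$ of \S\ref{algperf}), and composing with $\sigma^{-1}$ reduces the question to whether the relative Frobenius of $\bar{\mathfrak B}$ over $\sA^{\s a}/\varpi$ is an isomorphism. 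But the relative Frobenius of an \'etale morphism is always an isomorphism --- a standard fact whose almost version follows from the genuine one, both ``\'etale'' and ``Frobenius'' being stable under base change. Functoriality, with the spectral norm, is routine, a ring homomorphism $\phi$ between uniform $\sK$-alg\`ebres with $\phi(\sA^{\s})\subset\sB^{\s}$ being automatically continuous (\S\ref{ns}).

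For $(2)$ I would first settle the Galois case. If $\sB$ is a perfecto\"{\i}de $\sA$-alg\`ebre, galoisienne over $\sA$ of group $G$, then $G$ acts isometrically for the spectral norm (a $\sK$-automorphism fixing $\sA$ is continuous, \S\ref{ns}), so Lemme~\ref{L6}$(2)$ gives $\sB^{\s G}=\sA\cap\sB^{\s}=\sA^{\s}$, whence $(\sB^{\s a})^{G}=\sA^{\s a}$, invariants commuting with $(\;)^a$. Moreover $\sB$ is finie projective over $\sA$, so $\sB\hat\otimes_\sA\sB=\sB\otimes_\sA\sB$ is complete (\S\ref{pt}) and, being galoisienne, canonically isomorphic to $\prod_G\sB$; passing to $(\;)^{\s}$ and then to the almost level, Proposition~\ref{P7} yields $\sB^{\s a}\otimes_{\sA^{\s a}}\sB^{\s a}\cong\prod_G\sB^{\s a}$, via the canonical map. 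Hence $\sB^{\s a}$ is galoisienne over $\sA^{\s a}$ of group $G$, and in particular \'etale finie by Proposition~\ref{P2}$(1)$. For a general \'etale finie perfecto\"{\i}de $\sA$-alg\`ebre $\sB$, I would decompose $\sA$ along idempotents (Corollaire~\ref{C1'}) to reduce to constant rank $r$, and apply Lemme~\ref{L0} to the ring extension $\sA\inj\sB$ to obtain $\sT$: concretely the direct factor of the iterated $\sB\hat\otimes_\sA\cdots\hat\otimes_\sA\sB$ cut out by the ``no partial diagonal'' idempotent. This $\sT$ is perfecto\"{\i}de, since such completed tensor powers are perfecto\"{\i}des (Proposition~\ref{P7}) and a direct factor of a perfecto\"{\i}de alg\`ebre is perfecto\"{\i}de (\S\ref{i}). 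By the Galois case, $\sT^{\s a}$ is galoisienne over $\sA^{\s a}$ of group $\mathfrak S_r$ and over $\sB^{\s a}$ of group $\mathfrak S_{r-1}$, so Proposition~\ref{P2}$(3)$, applied to the intermediate extension $\sB^{\s a}$, shows that $\sA^{\s a}\inj\sB^{\s a}$ is \'etale finie of rank $r$.

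The step I expect to be the main obstacle is the perfecto\"{\i}de verification in $(1)$: everything there hinges on the interaction of \'etaleness with Frobenius modulo $\varpi$, i.e.\ on the fact that the relative Frobenius of an \'etale morphism is an isomorphism, transported into the almost setting --- this is exactly where the ``presque-puret\'e'' phenomenon lives. In $(2)$, by contrast, the genuinely delicate point one might fear, namely the almost finitude of $\sB^{\s}$ over $\sA^{\s}$, is circumvented by the Galois reduction, since Proposition~\ref{P2} supplies projective finitude for free; the only thing needing real care is checking that the auxiliary alg\`ebre $\sT$ of Lemme~\ref{L0} really lies in $\sK\hbox{-}{\bf Perf}$.
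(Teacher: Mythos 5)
Your proposal follows the same route as the paper in both parts: for $(1)$, verifying the hypotheses of Lemme~\ref{L14'} (flatness, completeness via Lemme~\ref{L1}, and the Frobenius bijectivity via the relative Frobenius of an \'etale morphism), and for $(2)$, reducing to the Galois case via Lemme~\ref{L0} and then invoking Prop.~\ref{P7} together with Prop.~\ref{P2}. One small imprecision: your justification that ``the almost version follows from the genuine one, both `\'etale' and `Frobenius' being stable under base change'' is not a proof in the almost world --- one cannot in general pass from a ${\mathfrak V}^a$-algebra statement back to a genuine ring statement by unwinding $(\;)_{!!}$, since almost \'etale does not imply \'etale. The paper cites \cite[th.~3.5.13 ii]{GR1}, where the isomorphism $(\mathfrak B/\varpi)\otimes_{(\sA^{\s a}/\varpi),F}(\sA^{\s a}/\varpi)\cong F^\ast(\mathfrak B/\varpi)$ is proved directly in the almost framework; you should invoke that result rather than appeal to the classical case. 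Everything else, including the use of Lemme~\ref{L6}(2) for the invariants and the concrete realization of $\sT$ as a direct factor of the completed tensor powers, matches the paper's argument.
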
 
              
   \begin{proof} $(1)$ $\mathfrak B $ est $\varpi$-adiquement compl\`ete (lemme \ref{L1}) et plate sur $\sK^{\s a}$. D'autre part, d'apr\`es \cite[th. 3.5.13 ii]{GR1}, on a $(\mathfrak B/\varpi) \otimes_{(\sA^{\s a}/\varpi)} F^\ast(\sA^{\s a}/\varpi) \cong F^\ast(\mathfrak B/\varpi)$. Ainsi $F$ induit un isomorphisme  $\mathfrak B/\varpi_{ {\frac{1}{p}}}\otimes_{\sK^{\s}/\varpi_  {\frac{1}{p}}, \sigma} \sK^{\s}/\varpi   \stackrel{\cong}{\to}  \mathfrak B/\varpi$ puisqu'il le fait pour $\sA^{\s a}/\varpi$. Donc $\sB := \mathfrak B[\frac{1}{\varpi}]$ est perfecto\"{\i}de et $\sB^{\s a} \cong \mathfrak B$ (lemme \ref{L14'}).  
 
 \smallskip $(2)$ $a)$ Commen\c cons par le cas o\`u l'extension $\sA \inj \sB $ est galoisienne de groupe $G$. D'apr\`es le lemme \ref{L1} et la prop. \ref{P7}, on a $\displaystyle \sB^{\s a} \otimes_{\sA^{\s a}} \sB^{\s a}= (\sB\otimes_{\sA}\sB)^{\s a} = \prod_G\, \sB^{\s a}$. Ainsi $\sB^{\s a}$ est galoisienne sur $\sA^{\s a}$, donc \'etale finie (prop. \ref{P2} $(1)$). 
 
  \smallskip $b)$ En g\'en\'eral, le rang de $\sB$ sur $\sA$ \'etant fini continu et born\'e, on peut supposer, en d\'ecomposant  $\sA$ en un nombre fini de facteurs, que ce rang est constant, \'egal \`a $r$. D'apr\`es le lemme \ref{L0}, on a une extension galoisienne $\sA \inj \sC$ de groupe $\mathfrak S_r$ se factorisant par $\sB$ et telle que $\sB = \sC^{\mathfrak S_{r-1}}$.
   D'apr\`es le pas $(a)$, $\sC^{\s a}$ est une $\sA^{\s a}$-alg\`ebre galoisienne de groupe $\mathfrak S_r$. On a aussi  $\sB^{\s a}= (\sC^{\s a})^{\mathfrak S_{r-1}}$ (lemme \ref{L6} $(2)$), qui est donc \'etale finie de rang constant sur  $\sA^{\s a}$, \'egal au rang de $\sB$ sur $\sA$ (prop. \ref{P2} $(3)$).    \end{proof}

          \subsubsection{}\label{demo} Compte tenu de cette proposition, \'etablir la seconde \'equivalence du th\'eor\`eme revient \`a prouver que     
             \smallskip   $(\ast)_\sA$: {\it toute $\sA$-alg\`ebre  $\sB$  \'etale finie est perfecto\"{\i}de. }

 En voici une esquisse de d\'emonstration, suivant l'approche de \cite[th. 3.6.21]{KL}, qui ne fait plus intervenir la presque-alg\`ebre  (le lecteur v\'erifiera que le recours \`a quelques \'enonc\'es ult\'erieurs ne cr\'ee pas de cercle vicieux). En car. $p$, $(\ast)_\sA$ suit de ce qu'une alg\`ebre \'etale finie sur un anneau parfait est parfaite.  Supposons alors $\car\, \sK =0$.

            \smallskip\noindent  $(a)$ On prouve d'abord $(\ast)_\sL$ 
         pour toute extension finie $\sL$ de $\sK$, \cf cor. \ref{C2}. 
          
          \smallskip\noindent  $(b)$ On montre que la transform\'ee de Gelfand ${\Gamma(\sA)}$ est perfecto\"{\i}de: c'est un produit uniforme de corps perfecto\"{\i}des  $\mathcal H(x)$, \cf prop. \ref{P12}.  On d\'eduit de l\`a et du pas pr\'ec\'edent que pour toute $\sA$-alg\`ebre \'etale finie $\sB$, $\sB \otimes_\sA \Gamma(\sA)$ est perfecto\"{\i}de et spectrale.
          
        \smallskip\noindent  $(c)$ On a $\Gamma(\sA^\flat)\cong \Gamma(\sA)^\flat$, \cf rem. \ref{Gelbasc}. Les corps perfecto\"{\i}des $\mathcal H(x)^\flat$ sont colimites uniformes de localisations rationnelles $\sA_\alpha^\flat$ de $\sA^\flat$, et $\mathcal H(x)$ est colimite uniforme des bascul\'es $\sA_\alpha$ qui sont des localisations rationnelles de $\sA^\flat$, \cf rem. \ref{r9}. 
    On a alors un carr\'e essentiellement commutatif d'\'equivalences 
     \[ \begin{CD} 2\hbox{-}{\rm{colim}}\, ({\sA_\alpha^\flat }\hbox{-}{\bf Alg}^{et. fin})   @>>> \mathcal H(x)^\flat \hbox{-}{\bf Alg}^{et. fin}   \\\     @V VV   @VVV    \\\  2\hbox{-}{\rm{colim}}\, ({\sA_\alpha  }\hbox{-}{\bf Alg}^{et. fin})   @>>> \mathcal H(x)  \hbox{-}{\bf Alg}^{et. fin}    \end{CD}\] o\`u les \'equivalences verticales sont donn\'ees par $\sharp$ (compte tenu de $(\ast)_{\sA^\flat}$ et $(\ast)_{\sA_\alpha^\flat}$), et o\`u les \'equivalences horizontales sont donn\'ees par l'approximation {``\`a la Elkik"} \cite[prop. 5.4.53]{GR1} et [SGA 4, exp. 7 lemme 5.6] (voir aussi \cite[7.5]{S1}) (notant que ${\rm{colim}}\,{\sA_\alpha^{\s} }$ est hens\'elien, de compl\'et\'e $\sA^{\s}$)\footnote{On utilise ici le fait que pour les alg\`ebres \'etales finies,  objets et morphismes sont de pr\'esentation finie. Il n'est pas facile de trouver un expos\'e sur les $2$-limites et $2$-colimites de cat\'egories orient\'e vers les applications plut\^ot que vers les g\'en\'eralisations. Nous renvoyons \`a l'appendice A de \cite{W}, qui fait exception.}.  
         
       \smallskip\noindent  $(d)$ Par compacit\'e de $\mathcal M(\sA)$, on obtient un recouvrement fini de  $\mathcal M(\sA)$ par des localisations rationnelles $\mathcal M(\sA_\alpha)$ tel que $\sB \otimes_\sA \sA_\alpha$ soit perfecto\"{\i}de. Pour passer des $\sA_\alpha$ \`a $\sA$, on utilise les bonnes propri\'et\'es faisceautiques des alg\`ebres perfecto\"{\i}des, \cf \cite[th. 7.9]{S1}\cite[th. 3.6.21]{KL}.   
       
       \smallskip\noindent  Le point $(2)$ du th\'eor\`eme suit du point $(1)$, des \'equivalences du th. \ref{T3} et de l'\'equivalence remarquable de Grothendieck (\cf \ref{eqrem}):  
  
     \smallskip   \centerline{$\sA^{{\s} a}$-${\bf{Alg}}^{et. f} \cong \sA^{{\s} a}/\varpi$-${\bf{Alg}}^{et. f} \cong \sA^{\flat{\s} a}/\varpi^\flat$-${\bf{Alg}}^{et. f} \cong \sA^{\flat {\s} a}$-${\bf{Alg}}^{et. f} . $\qed}

 \subsubsection{Remarques.} $(1)$ \cite{KL} \'etablit la prop. \ref{P8'} par voie analytique en introduisant des anneaux de Robba;\cite{S1} l'\'etablit en m\^eme temps que $(\ast)_\sA$ en \'elaborant les points $(c)$ et $(d)$.
 
 \smallskip \noindent $(2)$ Comme sugg\'er\'e dans \cite{F} et d\'evelopp\'e dans \cite[3.6]{KL}, on peut d\'efinir une notion d'alg\`ebre perfecto\"{\i}de sur un corps $p$-adique $\sK$ non n\'ecessairement perfecto\"{\i}de: c'est une $\sK$-alg\`ebre de Banach uniforme $\sA$ telle que $F$ soit surjectif sur $\sA^{\s}/p$ et qu'il existe $\varpi_{\frac{1}{p}}$ avec $\varpi_{\frac{1}{p}}^p \equiv p\, \mod\, p^2\sA^{\s}$. Cela implique l'existence de $\varpi_{s}\in \sA$ de norme $\vert p\vert^s$ pour tout $s\in \Z[\frac{1}{p}]$, avec $\varpi_{-s} = \varpi_s^{-1}$, d'o\`u $\sA^{\ss} = (\sA^{\ss})^2$. On a des analogues du th. \ref{T3} et de la prop. \ref{P7} \cite[3.6.5, 3.6.11]{KL}\footnote{on prendra garde que les ``uniform algebras" de \cite{KL} sont nos alg\`ebres spectrales.}, ainsi que la g\'en\'eralisation correspondante du \ref{T4}  \cite[3.6.21, 5.5.9]{KL} (presque \'etale s'entend ici dans le cadre $(\sA^{\s}, \sA^{\ss})$). De nouveau, l'argument galoisien de la prop. \ref{P8'} permet de se dispenser des anneaux de Robba de \cite{KL}.
 
   \smallskip \noindent $(3)$ Tout anneau local complet r\'egulier ramifi\'e de caract\'eristique mixte $(0,p)$ est isomorphe \`a un anneau de la forme $W(k)[[T_{\leq n}]]/(p-f)$ o\`u $f$ est dans le carr\'e de l'id\'eal maximal et non divisible par $p$. Le m\^eme argument que pour le cas non ramifi\'e (ex. \ref{E11} (2)) montre que $\widehat{W(k)[[T^{\e}_{\leq n}]]/(p-f)}$ est perfecto\"{\i}de au sens g\'en\'eralis\'e ci-dessus, \cf \cite[4.9]{Sh}.
   
     \smallskip \noindent $(4)$ Le th\'eor\`eme de presque-puret\'e fournit des exemples de modules auto-duaux (via la trace, \cf rem. \ref{r1'} $(2)$), et par suite r\'eflexifs, qui ne sont pas de type fini, m\^eme sur un anneau de valuation; c'est le cas du $\sK^{\s}$-module $\sB^{\s}$ dans l'exemple \ref{E4}.
 
        \subsubsection{} Voici une premi\`ere cons\'equence du th\'eor\`eme.
        
        \begin{cor}\label{C2'}  Soit $\sB$ une extension \'etale finie d'une $\sK$-alg\`ebre perfecto\"{\i}de $\sA$. Alors $\sB^{\s}$ est entier sur $\sA^{\s}$; c'est en fait la fermeture (compl\`etement) int\'egrale de $\sA^{\s}$ dans $\sB$. 
        \end{cor}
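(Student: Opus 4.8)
L'id\'ee est de d\'eduire ceci du th\'eor\`eme~\ref{T4} et des r\'esultats galoisiens du \S\ref{eg}, exactement comme dans la preuve de la prop.~\ref{P8'}$(2)$. D'apr\`es le th.~\ref{T4}, $\sB$ est elle-m\^eme une $\sA$-alg\`ebre perfecto\"{\i}de; en particulier elle est munie de sa norme spectrale, qui est l'unique norme spectrale compl\`ete sur l'anneau sous-jacent, et tout morphisme de $\sA$-alg\`ebres \'etales finies est automatiquement continu, donc isom\'etrique pour les normes spectrales (par l'unicit\'e rappel\'ee au \S\ref{ns}). Le rang de $\sB$ sur $\sA$ \'etant une fonction continue born\'ee, on peut d\'ecomposer $\sA$ en un nombre fini de facteurs perfecto\"{\i}des (\S\ref{i} et th.~\ref{T4}) et supposer ce rang constant, \'egal \`a $r$; l'assertion sur les fermetures int\'egrales se lit alors facteur par facteur.

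On applique ensuite le lemme~\ref{L0} \`a l'extension d'anneaux $\sA\inj\sB$: il existe une extension galoisienne $\sA\inj\sC$ de groupe $\mathfrak S_r$ se factorisant par $\sB$, avec $\sB=\sC^{\mathfrak S_{r-1}}$. D'apr\`es la prop.~\ref{P2}$(1)$ et le th.~\ref{T4}, $\sC$ est \'etale finie sur $\sA$ et perfecto\"{\i}de, et d'apr\`es ce qui pr\'ec\`ede les inclusions $\sA\inj\sB\inj\sC$ sont isom\'etriques pour les normes spectrales, de sorte que $\sB^{\s}=\sB\cap\sC^{\s}$. En appliquant le lemme~\ref{L6}$(2)$ \`a l'extension $\sA\inj\sC$ munie du groupe fini $G=\mathfrak S_r$ --- qui pr\'eserve la norme de $\sC$ puisque celle-ci est spectrale compl\`ete --- on obtient que $\sC^{\s}$ est enti\`ere sur $\sA^{\s}$ et en est pr\'ecis\'ement la fermeture int\'egrale dans $\sC$.

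Par cons\'equent tout \'el\'ement de $\sB^{\s}=\sB\cap\sC^{\s}$ est entier sur $\sA^{\s}$, donc $\sB^{\s}$ est enti\`ere sur $\sA^{\s}$ et contenue dans la fermeture int\'egrale de $\sA^{\s}$ dans $\sB$, a fortiori dans sa fermeture compl\`etement int\'egrale. Pour l'inclusion r\'eciproque, le lemme~\ref{L6}$(1)$ appliqu\'e \`a l'extension (continue) $\sA\inj\sB$ montre que la fermeture compl\`etement int\'egrale de $\sA^{\s}$ dans $\sB$ est contenue dans $\sB^{\s}$. En encha\^{\i}nant ces inclusions, on conclut que $\sB^{\s}$ co\"{\i}ncide \`a la fois avec la fermeture int\'egrale et avec la fermeture compl\`etement int\'egrale de $\sA^{\s}$ dans $\sB$.

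Le seul point d\'elicat est de s'assurer que la factorisation galoisienne fournie par le lemme~\ref{L0} au niveau des anneaux est compatible avec la structure analytique, \ie que $\sA\inj\sB\inj\sC$ sont isom\'etriques; c'est l\`a qu'interviennent le fait que $\sB$ et $\sC$ sont perfecto\"{\i}des (th.~\ref{T4}) et l'unicit\'e de la norme spectrale compl\`ete, et cela ne pr\'esente pas de r\'eelle difficult\'e. Notons qu'on ne peut court-circuiter l'argument en n'invoquant que la presque-finitude \'etale de $\sB^{\s a}$ sur $\sA^{\s a}$: celle-ci ne donne que la \emph{presque}-finitude (le probl\`eme \'evoqu\'e apr\`es le th.~\ref{T1}), tandis que l'astuce du groupe sym\'etrique fournit une v\'eritable int\'egralit\'e via le fait qu'un anneau muni d'une action d'un groupe fini est entier sur l'anneau de ses invariants.
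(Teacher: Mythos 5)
Your proof is correct, but it takes a genuinely different route from the proof the paper actually gives. The paper argues directly: after reducing to constant rank $r$, it takes $b\in\sB^{\s}$ and considers its characteristic polynomial $\chi_b(T)=\sum_0^r(-1)^i\,Tr_{(\wedge^i\sB)/\sA}(\cdot b)\,T^{r-i}$; since $\sB^{\s a}$ is almost projective finite over $\sA^{\s a}$ by Theorem~\ref{T4}, the coefficients $Tr_{(\wedge^i\sB^{\s a})/\sA^{\s a}}(\cdot b)$ lie in $\sA^{\s a}_\ast=\sA^{\s}$, so $\chi_b(b)=0$ exhibits $b$ as integral over $\sA^{\s}$; the rest follows from Lemma~\ref{L6}(1).

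Your route is the Galois-theoretic one, which the paper itself sketches in a remark just after the corollary --- with the significant difference that the paper's remark invokes Szamuely's Galois closure and therefore has to assume $\sA$ has finitely many connected components, whereas you use Lemma~\ref{L0} (the symmetric wreath-product construction), which only needs constant rank. In that sense your version actually removes a hypothesis from the paper's alternative argument, and is structurally the same device already used in the proof of Prop.~\ref{P8'}(2b). The one step you pass over somewhat quickly is the claim that $\sB\inj\sC$ is isometric (hence $\sB^{\s}=\sB\cap\sC^{\s}$): what makes this work is that $\sB=\sC^{\mathfrak S_{r-1}}$ is a \emph{closed} subalgebra of $\sC$ (kernel of the continuous map $(\gamma-1)_{\gamma\in\mathfrak S_{r-1}}$), so the restriction of $|\cdot|_{\sC}$ to $\sB$ is a complete spectral norm, and by the uniqueness of such a norm (\S\ref{ns}) it must coincide with $|\cdot|_{\sB}$. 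With that spelled out, the argument is watertight. As a trade-off, the paper's trace argument is shorter and more elementary (no need for Lemma~\ref{L0} or the isometry discussion), while yours avoids manipulating exterior powers of almost-projective modules and slots neatly into the Galois machinery the paper has already set up.
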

         
      \begin{proof} Comme la fonction rang de $\sB$ sur ${\Spec} \sA$ est continue et born\'ee, on peut la supposer constante \'egale \`a $r$ en d\'ecomposant $\sA$ en produit fini. Soit $b\in \sB^{\s}$ et soit $\displaystyle \chi_b(T) = \sum_0^r\, (-1)^i \, Tr_{(\wedge^i\sB)/\sA}(\cdot b) T^{r-i}$ son polyn\^ome caract\'eristique. On a $\chi_b(b) = 0$. Par ailleurs, comme $\sB^{\s a}$ est presque projectif fini sur $\sA^{\s a}$ d'apr\`es \ref{T4}, on a  $Tr_{(\wedge^i\sB)/\sA}(\cdot b) = Tr_{(\wedge^i\sB^{\s a} )/\sA^{\s a} }(\cdot b) \in \sA^{\s a}_\ast= \sA^{\s}$. Donc $b$ est entier sur $\sA^{\s}$. La seconde assertion suit de l\`a et du lemme \ref{L6} $(1)$. \end{proof}

 \subsubsection{\it Remarque.} Si $\sA$ n'a qu'un nombre fini de composantes connexes, on peut aussi raisonner comme suit: en d\'ecomposant $\sA$ et $\sB$, on se ram\`ene aussit\^ot au cas o\`u $\sA$ et $\sB$ sont connexes. Il existe alors une cl\^oture galoisienne $\sC$ (\cite[prop. 5.3.9]{Sz}, de groupe not\'e $G$. Comme $G$ agit par isom\'etries eu \'egard aux normes spectrales,  le lemme \ref{L6} $(2)$ dit que $\sC^{\s}$ est entier sur $\sA^{\s}$, donc $\sB^{\s}$ aussi.
       
           \subsubsection{}  
  Le th\'eor\`eme \ref{T4} admet une r\'eciproque partielle, que nous n'utiliserons pas\footnote{voir aussi \cite[prop. 3.6.22]{KL}.}:
 
 \begin{prop}\label{P8} Soit $\sB$ une $\sK$-alg\`ebre perfecto\"{\i}de, extension \'etale finie d'une sous-$\sK$-alg\`ebre $\sA$. Alors $\sA$ est perfecto\"{\i}de.
 \end{prop}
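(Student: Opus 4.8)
The plan is to reduce, by a Galois dévissage, to the statement that the subalgebra of invariants of a perfecto\"{\i}de algebra under a finite group of isometries is perfecto\"{\i}de, and then to attack that by manufacturing Galois‑approximately‑invariant $p$‑th roots through tilting.

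First I would record that $\sA\inj\sB$ is faithfully flat (rem. \ref{r1'}$(1)$) and that $Tr_{\sB/\sA}$ is surjective (étalet\'e), so $\sA$ is a direct factor of $\sB$ as a topological $\sA$‑module; hence $\sA$ is closed in $\sB$, $\sA^{\s}=\sA\cap\sB^{\s}$ is born\'e, $\sA$ is a $\sK$‑alg\`ebre de Banach uniforme, and by uniqueness of the spectral norm (\ref{ns}) the spectral norm of $\sB$ restricts to that of $\sA$. Decomposing $\sA$ into a finite product according to the rank of $\sB$, one may assume $\sB$ has constant rank $r$ over $\sA$; by lemme \ref{L0} there is an extension galoisienne $\sA\inj\sT$ de groupe $\mathfrak S_r$ factoring through $\sB$, and $\sT$, being \'etale finie over the perfecto\"{\i}de $\sB$, is perfecto\"{\i}de (th. \ref{T4}). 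Thus $G:=\mathfrak S_r$ acts on $\sT$ by continuous $\sA$‑automorphisms, necessarily isometric for the spectral norms, with $\sT^G=\sA$ and $\sA^{\s}=(\sT^{\s})^G$; and since $\sT^G$ is again uniform with $(\sT^G)^{\s}=(\sT^{\s})^G$, it suffices to prove that $F$ is surjective on $(\sT^{\s})^G/\varpi$ — or even, any $\varpi^{1/p^k}$ being still an admissible uniformizer for the perfecto\"{\i}de criterion (\ref{algperf}), that $F$ is surjective modulo $\varpi^{s}$ for some $s>0$. So the whole question comes down to: \emph{$\sT^G$ is perfecto\"{\i}de whenever a finite group $G$ acts isometrically on a perfecto\"{\i}de $\sT$}.

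For this, given $a\in(\sT^{\s})^G$, I would use the $G$‑equivariant isomorphism ${\footnotesize{\#}}\colon\sT^{\flat\s}/\varpi^\flat\iso\sT^{\s}/\varpi$ to get a unique $G$‑invariant $\bar\alpha$ with ${\footnotesize{\#}}\bar\alpha\equiv a$; lifting $\bar\alpha$ to $\alpha\in\sT^{\flat\s}$ and taking its $p^m$‑th roots in the perfect ring $\sT^{\flat\s}$, the elements $a_m:={\footnotesize{\#}}(\alpha^{1/p^m})\in\sT^{\s}$ satisfy $a_m^p=a_{m-1}$, $a_0\equiv a\bmod\varpi$, and $\gamma(a_m)\equiv a_m\bmod\varpi^{1/p^m}\sT^{\s}$ for all $\gamma\in G$ (take $p^m$‑th roots of the congruence $\gamma\alpha\equiv\alpha\bmod\varpi^\flat$, then apply ${\footnotesize{\#}}$ modulo $\varpi^{1/p^m}$). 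When $p\nmid|G|$ one concludes at once: $\tilde a:=\tfrac1{|G|}\sum_{\gamma}\gamma(a_1)\in(\sT^{\s})^G$ is $\equiv a_1\bmod\varpi^{1/p}\sT^{\s}$, whence $\tilde a^p\equiv a_1^p=a_0\equiv a\bmod\varpi(\sT^{\s})^G$. For a general $G$, let $P$ be a $p$‑Sylow subgroup and $\sC:=\sT^P$: then $\sA\inj\sC$ is \'etale finie of degree $[G:P]$ prime to $p$ (prop. \ref{P2}$(3)$), and the identical computation — honest $p^m$‑th roots of $a$ inside $\sC^{\s}$, then $\tfrac1{[G:P]}Tr_{\sC/\sA}(a_1)$, with the congruences checked inside $\sT$ — shows $\sA$ perfecto\"{\i}de provided $\sC$ is. Finally $\sC=\sT^P$, with $P$ a $p$‑groupe, reduces by d\'evissage along a chain $P=P_0\rhd P_1\rhd\cdots\rhd P_k=\{1\}$ of index‑$p$ normal subgroups (each $\sT^{P_i}\inj\sT^{P_{i+1}}$ being galoisienne de groupe $\Z/p$) to the case $G=\Z/p$.

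The case $G=\Z/p$ is where I expect the real difficulty, since averaging is unavailable; instead one corrects $a_1$ by a coboundary. Writing $\sigma(a_1)-a_1=\varpi^{1/p}\delta_1$ with $\delta_1\in\sT^{\s}$, one has $Tr_{\sT/\sA}(\delta_1)=0$, so choosing $\epsilon\in\sT$ with $Tr_{\sT/\sA}(\epsilon)=1$ (possible by \'etalet\'e) and applying the usual $\Z/p$ cocycle formula, one finds $w$ with $(\sigma-1)(w)=\delta_1$; then $b:=a_1-\varpi^{1/p}w$ is $\sigma$‑invariant, so lies in $\sA$, and $b^p\equiv a$ modulo an explicit power of $\varpi$. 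The obstruction is that $w$ lies a priori only in $\varpi^{-M}\sT^{\s}$, where $\varpi^{M}\epsilon\in\sT^{\s}$; one needs $M<\tfrac1p$, i.e. $\varpi^{1/p}\in Tr_{\sT/\sA}(\sT^{\s})$, for $b$ to land in $\sA^{\s}$ and satisfy $b^p\equiv a\bmod\varpi^{1-pM}\sA^{\s}$. So the heart of the argument is a ramification estimate for the trace ideal $Tr_{\sT/\sA}(\sT^{\s})\subset\sA^{\s}$, which I would try to extract from the fact that, via the étale dual basis of $\sT$ over $\sA$, $\sT^{\s}$ is almost finite projective over $\sA^{\s}$ up to a bounded power of $\varpi$, together with a careful choice of working modulus. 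Granting it, $F$ is surjective modulo a positive power of $\varpi$ on $(\sT^{\s})^G$, so $\sT^G$ — and hence $\sA$ — is perfecto\"{\i}de.
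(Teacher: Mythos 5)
Your first two reductions are exactly those of the paper: one reduces by lemme~\ref{L0} (the paper says ``en raisonnant comme au pas $b)$ de la prop.~\ref{P8'}'') to the case where $\sA\inj\sT$ is galoisienne, and then the whole statement comes down to proving that $\sT^G$ is perfecto\"{\i}de when a finite group $G$ acts isometrically on a perfecto\"{\i}de $\sT$. That is the content of prop.~\ref{P13}, and the paper invokes it directly. From that point your route diverges entirely from the paper's.

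The paper proves prop.~\ref{P13} by the Witt-vector description $\sT\cong W(\sT^{\flat\s})[\tfrac1p]\otimes_{W(\sK^{\flat\s})[\frac1p]}\sK$, noticing that $G$-invariants are the image of the Reynolds projector $b\mapsto\tfrac1{|G|}\sum_\gamma\gamma(b)$, which is available for \emph{every} finite $G$ since $\car\sK=0$ makes $|G|$ invertible; formation of the image of an idempotent commutes with $\otimes_{W(\sK^{\flat\s})[\frac1p]}\sK$ and with $W$, giving $\sT^G\cong(\sT^{\flat G})^\natural$ in one stroke. So the paper never meets the dichotomy $p\nmid|G|$ versus $p\mid|G|$; that distinction is an artefact of your attack.

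Your approach, by contrast, has a genuine gap at the $\Z/p$ stage, as you yourself half-acknowledge. The ramification estimate you need, namely $\varpi^{1/p}\in\operatorname{Tr}_{\sT/\sA}(\sT^{\s})$ (so that your cochain $w$ has denominators $<1/p$), is essentially equivalent to what you are trying to prove. If $\sA$ were already known to be perfecto\"{\i}de, th.~\ref{T4} would say $\sA^{\s a}\to\sT^{\s a}$ is presque \'etale finie and rem.~\ref{r1'}$(2)$ would give that the trace is presque surjective — precisely the estimate — but that is the conclusion, not an available tool. What you actually have \emph{before} knowing $\sA$ is perfecto\"{\i}de is only that $\sT$ is projectif fini over $\sA$ as a Banach module; clearing denominators gives some fixed $M$ with $\varpi^M\epsilon\in\sT^{\s}$, but nothing pins $M$ below $1/p$, and an arbitrarily wildly ramified $\sA^{\s}\inj\sT^{\s}$ is exactly the scenario the proposition is meant to exclude. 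The phrase ``almost finite projective over $\sA^{\s}$ up to a bounded power of $\varpi$, together with a careful choice of working modulus'' does not close this: a bounded-but-large $M$ kills the congruence $b^p\equiv a\bmod\varpi^{1-pM}$. The $p\nmid|G|$ part of your argument (Reynolds averaging of $a_1$) and the Sylow/d\'evissage reductions are sound; the hole is entirely in the $\Z/p$ cocycle correction, and it is the whole difficulty. I would simply replace your final two paragraphs with the Witt-vector argument of prop.~\ref{P13}.
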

 
 \begin{proof} En raisonnant comme au pas $b)$ de la prop. \ref{P8'} , on se ram\`ene au cas galoisien de groupe $G$. Alors   $\sA = \sC^G$  est perfecto\"{\i}de d'apr\`es la prop. \ref{P13} ci-dessous.    \end{proof}

 \subsection{Monomorphismes (et recadrage).}        
     \subsubsection{}\label{monop} Dans $\sK\hbox{-}\bf Perf$, les homomorphismes continus injectifs sont des monomorphismes. 
     
     La r\'eciproque est vraie en car. $p$, car le plongement $\sK\hbox{-}\bf Perf\to \sK\hbox{-}\bf uBan$ admet un adjoint \`a gauche (cela d\'ecoule aussi formellement du fait que $\sK\hbox{-}\bf Perf$ admet des {\it objets libres}: $S\mapsto \sK\langle T_s^{\e}\rangle_{s\in S}$ est adjoint \`a gauche du foncteur oubli $\sK\hbox{-}{\bf Perf}\to {\bf Ens}$). 
 
 \newpage\smallskip En car. $0$, les monomorphismes sont les morphismes $\phi$ dont le bascul\'e $\phi^\flat$ est injectif. Un tel morphisme n'est pas n\'ecessairement injectif, comme le montre l'exemple suivant. Ainsi, si $\phi$ est injectif il en est de m\^eme de $\phi^\flat$, mais la r\'eciproque n'est pas vraie.
   
   \subsubsection{\it Exemple prophylactique.}  Soit $\hat K_\infty$ le corps perfecto\"{\i}de cyclotomique, et consid\'erons la tour d'extensions perfecto\"{\i}des $\hat K_\infty[p^{\frac{1}{p^i}}]$ et le compl\'et\'e $\sL$ de leur r\'eunion, qui est encore perfecto\"{\i}de. Soit $\hat K_\infty\langle T^{\e}\rangle \stackrel{\phi}{\to} \sL$ le morphisme d'\'evaluation en $p$ (\ie $T^{{\frac{1}{p^i}}}\mapsto p^{{\frac{1}{p^i}}}$), qui n'est ni injectif ni surjectif. 
 
 Basculons: $\phi^{\flat}$ est donn\'e par l'\'evaluation en un \'el\'ement $p^\flat= (\cdots, p^{\frac{1}{p}}, p)$ de $ \sL^{\flat \s}$ ($\sL^{\flat }$ est la compl\'etion de la tour d'extensions s\'eparables $\hat K_\infty[p^{\frac{1}{p^i}}]^\flat$, et $p^\flat$ n'est dans aucune d'entre elles). Le noyau de $\phi^\flat$ est un id\'eal ferm\'e premier radiciel, et  l'alg\`ebre de Banach int\`egre $\sB := \hat K_\infty^\flat\langle T^{\e}\rangle/{\ker} \phi^\flat$ est perfecto\"{\i}de spectrale \cite[3.1.6 d]{KL}. Ce ne peut \^etre un corps car la norme serait multiplicative \cite{K}, et le plongement dans $\sL^\flat$ serait alors isom\'etrique; basculant dans l'autre sens, on aurait une factorisation de $\phi$ en un morphisme surjectif $\hat K_\infty\langle T^{\e}\rangle \to \sB^\sharp $ (\cf rem. \ref{r8-} ci-dessous) suivi d'un plongement isom\'etrique dans $\sL$, ce qui contredirait le fait que $\phi$ est d'image dense mais non surjectif. 
 
Le morphisme $\hat K_\infty^\flat\langle T \rangle \to \hat K_\infty^\flat\langle T^{\e}\rangle$ induit un hom\'eomorphisme $D_\infty := \sM(\hat K_\infty^\flat\langle T^{\e}\rangle) \to D := \sM(\hat K_\infty^\flat\langle T \rangle)$, les corps r\'esiduels $\sH(x)$ de $D_\infty$ \'etant les perfections compl\'et\'ees de ceux de $D$. Le spectre $\sM(\sB)$ est un sous-espace connexe (\cf \cite[7.4.2]{Be}) de $D_\infty$ non r\'eduit \`a un point d'apr\`es ce qui pr\'ec\`ede, donc il contient un point $x_\infty$ de {``dimension"} $d(x)\geq 1$ (\cf \cite[9.2.3]{Be}); le point $x\in D$ correspondant est alors de type $2$ ou $3$ \cite[9.1]{Be}. D'apr\`es \cite[4.4]{P}, il existe $f\in \sK^\flat\langle T\rangle$ et $r \in ]0, 1]$ tels que le domaine affino\"{\i}de de $D$ d\'efini par $\vert f\vert \leq r$ ait un bord de Shilov r\'eduit au point $x$; si $\sA$ est l'alg\`ebre affino\"{\i}de (non n\'ecessairement stricte) associ\'ee, le morphisme $\sA \to \sH(x)$ est donc isom\'etrique. Il en est alors de m\^eme de $\widehat{\sA^{\e}} \to \widehat{\sH(x)^{\e}}= \sH(x_\infty)$. Par ailleurs $\hat K_\infty^\flat\langle T^{\e} \rangle\to   \widehat{\sA^{\e}}$ est injectif (apr\`es extension des scalaires, c'est une localisation {``de Weierstrass"}), donc le compos\'e $\hat K_\infty^\flat\langle T^{\e} \rangle \to \sH(x_\infty)$ aussi. Comme il se factorise \`a travers $\sB$, on en d\'eduit que ${\ker} \, \phi^\flat =0$.

  \subsubsection{}\label{prp}  Reprenons l'exemple du monomorphisme $\sB \inj g^{\f}\sB^{\s}[\frac{1}{\varpi}]$ de \ref{E6} ($g$ non diviseur de z\'ero). Supposons $\sB$ perfecto\"{\i}de. Nous verrons plus tard (lemme \ref{uL}) que les conditions \'equivalentes du sorite \ref{S2} sont satisfaites (de sorte que $g^{\f}\sB^{\s}[\frac{1}{\varpi}]$ est la fermeture int\'egrale compl\`ete de $\sB^{\s}$ dans $\sB[\frac{1}{g}]$); elle n'est pas n\'ecessairement \'egale \`a $\sB$ comme le montre l'exemple $(\sK + T_1^{\e} \sK\langle  T_1^{\e} , T_2^{\e} \rangle, g= T_1)$. La question suivante est ouverte:  
  
  \begin{qn}\label{q} {\it $g^{\f}\sB^{\s}[\frac{1}{\varpi}]$ est-elle perfecto\"{\i}de?}\end{qn}

 \medskip\noindent C'est clair en car. $p$ puisque cette derni\`ere est parfaite. En car. $0$, en revanche, m\^eme si la multiplication par $g$ est isom\'etrique, une approche directe \`a partir des isomorphismes 
  \[g^{-\frac{1}{p^{k+1}}}\sB^{\s}/\varpi_{\frac{1}{p}} \stackrel{x\mapsto x^p}{\cong}   g^{-\frac{1}{p^k}}\sB^{\s}/\varpi \]
   se heurte \`a l'\'eventualit\'e que $\lim \, (g^{-\frac{1}{p^k}}\sB^{\s}/\varpi) \to \lim \, (g^{-\frac{1}{p^k}}\sB^{\s}/\varpi_{\frac{1}{p}})$ ne soit pas surjectif\footnote{J'ignore si cette \'eventualit\'e se produit.}.  
  De m\^eme, une approche \`a partir de l'isomorphisme 
  \[ \sB^{\s} \cong  W(\sB^{\flat{\s}})\otimes_{W(\sK^{\flat{\s}}\langle T^{\e}\rangle)}\sK^{\s}\langle T^{\e}\rangle  \]
    bute sur le probl\`eme de la compatibilit\'e de $g^{\f} (-)$ au produit tensoriel (un quotient, dans ce cas).   
  
   \subsubsection{}\label{app} Pour clarifier la situation, nous introduisons la notion de {\it $\sK\langle T^{\e}\rangle$-alg\`ebre presque perfecto\"{\i}de}. Pla\c cons-nous dans le cadre $(\sK^{\s}\langle T^{\e}\rangle, T^{\e}\sK^{\ss} \sK^{\s}\langle T^{\e}\rangle)$, et reprenons les notations de \ref{recc}. 
   
   \begin{defn}\label{D1} Soit $\sB$ une $\sK\langle T^{\e} \rangle$-alg\`ebre de Banach uniforme. On dit que $\sB$ (ou $\sB^{\hat{a}}$) est {\emph{presque perfecto\"{\i}de}} si $\sB^{\hat{a}}$ est isomorphe \`a une $\sK\langle T^{\e} \rangle$-alg\`ebre perfecto\"{\i}de. 
  \end{defn}
  
 Notons $\sK\langle T^{\e} \rangle^{\hat a}\hbox{-}{\bf{Perf}}$ (\resp $\sK\langle T^{\e} \rangle^{\hat a}\hbox{-}{\bf{pPerf}}$) la sous-cat\'egorie pleine de $\sK\langle T^{\e} \rangle\hbox{-}{\bf{uBan}}$ dont les objets sont perfecto\"{\i}des (\resp presque perfecto\"{\i}des).

  \begin{lemma}\label{L15}  L'adjoint \`a droite $(\;)^\natural$ du plongement $\sK\langle T^{\e} \rangle\hbox{-}{\bf{Perf}}\inj \sK\langle T^{\e} \rangle\hbox{-}{\bf{uBan}}$ induit un adjoint \`a droite du plongement $\sK\langle T^{\e} \rangle^{\hat a}\hbox{-}{\bf{Perf}}\inj \sK\langle T^{\e} \rangle^{\hat a}\hbox{-}{\bf{uBan}}$, encore not\'e $\natural$. 
    
  Le plongement de $\sK\langle T^{\e} \rangle^{\hat a}\hbox{-}{\bf{Perf}}\inj \sK\langle T^{\e} \rangle^{\hat a}\hbox{-}{\bf{pPerf}}$ 
   est une \'equivalence, de quasi-inverse induit par $\natural$. \end{lemma}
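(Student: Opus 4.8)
The plan is to build $\natural$ on the $\hat a$-categories out of the functor $(\;)_{\hat\ast}$, to verify the adjunction by splicing the two known adjunctions $\natural\dashv\iota$ and $(\;)^{\hat a}\dashv(\;)_{\hat\ast}$, and then to deduce the equivalence almost formally. Recall from \S\ref{recc} that $(\;)^{\hat a}$ is the identity on objects, that $(\;)_{\hat\ast}$ is its right adjoint with $((\sB^{\hat a})_{\hat\ast})^{\s}=(\sB^{\s a})_\ast=g^{\f}\sB^{\s}$ (Lemme \ref{L7}), and that $((\sB^{\hat a})_{\hat\ast})^{\hat a}\cong\sB^{\hat a}$ since $(N_\ast)^a\cong N$; hence the counit $(\;)^{\hat a}\circ(\;)_{\hat\ast}\to\mathrm{id}$ is an isomorphism, $(\;)_{\hat\ast}$ is fully faithful with essential image the \emph{$\hat a$-clos} algebras, i.e. those $\sY$ with $\sY^{\s}=(\sY^{\s a})_\ast$, and $\sK\langle T^{\e}\rangle^{\hat a}\hbox{-}{\bf uBan}$ is the localisation of $\sK\langle T^{\e}\rangle\hbox{-}{\bf uBan}$ at les presque-isomorphismes (Lemme \ref{L5}), inside which $\sK\langle T^{\e}\rangle^{\hat a}\hbox{-}{\bf Perf}$ and $\sK\langle T^{\e}\rangle^{\hat a}\hbox{-}{\bf pPerf}$ are full subcategories.

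\emph{Assertion (1).} First I would note that $\natural=\sharp\circ\flat$ respecte les presque-isomorphismes: $\flat$ depends on $(\;)^{\s}$ only through $(\;)^{\s}/\varpi$ and the Frobenius-limit, and $\lim_F$ commute avec $(\;)^a$ (la localisation commute aux limites, \S\ref{MLN}), so $\flat$ preserves presque-isomorphismes, and $\sharp$, being an equivalence, does too. Therefore $(\;)^{\hat a}\circ\natural$ inverts les presque-isomorphismes and factors through a functor $\natural:\sK\langle T^{\e}\rangle^{\hat a}\hbox{-}{\bf uBan}\to\sK\langle T^{\e}\rangle^{\hat a}\hbox{-}{\bf Perf}$, given on objects by $\sB^{\hat a}\mapsto(\natural((\sB^{\hat a})_{\hat\ast}))^{\hat a}$ (equivalently $(\natural\sB)^{\hat a}$, the unit $\sB\to(\sB^{\hat a})_{\hat\ast}$ being a presque-isomorphisme). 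For the adjunction, for $\sP$ perfectoïde and $\sB$ uniforme one gets, via fullness, $(\;)^{\hat a}\dashv(\;)_{\hat\ast}$ and $\natural\dashv\iota$,
\[{\rm Hom}(\sP^{\hat a},\sB^{\hat a})\;\cong\;{\rm Hom}(\sP,(\sB^{\hat a})_{\hat\ast})\;\cong\;{\rm Hom}(\sP,\natural((\sB^{\hat a})_{\hat\ast})),\]
and the same chain applied to $\sQ^{\hat a}:=(\sQ)^{\hat a}$ with $\sQ:=\natural((\sB^{\hat a})_{\hat\ast})$ gives ${\rm Hom}(\sP^{\hat a},\sQ^{\hat a})\cong{\rm Hom}(\sP,\natural((\sQ^{\hat a})_{\hat\ast}))$. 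Thus it remains to see $\sQ\cong\natural((\sQ^{\hat a})_{\hat\ast})$ whenever $\sQ$ lies in the image of $\natural\circ(\;)_{\hat\ast}$; since $(\sB^{\hat a})_{\hat\ast}$ is $\hat a$-clos, this reduces to

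$(\star)$ \emph{$\natural$ carries $\hat a$-clos uniform Banach $\sK\langle T^{\e}\rangle$-algebras to $\hat a$-clos perfectoïde algebras}:

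granting $(\star)$, $\sQ$ is $\hat a$-clos, so $(\sQ^{\hat a})_{\hat\ast}=\sQ$ and $\natural((\sQ^{\hat a})_{\hat\ast})=\natural\sQ=\sQ$.

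\emph{The point $(\star)$ is the heart of the matter.} Using the compatibilité du basculement avec le recadrage, together with the fact that $\flat,\sharp$ respect presque-isomorphismes, one reduces to $\car\sK=p$, where $\natural={\rm ucolim}\,(\;)^{1/p^i}$ is the completed perfection (Prop. \ref{P4}). There, writing $g$ for the image of $T$, one must propagate the condition $\sY^{\s}=\cap_m g^{-1/p^m}\sY^{\s}$ through $\varpi$-adic completion and through the filtered colimit of the perfection: $\cap_m g^{-1/p^m}(\;)$ commute aux limites by Lemme \ref{L14}, and for the colimit one uses that each $\sY^{\s\,1/p^{i}}$ is again $\hat a$-clos (the transition maps and the roots $g^{1/p^{k}}$ already lie in $\sY$) together with the $\hat a$-closure of $\sK\langle T^{\e}\rangle^{\s}$ itself, i.e. $T^{\f}\sK^{\s}\langle T^{\e}\rangle=\sK^{\s}\langle T^{\e}\rangle$. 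I expect this bookkeeping — the interplay of $g^{\f}(\;)$ with the colimit in the perfection (and with the Witt-vector description of $\sharp$ in characteristic $0$) — to be the main obstacle; note that the examples of \S\ref{E6} and \S\ref{E13} forbid $\natural((\sR^{\hat a})_{\hat\ast})\cong\sR$ for \emph{arbitrary} perfectoïde $\sR$, so the $\hat a$-closure hypothesis in $(\star)$ is genuinely needed.

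\emph{Assertion (2).} The inclusion $\sK\langle T^{\e}\rangle^{\hat a}\hbox{-}{\bf Perf}\inj\sK\langle T^{\e}\rangle^{\hat a}\hbox{-}{\bf pPerf}$ is fully faithful (both being full subcategories of the same $\hat a$-category) and essentially surjective \emph{by the very definition} of presque perfectoïde: if $\sB$ is presque perfectoïde then $\sB^{\hat a}$ is isomorphic, in $\sK\langle T^{\e}\rangle^{\hat a}\hbox{-}{\bf uBan}$, to some $\sP^{\hat a}$ with $\sP$ perfectoïde, hence lies in the essential image of $\sK\langle T^{\e}\rangle^{\hat a}\hbox{-}{\bf Perf}$. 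So it is an equivalence, and its quasi-inverse is the restriction of the $\natural$ of assertion (1): since that $\natural$ is right adjoint to the fully faithful inclusion, the unit $\mathrm{id}\to\natural\circ(\text{incl.})$ is an isomorphism (equivalently $\natural\iota\cong\mathrm{id}$, Prop. \ref{P6}), while for $\sB$ presque perfectoïde the iso $\sB^{\hat a}\cong\sP^{\hat a}$ yields $\natural\sB^{\hat a}\cong\natural\sP^{\hat a}\cong\sP^{\hat a}\cong\sB^{\hat a}$, naturally in $\sB$.
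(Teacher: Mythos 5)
Your proposal reaches the right candidate for the induced $\natural$ (namely $\sB^{\hat a}\mapsto\bigl(\natural((\sB^{\hat a})_{\hat\ast})\bigr)^{\hat a}$, which is exactly what the paper writes as $(\;)^{\hat a}\circ\natural\circ(\;)_{\hat\ast}$), and your treatment of assertion (2) is fine. But the proof of assertion (1) has a genuine gap, and it is one you yourself flag: the claim $(\star)$, that $\natural$ carries $\hat a$-closed algebras to $\hat a$-closed perfectoid algebras, is load-bearing in your argument and is nowhere verified. Your chain of adjunctions lands you in ${\rm Hom}_{\bf Perf}(\sP,\sQ)$ with $\sQ=\natural((\sB^{\hat a})_{\hat\ast})$, and to convert this into ${\rm Hom}(\sP^{\hat a},\sQ^{\hat a})$ you need $\sQ\cong(\sQ^{\hat a})_{\hat\ast}$, i.e.\ $(\star)$; saying you "expect this bookkeeping" is not a proof. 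Moreover, even the auxiliary step you sketch — that $\natural$ preserves presque-isomorphismes because $\lim_F$ commutes with $(\;)^a$ — needs care, since the transition maps in $\lim_F$ are Frobenius and hence not $\mathfrak V$-linear, so the general fact that localisation commutes with limits does not apply off the shelf.

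The paper avoids $(\star)$ entirely by exploiting the \emph{left} adjoint $(\;)_{\hat{!!}}$, not just $(\;)_{\hat\ast}$. Because $(\;)^{\hat a}\circ(\;)_{\hat{!!}}\cong\mathrm{id}$ and the square
\[
\begin{CD}
\sK\langle T^{\e}\rangle\hbox{-}{\bf Perf} @>i>> \sK\langle T^{\e}\rangle\hbox{-}{\bf uBan}\\
@VV(\;)^{\hat a}V @VV(\;)^{\hat a}V\\
\sK\langle T^{\e}\rangle^{\hat a}\hbox{-}{\bf Perf} @>j>> \sK\langle T^{\e}\rangle^{\hat a}\hbox{-}{\bf uBan}
\end{CD}
\]
commutes, one has $j=(\;)^{\hat a}\circ i\circ(\;)_{\hat{!!}}$, so its right adjoint is the composite of right adjoints $(\;)^{\hat a}\circ\natural\circ(\;)_{\hat\ast}$ — a purely formal statement with nothing to verify. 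The final leg $(\;)_{\hat{!!}}\dashv(\;)^{\hat a}$ on the ${\bf Perf}$ side is precisely what lets you come back from ${\rm Hom}_{\bf Perf}$ to ${\rm Hom}_{\hat a\hbox{-}{\bf Perf}}$, and it is what your proof is missing. If you want to stay with $(\;)_{\hat\ast}$ only, you do have to prove $(\star)$; if you allow $(\;)_{\hat{!!}}$, the argument collapses to a one-line composition of adjunctions and $(\star)$ becomes irrelevant for the adjunction itself (it survives only as the remark, also implicit in the paper's identity $(\;)^{\hat a}\circ\natural=((\;)^{\hat a}\circ\natural\circ(\;)_{\hat\ast})\circ(\;)^{\hat a}$, that the induced $\natural$ agrees with the old one on objects).
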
  

\begin{proof} Consid\'erons le diagramme commutatif 
  \[ \begin{CD}  \sK\langle T^{\e} \rangle\hbox{-}{\bf{Perf}}  @> i >>   \sK\langle T^{\e} \rangle\hbox{-}{\bf{Alg}}  \\@VV(\,)^{\hat a} V @V(\,)^{\hat a} VV     \\\ (\sK\langle T^{\e} \rangle^{\hat a}\hbox{-}{\bf{Perf}}   @> j >>   \sK\langle T^{\e} \rangle^{\hat a}\hbox{-}{\bf{Alg}}   . \end{CD}\] Alors $(\,)^{\hat a} \circ \natural \circ (\,)_{\hat\ast}$ est adjoint \`a droite de $(\,)^{\hat a} \circ i \circ (\,)_{\hat{!!}}= j$, et $(\,)^{\hat a} \circ \natural = ((\,)^{\hat a} \circ \natural \circ (\,)_{\hat\ast}) \circ (\,)^{\hat a}$. 
  
  La seconde assertion en d\'ecoule.  
\end{proof} 
  
 \begin{sorite}\label{S3} Soit $\sB$ une $\sK\langle T^{\e} \rangle$-alg\`ebre de Banach uniforme.
  Les conditions suivantes sont \'equivalentes:
\begin{enumerate}
\item $\sB$ est presque perfecto\"{\i}de,
\item $F$ est presque surjectif sur $\sB^{\s}/\varpi$, 
 \item  $\sB^{\natural {\s}}\inj \sB^{\s}$ est un presque-isomorphisme,
 \item  $\sB^\natural \inj \sB$ est un presque-isomorphisme,
    \item il existe une $ \sK\langle T^{\e} \rangle$-alg\`ebre perfecto\"{\i}de $\,\sB'\,$ et un morphisme $\,\sB'\to \sB\,$ de $ \sK\langle T^{\e} \rangle$-alg\`ebres de Banach qui est un presque-isomorphisme,
      \item $(\sB^{\hat a})_{\hat{!!}}$ est perfecto\"{\i}de. 
  \end{enumerate} \end{sorite}

 \begin{proof}   Les implications $ (1) \Rightarrow (2),\;   (3)\Rightarrow  (4) \Rightarrow (5)$ et $(6) \Rightarrow (1)$ sont claires. 
 
 \smallskip\noindent $(2)\Rightarrow (3)$: il suffit de prouver que  $\sB^\natural \to \sB$ est presque surjectif sous $(2)$  (\cf prop. \ref{P6}).  D'apr\`es le lemme de Nakayama (\cf \S \ref{MLN}),  il suffit de montrer que $\sB^{\natural \sm o}/\varpi \cong \sB^{\flat\sm o}/\varpi^\flat \to \sB^{\s}/\varpi$ est presque surjectif.  Posons $ \mathfrak L :=  \sB^{\s}/\varpi $, de sorte que $\displaystyle \sB^{\flat\sm o}= \lim_F \mathfrak L$. Il suffit de montrer que $\displaystyle \lim_F \mathfrak L \to \mathfrak L$ est presque surjectif. Or $(2)$ se r\'e\'ecrit comme la presque surjectivit\'e du morphisme  $\;    \mathfrak L \otimes_{\sK^{\s}\langle T^{\e}\rangle/\varpi  , F}  \sK^{\s}\langle T^{\e}\rangle/\varpi   \stackrel{\varphi}{\to}  \mathfrak L  $ induit par Frobenius. Cette derni\`ere entra\^{\i}ne la surjectivit\'e de $F: \sK^{\ss}T^{\e} \mathfrak L \to\sK^{\ss}T^{\e} \mathfrak L $. Par le lemme de Mittag-Leffler pour les $\F_p$-espaces vectoriels , on en d\'eduit la surjectivit\'e de $\displaystyle \sK^{\ss}T^{\e} \lim \mathfrak L \to\sK^{\ss}T^{\e} \mathfrak L $, d'o\`u la presque surjectivit\'e de $\displaystyle \lim_F \mathfrak L \to \mathfrak L$.

  \smallskip\noindent $(5)\Rightarrow (1)$: d'apr\`es le lemme \ref{L5}, $(6)$ implique que $(\sB')^{\s} \to \sB^{\s}$ est un presque-isomorphisme, d'o\`u $(1)$.
  
   \smallskip\noindent $(3)\Rightarrow (6)$:  quitte \`a remplacer $\sB$ par $\sB^\natural$, $(4)$ permet de supposer $\sB$ perfecto\"{\i}de pour prouver $(6)$.   D'apr\`es le lemme pr\'ec\'edent, on a $[((\sB^{\hat a})_{\hat{!!}} )^\natural]^{\hat a} = [((\sB^{\hat a})_{\hat{!!}} )^{\hat a}]^\natural = (\sB^{\hat a})^\natural$, et le compos\'e  \[[[((\sB^{\hat a})_{\hat{!!}} )^\natural]^{\hat a}]_{\hat{!!}}  \to ((\sB^{\hat a})_{\hat{!!}} )^\natural  \to (\sB^{\hat a})_{\hat{!!}} . \] est l'identit\'e. Donc le morphisme injectif $((\sB^{\hat a})_{\hat{!!}} )^\natural  \to (\sB^{\hat a})_{\hat{!!}}$ est un isomorphisme.
   \end{proof}

    \subsubsection{Remarques}   $(1)$ Supposons $g$ non-diviseur de z\'ero. Dans la classe d'isomorphisme de $\sB^{\hat a} $, il y a une alg\`ebre perfecto\"{\i}de initiale $(\sB^{\hat a})_{\hat{!!}}$ et une alg\`ebre perfecto\"{\i}de finale $(g^{\f}\sB^{\s}[\frac{1}{\varpi}])^\natural  $ (qui serait $(\sB^{\hat a})_{\hat{\ast}}$ si la question ci-dessus avait une r\'eponse positive).

\smallskip\noindent $(2)$ 
Perfecto\"{\i}de implique presque perfecto\"{\i}de mais {\it pas vice-versa}, m\^eme si $g$ est non-diviseur de z\'ero: la sous-$\sK\langle T_1^{\e}\rangle$-alg\`ebre $\sB =     \sK + (T_1^{\e} \sK\langle  T_1^{\e} , T_2^{\e} \rangle)^- + (T_2 \, \sK\langle  T_1^{\e} , T_2^{\e} \rangle)^-$ de $ \sK\langle  T_1^{\e} , T_2^{\e} \rangle$  est presque perfecto\"{\i}de mais pas perfecto\"{\i}de, puisque son image modulo $(T_1^{\e})$ n'est pas parfaite.
 
\smallskip\noindent $(3)$  En d\'epit du point $(5)$ du lemme (et du lemme \ref{L5}, qui traite des morphismes et non des {``presque-morphismes"}), il n'est pas clair qu'une $\sK\langle T^{\e} \rangle$-alg\`ebre de Banach uniforme presque isomorphe \`a une $\sK\langle T^{\e} \rangle$-alg\`ebre perfecto\"{\i}de soit presque perfecto\"{\i}de.

 \begin{lemma}\label{L16'} Le foncteur $\sA \mapsto \sA^{\s}$ induit une \'equivalence de la cat\'egorie des $\sK\langle T^{\e} \rangle^{\hat a}$-alg\`ebres presque perfecto\"{\i}des vers celle des $\sK\langle T^{\e} \rangle^{\s a}$-alg\`ebres ${\mathfrak{A}^a}$ plates sur $\sK^{\s}$, compl\`etes et telles que $\,{\mathfrak{A}^a}/\varpi_{\frac{1}{p}}\stackrel{x\mapsto x^p}{\to}\,{\mathfrak{A}^a}/\varpi$ soit {bijectif}.  
      \end{lemma}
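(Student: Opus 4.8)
The plan is to reduce the statement to the perfecto\"{\i}de case already settled in lem. \ref{L14'}, using lem. \ref{L15} to pass from ``presque perfecto\"{\i}de'' to ``perfecto\"{\i}de''. One first checks that $\sA\mapsto\sA^{\s a}$ is well defined on the source category. If $\sB$ is a uniform Banach $\sK\langle T^{\e}\rangle$-algebra, then $\sB^{\s}$ is $\varpi$-torsion-free and $\varpi$-adiquement compl\`ete (cor. \ref{C1}), so $\sB^{\s a}$ is flat and complete over $\sK^{\s a}$; and if moreover $\sB$ is presque perfecto\"{\i}de, choosing a perfecto\"{\i}de $\sK\langle T^{\e}\rangle$-algebra $\sA'$ with $\sB^{\hat a}\cong(\sA')^{\hat a}$, \ie $\sB^{\s a}\cong(\sA')^{\s a}$ as $\sK\langle T^{\e}\rangle^{\s a}$-algebras, the perfecto\"{\i}de axiom $(iv)$ for $\sA'$ transfers across this isomorphism to give that $\sB^{\s a}/\varpi_{\frac{1}{p}}\stackrel{x\mapsto x^p}{\to}\sB^{\s a}/\varpi$ is bijective. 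Hence the functor does land in the announced target category.

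Next, by lem. \ref{L15} the inclusion $\sK\langle T^{\e}\rangle^{\hat a}\hbox{-}{\bf{Perf}}\inj\sK\langle T^{\e}\rangle^{\hat a}\hbox{-}{\bf{pPerf}}$ is an equivalence whose quasi-inverse is induced by $\natural$, and for $\sB$ presque perfecto\"{\i}de sorite \ref{S3}$(3)$ gives $\sB^{\natural\s a}\cong\sB^{\s a}$. Therefore the functor $\sB\mapsto\sB^{\s a}$ on presque perfecto\"{\i}de algebras is naturally isomorphic to the composite of this equivalence with the restriction of $\sA\mapsto\sA^{\s a}$ to perfecto\"{\i}de $\sK\langle T^{\e}\rangle$-algebras, so it suffices to prove that the latter is an equivalence from the category of perfecto\"{\i}de $\sK\langle T^{\e}\rangle$-algebras (with the $\hat a$-morphisms) onto the target. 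Full faithfulness is then automatic: by the very definition of the $\hat a$-categories (\cf \ref{recc}), ${\rm{Hom}}(\sA^{\hat a},\sB^{\hat a})$ is the set of $\sK\langle T^{\e}\rangle^{\s a}$-algebra homomorphisms $\sA^{\s a}\to\sB^{\s a}$, which is exactly the Hom-set of the target category.

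For essential surjectivity, start from a flat complete $\sK\langle T^{\e}\rangle^{\s a}$-algebra $\mathfrak A^a$ with $\mathfrak A^a/\varpi_{\frac{1}{p}}\stackrel{x\mapsto x^p}{\to}\mathfrak A^a/\varpi$ bijective. Its underlying $\sK^{\s a}$-algebra (obtained by restriction of scalars, the underlying ring being unchanged) still satisfies the hypotheses of the $\sK^{\s a}$-analogue of lem. \ref{L14'}, since flatness, completeness, and bijectivity of $\sigma$ depend only on the $\sK^{\s}$-module structure and on the multiplication of $\mathfrak A^a$; so there is a $\sK$-perfecto\"{\i}de algebra $\sA$ together with an isomorphism of $\sK^{\s a}$-algebras $\sA^{\s a}\cong\mathfrak A^a$. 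Transporting the $\sK\langle T^{\e}\rangle^{\s a}$-algebra structure of $\mathfrak A^a$ through this isomorphism equips $\sA^{\s a}$ with a $\sK\langle T^{\e}\rangle^{\s a}$-algebra structure, \ie a morphism $(\sK^{\s}\langle T^{\e}\rangle)^a\to\sA^{\s a}$; using that $\sA^{\s}=(\sA^{\s a})_\ast$ for a perfecto\"{\i}de algebra (cor. \ref{C1}$(4)$, in the frame $(\sK^{\s},\sK^{\ss})$) together with cor. \ref{C1} one gets ${\rm{Hom}}_{\sK^{\s a}}((\sK^{\s}\langle T^{\e}\rangle)^a,\sA^{\s a})={\rm{Hom}}_{\sK^{\s}}(\sK^{\s}\langle T^{\e}\rangle,\sA^{\s})={\rm{Hom}}^{cont}_{\sK}(\sK\langle T^{\e}\rangle,\sA)$, so this structure lifts to a genuine continuous homomorphism $\sK\langle T^{\e}\rangle\to\sA$. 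Thus $\sA$ becomes a perfecto\"{\i}de $\sK\langle T^{\e}\rangle$-algebra with $\sA^{\s a}\cong\mathfrak A^a$ as $\sK\langle T^{\e}\rangle^{\s a}$-algebras, which completes the argument.

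The substance is carried entirely by lem. \ref{L14'} and lem. \ref{L15}; the only point demanding real care is this last one, namely the bookkeeping of the $\sK\langle T^{\e}\rangle$-module structure through the functor $\natural$ and through the equivalence of lem. \ref{L14'}, and the lifting of almost-morphisms of $\sK^{\s}$- (resp. $\sK^{\s}\langle T^{\e}\rangle$-) algebras to honest continuous morphisms, for which the identity $\sA^{\s}=(\sA^{\s a})_\ast$ in the frame $(\sK^{\s},\sK^{\ss})$ is the crucial input. Everything else is formal.
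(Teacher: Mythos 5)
Your reduction to the perfecto\"{\i}de case via lem.~\ref{L15}, the verification that the functor is well defined, and the observation that full faithfulness holds by definition of the $\hat a$-morphisms are all fine. The gap is in essential surjectivity, and it is a frame mismatch. The hypothesis on $\mathfrak A^a$ --- bijectivity of $\mathfrak A^a/\varpi_{\frac{1}{p}}\to\mathfrak A^a/\varpi$ --- is an almost bijectivity in the frame $(\sK^{\s}\langle T^{\e}\rangle,\,T^{\e}\sK^{\ss}\langle T^{\e}\rangle)$ of \S\ref{app}: kernel and cokernel are annihilated by $T^{\e}\sK^{\ss}\langle T^{\e}\rangle$. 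The $\sK^{\s a}$-analogue of lem.~\ref{L14'} lives in the frame $(\sK^{\s},\sK^{\ss})$, where the Frobenius hypothesis requires annihilation by $\sK^{\ss}$; since $T^{\e}\sK^{\ss}\langle T^{\e}\rangle\subsetneq\sK^{\ss}\langle T^{\e}\rangle$, that is a strictly stronger annihilation condition. There is in fact no restriction-of-scalars functor between these two almost categories in the direction you need, because $\sK^{\s}\to\sK^{\s}\langle T^{\e}\rangle$ does not send $\sK^{\ss}$ into $T^{\e}\sK^{\ss}\langle T^{\e}\rangle$ (cf.~\ref{rec}). So the claim that ``the underlying $\sK^{\s a}$-algebra still satisfies the hypotheses of lem.~\ref{L14'}'' is unjustified, and your argument cannot produce a genuinely perfecto\"{\i}de $\sK\langle T^{\e}\rangle$-algebra at this step. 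This is exactly the distinction that the notion ``presque perfecto\"{\i}de'' of \S\ref{app} is designed to record.

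The proof ``analogous to lem.~\ref{L14'}'' that the paper intends stays in the frame of \S\ref{app} and does not route through lem.~\ref{L14'} at all. Given $\mathfrak A^a$, set $\mathfrak A:=(\mathfrak A^a)_\ast$ (almost-elements in that frame, with $\mathfrak m=T^{\e}\sK^{\ss}\langle T^{\e}\rangle$) and $\sB:=\mathfrak A[\frac{1}{\varpi}]$ with the norm ${}^{{}^{\mathfrak A}}\vert\,\vert$. Since $\mathfrak A=\mathfrak A_\ast$ is $\mathfrak m$-torsion-free and $\varpi$-torsion-free, the almost injectivity of Frobenius upgrades to genuine injectivity of $\mathfrak A/\varpi_{\frac{1}{p}}\to\mathfrak A/\varpi$; by sorite~\ref{s1}~$(5f)$ and cor.~\ref{C1} (whose conditions are preserved by $(\;)_\ast$, cf.~lem.~\ref{L4}), $\sB$ is uniform with $\sB^{\s}=\mathfrak A$. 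The almost surjectivity of Frobenius is then exactly condition~$(2)$ of sorite~\ref{S3}, so $\sB$ is presque perfecto\"{\i}de, and $\sB^{\s a}\cong\mathfrak A^a$. This gives essential surjectivity with no change of frame.
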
  
      
  La preuve est analogue \`a celle du lemme \ref{L14'}. \qed

 \begin{prop}\label{P10}    Si $\sA$ est presque perfecto\"{\i}de, on a une \'equivalence de cat\'egories 
 
 \medskip\noindent {$\{$ $\sA^{\hat a}$-alg\`ebres presque perfecto\"{\i}des presque \'etales finies$\}  \stackrel{\sim}{\to}   \{\sA^{\s a}$-alg\`ebres \'etales  finies$\}  $ }  

\medskip  donn\'ee par $(\;)^{\s a}$.
   \end{prop}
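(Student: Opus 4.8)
The plan is to deduce Proposition \ref{P10} from the lemma \ref{L16'}, exactly as the lemma \ref{L16'} is deduced from the corollary \ref{C1}, combined with the observation — already used in the proof of the proposition \ref{P8'} — that a finite \'etale algebra inherits the bijectivity of the ($\sigma$-twisted) Frobenius modulo $\varpi$. First I would check that $(\;)^{\s a}$ does send the left-hand category into the right-hand one, which is tautological: if $\sB^{\hat a}$ is a presque perfecto\"{\i}de presque \'etale finie $\sA^{\hat a}$-alg\`ebre, then by definition $\sB^{\s a}$ is an \'etale finie $\sA^{\s a}$-alg\`ebre. Then I would invoke the lemma \ref{L16'} in its version relative to $\sA$ (legitimate since that equivalence is given by $(\;)^{\s a}$, sends $\sA^{\hat a}$ to $\sA^{\s a}$ and respects morphisms): it identifies the category of presque perfecto\"{\i}de $\sA^{\hat a}$-alg\`ebres with the category of $\sA^{\s a}$-alg\`ebres $\mathfrak B^a$ that are flat over $\sK^{\s}$, $\varpi$-adiquement compl\`etes, and such that $\mathfrak B^a/\varpi_{\frac{1}{p}}\stackrel{x\mapsto x^p}{\to}\mathfrak B^a/\varpi$ is bijective, and under this identification ``presque \'etale finie sur $\sA^{\hat a}$'' corresponds to ``\'etale finie sur $\sA^{\s a}$''. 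In particular $(\;)^{\s a}$ is pleinement fid\`ele on the subcategory of interest.

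It then remains to prove essential surjectivity, i.e. that every \'etale finie $\sA^{\s a}$-alg\`ebre $\mathfrak B^a$ automatically satisfies the three extra conditions above. Platitude sur $\sK^{\s}$ will follow from the fact that $\mathfrak B^a$, being a finite projective $\sA^{\s a}$-module, is plat sur $\sA^{\s a}$ (rem. \ref{r1}$(1)$), while $\sA^{\s a}$ is plat sur $\sK^{\s a}$ because $\sA$ is a Banach uniform algebra and hence $\sA^{\s}$ has no $\varpi$-torsion. The $\varpi$-adic completeness is exactly the lemma \ref{L1}, $\sA^{\s}$ being $\varpi$-adiquement compl\`ete. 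For the bijectivity of the twisted Frobenius I would argue as in the proof of the proposition \ref{P8'}$(1)$: since $\mathfrak B^a/\varpi$ is \'etale finie sur $\sA^{\s a}/\varpi$, \cite[th. 3.5.13 ii]{GR1} gives $(\mathfrak B^a/\varpi)\otimes_{(\sA^{\s a}/\varpi)}F^\ast(\sA^{\s a}/\varpi)\cong F^\ast(\mathfrak B^a/\varpi)$, so that $\mathfrak B^a/\varpi_{\frac{1}{p}}\otimes_{\sK^{\s}/\varpi_{\frac{1}{p}},\sigma}\sK^{\s}/\varpi\to\mathfrak B^a/\varpi$ is obtained by base change from the corresponding map for $\sA^{\s a}$, which is bijective precisely because $\sA$ is presque perfecto\"{\i}de (cf. le sorite \ref{S3}$(2)$ et la rem. \ref{r2}$(1)$). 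The lemma \ref{L16'} then produces a presque perfecto\"{\i}de $\sA^{\hat a}$-alg\`ebre $\sB^{\hat a}$ with $\sB^{\s a}\cong\mathfrak B^a$, which is presque \'etale finie sur $\sA^{\hat a}$ by hypothesis on $\mathfrak B^a$; hence $(\;)^{\s a}$ is essentiellement surjectif, which finishes the argument.

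The step I expect to be the crux is the last one, namely the bijectivity of the $\sigma$-twisted Frobenius on $\mathfrak B^a/\varpi$: this is the only place where the interplay between almost-\'etaleness and the (almost) perfecto\"{\i}de hypothesis on $\sA$ really enters, and it is what makes the statement more than formal. Everything else — well-definedness, full faithfulness, platitude, $\varpi$-completeness — is bookkeeping once the lemma \ref{L16'} is granted, which is why the proof is essentially a relative variant of that of \ref{L16'}.
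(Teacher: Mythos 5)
Your essential surjectivity argument (the analogue of Prop.~\ref{P8'}(1)) is correct and is indeed the intended route: verify platitude, compl\'etude, and bijectivit\'e of the $\sigma$-twisted Frobenius for $\mathfrak B^a$, then invoke the relative variant of Lemma~\ref{L16'}. The base-change argument via \cite[th.\ 3.5.13\,ii]{GR1} for the Frobenius is exactly what the paper uses in \ref{P8'}(1).

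The gap is in the step you dismiss as tautological. The definition of \emph{presque \'etale fini} given in \S\ref{tp} reads: $\sB^{\hat a}$ est presque \'etale fini sur $\sA^{\hat a}$ si $\sB^{a}$ --- not $\sB^{\s a}$ --- est \'etale fini sur $\sA^{a}$, in the cadre $(\sK^{\s}[T^{\e}],(\varpi T)^{\e}\sK^{\s}[T^{\e}])$. Since $\varpi$ is invertible in $\sA$ and $\sB$, this is a condition on the $\sK$-Banach algebra $\sB$ as an almost $\sA$-algebra, not a condition on the open bounded subring $\sB^{\s}$; in particular $\sB^a = \sB^{\s a}[\frac1\varpi]$, so the hypothesis gives you $\sB^{\s a}[\frac1\varpi]$ finite projective and unramified over $\sA^{\s a}[\frac1\varpi]$, which does \emph{not} formally descend to a statement about $\sB^{\s a}$ over $\sA^{\s a}$. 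Deducing that $\sB^{\s a}$ is \'etale finie over $\sA^{\s a}$ is precisely the analogue of Prop.~\ref{P8'}(2), and the paper's proof of that part is not formal: it reduces to the Galois case via Lemma~\ref{L0}, and treats the Galois case through the identity $\sB^{\s a}\otimes_{\sA^{\s a}}\sB^{\s a}=(\sB\otimes_\sA\sB)^{\s a}\cong\prod_G\sB^{\s a}$ coming from Lemma~\ref{L1} and Prop.~\ref{P7}, then passes to the invariants to handle the non-Galois case. Your proposal asserts, twice, that ``presque \'etale finie'' corresponds to ``\'etale finie'' under the Lemma~\ref{L16'} identification, but never proves it; this correspondence \emph{is} the content of the forward direction. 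Consequently you have also mis-located the crux: the Frobenius argument is the crux of the easy direction only, while the genuine difficulty lies in the Galois reduction that you have omitted, and which the phrase ``analogue \`a celle de la prop.~\ref{P8'}'' refers to in its entirety (both parts (1) and (2), not just (1)).
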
  
 
 La preuve est analogue \`a celle de la prop. \ref{P8'}. \qed

 \medskip
  \subsection{Epimorphismes (et localisation).}

         \subsubsection{}\label{r8} Comme le plongement $\sK\hbox{-}\bf Perf \to  \sK\hbox{-}\bf uBan$ est fid\`ele et admet un adjoint \`a droite, il pr\'eserve et refl\`ete les \'epimorphismes. 
         
         Soit $\sB$ une $\sK$-alg\`ebre perfecto\"{\i}de et  $\phi: \sB \to \sC$  un \'epimorphisme extr\'emal de $\sK$-alg\`ebres de Banach uniformes (\cf \ref{id}). Puisque $\sK\hbox{-}{\bf{Perf}}$ est une {sous-cat\'egorie cor\'eflexive} de $\sK\hbox{-}{\bf{uBan}}$ dont les co\"unit\'es d'adjonction sont des monomorphismes, elle est stable par quotient extr\'emal  (\cf \cite[prop. 4]{HS}), donc $\sB$ est perfecto\"{\i}de (mais vu comme \'epimorphisme de $\sK$-alg\`ebres affino\"{\i}des, $\phi$ n'est peut-\^etre pas extr\'emal).
      
      \smallskip Par exemple, consid\'erons une $\sK$-alg\`ebre perfecto\"{\i}de spectrale $\sA$  et un \'el\'ement $g\in \sA^{\s }$, et formons l'alg\`ebre spectrale $\sA\langle g^{\e}\rangle$, dont $\sA$ est une sous-alg\`ebre norm\'ee (ex. \ref{E9} (2)). Alors $ \sA \hat\otimes_\sK \sK\langle T^{\e}\rangle$ est une $\sK$-alg\`ebre perfecto\"{\i}de spectrale (prop. \ref{P7}) et le morphisme canonique $\phi: \sA \hat\otimes_\sK \sK\langle T^{\e}\rangle \to \sA\langle g^{\e}\rangle$ est un \'epimorphisme extr\'emal (\cf formules \eqref{eqX} et \eqref{eu}), donc $ \sA\langle g^{\e}\rangle$ est perfecto\"{\i}de.
       Notant $g^\flat \in \sA\langle g^{\e}\rangle^\flat$ le syst\`eme inverse des $g^{\frac{1}{p^k}}$ modulo $p$, on a $g= {\footnotesize{\#}} g^\flat$.

     \subsubsection{\it Remarque.}\label{r8-} Signalons quelques r\'esultats relatifs aux \'epimorphismes que nous n'utiliserons pas dans la suite. Si $\phi: \sB \to \sC$ un morphisme de $\sK$-alg\`ebres de Banach uniformes d'image dense, et si $\sB$ est perfecto\"{\i}de, il en est de m\^eme de $\sC\,$ (\cf  \cite[th. 3.6.17 (b)]{KL}).     
  De surcro\^{\i}t, si $\phi $ est surjectif et $\sB$ spectrale,  la norme spectrale de $\sC$ est la norme quotient et l'homomorphisme induit sur les boules unit\'e est presque surjectif
      \cite[prop. 3.6.9 (c)]{KL}. 
      
      On en d\'eduit que $\phi$ est surjectif si et seulement si $\phi^\flat$ l'est; en effet, la condition se traduit par la presque surjectivit\'e de $\phi^{\s} $ mod. $\varpi$, qui s'identifie \`a $\phi^{\flat\s}$ mod. $\varpi^\flat$.

\smallskip Par ailleurs, noter que $\ker \phi^\flat$ est un id\'eal ferm\'e radiciel, donc le quotient $\sA^\flat/\ker \phi^\flat$ est perfecto\"{\i}de \cite[3.1.6 d]{KL}, et on a une factorisation canonique de $\phi$ dans $\sK\hbox{-}\bf Perf$ en \[\sA \stackrel{\phi_1}{\to} (\sA^\flat/ \ker \phi^\flat)^\sharp \stackrel{\phi_2}{\to} (\sA/\ker \phi)^u \stackrel{\phi_3}{\to} \sB\]  o\`u $\phi_1$ est surjectif, $\phi_2$ est \`a la fois un \'epimorphisme extr\'emal et un monomorphisme, et $\phi_3\circ \phi_2$ est un monomorphisme.

      \subsubsection{}\label{locp} 
       Passons aux localisations affino\"{\i}des (\cf \S \ref{loc}), et 
      commen\c cons par examiner l'exemple standard de l'alg\`ebre perfecto\"{\i}de spectrale $\sK\langle T^{\e}\rangle $.  Si $\car \sK = 0$, on prend $\varpi$ de la forme ${\footnotesize{\#}}(\varpi^\flat)$,  et on pose $  \varpi^{\frac{1}{p^i}} := {\footnotesize{\#}}((\varpi^\flat)^{\frac{1}{p^i}} )$; notons que le bascul\'e de  $\sK\langle T^{\e}\rangle$ est $\sK^\flat\langle T^{\e}\rangle$. 

  Pour tout $j$ et tout $m$,   $\sK\langle T^{\frac{1}{p^m}}\rangle\{ {\frac{\varpi^j}{T}}\}^{\s}  $ est le 
        compl\'et\'e $\sK^{\s}\langle T^{\frac{1}{p^m}}, ( {\frac{\varpi^j}{T}})^{\frac{1}{p^m}}\rangle $ de $\sK^{\s}[ T^{\frac{1}{p^m}},  ({\frac{\varpi^j}{T}})^{\frac{1}{p^m}}]:= \sK^{\s}[ T^{\frac{1}{p^m}},  U^{\frac{1}{p^m}}]/( T^{\frac{1}{p^k}} U^{\frac{1}{p^k}} - \varpi^{\frac{j}{p^k}})_{k\leq m}$ 
 tandis que $\sK\langle T^{\frac{1}{p^m}}\rangle\{ {\frac{\varpi^j}{T}}\}_{\leq 1}  $ est le 
        compl\'et\'e $\sK^{\s}\langle T^{\frac{1}{p^m}},  {\frac{\varpi^j}{T}}\rangle $ de $\sK^{\s}[ T^{\frac{1}{p^m}},  {\frac{\varpi^j}{T}}]:=  \sK^{\s}[ T^{\frac{1}{p^m}},  U]/(TU -\varpi^j  )$.  
        Passant \`a la colimite compl\'et\'ee (pour $m\to \infty$), on obtient  
       \begin{equation}\label{e24} \sK\langle T^{\e}\rangle\{{\frac{\varpi^j}{T}}\}^{\s} = \sK^{\s}\langle T^{\e},  ({\frac{\varpi^j}{T}})^{\e}\rangle :=  \widehat{ \sK^{\s}[ T^{\e},  U^{\e}]/( T^{\frac{1}{p^k}} U^{\frac{1}{p^k}} - \varpi^{\frac{j}{p^k}})_k },\end{equation}  
      alors que la boule unit\'e est 
        \begin{equation}\label{e25}  {\sK\langle T^{\e}\rangle\{{\frac{\varpi^j}{T}}\}}_{\leq 1} =  \sK^{\s} \langle T^{\e},  {\frac{\varpi^j}{T}}\rangle = \sK^{\s} \langle T^{\e}\rangle \hat\otimes_{\sK^{\s}\langle T \rangle}  \sK^{\s}\langle   T, {\frac{\varpi^j}{T}} \rangle. \end{equation}  
       Ainsi   \begin{equation}\label{e25,}  {\sK\langle T^{\e}\rangle\{{\frac{\varpi^j}{T}}\}}  =   \sK\langle T^{\e}\rangle \hat\otimes_{\sK\langle T \rangle}  \sK\langle   T, {\frac{\varpi^j}{T}} \rangle =  \sK\langle T^{\e}\rangle \hat\otimes_{\sK\langle T \rangle}  \sK\langle   T \rangle\{{\frac{\varpi^j}{T}}\} \end{equation} 
      est perfecto\"{\i}de d'apr\`es \eqref{e24}, en particulier uniforme, mais non spectrale en g\'en\'eral d'apr\`es \eqref{e25}. 
        Pour toute $\sK\langle T^{\e}\rangle$-alg\`ebre de Banach uniforme $\sA$, on a alors, en notant $g$ l'image de $T$ et en combinant les formules  \eqref{e11} et \eqref{e25,}:
        
         \begin{equation}\label{e25'}    \sA\{ \frac{\varpi^j}{g} \}   \cong   \sA\hat\otimes_{  \sK\langle T^{\e} \rangle}  \sK\langle T^{\e} \rangle \{\frac{\varpi^j}{T}\}   .  \end{equation}

                \begin{prop}\label{P11}  Soient $\sA$ une $\sK\langle T^{\e}\rangle$-alg\`ebre perfecto\"{\i}de munie de sa norme spectrale, et $g= T\cdot 1$.
                 Alors pour tout $j\in \N$, $\sA\{{\frac{\varpi^j}{T}}\}$ est perfecto\"{\i}de et le morphisme canonique 
        \begin{equation}\label{e26}(\sA^{\s} \hat\otimes_{\sK^{\s}\langle T^{\e}\rangle}   {\sK^{\s}\langle T^{\e}, ({\frac{\varpi^j}{T}})^{\e}\rangle})_{\ast} \to     \sA\{{\frac{\varpi^j}{g}}\}^{\s} \end{equation} 
         est un isomorphisme. 
         En particulier, $\sA\hat\otimes_{\sK\langle T^{\e}\rangle}   {\sK\langle T^{\e}\rangle\{\frac{\varpi^j}{g}}\}^u$ est spectrale. 
         
         En outre, si $\car \sK =0$, $\sA\{{\frac{\varpi^j}{g}}\}^\flat$ s'identifie \`a $\sA^\flat\{{\frac{(\varpi^\flat)^j}{g^\flat}}\}$ o\`u $g^\flat$ est l'image de $T$ dans $\sA^\flat$ (de sorte que ${\footnotesize{\#}} g^\flat = g$).
           \end{prop}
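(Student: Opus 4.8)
The plan is to reduce everything to prop. \ref{P7}, taking as base perfecto\"{\i}de algebra $\sK\langle T^{\e}\rangle$ itself. The starting point is formula \eqref{e25'}, which identifies $\sA\{\frac{\varpi^j}{g}\}$ with the amalgamated sum $\sA\hat\otimes_{\sK\langle T^{\e}\rangle}\sK\langle T^{\e}\rangle\{\frac{\varpi^j}{T}\}$ of $\sK\langle T^{\e}\rangle$-algebras: here $\sA$ is perfecto\"{\i}de over $\sK\langle T^{\e}\rangle$ by hypothesis, while $\sK\langle T^{\e}\rangle\{\frac{\varpi^j}{T}\}$ has already been seen (via \eqref{e24}, which gives $(\;)^{\s}=\sK^{\s}\langle T^{\e},(\frac{\varpi^j}{T})^{\e}\rangle$) to be perfecto\"{\i}de.

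Applying prop. \ref{P7} to this amalgamated sum over $\sK\langle T^{\e}\rangle$ yields at once that the canonical morphism
\[\bigl(\sA^{\s}\hat\otimes_{\sK^{\s}\langle T^{\e}\rangle}(\sK\langle T^{\e}\rangle\{\tfrac{\varpi^j}{T}\})^{\s}\bigr)_{\ast}\;\stackrel{\sim}{\longrightarrow}\;\bigl(\sA\hat\otimes_{\sK\langle T^{\e}\rangle}\sK\langle T^{\e}\rangle\{\tfrac{\varpi^j}{T}\}\bigr)^{\s}\]
is an isomorphism; by \eqref{e24} and \eqref{e25'} this is precisely \eqref{e26}. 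Setting $\mathfrak B=\sA^{\s}\hat\otimes_{\sK^{\s}\langle T^{\e}\rangle}\sK^{\s}\langle T^{\e},(\frac{\varpi^j}{T})^{\e}\rangle$, we thus identify $\sA\{\frac{\varpi^j}{g}\}^{\s}$ with $(\mathfrak B)_{\ast}=\bigcap_{s>0}\varpi_{-s}\mathfrak B$; since $\mathfrak B$ is bounded in $\sA\{\frac{\varpi^j}{g}\}$ (being the $\varpi$-adic completion of the $\sK^{\s}$-algebra generated by the two bounded subalgebras $\sA^{\s}$ and $\sK^{\s}\langle T^{\e},(\frac{\varpi^j}{T})^{\e}\rangle$) and $|\varpi|^{-s}\to1$ as $s\to0^{+}$, so is $(\mathfrak B)_{\ast}$; hence $\sA\{\frac{\varpi^j}{g}\}$ is uniform. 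Being uniform and complete it coincides with its uniformization $\sA\hat\otimes^u_{\sK\langle T^{\e}\rangle}\sK\langle T^{\e}\rangle\{\frac{\varpi^j}{T}\}$, which by prop. \ref{P7} is the amalgamated sum of $\sA$ and $\sK\langle T^{\e}\rangle\{\frac{\varpi^j}{T}\}$ in the category of perfecto\"{\i}de algebras; so $\sA\{\frac{\varpi^j}{g}\}$ is perfecto\"{\i}de and the first two assertions follow. For the ``en particulier'' clause, $\sA$ (spectral by hypothesis), $\sK\langle T^{\e}\rangle$ and $(\sK\langle T^{\e}\rangle\{\frac{\varpi^j}{T}\})^u$ are all spectral, so the last sentence of prop. \ref{P7} gives $\sA\hat\otimes_{\sK\langle T^{\e}\rangle}(\sK\langle T^{\e}\rangle\{\frac{\varpi^j}{T}\})^u=\sA\hat\otimes^u_{\sK\langle T^{\e}\rangle}(\sK\langle T^{\e}\rangle\{\frac{\varpi^j}{T}\})^u$, which is spectral as a uniformization.

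There remains the clause for $\car\sK=0$. Prop. \ref{P7} also states that amalgamated sums of perfecto\"{\i}de algebras commute with tilting, so, using $\sK\langle T^{\e}\rangle^{\flat}\cong\sK^\flat\langle T^{\e}\rangle$ (ex. \ref{E12}),
\[\sA\{\tfrac{\varpi^j}{g}\}^{\flat}=\bigl(\sA\hat\otimes_{\sK\langle T^{\e}\rangle}\sK\langle T^{\e}\rangle\{\tfrac{\varpi^j}{T}\}\bigr)^{\flat}=\sA^{\flat}\hat\otimes_{\sK^\flat\langle T^{\e}\rangle}\bigl(\sK\langle T^{\e}\rangle\{\tfrac{\varpi^j}{T}\}\bigr)^{\flat},\]
and it remains to identify $(\sK\langle T^{\e}\rangle\{\frac{\varpi^j}{T}\})^{\flat}$ with $\sK^\flat\langle T^{\e}\rangle\{\frac{(\varpi^\flat)^j}{T}\}$. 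I would read this off \eqref{e24}: the $(\;)^{\s}$ of both algebras is the completion of $R[T^{\e},U^{\e}]/(T^{\frac{1}{p^k}}U^{\frac{1}{p^k}}-\pi^{\frac{j}{p^k}})_{k}$, with $(R,\pi)=(\sK^{\s},\varpi)$ on one side and $(R,\pi)=(\sK^{\flat\s},\varpi^\flat)$ on the other, and these two reductions modulo $\varpi$ resp. $\varpi^\flat$ are carried into one another by the canonical isomorphism $\sK^{\s}/\varpi\cong\sK^{\flat\s}/\varpi^\flat$ (which sends $\varpi^s$ to $(\varpi^\flat)^s$ since $\varpi={\footnotesize{\#}}(\varpi^\flat)$), so the two $\sK^\flat$-perfecto\"{\i}de algebras coincide by the equivalence of th. \ref{T3}(2). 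Substituting back and applying \eqref{e25'} over $\sK^\flat$ yields $\sA\{\frac{\varpi^j}{g}\}^{\flat}\cong\sA^{\flat}\{\frac{(\varpi^\flat)^j}{g^\flat}\}$, where $g^\flat$ is the image of $T$ in $\sA^\flat$. The two spots that need care are the bookkeeping of $\hat\otimes$ versus $\hat\otimes^u$ — that is, the uniformity of $\sA\{\frac{\varpi^j}{g}\}$, which is needed for prop. \ref{P7} to apply to it verbatim — and the explicit matching of the presentation \eqref{e24} under tilting; everything else is formal once prop. \ref{P7} is in hand.
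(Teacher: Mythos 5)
Your proof is correct and follows exactly the same route as the paper, which dispatches Prop.~\ref{P11} in one line (``Compte tenu des formules \eqref{e24}\eqref{e25'}, c'est un corollaire de la prop.~\ref{P7}''); you have merely fleshed out the details, including the boundedness argument for uniformity and the identification of the tilt of $\sK\langle T^{\e}\rangle\{\frac{\varpi^j}{T}\}$ via reduction mod $\varpi$ and th.~\ref{T3}(2), all of which are exactly what the paper's terse citation of \eqref{e24}, \eqref{e25'} and prop.~\ref{P7} is meant to convey.
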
 
       
      \begin{proof} Compte tenu des formules \eqref{e24}\eqref{e25'}, c'est un corollaire de la prop. \ref{P7}.
      \end{proof}

 \subsubsection{Remarque}\label{r9}  
 Le m\^eme argument montre que $\sA\{\frac{f_1, \ldots, f_n}{g}\}$ est perfecto\"{\i}de si $f_1, \ldots, f_n, g$ admettent des suites compatibles de racines $p^{m}$-i\`emes. C'est encore vrai sans supposer cette condition: 
  Scholze le d\'eduit du r\'esultat pr\'ec\'edent - qu'il prouve \`a nouveaux frais \cite[def. 2.13, th. 6.3 (ii), lem. 6.4 (iii)]{S1}) plut\^ot que de le d\'eduire du r\'esultat sur les $\hat\otimes$ - , gr\^ace \`a un lemme d'approximation de $f_i, g\in \sA^{\s}$ par des \'el\'ements du type ${\footnotesize{\#}} {\tilde f_i}^\flat, {\footnotesize{\#}} {\tilde g}^\flat$ \cite[cor. 6.7 (i)]{S1}.   
 
   \medskip\subsection{Produits (et transform\'ee de Gelfand).}\label{prodGp} 

         \subsubsection{} Les produits uniformes  
          d'alg\`ebres perfecto\"{\i}des sont perfecto\"{\i}des, du fait que 
     $( \prod  \sA^{\alpha{\s}})/\varpi \cong   \prod  ( \sA^{\alpha{\s}}/\varpi ).$
       
      \begin{prop}\label{P12} Si $\sA\neq 0$ est perfecto\"{\i}de, il en est de m\^eme de sa transform\'ee de Gelfand $\Gamma(\sA)$.           \end{prop}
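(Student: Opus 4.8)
Le plan est de se ramener au fait déjà acquis (\S \ref{prodGp}) que les produits uniformes d'alg\`ebres perfecto\"{\i}des sont perfecto\"{\i}des. Rappelons que par d\'efinition $\Gamma(\sA) = \prod^u_{x\in \mathcal M(\sA)} \mathcal H(x)$, où $\mathcal H(x)$ est le compl\'et\'e du corps r\'esiduel d'un point $x$ du spectre de Berkovich, \ie d'une semi-norme multiplicative born\'ee sur $\sA$. Il suffit donc de montrer que chaque $\mathcal H(x)$ est un corps perfecto\"{\i}de : le produit uniforme d'une famille de corps perfecto\"{\i}des sur $\sK$ \'etant perfecto\"{\i}de, $\Gamma(\sA)$ le sera.

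Premi\`erement, je fixe $x\in \mathcal M(\sA)$ et je note $\mathfrak p_x$ le noyau de la semi-norme correspondante, de sorte que $\sA/\mathfrak p_x$ est int\`egre, munie d'une norme multiplicative, et $\mathcal H(x)$ est le compl\'et\'e de son corps des fractions. Le point crucial est que la surjectivit\'e de Frobenius modulo $\varpi$ se propage de $\sA^{\s}$ \`a $\mathcal H(x)^{\s}$. Concr\`etement : pour $\xi\in \mathcal H(x)^{\s}$ on l'approche, \`a $\varpi$ pr\`es pour la valeur absolue de $x$, par (l'image d')un $a\in \sA^{\s}$ (en utilisant que $\sA/\mathfrak p_x$, donc $\mathcal H(x)$, est limite de valeurs de $\sA$ et que $\vert \xi - \bar a\vert_x$ peut \^etre rendue $\leq \vert\varpi\vert$) ; par perfecto\"{\i}dit\'e de $\sA$ il existe $b\in \sA^{\s}$ avec $b^p \equiv a \pmod{\varpi \sA^{\s}}$, et $\bar b \in \mathcal H(x)^{\s}$ v\'erifie $\bar b^{\,p} \equiv \xi \pmod{\varpi \mathcal H(x)^{\s}}$. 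On peut rendre cet argument propre en invoquant la caract\'erisation $iii)$ de \ref{algperf} (bijectivit\'e de $x\mapsto x^p$ de ${\s}/\varpi_{1/p}$ vers ${\s}/\varpi$) : la r\'eduction $\sA^{\s}/\varpi \to \mathcal H(x)^{\s}/\varpi$ est surjective d'image dense puis, la source \'etant parfaite modulo $\varpi$ au sens appropri\'e, l'image l'est aussi. Il faut aussi s'assurer que $\mathcal H(x)$ est un corps complet pour une valeur absolue non archim\'edienne \emph{non discr\`ete} : cela r\'esulte de ce que $\vert \sK^\times\vert$, donc $\Gamma$, est dense dans $\R_+$ (cor. du fait que $\sK$ est perfecto\"{\i}de), et $\vert \mathcal H(x)^\times\vert \supseteq \vert\sK^\times\vert$.

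L'obstacle principal sera le premier pas d'approximation : justifier rigoureusement que tout \'el\'ement de $\mathcal H(x)^{\s}$ est $\varpi$-proche de l'image d'un \'el\'ement de $\sA^{\s}$ (et non seulement de $\sA$). Le point d\'elicat est le contr\^ole des bornes : un \'el\'ement de $\sA$ d'image petite en $x$ peut avoir de grandes puissances. On contournera cela en travaillant modulo $\varpi$ plut\^ot qu'avec des rel\`evements : $\sA^{\s}/\varpi \to \mathcal H(x)^{\s}/\varpi \cong \mathcal H(x)^{\s}/\varpi\mathcal H(x)^{\s}$ est un morphisme d'anneaux d'image dense pour la topologie discr\`ete de la cible (car $\sA/\mathfrak p_x$ s'envoie densément dans $\mathcal H(x)$, et tout \'el\'ement de $\mathcal H(x)^{\s}$ est limite d'\'el\'ements de $\sA^{\s}/\mathfrak p_x \sA^{\s}$ quitte \`a multiplier par une unit\'e de $\sK^{\s}$ convenable), donc en fait surjectif sur chaque quotient fini, ce qui suffit. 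Une fois $\mathcal H(x)$ reconnu perfecto\"{\i}de pour tout $x$, la conclusion $\Gamma(\sA) = \prod^u \mathcal H(x)$ perfecto\"{\i}de est imm\'ediate par la remarque liminaire de \ref{prodGp}. Enfin, cette construction \'etant fonctorielle et compatible au basculement (ce qui sera utilis\'e dans l'esquisse \ref{demo}$(b)$--$(c)$), on notera que $\Gamma(\sA)^\flat \cong \Gamma(\sA^\flat)$ via $\mathcal H(x)^\flat$, point qu'on reverra s\'epar\'ement.
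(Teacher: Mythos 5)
Votre réduction au cas d'un corps $\mathcal H(x)$, via le fait que les produits uniformes d'algèbres perfectoïdes sont perfectoïdes, est bien le même premier pas que le papier. Mais l'étape d'approximation qui suit contient une erreur non réparable en l'état.

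Le point défaillant est l'affirmation que « $\sA/\mathfrak p_x$ s'envoie densément dans $\mathcal H(x)$ », d'où vous déduisez que $\sA^{\s}/\varpi \to \mathcal H(x)^{\s}/\varpi$ est surjectif. Or $\mathcal H(x)$ est, par définition \cite[1.2]{Be}, le \emph{complété du corps des fractions} $\mathrm{Frac}(\sA/\mathfrak p_x)$, et seul ce corps des fractions y est dense — pas $\sA/\mathfrak p_x$ lui-même. Exemple concret : pour $\sA = \sK\langle T\rangle$ et $x$ le point de Gauss, on a $\mathfrak p_x = 0$, $\mathcal H(x) = \widehat{\sK(T)}$, et $1/T$ est à distance $\geq 1$ de $\sK\langle T\rangle$ (car pour tout $P$, le terme constant de $1 - TP$ vaut $1$, donc $\vert 1/T - P\vert_{\mathrm{Gauss}} = \vert 1 - TP\vert_{\mathrm{Gauss}} \geq 1$). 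Multiplier par un scalaire de $\sK^{\s}$ ne change rien à l'obstruction (elle est invariante par homothétie), et la réduction modulo $\varpi$ non plus : au niveau résiduel, $\sA^{\s}/\varpi = k[T]$ ne surjecte pas sur $\mathcal H(x)^{\s}/\varpi \supseteq k(T)$. Votre argument de Frobenius ne peut donc pas démarrer : on ne peut pas ramener la surjectivité de $F$ sur $\mathcal H(x)^{\s}/\varpi$ à celle sur $\sA^{\s}/\varpi$ par simple approximation.

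C'est précisément pour contourner cela que le papier passe par les \emph{localisations rationnelles} : $\mathcal H(x)$ est colimite uniforme de localisations rationnelles $\sA_\alpha$ de $\sA$ (rem.~\ref{r9}, \cite[2.4.17]{KL}). Inverser $g$ dans $\sA\{\frac{f_1,\ldots,f_n}{g}\}$ donne précisément accès aux fractions qui font défaut à $\sA/\mathfrak p_x$, et comme les $\sA_\alpha$ sont perfectoïdes (prop.~\ref{P11} / rem.~\ref{r9}) et que les colimites uniformes préservent le caractère perfectoïde (\S~\ref{lcolp}), $\mathcal H(x)$ l'est. Si vous voulez un argument « direct » dans l'esprit du vôtre, il faut au minimum approximer $\xi \in \mathcal H(x)^{\s}$ par des quotients $\bar a/\bar b$ avec $a, b \in \sA^{\s}$, ce qui ramène implicitement aux localisations rationnelles et n'est plus élémentaire.
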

 
      \begin{proof} Comme $\Gamma(\sA)$ est produit uniforme de corps $\mathcal H(x)$, il suffit de voir que ceux-ci sont perfecto\"{\i}des. Or $\mathcal H(x)$ est colimite uniforme de localisations rationnelles de $\sA$, qui sont perfecto\"{\i}des, \cf rem. \ref{r9} et    \cite[2.4.17]{KL}; donc $\mathcal H(x)$ est perfecto\"{\i}de (\cf \S \ref{lcolp}).  
 \end{proof}

 \subsubsection{Remarque.}\label{Gelbasc} On peut montrer que si $\sA$ est perfecto\"{\i}de, $\Gamma(\sA^\flat)\cong \Gamma(\sA)^\flat$, comme cons\'equence de la commutation des localisations rationnelles au basculement et de ce que  $\mathcal M(\sA) \cong \mathcal M(\sA^\flat)$ \cite[th. 3.3.7]{KL}. 
 J'ignore si $\Gamma(\sA^\flat)\cong \Gamma(\sA)^\flat$ vaut encore sans supposer $\sA$ perfecto\"{\i}de.

   \subsection{Limites.}\label{lp}   
     
    \subsubsection{}   La cat\'egorie des $\sK$-alg\`ebres perfecto\"{\i}des est {\it compl\`ete}, \ie admet toutes les (petites) limites. Cela d\'ecoule de ce que $\sK\hbox{-}\bf uBan$ est compl\`ete et $\sK\hbox{-}{\bf Perf}$ en est une sous-cat\'egorie cor\'eflexive, \ie le plongement $\sK\hbox{-}{\bf Perf} \inj \sK\hbox{-}\bf uBan$ admet un adjoint \`a droite (\cf prop. \ref{P4}, \ref{P6}).
    
  Si $\car \sK = p$, ce plongement admet aussi un adjoint \`a gauche (\cf prop. \ref{P5}), donc refl\`ete les limites: les limites perfecto\"{\i}des sont les ${\rm{ulim}}$.  Comme $\flat: \sK\hbox{-}\bf uBan \to \sK^\flat\hbox{-}{\bf Perf}$ admet un adjoint \`a gauche (prop. \ref{P6}), il commute donc aux ${\rm{ulim}}$.
  
   Si $\car \sK = 0$, on les obtient par basculement \`a partir du cas de caract\'eristique $p$: les limites perfecto\"{\i}des sont les ${\rm{ulim}}^\natural$.     
     
 \subsubsection{} En caract\'eristique $0$, {\it limite perfecto\"{\i}de et limite uniforme ne co\"{\i}ncident pas}.
 
  Ce ph\'enom\`ene se pr\'esente d\'ej\`a dans le cas des limites finies: {\it l'intersection de deux sous-alg\`ebres perfecto\"{\i}des d'une alg\`ebre perfecto\"{\i}de n'est pas n\'ecessairement perfecto\"{\i}de}, comme on le voit dans l'ex.  \ref{E13}, o\`u  $\hat A_\infty[\sqrt{g}] = \widehat{Q( A_\infty)}(\sqrt{g}) \cap \hat A_\infty\langle g^{\frac{1}{2^\infty}}\rangle \subset \widehat{Q({ A_\infty \langle g^{\frac{1}{2^\infty}}\rangle})}$  
 (\cf \ref{r8}, rem. \ref{r5} $(1)$ et ex. \ref{E18}). 
 
      Dans la direction positive, on a:

    \begin{prop}\label{P13}  L'anneau des invariants $\sB^G$ d'une $\sK$-alg\`ebre perfecto\"{\i}de $\sB$ sous un groupe \emph{fini} d'isom\'etries est une $\sK$-alg\`ebre perfecto\"{\i}de, et identifiant $G$ \`a un groupe d'isom\'etries de $\sB^\flat$ par basculement, on a $\, \sB^{\flat G}= \sB^{G\flat}$.
     \end{prop}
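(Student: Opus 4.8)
The plan is to reduce the characteristic $0$ case to the (easy) characteristic $p$ case by basculement. First I would record that $\sB^G$, being the noyau of the continuous map $\sB\to\prod_{\gamma\in G}\sB$, $b\mapsto(\gamma(b)-b)_\gamma$, is a closed sous-$\sK$-alg\`ebre of $\sB$, hence a $\sK$-alg\`ebre de Banach, and is uniforme since $\sB^{\s G}=\sB^{\s}\cap\sB^G$ is born\'e; replacing the norm by its norme spectrale (still $G$-invariante, \cf \ref{ns}) I may assume $\sB$, and therefore $\sB^G$, spectrale. When $\car\sK=p$ the argument is immediate: $F$ is bijectif sur $\sB^{\s}$ (rem.\ \ref{r5}$(2)$), and its restriction to $\sB^{\s G}$ is again bijectif --- injectif by restriction, and surjectif because the unique $p$-i\`eme racine $s\in\sB^{\s}$ of a given $r\in\sB^{\s G}$ satisfies $\gamma(s)^p=\gamma(r)=r=s^p$, whence $\gamma(s)=s$ by injectivit\'e de $F$; so $\sB^G$ is parfaite, \ie perfecto\"{\i}de, and since $\sB^\flat=\sB$ the identity $\sB^{\flat G}=\sB^G=\sB^{G\flat}$ is trivial.

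Now suppose $\car\sK=0$. Since $\flat$ is un foncteur, $G$ acts on $\sB^\flat$, and by rem.\ \ref{r6}$(5)$ it acts par isom\'etries; by the case just treated $\sB^{\flat G}$ is a $\sK^\flat$-alg\`ebre perfecto\"{\i}de, with $\sB^{\flat G\,\s}=(\sB^{\flat\s})^G$. I would then apply the quasi-inverse $\sharp$ of $\flat$ (th.\ \ref{T3}) to obtain a $\sK$-alg\`ebre perfecto\"{\i}de $\sharp(\sB^{\flat G})$, and feed the inclusion $\sB^{\flat G}\inj\sB^\flat$ through $\sharp$ and the canonical isomorphism $\sharp(\sB^\flat)\cong\sB$ to get a morphisme $\sharp(\sB^{\flat G})\to\sB$. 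Writing $\sK^{\s}=W(\sK^{\flat\s})/(\theta)$ with $\theta$ a $G$-fixed non-diviseur de z\'ero, which is also a non-diviseur de z\'ero sur $W(\sB^{\flat\s})$ (\cf \ref{basc}, \eqref{e21}), this morphism is, on $(\;)^{\s}$,
\[ W(\sB^{\flat\s})^G/\theta=W((\sB^{\flat\s})^G)\otimes_{W(\sK^{\flat\s})}\sK^{\s}\ \longrightarrow\ W(\sB^{\flat\s})\otimes_{W(\sK^{\flat\s})}\sK^{\s}=\sB^{\s}, \]
using that $W$ commute aux limites (hence aux $G$-invariants). The equality $\theta W(\sB^{\flat\s})\cap W(\sB^{\flat\s})^G=\theta W(\sB^{\flat\s})^G$ --- valid because $\theta$ is $G$-fixed and a non-diviseur de z\'ero --- shows this map is injectif; rem.\ \ref{r6}$(5)$ shows it is isom\'etrique, so its image is ferm\'ee, and that image is stable et fixe sous $G$, \ie contained in $\sB^G$. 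Thus $\sharp(\sB^{\flat G})$ is a sous-alg\`ebre perfecto\"{\i}de ferm\'ee of $\sB$ lying inside $\sB^G$.

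It then remains to see that this inclusion $\sharp(\sB^{\flat G})\inj\sB^G$ is an \'egalit\'e: granting this, $\sB^G=\sharp(\sB^{\flat G})$ is perfecto\"{\i}de and $\sB^{G\flat}=\flat\sharp(\sB^{\flat G})=\sB^{\flat G}$. Since both algebras are $\varpi$-adiquement compl\`etes, the lemme de Nakayama (\S\ref{MLN}) reduces this to: $\sharp(\sB^{\flat G})^{\s}\to\sB^{\s G}$ is surjectif modulo $\varpi$, equivalently every $G$-invariant element of $\sB^{\s}=W(\sB^{\flat\s})/\theta$ se rel\`eve en un $G$-invariant de $W(\sB^{\flat\s})$ (equivalently, $F$ est surjectif sur $\sB^{\s G}/\varpi$). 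This is the heart of the matter. It is clear when $p\nmid|G|$: then $|G|$ est inversible dans $\sK^{\s}$, so the obstruction (which lies in a group tu\'e par $|G|$, \eg $H^1(G,W(\sB^{\flat\s}))$) vanishes; concretely, choosing $b\in\sB^{\s}$ with $b^p\equiv r\ (\varpi)$ one averages $s:=|G|^{-1}\sum_\gamma\gamma(b)\in\sB^{\s G}$, for which $(\sum_\gamma\gamma(b))^p\equiv\sum_\gamma\gamma(b)^p\equiv|G|\,r\ (\varpi)$, whence $s^p\equiv|G|^{1-p}r\ (\varpi)$, which suffices since $|G|^{1-p}$ is a unit.

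When $p\mid|G|$ this averaging degenerates (indeed $|G|\equiv0\ (\varpi)$, so the cohomological bound is vacuous), and here I expect the main obstacle to lie. The plan in this case is to exploit that $(\sB^{\flat\s})^G=(\sB^{\flat G})^{\s}$ is \emph{already parfaite}: since $W$ of a $\F_p$-alg\`ebre parfaite is its unique relev\'e plat $p$-adiquement complet, and $\theta$ is a $G$-fixed non-diviseur de z\'ero, the natural map $W((\sB^{\flat\s})^G)/\theta\to(W(\sB^{\flat\s})/\theta)^G$ --- already known to be injective --- should be bijectif, which is precisely the lifting required; this verification, bootstrapped from the characteristic $p$ statement and the flatness of $W(\sB^{\flat\s})/\theta$ over $\sK^{\s}$ (lemme \ref{L14'}), is the step I anticipate being delicate.
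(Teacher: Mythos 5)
Your reduction to the spectral case, your treatment of characteristic $p$, and your observation that $\sharp(\sB^{\flat G})\inj \sB^G$ is an isometric inclusion are all correct. The gap is exactly where you flag it: the case $p\mid |G|$ is left as a hope, not a proof, and your proposed route is unlikely to close it. The sequence $0\to W(\sB^{\flat\s})\stackrel{\theta}{\to} W(\sB^{\flat\s})\to \sB^{\s}\to 0$ shows that the cokernel of $W(\sB^{\flat\s G})/\theta \to \sB^{\s G}$ sits inside $H^1(G, W(\sB^{\flat\s}))[\theta]$, a $|G|$-torsion group; when $p\mid |G|$ there is no a priori reason for this to vanish, so attacking the surjectivity of the \emph{integral} map $\sharp(\sB^{\flat G})^{\s}\to\sB^{\s G}$ head-on (via Nakayama and mod-$\varpi$ lifting) is chasing a statement whose truth is only available \emph{after} one knows $\sB^G$ is perfecto\"{\i}de.

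The paper's proof sidesteps this entirely and is the one you should internalise: write $\sB\cong W(\sB^{\flat\s})[\frac{1}{p}]\otimes_{W(\sK^{\flat\s})[\frac{1}{p}]}\sK$ and take $G$-invariants \emph{before} ever going integral. Since $\car\sK=0$, the base ring $W(\sK^{\flat\s})[\frac{1}{p}]$ is a $\Q_p$-algebra, hence a $\Q$-algebra, so the Reynolds projector $e=\frac{1}{|G|}\sum_{\gamma}\gamma$ is available there \emph{regardless of whether $p$ divides $|G|$}. Thus $(\;)^G$ is the image of an idempotent, commutes with $-\otimes_{W(\sK^{\flat\s})[\frac{1}{p}]}\sK$, and $W$ commutes with $(\;)^G$ (being a right adjoint); one gets $\sB^G\cong W(\sB^{\flat\s G})[\frac{1}{p}]\otimes_{W(\sK^{\flat\s})[\frac{1}{p}]}\sK = (\sB^{\flat G})^\natural$ in one line, with no case distinction on $|G|$ and no Nakayama. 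Your averaging instinct was right but applied at the wrong level (integral $\sB^{\s}$, where $|G|$ need not be a unit, instead of rational $\sB$, where it always is). The identity $\sB^{\flat\s G}=\sB^{G\flat\s}$ and hence $\sB^{\flat G}=\sB^{G\flat}$ then drops out, closing the second assertion.
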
  
     
     \begin{proof} D'apr\`es la discussion qui pr\'ec\`ede, il suffit de montrer la premi\`ere assertion.  Notons $(\sB^{\flat  G} )^\natural$ la $\sK$-alg\`ebre perfecto\"{\i}de associ\'ee \`a $\sB^{\flat  G}  $ par basculement.
    On a    $\; \sB  \cong  W(\sB^{\flat {\s}} )[\frac{1}{p}]\otimes_{W(\sK^{\flat {\s}} ) [\frac{1}{p}]} \sK, $ d'o\`u 
    \centerline{$\; \sB^G  \cong (W(\sB^{\flat {\s}} )[\frac{1}{p}]\otimes_{W(\sK^{\flat {\s}} )[\frac{1}{p}] } \sK)^G = (W(\sB^{\flat {\s}}) [\frac{1}{p}])^G \otimes_{W(\sK^{\flat {\s}} )[\frac{1}{p}] } \sK$} 
    \centerline{$= (W(\sB^{\flat {\s}})^G [\frac{1}{p}]) \otimes_{W(\sK^{\flat {\s}} )[\frac{1}{p}] } \sK =  W( \sB^{\flat {\s} G})[\frac{1}{p}]\otimes_{W(\sK^{\flat {\s}} )[\frac{1}{p}] } \sK \cong (\sB^{\flat  G} )^\natural ,  $}
     la premi\`ere \'egalit\'e s'obtenant en identifiant $G$-invariants et image du projecteur de Reynolds $ \, b\mapsto \frac{1}{\vert G\vert}$ \begin{small}$\displaystyle\sum_{\gamma\in G}$\end{small} $\, \gamma(b)$. 
  \end{proof} 
   
\subsubsection{Remarque}\label{r10} Ceci ne s'\'etend pas aux groupes infinis. Reprenons l'exemple \ref{E11} $(2)$ avec $n=1$: l'action galoisienne de $\Z_p$ sur $T_1^\e\subset  A_\infty$ s'\'etend par continuit\'e \`a l'alg\`ebre perfecto\"{\i}de  $\hat A_\infty$, mais l'alg\`ebre $A_0$ de ses invariants n'est pas perfecto\"{\i}de.  
 
         \subsubsection{}\label{lplu}  Supposons  $\car \sK = 0\,$, notons encore $\sK^\flat$ son bascul\'e, et prenons  $\varpi$ de la forme ${\footnotesize{\#}}(\varpi^\flat)$.  
       On a vu que la limite perfecto\"{\i}de d'un syst\`eme projectif $(\sA^{\alpha\flat})$ de $\sK^\flat$-alg\`ebres perfecto\"{\i}des co\"{\i}ncide avec  $\sA^\flat :=   {\rm{{ulim}}}\,\sA^{\alpha \flat} $. 
       
    Dans le cas d'un syst\`eme projectif $(\sA^{\alpha})$ de $\sK$-alg\`ebres perfecto\"{\i}des, de limite uniforme $\sA$, la limite perfecto\"{\i}de n'est autre, on l'a vu, que l'alg\`ebre $\sA^\natural$, dont le bascul\'e est d\'etermin\'e par
    \begin{equation}\label{e26'}\,\displaystyle (\sA^\natural)^{\flat\sm o} = \sA^{\flat\sm o}   = \lim_{F} \sA^{\s}   = \lim_{F,\alpha} \sA^{\alpha \sm o} = \lim_\alpha   \sA^{\alpha\flat  \sm o} = \lim_{F,\alpha}  ( \sA^{\alpha \sm o}/\varpi) .\end{equation}
      L'exemple suivant montre que $\sA^\natural \to \sA$ n'est pas un isomorphisme en g\'en\'eral.  
    
  \subsubsection{Exemple prophylactique.}\label{E17} 
 Reprenons l'ex. \ref{E13}.
  Nous verrons au th. \ref{T5} que la limite uniforme des         $\hat A_\infty\langle g^{\frac{1}{2^\infty}}\rangle \langle\frac{\varpi^j}{g} \rangle$ est   $g^{-\frac{1}{2^\infty}}\hat A_\infty\langle g^{\frac{1}{2^\infty}}\rangle$ qui n'est autre que la fermeture compl\`etement int\'egrale de 
        $\hat A_\infty\langle g^{\frac{1}{2^\infty}} \rangle$ dans  $\hat A_\infty\langle g^{\frac{1}{2^\infty}}\rangle[\frac{1}{g}]$. Passant aux $2\Z_2$-invariants, la limite uniforme $\sB $ des     $\hat A_\infty\langle g^{\frac{1}{2^\infty}}\rangle \langle\frac{\varpi^j}{g} \rangle^{2\Z_2}$ n'est autre que la fermeture compl\`etement int\'egrale de 
        $\hat A_\infty[\sqrt g] = \hat A_\infty\langle g^{\frac{1}{2^\infty}}\rangle)^{2\Z_2}$ dans  $\hat A_\infty[\sqrt g, \frac{1}{g}]$ (\cf ex. \ref{E18}). Or $\hat A_\infty$ est compl\`etement int\'egralement clos ({\it ibid.}), et comme la fermeture int\'egrale d'un anneau compl\`etement int\'egralement clos dans toute extension alg\'ebrique de son corps de fractions est compl\`etement int\'egralement close (\cite[ch. V, \S 1, ex. 14]{B2}), on a $\sB = \hat A_\infty[\sqrt g]$, qui n'est pas perfecto\"{\i}de.

  \medskip\subsection{Colimites.}\label{lcolp}      
       
     \subsubsection{}  La cat\'egorie des $\sK$-alg\`ebres perfecto\"{\i}des admet des colimites filtrantes:
    ce sont les colimites {uniforme}s. En effet, si l'endomorphisme de Frobenius est surjectif sur les  $\sB_\alpha^{\s}/\varpi$, il l'est sur 
    \begin{equation}\label{eco} \,{\rm{colim}}\,(\sB_\alpha^{\s}/\varpi) \cong ({\rm{colim}}\,\sB_\alpha^{\s})/\varpi \cong   ({\rm{{ucolim}}}\,\sB_\alpha)^{\s}/\varpi\end{equation}  
    
  Comme elle admet aussi des sommes amalgam\'ees $\hat\otimes^u= \hat\otimes $, elle est {\it cocompl\`ete}, les colimites se calculant comme dans la cat\'egorie des $\sK$-alg\`ebres de Banach uniformes. Les colimites d'alg\`ebres perfecto\"{\i}des commutent \`a l'\'equivalence de basculement.

    \subsubsection{Exemples}\label{E16} $(1)$ Le compl\'et\'e d'une extension alg\'ebrique de $\sK$ (qui est parfait) s'\'ecrit  $\hat \sK_\infty = {\rm{{ucolim}}}_i\,  \sK_i$ o\`u $\sK_i$ parcourt les extensions finies s\'eparables de $\sK$, qui sont des corps perfecto\"{\i}des d'apr\`es le cor. \ref{C2} ou le th. \ref{T4}; donc $\hat \sK_\infty$ est un corps perfecto\"{\i}de.
    
 De m\^eme,  
   $\hat \sK_\infty \hat\otimes_{\hat \sK_0}^u \hat \sK_\infty = {\rm{{ucolim}}}_i\,  (\sK_i  \otimes_{\sK_0} \sK_i)$ est une $\sK$-alg\`ebre perfecto\"{\i}de.

    \smallskip\noindent $(2)$  Le coproduit d'une famille $(\sB_\alpha)$ de $\sK$-alg\`ebres perfecto\"{\i}des est repr\'esent\'e  par un produit  tensoriel  compl\'et\'e  infini  $\displaystyle \hat\otimes_\sK \sB_\alpha= \hat\otimes_\alpha\, \sB_\alpha$. Toute colimite des $(\sB_\alpha)$ est quotient de $\hat\otimes_\alpha \sB_\alpha$.  Pour toute $\sK$-alg\`ebre perfecto\"{\i}de, le morphisme $\hat\otimes_{\alpha\in \sB^{\flat\sm o}}  \,\sK\langle T_\alpha^{\e}\rangle \to \sB,   \; T_\alpha \mapsto {\footnotesize{\#}} \alpha,$  a une image dense (\cf rem. \ref{r6} $(1)$).

\bigskip

 \newpage  \section{Analyse perfecto\"{\i}de autour du ``th\'eor\`eme d'extension de Riemann". }\label{APER}

  \medskip  D'apr\`es le th\'eor\`eme d'extension de Riemann, les fonctions analytiques born\'ees sur un polydisque complexe priv\'e d'un sous-espace analytique - une hypersurface d'\'equation $\,g=0\,$ pour fixer les id\'ees - se prolongent en des fonctions analytiques sur tout le polydisque. L'analogue $p$-adique de ce r\'esultat est connu \cite{Ba}; avec les notations de l'ex. \ref{E11} $(2)$, on a  
    \begin{equation}\label{e27} \displaystyle{A  \, = \,\lim \, A_0\{ \frac{\varpi^{j}}{g}\}^{\s} .}\end{equation}
     Ce sont des versions perfecto\"{\i}des de cet \'enonc\'e que nous visons dans ce paragraphe.
       
  \medskip\subsection{Entr\'ee en mati\`ere.}\label{eem}  Donnons une courte preuve de l'\'enonc\'e plus faible
     \begin{equation}\label{e27'} \displaystyle{A  \, = \,\lim \, A_0\{ \frac{\varpi^{j}}{g}\}_{\leq 1}  }\end{equation}
  ($A = \sK_0^{\s}[[T_{\leq n}]], \; A_0 := A[\frac{1}{p}]$). 
     Quitte \`a faire un changement de variables $T_1, \ldots, T_n$, on peut supposer que $g$ est sous forme de Weierstrass en la variable $T_n$,
   \ie que $\,g\in A\setminus ({p}, T_1, \cdots T_{n-1})A\,$
   (\cite[VII, \S3, n. 7, lemme 3]{B2}). Le th\'eor\`eme de pr\'eparation de Weierstrass \cite[VII, \S3, n. 8, prop. 5]{B2} \'enonce que 
  $\;A/gA\cong  \bigoplus_{s=0}^{s=r-1}\,  \sK_0^{\s}[[T_{<n}]] \, T_n^s\,$ 
  (o\`u $r$ est la valuation $T_n$-adique de $g$ modulo $({p}, T_{<n})$),  
 d'o\`u l'on d\'eduit, d'apr\`es Nakayama, que
   \[A = \bigoplus \,  \sK_0^{\s}[[T_{<n},  g]] \, T_n^s.\]
 Appliquant $\otimes_{\sK\langle g\rangle_{\leq 1}}\sK\langle g, \frac{p^j}{g}\rangle_{\leq 1}$ et quotientant par la torsion $p$-primaire \'eventuelle, on en d\'eduit (en vertu de la formule \eqref{e11} et du point 2e) du sorite \ref{s1}) une d\'ecomposition 
   \[A_0\{ \frac{{p}^{j}}{g}\}_{\leq 1} \cong \bigoplus \,  \sK_0^{\s}[[T_{<n}, g]][\frac{1}{{p}}]_{\leq 1}\{ \frac{{p}^{j}}{g}\} \, T_n^s,\]
    compatible avec les fl\`eches de transition quand $j$ augmente.   
   On est donc ramen\'e \`a d\'emontrer l'\'enonc\'e initial dans le cas particulier $g=T_n$, qui se traite directement en observant que $A_0\{ \frac{{p}^{j}}{T_n}\}_{\leq 1}= A\langle \frac{{p}^{j}}{T_n}\rangle  $ et en consid\'erant le d\'eveloppement en puissances de $T_n $ \`a coefficients dans $ \sK_0^{\s}[[T_{<n}]]$.\qed

 On peut ensuite remplacer $A_0$ par $A_i$; mais passer \`a la colimite uniforme pour en d\'eduire l'\'enonc\'e analogue pour l'alg\`ebre perfecto\"{\i}de $\hat A_\infty$ ne va pas de soi; nous y reviendrons dans l'ex. \ref{E18}, o\`u nous montrerons aussi l'\'egalit\'e (plus forte) $A = \lim \, A_0\{ \frac{\varpi^{j}}{g}\}^{\s}$.

     \medskip\subsection{Limite uniforme de localis\'ees d'une alg\`ebre perfecto\"{\i}de.}\label{lul}     
 
 On compare ici une alg\`ebre perfecto\"{\i}de $\sA$ \`a la limite uniforme de ses localis\'ees $\sA\langle \frac{\varpi^j}{g}\rangle$.
 
 \subsubsection{}\label{lu} Soient $\sK$ un corps perfecto\"{\i}de de caract\'eristique r\'esiduelle $p$, et $\varpi$ un \'el\'ement de $\sK^{\ss}\setminus 0$ tel que $p\sK^{\s}\subset \varpi\sK^{\s}$. Si $\car \, \sK = 0$,  on prend $\varpi$ de la forme ${\footnotesize{\#}}(\varpi^\flat)$,  et on sp\'ecifie pour chaque $i$ une racine $p^i$-\`eme de $\varpi$ en posant $  \varpi^{\frac{1}{p^i}} := {\footnotesize{\#}}((\varpi^\flat)^{\frac{1}{p^i}} )$. 
  
  \noindent Etant donn\'e $r\in \N[\frac{1}{p}]$ et une $(\sK^{\s}/\varpi^r)[T^{\e}]$-alg\`ebre $R$,  consid\'erons les $(\sK^{\s}/\varpi^r)[T^{\e}]$-alg\`ebres 
 \begin{equation}\label{e28} R^{[j]} :=  R\otimes_{\sK^{\s}[T^{\e}] } \,\sK^{\s}[T^{\e}, (\frac{\varpi^{j}}{T})^{\e}], \; ({j\in \N}). \end{equation}
 Elles forment un syst\`eme projectif,  l'\'el\'ement $u_{[j+1]}^s:= 1\otimes (\frac{\varpi^{j+1}}{T})^{s}$ de $ R^{[j+1]}$ s'envoyant sur $\,\varpi^s u_{[j] }^s\in R^{[j]}$. On a $ T^s u_{[j] }^s = \varpi^{js} \cdot 1_{R^{[j]}}$.
    La proposition suivante est une version simplifi\'ee d'un r\'esultat remarquable de P. Scholze \cite[prop. II.3.1]{S2}.    
  
  \begin{prop}\label{P15} Le morphisme canonique $\;\displaystyle  R^a \to  \lim_j (R^{[j]})^a $  est un presque-isomorphisme dans  le cadre $(\sK^{\s}[T^{\e}], (\varpi T)^{\e}\sK^{\s}[T^{\e}])$.    \end{prop}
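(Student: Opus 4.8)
The statement asserts an almost-isomorphism $R^a \to \lim_j (R^{[j]})^a$ in the framework $(\sK^{\s}[T^{\e}], (\varpi T)^{\e}\sK^{\s}[T^{\e}])$, for an arbitrary $(\sK^{\s}/\varpi^r)[T^{\e}]$-algebra $R$. The plan is to reduce to the universal case $R = (\sK^{\s}/\varpi^r)[T^{\e}]$ itself and then compute directly, exploiting that $\tilde{\mathfrak m}$ is flat (since $\mathfrak m = (\varpi T)^{\e}\sK^{\s}[T^{\e}]$ is of the form $\pi^{\e}\mathfrak V$, hence flat, hence $\mathfrak m = \tilde{\mathfrak m}$). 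Indeed, tensoring a presque-isomorphism with $-\otimes_{\sK^{\s}[T^{\e}]} R$ over the universal algebra preserves the property when the relevant limits commute with that base change; since each $R^{[j]}$ is by definition $R \otimes_{\sK^{\s}[T^{\e}]} (\sK^{\s}[T^{\e}])^{[j]}$ and the transition maps in $j$ are base-changed from the universal ones, the only delicacy is that $\lim_j$ does not commute with $-\otimes R$ in general. So the cleaner route is to work directly.

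First I would make the almost-mathematics concrete using the description of $\mathfrak m$: we have $\mathfrak m = \varinjlim_{\cdot (\varpi T)^{1/p^m - 1/p^{m+1}}} \sK^{\s}[T^{\e}]$, so for any module $M$, $M^a_\ast = {\rm Hom}_{\mathfrak V}(\mathfrak m, M) = \varprojlim_m M$ along multiplication by $(\varpi T)^{1/p^m - 1/p^{m+1}}$, which concretely is $\bigcap_m (\varpi T)^{-1/p^m} M$ inside $M[\frac{1}{\varpi T}]$ when $M$ is $\varpi T$-torsion-free (and a suitable variant otherwise). The point of the statement is then: after passing to this ``$(\varpi T)^{\f}$-saturation'', the natural map $R \to \lim_j R^{[j]}$ becomes an isomorphism. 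The key computational observation is that $R^{[j]} = R[u^{\e}]/(T^{1/p^k} u^{1/p^k} - \varpi^{j/p^k})_k$, so that modulo almost-zero modules $R^{[j]}$ looks like ``$R$ with $T$ made invertible up to $\varpi^j$'', and the inverse limit over $j$ re-glues the generic fibre in $T$ with the special fibre, recovering $R$ up to $(\varpi T)^{\e}$-torsion phenomena.

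The main steps, in order: (i) replace $\mathfrak m$ by the explicit colimit presentation and rewrite both sides as honest inverse limits of $\sK^{\s}[T^{\e}]$-modules, reducing the claim to surjectivity and injectivity ``up to $\mathfrak m$'' of $R \to \lim_j R^{[j]}$; (ii) handle the kernel: an element of $R$ mapping to $0$ in every $R^{[j]}$ is killed by a power of $T$ relative to $\varpi^j$ for all $j$, and a diagram chase with the relations $T^s u_{[j]}^s = \varpi^{js}$ shows it is annihilated by $(\varpi T)^{1/p^m}$ for suitable $m$, hence almost zero; (iii) handle the image: given a compatible system $(x_j) \in \lim_j R^{[j]}$, use the transition map sending $u_{[j+1]}^s \mapsto \varpi^s u_{[j]}^s$ together with the relation $T^s u_{[j]}^s = \varpi^{js} \cdot 1$ to produce, for each $\eta \in \mathfrak m$, a genuine preimage of $\eta \cdot (x_j)$ in $R$ — concretely, write $\eta = (\varpi T)^{1/p^m} \eta'$ and observe that $(\varpi T)^{1/p^m} x_j$ lifts canonically because multiplying by $T^{1/p^m}$ lands one in the ``integral'' part while the compatible system controls the $\varpi$-adic behaviour; (iv) check that these almost-preimages are compatible as $m$ varies and assemble them, invoking the Mittag-Leffler lemma for $\mathfrak V^a$-modules (\S\ref{MLN}) to pass the surjectivity through the inverse limit since the transition maps $R^{[j+1]} \to R^{[j]}$ are almost surjective (their cokernel being killed by $\varpi$, hence by $\mathfrak m$).

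\textbf{Main obstacle.} The hard part will be step (iii)–(iv): controlling the interaction between the two ``directions'' of almost-vanishing — the $T$-adic direction that makes $R^{[j]}$ differ from a localization, and the $\varpi$-adic direction along which the inverse limit in $j$ operates — uniformly enough to extract a single exponent $1/p^m$ (depending only on $\eta \in \mathfrak m$, not on $j$) that simultaneously lifts the whole compatible system. This is exactly why the framework is $(\sK^{\s}[T^{\e}], (\varpi T)^{\e}\sK^{\s}[T^{\e}])$ rather than $(\sK^{\s}[T^{\e}], T^{\e}\sK^{\s}[T^{\e}])$: one needs to absorb powers of $\varpi$ into $\mathfrak m$ as well. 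I expect that once the bookkeeping $T^s u_{[j]}^s = \varpi^{js}$ is exploited systematically, the uniform exponent emerges from the identity $(\varpi T)^{s} u_{[j]}^{s} = \varpi^{(j+1)s} \cdot 1$ in $R^{[j]}$, which is the algebraic heart of Scholze's original argument; the reduction to this identity and the Mittag-Leffler step are then routine.
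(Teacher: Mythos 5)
Your overall plan — injectivity plus almost-surjectivity plus Mittag-Leffler — points in a sensible direction, and you correctly identify the interaction between the $T$-adic and $\varpi$-adic directions as the crux. But the concrete version of step (iv) as stated is wrong in two ways, and the missing ingredient is precisely what the paper introduces to make the argument work.

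First, the cokernel of $R^{[j+1]} \to R^{[j]}$ is \emph{not} killed by $\varpi$. The image of the transition map is the $R$-submodule generated by $\varpi^s u_{[j]}^s$ for $s \in \N[\tfrac{1}{p}]_{\geq 0}$, so to kill the class of $u_{[j]}^s$ in the cokernel you need $\varpi^s$, and $s$ is unbounded. Second — and more importantly — even if the cokernel \emph{were} killed by $\varpi$, this would not make it almost zero: in the present framework $\mathfrak m = (\varpi T)^{\e}\sK^{\s}[T^{\e}]$, and $\varpi \notin \mathfrak m$ (you cannot write $\varpi$ as a $\sK^{\s}[T^{\e}]$-combination of positive fractional powers of $\varpi T$, since that would require a negative power of $T$). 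A module killed by $\varpi$ is not almost zero here; indeed $R$ itself is a $\sK^{\s}/\varpi^r$-algebra and is certainly not almost zero, which is the whole content of the statement. So the Mittag-Leffler input you propose is unavailable for the system $(R^{[j]})_j$ itself.

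The device the paper uses — and which your proposal lacks — is the introduction of the truncated submodules $R^{[j]k} := \sum_{s \leq 1/p^k} R\, u_{[j]}^s \subset R^{[j]}$, together with the observation that for $j' \geq j + rp^k$ the transition map $R^{[j']} \to R^{[j]}$ factors through $R^{[j]k}$ (because $\varpi^{s(j'-j)} = 0$ in $R$ once $s(j'-j)\geq r$). This lets one replace $\lim_j R^{[j]}$ by the iterated limit $\lim_{j,k} R^{[j]k}$, and it is only for the truncated pieces that the annihilation is controlled: $T^{1/p^k}$ annihilates $R^{[j]k}/S^j$ (where $S^j := \Im(R \to R^{[j]})$), so $(S^j)^a \cong \lim_k (R^{[j]k})^a$, and a direct computation shows $\ker(R \to S^j)$ is killed by $T^{r/j}$, hence is almost zero in the limit over $j$. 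Your sketch never isolates these bounded sums $R^{[j]k}$, and without them the almost-vanishing cannot be extracted with a uniform exponent. Note also that the Mittag-Leffler lemma is not what the paper uses for P15 at all — the argument there is a pure iterated-limit computation; $\lim^1$-vanishing (via the Fekete-type summability of exponents and the $R^{[j]k}$-truncation) enters only in the sharper Proposition \ref{P16}.
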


   Nous utiliserons librement le fait qu'une $\lim$ ou $\lim^1$  index\'ee par un ensemble ordonn\'e peut se calculer sur un sous-ensemble cofinal.
  
 \begin{proof}   Introduisons les sous-$R$-modules 
 \begin{equation}\label{e29} R^{[j]k} :=  \sum_{s\in \N[\frac{1}{p}] \cap [0, \frac{1}{p^k}]} \, R.u_{[j] }^s  \subset  R^{[j]}, \;\end{equation}  qui forment un double syst\`eme projectif de $R$-modules.  
   
   Comme $u_{[j'] }^s$ s'envoie sur $0$ dans $R^{[j]}$ si $\,s({j'- j})\geq r$, $ R^{[j']} \to  R^{[j]}$  se factorise \`a travers $R^{[j]k}$ d\`es que $ j'\geq j+ rp^k$. On en d\'eduit
\begin{equation}\label{e30} \displaystyle  {\lim_{j}}  R^{[j]} \cong {\lim_{j}}  R^{{[j]}k} \cong {\lim_{jk}}  R^{{[j]}k}.\end{equation}  
 
Comme $T^{ \frac{1}{p^k}}u_{[j] }^s\in S^j:= \Im (R\to R^{[j]})$ si $s\leq \frac{1}{p^k}$, on a $T^{ \frac{1}{p^k}} R^{[j]k} \subset S^j $, et le quotient est annul\'e par $T^{ \frac{1}{p^k}}$.  
 On en d\'eduit  (\cf \ref{MLN})
\begin{equation}\label{e32}  \, (S^j)^a \cong \displaystyle \lim_k (R^{[j]k} )^a.\end{equation}
    
  Par ailleurs, le noyau de $R\surj S^j \cong \Im(R \to  R[U_{[j]}^\e]/(T^{\frac{1}{p^k}}U_{[j] }^{\frac{1}{p^k}}- \varpi^{\frac{1}{p^k}} )_{k \in \N})$ est annul\'e par $T^{\frac{r}{j}}$ pour tout $j $ de la forme  $p^\ell$: en effet, si $a$ est dans ce noyau, et si on le voit comme \'el\'ement de $R[U_{[j]}^\e]$, il s'\'ecrit $(T^{\frac{1}{p^k}}U_{[j]}^{\frac{1}{p^k}} - \varpi^{\frac{j}{p^k}})\sum a_s U_{[j]}^s$ pour $k$ convenable (qu'on peut supposer $\geq \ell$ puisque $T^{\frac{1}{p^k}}U_{[j]}^{\frac{1}{p^k}}- \varpi^{\frac{1}{p^k}}$ divise $T^{\frac{1}{p^\ell}}U_{[j]}^{\frac{1}{p^\ell}}- \varpi^{\frac{1}{p^\ell}}$ si $k\geq \ell$). En comparant les coefficients, on obtient alors $T^{\frac{n}{p^k}}a = - \varpi^{\frac{j(n+1)}{p^k}}a_{\frac{n}{p^k}} $ pour tout $n\in \N$, d'o\`u le r\'esultat en prenant $n = r p^{k-\ell}$.  
   Compte tenu de  \eqref{e30}\eqref{e32}, on conclut que     $\displaystyle \,  R^a \cong  \lim_j (S^j)^a\cong \lim_{jk} (R^{[j]k})^a \cong \lim_j (R^{[j]})^a,  $  d'o\`u l'assertion.
    \end{proof}

\subsubsection{} Voici une variante perfecto\"{\i}de du th\'eor\`eme d'extension de Riemann. 
  
  \begin{thm}\label{T5} Soient $\sA$ une $\sK$-alg\`ebre (presque) perfecto\"{\i}de et $g$ un \'el\'ement de $ \sA^{\s}$ non diviseur de z\'ero. On suppose que $\sA$ contient une suite compatible de racines $p^m$-i\`emes de $g$. Alors 
      \begin{equation}\label{e37} \lim  {\sA}\{\frac{\varpi^j}{g} \}^{\s}\, = g^{\f}\sA^{\s},\end{equation} 
   qui est la fermeture compl\`etement int\'egrale de $\sA^{\s}$ dans $\sA[\frac{1}{g}].$ 
              \end{thm}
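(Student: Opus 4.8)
Le plan est de se ramener modulo $\varpi^r$ et d'invoquer la proposition~\ref{P15}. On peut supposer $\sA$ perfecto\"{\i}de, munie de sa norme spectrale: remplacer $\sA$ par $\sA^\natural$ ne change ni $g^{\f}\sA^{\s}=(\sA^{\s a})_\ast$ (lemme~\ref{L7}), ni, \`a presque-isomorphisme pr\`es, les localis\'ees affino\"{\i}des $\sA\{\frac{\varpi^j}{g}\}^{\s}$; et comme les deux membres sont $\ast$-clos (voir ci-dessous), un presque-isomorphisme entre $\lim_j\sA\{\frac{\varpi^j}{g}\}^{\s}$ et $g^{\f}\sA^{\s}$ est un isomorphisme. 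Puisque $g\in\sA^{\s}$ admet une suite compatible de racines $p^m$-i\`emes, $\sA$ est une $\sK\langle T^{\e}\rangle$-alg\`ebre ($T^{1/p^m}\mapsto g^{1/p^m}$) et la prop.~\ref{P11} s'applique \`a chaque $\sA\{\frac{\varpi^j}{g}\}$, qui est donc perfecto\"{\i}de, en particulier uniforme avec $g$ inversible. D'apr\`es le lemme~\ref{L8}, $\sA\{\frac{\varpi^j}{g}\}^{\s}=g^{\f}\sA\{\frac{\varpi^j}{g}\}^{\s}$; le lemme~\ref{L14} donne alors $g^{\f}\bigl(\lim_j\sA\{\frac{\varpi^j}{g}\}^{\s}\bigr)=\lim_j\sA\{\frac{\varpi^j}{g}\}^{\s}=:\tilde\sA^{\s}$, c'est-\`a-dire $\tilde\sA^{\s}=(\tilde\sA^{\s a})_\ast$ (lemme~\ref{L7}); de plus $\tilde\sA^{\s}$ est $\varpi$-adiquement compl\`ete (cor.~\ref{C1}, formule~\eqref{e18}). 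De m\^eme $g^{\f}\sA^{\s}=(\sA^{\s a})_\ast$ est $\ast$-clos, et l'on dispose d'une injection naturelle $g^{\f}\sA^{\s}\hookrightarrow\tilde\sA^{\s}$: si $b\in g^{\f}\sA^{\s}$, alors $(\varpi^jg^{-1})^{1/p^m}\in\sA\{\frac{\varpi^j}{g}\}^{\s}$ (cet anneau \'etant $p$-radiciellement ferm\'e dans $\sA\{\frac{\varpi^j}{g}\}$), donc $\varpi^{j/p^m}b\in\sA\{\frac{\varpi^j}{g}\}^{\s}$ pour tout $m$, d'o\`u $b\in\varpi^{\f}\sA\{\frac{\varpi^j}{g}\}^{\s}=\sA\{\frac{\varpi^j}{g}\}^{\s}$ (lemme~\ref{L8}), de mani\`ere compatible en $j$; l'injectivit\'e r\'esulte du lemme~\ref{L11'}.

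Les deux membres \'etant $\ast$-clos, il suffit de montrer que $(\;)^a$ de l'inclusion $g^{\f}\sA^{\s}\hookrightarrow\tilde\sA^{\s}$ est un isomorphisme dans le cadre $(\sK^{\s}[T^{\e}],(\varpi T)^{\e}\sK^{\s}[T^{\e}])$ de la prop.~\ref{P15}: appliquer $(\;)_\ast$ redonne alors l'inclusion comme isomorphisme, et l'\'egalit\'e annonc\'ee de sous-anneaux de $\sA[\frac1g]$ en d\'ecoule. Pour cela, fixons $r\in\N[\frac1p]$ et posons $R_r:=\sA^{\s}/\varpi^r$, vue comme $(\sK^{\s}/\varpi^r)[T^{\e}]$-alg\`ebre via $T^{1/p^m}\mapsto g^{1/p^m}$. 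D'apr\`es la prop.~\ref{P11}, $\sA\{\frac{\varpi^j}{g}\}^{\s}=(\mathfrak C_j)_\ast$ avec $\mathfrak C_j:=\sA^{\s}\hat\otimes_{\sK^{\s}\langle T^{\e}\rangle}\sK^{\s}\langle T^{\e},(\frac{\varpi^j}{T})^{\e}\rangle$; comme la compl\'etion commute \`a $-/\varpi^r$ et que $\sK^{\s}\langle T^{\e}\rangle/\varpi^r=(\sK^{\s}/\varpi^r)[T^{\e}]$ (de m\^eme pour $\sK^{\s}\langle T^{\e},(\frac{\varpi^j}{T})^{\e}\rangle$), on obtient $\mathfrak C_j/\varpi^r=R_r^{[j]}$ avec la notation de la formule~\eqref{e28}, d'o\`u $(\sA\{\frac{\varpi^j}{g}\}^{\s}/\varpi^r)^a=(\mathfrak C_j/\varpi^r)^a=(R_r^{[j]})^a$, fonctoriellement en $j$ et $r$. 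Comme $(\;)^a$ commute aux limites et que chaque $\sA\{\frac{\varpi^j}{g}\}^{\s}$ est $\varpi$-adiquement compl\`ete,
\[(\tilde\sA^{\s})^a=\lim_j\lim_r\bigl(\sA\{\tfrac{\varpi^j}{g}\}^{\s}/\varpi^r\bigr)^a=\lim_r\lim_j\,(R_r^{[j]})^a=\lim_r\,(R_r)^a=(\sA^{\s})^a,\]
l'\'egalit\'e du milieu \'etant pr\'ecis\'ement la prop.~\ref{P15}, et cet isomorphisme \'etant induit par $\sA^{\s}\to\tilde\sA^{\s}$. Ainsi $g^{\f}\sA^{\s}=\tilde\sA^{\s}$.

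Enfin, $g^{\f}\sA^{\s}$ est la fermeture compl\`etement int\'egrale de $\sA^{\s}$ dans $\sA[\frac1g]$: de $g^{\f}\sA^{\s}\subseteq g^{-1}\sA^{\s}$ on tire $\varpi g\cdot b^n\in\sA^{\s}$ pour tout $n$ d\`es que $b\in g^{\f}\sA^{\s}$, donc $g^{\f}\sA^{\s}$ est contenu dans cette fermeture; r\'eciproquement, un $b\in\sA[\frac1g]$ presque entier sur $\sA^{\s}$ est presque entier, relativement \`a $\varpi$, sur l'anneau compl\`etement int\'egralement ferm\'e $\sA\{\frac{\varpi^j}{g}\}^{\s}$ (on absorbe les puissances de $g$ gr\^ace \`a $\varpi^jg^{-1}\in\sA\{\frac{\varpi^j}{g}\}^{\s}$), donc appartient \`a celui-ci pour tout $j$, d'o\`u \`a $\tilde\sA^{\s}=g^{\f}\sA^{\s}$ (le lemme~\ref{L11'} assurant l'injectivit\'e de $\sA[\frac1g]$ dans le produit des $\sA\{\frac{\varpi^j}{g}\}$); c'est aussi l'une des \'equivalences du sorite~\ref{S2}.

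L'ingr\'edient vraiment substantiel, la prop.~\ref{P15}, \'etant d\'ej\`a \`a disposition, l'obstacle principal est le suivi soigneux des identifications entre les trois niveaux — l'alg\`ebre (presque) perfecto\"{\i}de, ses r\'eductions mod $\varpi^r$, et la prop.~\ref{P15} — et en particulier la v\'erification que les presque-isomorphismes successifs sont assez fonctoriels pour \^etre invers\'es de mani\`ere coh\'erente par $(\;)^a$ et $(\;)_\ast$, et que le passage au cadre $(\sK^{\s}[T^{\e}],(\varpi T)^{\e}\sK^{\s}[T^{\e}])$ est bien inoffensif pour l'\'enonc\'e final (ind\'ependant du cadre).
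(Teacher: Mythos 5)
Your proof is correct and follows essentially the same route as the paper: reduce to the perfectoid spectral case via $\sA^\natural$, use Prop.~\ref{P11} to identify $\sA\{\frac{\varpi^j}{g}\}^{\s}/\varpi^r$ with $R_r^{[j]}$ up to presque-isomorphism, invoke Prop.~\ref{P15} after interchanging the limits over $j$ and $r$, then recover the statement by applying $(\;)_\ast$ and the sorite~\ref{S2}. The paper streamlines the passage from the presque-isomorphism $\lim(\sA\{\frac{\varpi^j}{g}\}^{\s})^a \cong (\sA^{\s})^a$ to the honest equality by combining formula~\eqref{e6'} with Lemmas~\ref{L8} and~\ref{L14}, whereas you prefer to first build an explicit injection $g^{\f}\sA^{\s}\hookrightarrow\tilde\sA^{\s}$ and then show it becomes an isomorphism under $(\;)^a$; both are legitimate, though yours requires the extra (correct but slightly delicate) observation that the presque-isomorphism $\mathfrak C_j\to\sA\{\frac{\varpi^j}{g}\}^{\s}$ still is one modulo $\varpi^r$. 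One small slip: when you write ``$b\in\varpi^{\f}\sA\{\frac{\varpi^j}{g}\}^{\s}=\sA\{\frac{\varpi^j}{g}\}^{\s}$ (lemme~\ref{L8})'', Lemma~\ref{L8} does not literally apply since $\varpi$ is not invertible; what you need is simply that for a spectral algebra $\sB$, $|\varpi^{1/p^m}b|_{sp}\leq 1$ for all $m$ forces $|b|_{sp}\leq 1$ (the same mechanism, but as a direct consequence of spectrality rather than of that lemma). For the final step your direct verification of complete integral closure bypasses Lemma~\ref{uL}'s $p$-radicial route and is a clean alternative.
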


 \begin{proof} Le cas presque perfecto\"{\i}de se ram\`ene au cas perfecto\"{\i}de en rempla\c cant $\sA$ par $\sA^\natural$ (qui contient encore $g^{\e}$). Supposons donc $\sA$ perfecto\"{\i}de.  D'apr\`es la prop. \ref{P11}, $\sA$ perfecto\"{\i}de implique $ \sA\{ \frac{\varpi^j}{g}\}$ perfecto\"{\i}de, et en outre $ \sA^{\s}\langle (\frac{\varpi^j}{g})^{\e}\rangle       {\to}  \sA\{ \frac{\varpi^j}{g}\}^{\s} \,$ est un presque-isomorphisme dans le cadre $(\sK^{\s},  \sK^{\ss})$, donc aussi dans $(\sK^{\s}[T^{\e}], (\varpi T)^{\e}(\sK^{\s}[T^{\e}])$. 
   
\noindent Appliquant la prop. \ref{P15} avec $ R := \sA^{\s}/\varpi^r $ et $(R^{[j]} )^a \cong  (\sA\{ \frac{\varpi^j}{g}\}^{\s}/\varpi^r)^a$, on obtient  
\begin{equation}\label{eqmod}(\sA^{\s}/\varpi^r )^a \cong \lim  (\sA\{ \frac{\varpi^j}{g}\}^{\s}/\varpi^r)^a.\end{equation}
 En passant \`a la limite sur $r\to \infty$, puis en intervertissant les limites, cela donne
\[(\sA^{\s} )^a \cong \lim  (\sA\{ \frac{\varpi^j}{g}\}^{\s} )^a.\]
 Appliquant le foncteur $(\;)_\ast$ des presque-\'el\'ements, qui commute aux limites, et compte tenu de la formule \eqref{e6'} et du lemme \ref{L14}, on trouve  $  g^{\f}\sA^{\s} \cong \lim \sA\{ \frac{\varpi^j}{g}\}^{\s }.$

  \medskip Montrons ensuite que $ g^{\f} \sA     \subset  \sA^\ast_{\sA[\frac{1}{g}]}$ est une \'egalit\'e, c'est-\`a-dire la condition $(4)$ du sorite \ref{S2}:
   
  \begin{lemma}\label{uL} Pour toute $\sK\langle T^{\e}\rangle$-alg\`ebre perfecto\"{\i}de, les conditions \'equivalentes du sorite \ref{S2} sont satisfaites.
  \end{lemma}  
     
  On va \'etablir la condition $(1)$ de ce sorite:  $(a\in \sB^{\s}[\frac{1}{g}], a^p \in g^{\f} \sB^{\s}) \Rightarrow a \in g^{\f} \sB^{\s}$. Notons $a_j$ l'image de $a$ dans $\sB\{ \frac{\varpi^j}{g}  \}^u$. 
On a  $a\in \sB^{\s}[\frac{1}{g}]\Rightarrow \exists m\in \N, \forall j\in \N, g^m a_j \in \sB\{ \frac{\varpi^j}{g}  \}^{\s} $ (et multipliant par $(\frac{\varpi^j}{g})^m$), $\varpi^{jm}a_j\in   \sB\{ \frac{\varpi^j}{g}  \}^{\s} $. Si $a_j^p\in  \sB\{ \frac{\varpi^j}{g}  \}^{\s} $, on conclut du sorite \ref{s1} $(5e)$ que $a_j \in  \sB\{ \frac{\varpi^j}{g}  \}^{\s} $. D'o\`u, \`a la limite, $a\in g^{\f} \sB^{\s}$. \qed
  
   Cela termine la preuve du th\'eor\`eme. \end{proof}

 \subsubsection{Remarque}\label{r12}  Si la multiplication par $g$ est {\it isom\'etrique}, le th\'eor\`eme donne:    $\, {\rm{ulim}}\,   {\sA}\{\frac{\varpi^j}{g} \}\, = g^{\f}\sA$, qui est la fermeture compl\`etement int\'egrale de $\sA $ dans $\sA[\frac{1}{g}]$. En outre
 la preuve 
   se simplifie un peu par rapport \`a celle de \eqref{e37}: on n'a alors \`a invoquer la prop. \ref{P15} que dans le cas o\`u $R\to R^{[j]}$ est injectif, \ie $R= S^j$. 

  \begin{cor}\label{C3}  \begin{enumerate}
  \item Munissons $\sA$ de sa norme spectrale.  Alors 
    $(g^{\f}\sA^{\s}[\frac{1}{\varpi}])^\natural$ est la plus grande $\sA$-alg\`ebre perfecto\"{\i}de spectrale contenue dans $\sA[\frac{1}{g}]$.  
    \item On a $(g^{\f}\sA^{\s})^{\flat} \cong g^{\flat \f}\sA^{\flat \s} ,$ donc aussi $(g^{\f}\sA^{\s}[\frac{1}{\varpi}])^{\natural }\cong (g^{\flat \f}\sA^{\flat \s}[\frac{1}{\varpi^\flat}])^\sharp$.
    \end{enumerate} 
  \end{cor}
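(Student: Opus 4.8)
Le plan est de déduire le corollaire du théorème \ref{T5} par un argument d'adjonction. D'après \eqref{e18} et le théorème \ref{T5}, la limite uniforme $\tilde\sA := {\rm{ulim}}_j\, \sA\{\frac{\varpi^j}{g}\}^u$ s'identifie à $g^{\f}\sA^{\s}[\frac{1}{\varpi}]$ munie de sa norme spectrale (de boule unité $g^{\f}\sA^{\s}$), chacune des $\sA\{\frac{\varpi^j}{g}\}^u$ étant perfectoïde d'après la prop. \ref{P11}. Comme les limites perfectoïdes sont les ${\rm{ulim}}^\natural$ (\S\ref{lp}), l'algèbre $(g^{\f}\sA^{\s}[\frac{1}{\varpi}])^\natural = \tilde\sA^\natural$ est la limite du système projectif $(\sA\{\frac{\varpi^j}{g}\}^u)_j$ dans $\sK\hbox{-}{\bf Perf}$; c'est une $\sA$-algèbre perfectoïde spectrale, et la coünité $\tilde\sA^\natural \to \tilde\sA \subset \sA[\frac{1}{g}]$ est isométrique (prop. \ref{P6}), donc injective, de sorte qu'elle est contenue dans $\sA[\frac{1}{g}]$. (Le cas presque perfectoïde se ramène au cas perfectoïde en remplaçant $\sA$ par $\sA^\natural$, comme dans la preuve du th. \ref{T5}.)

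Le point principal est la maximalité. Je partirais d'une $\sA$-algèbre perfectoïde spectrale $\sB$ munie d'un morphisme structurel $\sA \inj \sB$ continu injectif et d'un plongement d'$\sA$-algèbres $\sB \inj \sA[\frac{1}{g}]$ au-dessus de la localisation. Le lemme \ref{L11} fournit alors, pour chaque $j$, un isomorphisme d'algèbres de Banach $\sA\{\frac{\varpi^j}{g}\} \cong \sB\{\frac{\varpi^j}{g}\}$, d'où, après uniformisation, des morphismes canoniques continus $\sB \to \sB\{\frac{\varpi^j}{g}\}^u \cong \sA\{\frac{\varpi^j}{g}\}^u$, compatibles aux flèches de transition. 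La propriété universelle de la limite perfectoïde $\tilde\sA^\natural$ les factorise à travers un morphisme $\sB \to \tilde\sA^\natural$, et il reste à vérifier que, composé avec la coünité $\tilde\sA^\natural \to \sA[\frac{1}{g}]$, il redonne le plongement $\sB \inj \sA[\frac{1}{g}]$ : les deux morphismes $\sB \to \sA[\frac{1}{g}]$ induisent le même système de flèches vers les $\sA\{\frac{\varpi^j}{g}\}^u$, et $\sA[\frac{1}{g}] = \sA^{\s}[\frac{1}{\varpi g}]$ s'injecte dans $\prod_j \sA\{\frac{\varpi^j}{g}\}^u$ ($g$ y étant inversible, on se ramène à l'injectivité de $\sA^{\s}\to \lim_j \sA\{\frac{\varpi^j}{g}\}^{\s}$, lemme \ref{L11'}). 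Ainsi $\sB \to \tilde\sA^\natural$ est injectif et compatible aux plongements dans $\sA[\frac{1}{g}]$, ce qui établit $(1)$. La difficulté est le lemme \ref{L11} : sans lui, $g$ n'étant pas forcément inversible dans $\sB$, on ne disposerait pas de flèches de $\sB$ vers les localisations $\sA\{\frac{\varpi^j}{g}\}^u$ permettant d'invoquer la propriété universelle.

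Pour $(2)$, j'exploiterais que le basculement $\flat : \sK\hbox{-}{\bf Perf} \to \sK^\flat\hbox{-}{\bf Perf}$ est une équivalence (th. \ref{T3}), donc commute aux limites, et que $(\sA\{\frac{\varpi^j}{g}\}^u)^\flat \cong \sA^\flat\{\frac{(\varpi^\flat)^j}{g^\flat}\}^u$ d'après la prop. \ref{P11}, où $g^\flat := (g^{\frac{1}{p^m}}\bmod\varpi)_m \in \sA^{\flat \s}$ vérifie ${\footnotesize{\#}}g^\flat = g$ et n'est pas diviseur de zéro dans $\sA^\flat$ (${\footnotesize{\#}}$ étant injectif sur $\sA^{\flat \s}$, cf. l'argument de la rem. \ref{r6}$(2)$). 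En basculant l'identification $\tilde\sA^\natural \cong \lim_j \sA\{\frac{\varpi^j}{g}\}^u$ de $(1)$, compte tenu de $\flat\circ\natural \cong \flat$, et en appliquant le théorème \ref{T5} au couple $(\sA^\flat, g^\flat)$ (en caractéristique $p$, limite perfectoïde $=$ ${\rm{ulim}}$), on obtient
\[ (g^{\f}\sA^{\s}[\tfrac{1}{\varpi}])^\flat \;\cong\; {\rm{ulim}}_j\, \sA^\flat\{\tfrac{(\varpi^\flat)^j}{g^\flat}\}^u \;=\; g^{\flat \f}\sA^{\flat \s}[\tfrac{1}{\varpi^\flat}]. \]
En prenant $(\;)^{\s}$ il vient $(g^{\f}\sA^{\s})^\flat \cong g^{\flat \f}\sA^{\flat \s}$, et en appliquant $\sharp$ à l'isomorphisme précédent, $(g^{\f}\sA^{\s}[\frac{1}{\varpi}])^\natural \cong (g^{\flat \f}\sA^{\flat \s}[\frac{1}{\varpi^\flat}])^\sharp$.
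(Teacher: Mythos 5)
Your proof is correct and follows essentially the same route as the paper: both parts of the corollary hinge on Lemma \ref{L11} combined with Theorem \ref{T5} and Proposition \ref{P11}. The paper's treatment of maximality in (1) is slightly more direct — from $\sA\{\frac{\varpi^j}{g}\}\cong\sA'\{\frac{\varpi^j}{g}\}$ it applies Theorem \ref{T5} to both $\sA$ and the candidate $\sA'$ to conclude $g^{\f}\sA'^{\s}[\frac{1}{\varpi}]=g^{\f}\sA^{\s}[\frac{1}{\varpi}]$ and then takes $(\,)^\natural$ — but your universal-property argument (with the compatibility check via Lemma \ref{L11'}) encodes the same mechanism through the adjunction.
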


  \begin{proof} $(1)$ Soit $\sA'$ une $\sA$-alg\`ebre perfecto\"{\i}de spectrale ($\sA\to \sA'$ est donc suppos\'ee continue) contenue dans $\sA[\frac{1}{g}]$ (en tant que $\sA$-alg\`ebre {``abstraite"}, non topologis\'ee). D'apr\`es le lemme \ref{L11},  les morphismes $\sA\{ \frac{\varpi^j}{g} \}\to \sA'\{ \frac{\varpi^j}{g} \}$ sont des isomorphismes d'alg\`ebres de Banach. D'apr\`es le point $(2)$ du th\'eor\`eme pr\'ec\'edent, on a donc, en passant \`a la limite uniforme, $g^{\f}\sA^{\s}[\frac{1}{\varpi}] = g^{\f}\sA^{'\s}[\frac{1}{\varpi}]$. Ainsi $(g^{\f}\sA^{\s}[\frac{1}{\varpi}])^\natural$ contient $\sA^{'\natural} = \sA'$.  
  
\smallskip \noindent  $(2)$ On a  $(g^{\f}\sA^{\s})^{\flat} = (\lim \sA\{ \frac{\varpi^j}{g} \}^{\s})^\flat =  \lim \sA^\flat\{ \frac{\varpi^{\flat j}}{g^\flat} \}^{\s} = g^{\flat \f}\sA^{\flat \s}$ en vertu du th. \ref{T5} et de la prop. \ref{P11}.
  \end{proof} 
 
 \subsubsection{\it Remarque.} On peut se demander si, sans supposer que $\sA$ contienne $g^{\e}$, il demeure vrai que $\lim \sA\{ \frac{\varpi^j}{g} \}^{\s}$ est la fermeture compl\`etement int\'egrale de $\sA^{\s}$ dans $\sA[\frac{1}{g}]$. Supposons $\car  \sK = 0$ comme il est loisible,  et posons $\sK' := \widehat{\sK(\zeta_{p^\infty})}, \, \sA' := \sK'\hat\otimes^u_\sK \sA\langle g^{\e}\rangle$. Le groupe des $\sK$-automorphismes continus de $\sK'$ est un sous-groupe ferm\'e $\Gamma$ de $\Z_p^\times$. En utilisant par exemple la remarque \ref{r.2},
     on voit que $\sA$ se plonge isom\'etriquement dans $\sA'$. Or $\sK'$ est un corps perfecto\"{\i}de (prop. \ref{P3}) et $\sA' $ est une $\sK'$-alg\`ebre perfecto\"{\i}de (\cf \ref{r8}).      
Compte tenu de la formule \eqref{e10}, on a alors 
 $$ \lim {\sA}\{ \frac{\varpi^j}{g} \}^{\s} \subset (\lim\,  {\sA'}\{ \frac{\varpi^j}{g} \}^{\s})^G = (g^{\f}\sA'^{\s})^G = ( \sA'^{{\s}\ast}_{\sA'[\frac{1}{g}]})^G   =   (\sA'^G)^{{\s}\ast}_{(\sA'^G)[\frac{1}{g}]}. $$  
  Dans l'autre sens, si $a\in  \sA^{\s}[\frac{1}{g}] $ est presque entier sur $\sA^{\s}$, ses images dans chaque ${\sA}\{ \frac{\varpi^j}{g} \}^{\s}[\frac{1}{g}] \subset  {\sA}\{ \frac{\varpi^j}{g} \}$ le sont sur ${\sA}\{ \frac{\varpi^j}{g} \}^{\s} $ donc sont dans ${\sA}\{ \frac{\varpi^j}{g} \}^{\s} $ (point 5) du sorite \ref{s1}), et on a donc $a\in \lim {\sA}\{ \frac{\varpi^j}{g} \}^{\s} $.  
 Si $\sA'^G = \sA$, on conclut que $ \lim {\sA}\{ \frac{\varpi^j}{g} \}^{\s} = \sA^{{\s}\ast}_{\sA[\frac{1}{g}]}$. Notons que $\sA' $ est le compl\'et\'e d'une sous-alg\`ebre stable sous $G$ dont les $G$-invariants se r\'eduisent \`a $\sA$, mais il n'est pas clair que l'\'egalit\'e $\sA'^G = \sA$ (du type Ax-Sen-Tate) vaille en g\'en\'eral.

   \subsubsection{Exemple: $\hat A_\infty$}\label{E18}  Comme les anneaux noeth\'eriens $A^{\s}_i$ sont int\'egralement clos et entiers les uns sur les autres, il est facile de voir que $A_\infty = {\cup A^{\s}_i}$ est compl\`etement int\'egralement clos. Le passage au compl\'et\'e $\hat A_\infty$ ne va pas de soi; on le contourne en utilisant la technique de \cite[V, n. 4 prop. 15]{B2} (voir aussi  \cite[lemma A6]{An}). 
     
 On a $\hat A_\infty^{\s} = \widehat{\cup A_i^{\s}}$ et $\hat A_\infty^{\s}$ s'identifie \`a $W(k[[T_{\leq n}^{\e}]])\hat\otimes_{W(k)} \hat K_\infty^{\s}$; ainsi, tout \'el\'ement $a\in \hat A_\infty^{\s}$ s'\'ecrit de mani\`ere unique sous la forme
\begin{equation}\label{ecr} \sum \varpi^{s} [a_s],\;\; a_s\in k[[T_{\leq n}^{\e}]],\end{equation}  et o\`u $s$ d\'ecrit une suite discr\`ete finie ou tendant vers l'infini dans $\frac{1}{p-1}\N[\frac{1}{p}]$.
 De mani\`ere analogue, 
  tout \'el\'ement $b\in \widehat{Q(  \hat A_\infty)}^{\s}$ s'\'ecrit de mani\`ere unique sous la forme
\begin{equation}\label{ecr'} \sum \varpi^{s} [b_s],\;\; b_s\in k((T_{\leq n}^{\e})). \end{equation}
  Par approximation successive, on montre alors que si les puissances de $b\in Q( \hat A_\infty^{\s})$ sont contenues dans un $ \hat A_\infty^{\s}$-module de type fini, alors chaque coefficient $[b_s]$ est le relev\'e d'un \'el\'ement $b_s\in k((T_{\leq n}^{\e}))$ dont toutes les puissances sont contenues dans un $k[[T_{\leq n}^{\e}]]$-module de type fini. On est donc ramen\'e \`a la compl\`ete int\'egralit\'e de $k[[T_{\leq n}^{\e}]]$, qui d\'ecoule de l'int\'egralit\'e des $k[[T_{\leq n}^{\frac{1}{p^i}}]]$ puisque ces anneaux noeth\'eriens sont entiers (et m\^eme finis) les uns sur les autres.

\smallskip Soient $\overline{Q(A)}$ une cl\^oture alg\'ebrique du corps des fractions $Q(A)$ de $A$, et $F$  le sous-corps engendr\'e par $\cup A_i$ et les racines $p$-primaires d'un \'el\'ement $g\in A$; c'est une extension galoisienne infinie de $Q(A)$ de groupe $G_0:=   \Z_p^{n+1}\ltimes \Z_p^\times$ (le dernier facteur $\Z_p$ agissant sur $g^\e$ selon l'action kummerienne usuelle). Soit $\tilde A$ la fermeture int\'egrale de $A$ dans $F$, et $\hat{\tilde A}$ son compl\'et\'e $p$-adique et $\sA' := \hat{\tilde A}[\frac{1}{p}]$; c'est une alg\`ebre de Banach multiplicativement norm\'ee et $\sA'^{\s} = \hat{\tilde A}$.
 L'action de $G_0$ se prolonge \`a $\tilde R$ et $\sA'^{\s}$.  Un th\'eor\`eme de type Ax-Sen-Tate d\^u \`a J.-P. Wintenberger \cite{Wi} montre, compte tenu de ce que $A$ est r\'egulier, que  $\sA'^{\s G_0}= A$ (et plus pr\'ecis\'ement que $H^1(G_0 , \tilde R)$ est anuul\'e par $p$ si $p$ est impair et par $4$ si $p=2$, ce qui implique le r\'esultat compte tenu de la suite exacte \cite[prop. 1]{Wi})
\[ 0 \to (R/p^m R)^{G_0} \to (\tilde R /p^m \tilde R)^{G_0} \to H^1(G_0 , \tilde R)(p^m)\to 0.\]  
 Par ailleurs, on a un \'epimorphisme extr\'emal $\hat A_\infty\langle g^{\e}\rangle \to \sA'$, donc $\sA'$ est perfecto\"{\i}de (\cf \ref{r8}; c'est m\^eme un isomorphisme si $\hat A_\infty\langle g^{\e}\rangle$ est multiplicativement norm\'ee), et on peut lui appliquer le th. \ref{T5}. On conclut comme ci-dessus que
   \begin{equation}A = \lim A_0\{ \frac{p^j}{g}\}^{\s}.   \end{equation}
   Si $G_i\subset G_0$ d\'esigne le sous-groupe ouvert correspondant \`a l'extension galoisienne $Q(A_i)$, on a encore  que $H^1(G_i , \tilde R)$ est annul\'e par $p$ si $p$ est impair et par $4$ si $p=2$; pour $G = \lim G_i$, $H^1(G , \tilde R) = {\rm{colim}}\, H^1(G_i , \tilde R)$ l'est de m\^eme,  et on conclut par le m\^eme argument que  $\sA'^{\s G} = \widehat{\rm{colim}}\, A_i^{\s} = A_{\infty}^{\s}$ et 
   \begin{equation} A_{\infty}^{\s} = \lim  A_\infty\{ \frac{p^j}{g}\}^{\s}.\end{equation}

 \medskip\subsection{Limite uniforme vs. limite perfecto\"{\i}de}
     La proposition suivante analyse la situation abord\'ee en \ref{lplu}, et pr\'epare le terrain pour le th. \ref{T6}.  
   De nouveau, $\sK$ d\'esigne un corps perfecto\"{\i}de de caract\'eristique $0$, $\varpi$ un \'el\'ement de $\sK^{\ss}\setminus 0$ tel que $p\sK^{\s}\subset \varpi\sK^{\s}$, et $\varpi_{\frac{1}{p}}$ un \'el\'ement de norme $\vert\varpi\vert^{\frac{1}{p}}$.

       \begin{prop}\label{P14} Soient $(\sA^\alpha)$ un syst\`eme projectif de $\sK$-alg\`ebres perfecto\"{\i}des, et $\sA^\natural \to \sA$ le morphisme canonique entre limite perfecto\"{\i}de et limite uniforme.  
      \item Consid\'erons les conditions suivantes:
 \begin{enumerate} 
 \item  $\sA$ est perfecto\"{\i}de,
 \item  $\sA^\natural \to \sA$ est un isomorphisme;
  \item le morphisme compos\'e $\sA^{\natural\s }/\varpi \to \sA^{\s }/\varpi \to \lim\,({\sA^{\alpha \s }}/\varpi)$ (induit par les morphismes $\sA^{\natural\s }/\varpi\cong \sA^{\flat \sm o}/\varpi^\flat \to \sA^{\alpha\flat \sm o}/\varpi^\flat\cong  \sA^{\alpha  \sm o}/\varpi  $)  est presque surjectif,
   \item  Le morphisme $\displaystyle    \lim\,({\sA^{\alpha \s }}  /\varpi) \otimes_{\sK^{\s}/\varpi, F} \sK^{\s}/\varpi  \stackrel{\phi}{\to}  \lim\,({\sA^{\alpha \s }}/\varpi)$ induit par Frobenius est presque surjectif,
   \item  $  \lim\,({\sA^{\alpha \s }}/\varpi)   \to   \lim\,({\sA^{\alpha \s }}/\varpi_{\frac{1}{p}})$ est presque surjectif. \end{enumerate}
    On a 
   {\centerline{ $\; (1)  \Leftrightarrow (2) \;\Leftarrow \; (3)  \Leftrightarrow  (4) \Leftrightarrow (5) .$}}
\end{prop}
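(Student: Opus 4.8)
The plan is to prove the implications in the order $(1)\Leftrightarrow(2)$, then $(4)\Leftrightarrow(5)$, then $(5)\Leftrightarrow(3)$, and finally $(3)\Rightarrow(1)$. The equivalence $(1)\Leftrightarrow(2)$ is formal: by \S\ref{lp} the perfecto\"{\i}de limit of $(\sA^\alpha)$ is $\sA^\natural$, and $\sA^\natural\to\sA$ is the counit of the adjunction $\iota\dashv\natural$; Proposition \ref{P6}$(1)$ says this counit is an isomorphism precisely when $\sA$ is perfecto\"{\i}de, and when it is an isomorphism $\sA\cong\sA^\natural$ is perfecto\"{\i}de as an object of $\sK\hbox{-}{\bf Perf}$.

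Set $M_\alpha:=\sA^{\alpha\s}/\varpi$ and $L:=\lim_\alpha M_\alpha$; since $p\in\varpi\sK^\s$, these are $\F_p$-algebras, and write $F_L:=\lim_\alpha F$ for the Frobenius of $L$. As each $\sA^\alpha$ is perfecto\"{\i}de, $\sigma:\sA^{\alpha\s}/\varpi_{\frac1p}\to M_\alpha$ is bijective (condition $(iv)$ of \ref{algperf}) and $F=\sigma\circ\pi_\alpha$ with $\pi_\alpha:M_\alpha\twoheadrightarrow\sA^{\alpha\s}/\varpi_{\frac1p}$ the reduction; passing to $\lim_\alpha$ and using that $\lim_\alpha\sigma$ is bijective shows that $(5)$ is equivalent to almost surjectivity of $F_L$ in the cadre $(\sK^\s,\sK^{\ss})$. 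For $(4)\Leftrightarrow(5)$ it then suffices to note that $\Im\phi$ coincides with $\Im F_L$: $\Im\phi$ is the $\sK^\s/\varpi$-submodule of $L$ generated by $\{x^p\}$, and since $\sK$ is perfecto\"{\i}de each $c\in\sK^\s/\varpi$ is a $p$-th power $c=e^p$, so $\sum c_ix_i^p=\sum(e_ix_i)^p=(\sum e_ix_i)^p\in\Im F_L$, the last step by additivity of Frobenius in characteristic $p$.

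For $(5)\Leftrightarrow(3)$ I first identify the composite in $(3)$. Since $\flat$ commutes with uniform limits (\S\ref{lp}) and $\sA^{\alpha\flat\s}=\lim_F M_\alpha$ (\S\ref{basc}), one gets $\sA^{\flat\s}=({\rm ulim}_\alpha\sA^{\alpha\flat})^\s=\lim_\alpha\lim_F M_\alpha=\lim_F L$ (limits commute); moreover $\sA^{\alpha\flat\s}/\varpi^\flat\cong M_\alpha$ (Proposition \ref{P5}, applicable as $\sA^\alpha$ is perfecto\"{\i}de). Combining these with the description of the $\#$-induced map in \S\ref{basc} identifies the composite $\sA^{\natural\s}/\varpi=\sA^{\flat\s}/\varpi^\flat\to L$ of $(3)$ with the $\varpi^\flat$-reduction of the projection $\lim_F L\to L$ onto the zeroth term of the Frobenius tower $L\xleftarrow{F_L}L\xleftarrow{F_L}\cdots$. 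Since that projection factors as $F_L$ composed with the projection onto the term of index $1$, its image lies in $\Im F_L$, so $(3)$ forces $F_L$ to be almost surjective, i.e.\ $(5)$. Conversely, if $F_L$ is almost surjective then the transition maps of the tower are almost epimorphisms; being only $F$-semilinear, one handles them exactly as in the proof of the implication $(2)\Rightarrow(3)$ of sorite \ref{S3}, i.e.\ by replacing the almost surjection with an honest surjection of $\F_p$-vector spaces and invoking the Mittag-Leffler lemma for $\F_p$-vector spaces, to conclude that $\lim_F L\to L$ is an almost epimorphism, which is $(3)$. I expect this Mittag-Leffler step, together with the bookkeeping of the various identifications, to be the main obstacle.

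Finally $(3)\Rightarrow(1)$. The composite in $(3)$ factors as $\sA^{\natural\s}/\varpi\xrightarrow{i}\sA^\s/\varpi\xrightarrow{j}L$ with $i$ injective (Proposition \ref{P5}) and $j$ injective (Lemma \ref{L13}), both $\sK^\s/\varpi$-linear. If $j\circ i$ is almost surjective then for every $y\in\sA^\s/\varpi$ and $\eta\in\sK^{\ss}$ we have $\eta j(y)\in\Im(j\circ i)=j(\Im i)$, hence $j(\eta y)=j(w)$ for some $w\in\Im i$, whence $\eta y=w\in\Im i$ by injectivity of $j$; thus $i$ is almost surjective. As $i$ is induced by $\#$ modulo $\varpi$, each element of $\Im i$ has a $p$-th root in $\sA^\s/\varpi$, so $\Im i$ is contained in the image of $F$ on $\sA^\s/\varpi$; therefore $F$ is almost surjective on $\sA^\s/\varpi$, and $\sA$ is perfecto\"{\i}de by \ref{algperf}. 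With $(1)\Leftrightarrow(2)$ this completes the chain $(1)\Leftrightarrow(2)\Leftarrow(3)\Leftrightarrow(4)\Leftrightarrow(5)$.
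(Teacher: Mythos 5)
Your proof is correct and follows essentially the same route as the paper: the same identification $(\sA^{\natural})^{\flat\s}=\lim_F L$, the same Mittag--Leffler argument on $\sK^{\ss}L$ to pass from almost surjectivity of the Frobenius $F_L$ to almost surjectivity of the projection $\lim_F L\to L$, and the same reduction of $(4)$ and $(5)$ to almost surjectivity of $F_L$. Your re-organization of $(4)\Leftrightarrow(5)$ via the direct equality $\Im\phi=\Im F_L$ (using perfectness of $\sK^{\s}/\varpi$), and of $(3)\Rightarrow(1)$ via the factorization $j\circ i$ with both factors injective (rather than through $(4)$ as the paper does), is cosmetic — $(3)\Rightarrow(4)$ being immediate — and the final appeal to \ref{algperf} implicitly uses, as does the paper, that for $\sA$ uniform the Frobenius-induced map $\phi$ on $\sA^{\s}/\varpi$ is automatically injective (sorite \ref{s1}$(5f)$), so almost surjectivity suffices for the almost-isomorphism variant of condition $(v)$.
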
  
 
  \begin{proof}   Posons $\mathfrak L :=  \lim\,({\sA^{\alpha \s }}/\varpi)$ et $\mathfrak L _{\frac{1}{p}} :=  \lim\,({\sA^{\alpha \s }}/\varpi_{\frac{1}{p}} )$ pour abr\'eger. D'apr\`es la formule \eqref{e26'}, on a 
  \begin{equation} \displaystyle (\sA^\natural)^{\flat\sm o} = \sA^{\flat\sm o}   = \lim_{F} \mathfrak L. \end{equation} 
      
\noindent $(1) \Leftrightarrow (2)$ est clair (\cf \ref{lp}). 
 
\smallskip\noindent  $(3) \Rightarrow (4)$: $(3)$ signifie que $\displaystyle \lim_{F} \mathfrak L\to  \mathfrak L$ est presque surjectif, donc la derni\`ere fl\`eche de transition aussi, ce qui donne $(4)$. 
  
 \smallskip\noindent   $(4) \Leftrightarrow (5)$: notons $\sigma$ l'isomorphisme $ \sK^{\s}/\varpi_{\frac{1}{p}} \stackrel{ x\mapsto x^p}{\to} \sK^{\s}/\varpi$, et $\rho$ le morphisme canonique $ \mathfrak L/\varpi_{\frac{1}{p}}   \to  \mathfrak L _{\frac{1}{p}} $.
   Puisque les $\sA^{\alpha}$ sont perfecto\"{\i}des, on a un syst\`eme compatible d'isomorphismes de Frobenius  $\,\displaystyle    ({\sA^{\alpha \s }}  /\varpi_{\frac{1}{p}}) \otimes_{\sK^{\s}/\varpi_{\frac{1}{p}}, \sigma\,} \sK^{\s}/\varpi  \stackrel{ \sim}{\to}    \, {\sA^{\alpha \s }}/\varpi ,\,$   
   et par passage \`a la limite un isomorphisme 
    $\; \psi:\;  \mathfrak L _{\frac{1}{p}}\otimes_{\sK^{\s}/\varpi_{\frac{1}{p}}, \sigma} \sK^{\s}/\varpi \stackrel{ \sim}{\to}  \mathfrak L.$ 
    On a d'autre part un isomorphisme 
     \[ \iota:\;   \mathfrak L\otimes_{\sK^{\s}/\varpi , F} \sK^{\s}/\varpi \stackrel{ \sim}{\to}    \mathfrak L/\varpi_{\frac{1}{p}}\otimes_{\sK^{\s}/\varpi_{\frac{1}{p}}, \sigma} \sK^{\s}/\varpi ,\]
      et $\phi$ est le compos\'e
     $ \,\psi \circ (\rho\otimes 1)\circ \iota  $. Donc $\phi$ est presque surjectif si et seulement si $\rho$ l'est.

  \medskip $(4)\Rightarrow (3)$:   
 $(4)$ \'equivaut \`a la presque surjectivit\'e de $  \mathfrak L  \otimes_{\sK^{\s}/\varpi, F} \sK^{\s}/\varpi  \stackrel{F\otimes 1}{\to} \mathfrak L$.
Cette derni\`ere entra\^{\i}ne la surjectivit\'e de $F:  \sK^{\ss} \mathfrak L  {\to} \sK^{\ss}\mathfrak L$. Par le lemme de Mittag-Leffler pour les $\F_p$-espaces vectoriels, on en d\'eduit la surjectivit\'e de $\displaystyle  \lim_F  \sK^{\ss} \mathfrak L \to \sK^{\ss}\mathfrak L$, d'o\`u la presque surjectivit\'e de $\displaystyle    \lim_F \mathfrak L \to  \mathfrak L$.

\medskip $(3) + (4)\Rightarrow (1)$: d'apr\`es le lemme \ref{L13}, $\sA^{\s}/\varpi \to  \mathfrak L$ est injectif. Sous $(3)$, il est presque surjectif, donc est en fait un  
presque-isomorphisme, et $(4)$ entra\^{\i}ne alors que le morphisme $\displaystyle  ( \sA^{ \s }/\varpi ) \otimes_{\sK^{\s}/\varpi, F} \sK^{\s}/\varpi  \stackrel{\phi}{\to} \sA^{ \s }/\varpi $ induit par Frobenius est presque surjectif.  
     \end{proof}

 \subsubsection{Remarque}\label{r11} On peut appliquer le m\^eme argument dans le cadre $(\sK^{\s}[ T^{\e}],  T^{\e}\sK^{\ss}.\sK^{\s}[T^{\e}])$. Soit $(\sA^\alpha)$ un syst\`eme projectif de $\sK\langle T^{\e}\rangle$-alg\`ebres perfecto\"{\i}des, de limite uniforme $\sA$. Si 
 $\displaystyle    \lim\,({\sA^{\alpha \s }}  /\varpi)  \to   \lim\,({\sA^{\alpha \s }}  /\varpi_{\frac{1}{p}})  $  est presque surjectif, alors $\sA^\natural \to \sA$ est un presque-isomorphisme dans ce cadre, \ie $\sA$ est presque perfecto\"{\i}de (\cf sorite \ref{S3}); en outre, $\sA^{\s}/\varpi \to \lim\,({\sA^{\alpha \s }}  /\varpi)$ est un presque-isomorphisme.

  \medskip\subsection{Limite uniforme d'alg\`ebres perfecto\"{\i}des {``en gigogne"}.}\label{lun}     
  On \'etudie ici dans quelle mesure la limite uniforme d'un syst\`eme projectif d'alg\`ebres perfecto\"{\i}des $\sB^{j}$ est (presque) perfecto\"{\i}de, dans la situation o\`u les $\sB^{j}$ se d\'eduisent les unes des autres par localisation ($ \sB^{i}\{ \frac{\varpi^j}{g}\}   \cong \sB^{j}$).  

  \subsubsection{} Reprenons la situation et les notations de \ref{lu}, et pla\c cons-nous dans le cadre $(\sK^{\s}[T^{\e}], (\varpi T)^{\e}\sK^{\s}[T^{\e}])$.    
  
  \begin{prop}\label{P16}  Soit $(R^i)_{i\in \N}$ un syst\`eme projectif de $(\sK^{\s}/\varpi^r)[T^{\e}]$-alg\`ebres dont les morphismes de transition $R^i \to R^j$ se factorisent par $R^{i[j]}$ de mani\`ere compatible en $(i,j)$, ces factorisations induisant des isomorphismes $(R^{i[j]})^a \cong (R^j)^a$. 
     
        Alors $ \displaystyle     \;   {\lim_{j}}^1 (R^j)^a = 0.$  
      \end{prop}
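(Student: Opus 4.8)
The plan is to keep track of $\lim^{1}$ in the argument that proves the ``Riemann extension'' isomorphism of Prop. \ref{P15}. I work throughout in the almost category attached to $(\sK^{\s}[T^{\e}],(\varpi T)^{\e}\sK^{\s}[T^{\e}])$, using freely the facts recalled in \S\ref{MLN}: the index set being countable, $\lim^{i}$ behaves as for abelian groups (so $\lim^{1}$ is the cokernel of the usual telescope, vanishes above degree $1$, and commutes with localization), and a projective system with epimorphic transition maps has vanishing $\lim^{1}$ by Mittag--Leffler.

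First I would isolate the case of a single $(\sK^{\s}/\varpi^{r})[T^{\e}]$-algebra $R$, i.e.\ that $\lim^{1}_{j}(R^{[j]})^{a}=0$. Put $S^{j}:=\Im(R\to R^{[j]})$ and $K^{j}:=\ker(R\to R^{[j]})$, so $(S^{j})^{a}\cong(R/K^{j})^{a}$; since $R\to R^{[j+1]}\to R^{[j]}$ we get $K^{j+1}\subseteq K^{j}$, hence the tower $((R/K^{j})^{a})_{j}$ has surjective transition maps and $\lim^{1}_{j}(S^{j})^{a}=0$. For the quotient tower $M_{j}:=R^{[j]}/S^{j}$, the transition map $t$ sends $\overline{u^{\,s}_{[j+1]}}$ to $\varpi^{s}\,\overline{u^{\,s}_{[j]}}$, so the image of the $n$-fold transition $M_{j+n}\to M_{j}$ is generated over $R$ by the classes $\varpi^{ns}\overline{u^{\,s}_{[j]}}$ with $0<s$ and $ns<r$ (the remaining ones vanish as $\varpi^{r}R=0$); when $n$ is a power of $p$ one has $r/n\in\N[\frac{1}{p}]$ and $T^{r/n}u^{\,s}_{[j]}=T^{r/n-s}\varpi^{js}\in S^{j}$, so this image is killed by $T^{r/n}$, and for general $n$ it is killed by $T^{\varepsilon(n)}$ with $\varepsilon(n)\to 0$ (take the largest $p$-power $\le n$). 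Consequently, for any $\eta=T^{1/p^{k}}$ and any $j$ the $n$-fold transition into $M_{j}$ is annihilated by $\eta$ once $n$ is large, so that the finite sum $x_{j}:=\sum_{n\ge 0}t^{n}(\eta\,y_{j+n})$ (all but finitely many terms zero) solves $x_{j}-t(x_{j+1})=\eta\,y_{j}$; thus $\eta$ annihilates both $\lim_{j}M_{j}$ and $\lim^{1}_{j}M_{j}$, and letting $k$ vary gives $\lim_{j}(M_{j})^{a}=\lim^{1}_{j}(M_{j})^{a}=0$. The long exact sequence of $0\to(S^{j})^{a}\to(R^{[j]})^{a}\to(M_{j})^{a}\to 0$ now yields $\lim^{1}_{j}(R^{[j]})^{a}=0$.

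For Proposition \ref{P16} itself I would run the same three-step analysis with a moving base. The hypothesis and its compatibility in $(i,j)$ let one organize the two systems of transition maps into a double projective system $N^{i,m}:=(R^{m[i]})^{a}$ ($m\ge i$), whose diagonal reproduces $((R^{i})^{a})_{i}$ and each of whose ``rows'' $(N^{i,m})_{i}$ is, for fixed $m$, the Riemann-extension tower of the single algebra $R^{m}$, to which the previous paragraph applies ($\lim^{1}_{i}(R^{m[i]})^{a}=0$). One then filters $(R^{i})^{a}$ by the images $\Im\big((\varprojlim_{m}R^{m})^{a}\to(R^{i})^{a}\big)\subseteq\Im\big((R^{m})^{a}\to(R^{i})^{a}\big)$ and chases $\lim^{1}$ through the Grothendieck ``$\lim$ of $\lim$'' spectral sequences for the two orders of iterating over $i$ and $m$, together with Mittag--Leffler for the image filtration (whose kernels decrease) and the long exact sequences. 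The hard part is precisely the moving-base analogue of the $M_{j}$-computation: one must show that the ``error'' quotient towers built from these images have vanishing $\lim$ and $\lim^{1}$ in the almost sense, which forces one to use the quantitative input of the proof of Prop. \ref{P15} --- the kernels $\ker(R^{m}\to R^{m[i]})$ being killed by $T^{r/i}$ (for $i$ a power of $p$) and the factorizations of the transition maps through the submodules $R^{m[i]k}$ --- simultaneously for all $m$; granting that, $\lim^{1}_{j}(R^{j})^{a}=0$ drops out as before.
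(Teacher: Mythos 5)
Your first paragraph — the $\lim^1$-enhancement of Proposition~\ref{P15} for a single fixed algebra $R$, via the short exact sequence $0\to (S^j)^a\to (R^{[j]})^a\to (M_j)^a\to 0$, Mittag--Leffler for the surjective tower $(S^j)^a=(R/K^j)^a$, and the telescope computation showing $T^{1/p^k}$ kills $\lim_j M_j$ and $\lim^1_j M_j$ — is correct and would be a welcome addition alongside Prop.~\ref{P15}. But Proposition~\ref{P16} is precisely the \emph{moving-base} statement: the $R^i$ are different algebras, each almost a localization of the next, and your second paragraph, rather than proving the result, sets up a double system $N^{i,m}=(R^{m[i]})^a$, proposes to chase Grothendieck spectral sequences for iterated $\lim$, and then explicitly declares the key step (``the moving-base analogue of the $M_j$-computation'') to be ``the hard part'' and grants it. That concession is where the entire content of Prop.~\ref{P16} lives. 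Note also that with your restriction $m\ge i$ each row $(N^{i,m})_{i\le m}$ is a \emph{finite} tower, so invoking the single-$R$ paragraph there yields nothing.

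The paper does not reduce P16 to P15 or to a single-algebra case at all. It works directly with the triple-indexed system $(R^{i[j]k})$, shows it is cofinal in $(R^j)$, and proves three quantitative annihilation estimates: the cokernel of $R^{i[j+1]k}\to R^{i[j]k}$ is killed by $\varpi^{1/p^k}$; the cokernel of $R^{i[j]k+1}\to R^{i[j]k}$ is killed by $T^{1/p^k}$; and --- this is where the factorization hypothesis enters crucially --- for $i'\ge i\ge j+\frac{rp^{k+1}}{p-1}$, the image of $R^{i[j]k+1}$ in $R^{i[j]k}$ is almost contained in that of $R^{i'[j]k}$. Feeding a diagonal subsequence $(i_k,[k],k)$ with $i_k\ge k+\frac{rp^{k+1}}{p-1}$ into a $2\times 3$ commuting square then shows that the composite transition map is annihilated by $(\varpi T)^{3/p^k}$, and since $\sum 3/p^k<\infty$, vanishing of $\lim^1$ follows from the quantitative almost-vanishing criterion \cite[lemme~2.4.2~$iii)$]{GR1}. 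The third estimate and the diagonal choice have no analogue in your sketch; without supplying them, the proof is incomplete.
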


  \begin{proof}   Posons $u_{i[j]}^s:= 1\otimes (\frac{\varpi^{j }}{T})^{s}\in  R^{i[j]}=    R^{i}\otimes_{\sK^{\s}[T^{\e}] } \,\sK^{\s}[T^{\e}, (\frac{\varpi^{j}}{T})^{\e}]$. Pour toute sous-$(\sK^{\s}/\varpi^r)[T^{\e}]$-alg\`ebre $R'^i$ de $R^i$, introduisons les sous-$R'^i$-modules 
\begin{equation}\label{e34} (R'^{i})^{[j]k} :=  \sum_{s\in \N[\frac{1}{p}] \cap [0, \frac{1}{p^k}]} \, R'^i.u_{i[j]}^s \subset  R^{i[j]}.  \end{equation} 
Pour $R'^i= R^i$, on obtient ainsi un triple syst\`eme projectif $(R^{i[j]k})$ de $(\sK^{\s}/\varpi^r)[T^{\e}]$-modules index\'e par $(i,j,k)$. 
  Notons $\bar R^{i'[i]}$ l'image de $R^{i'[i]}$ dans $R^i$, et $\bar u_{[i]}^s$ l'image de $u_{i'[i]}^s$ (elle ne d\'epend pas de $i'$).

    Comme $u_{i[j'] }^s$ s'envoie sur $0$ dans $R^{i[j]}$ si $\,s({j'- j})\geq r$, $ R^{i[j']} \to  R^{i[j]}$  se factorise \`a travers $R^{i[j]k}$ d\`es que $ j'\geq j+ rp^k$, d'o\`u 
\begin{equation}\label{e36} \,\displaystyle  {\lim_{j}}^1 (R^{j} )^a \cong {\lim_{ij}}^1 (R^{i[j]})^a \cong  {\lim_{ijk}}^1 (R^{i[j]k})^a .\end{equation}  

  Comme $\varpi^{\frac{1}{p^k}}u_{i[j]}^s  $ est l'image dans $R^{i[j]}$ de $\varpi^{ \frac{1}{p^k}- s }u_{i[j+1]}^s$ pour $s\leq \frac{1}{p^k} $,  le conoyau de $R^{i[j+1]k} \to R^{i[j]k} $ est annul\'e par $\varpi^{\frac{1}{p^k}}$. 

   Comme $T^{\frac{1}{p^k}}u_{i[j]}^s$ est dans l'image de $ R^i$ pour $s\leq \frac{1}{p^k} $, le conoyau de  $R^{i,[j ],k+1} \to R^{i[j]k} $ est annul\'e par $T^{\frac{1}{p^{k }}}$.

 Enfin,  
   $(\bar R^{i'[i]})^{[j], k+1}$ est le sous-$R^{i'}$-module de $(\bar R^{i'[i]})^{[j]}$ 
   engendr\'e par les produits des images de $\bar u_{[i]}^s$ et des $ u_{i[j]}^t$ pour $t\leq {\frac{1}{p^{k+1}}}$, c'est-\`a-dire par les  $\varpi^{(i-j)s}u_{i[j]}^{s+t}$.  Supposons $ i'\geq  i \geq j + \frac{rp^{k+1}}{p-1}$. Alors la double in\'egalit\'e $(s+t\geq \frac{1}{p^k}, \, t\leq \frac{1}{p^{k+1}})$ implique  $s(i-j) \geq r$, de sorte que, sous la condition $t\leq \frac{1}{p^{k+1}}$, $\varpi^{(i-j)s}u_{i[j]}^{s+t}$ est nul si $s+t\geq \frac{1}{p^k}$, et appartient \`a $R^{i'[j] k}$ sinon; d'o\`u l'inclusion $(\bar R^{i'[i]})^{[j], k+1}\subset  R^{i'[j] k}$. Compte tenu de ce que $R^{i' [i]}\to R^i $ est un presque-isomorphisme, on en d\'eduit que l'image de $R^{i, [j], k+1 } $ dans $R^{i  [j] k } $ est presque contenue dans celle de $R^{i'[j]k}$.
  
  Prenons alors $j= k$, choisissons une suite croissante $i_k   \geq k + \frac{rp^{k+1}}{p-1}$, et consid\'erons le diagramme commutatif
   \[   \begin{CD} R^{i_{k+1}, [k+1], k+1}  @> a >> R^{i_{k+1}, [k], k+1}  @>  >>  R^{i_{k}, [k], k +1}    \\       @V VV   @V bVV @VVd V     \\\     R^{i_{k+1}, [k+1], k } @>  >>  R^{i_{k+1}, [k ], k } @> c >>  R^{i_{k}, [k], k }. \end{CD}\]
    D'apr\`es ce qui pr\'ec\`ede, $\Coker a$ (\resp $\Coker b$) est annul\'e par $ \varpi^{\frac{1}{p^{k+1 }}}$  (\resp $T^{\frac{1}{p^{k }}}$),  $\Im d$ est presque contenu dans $\Im c$, donc $\Coker c$ est presque quotient de $\Coker d$, donc presque annul\'e par $(\varpi T)^{\frac{1}{p^{k }}}$. On trouve que le compos\'e $cba$ est annul\'e par $(\varpi T)^{\frac{3}{p^k}}$. 
     Puisque  $\sum \frac{3 }{p^k} $ converge, on obtient $\,\lim^1 (R^{i_{k }, [k ], k  })^a= 0\,$ (\cf \cite[lemme 2.4.2 $iii)$]{GR1}). 
  
  Compte tenu de \eqref{e36}, on conclut que  $\,\displaystyle { {\lim_{j}}^1 (R^j)^a =    { \lim_{ijk}}^1 (R^{i   [j ], k  })^a = 0}$. 
    \end{proof}

 \subsubsection{}\label{ec} Si $\sB$ est une $\sK\langle T^{\e} \rangle$-alg\`ebre presque perfecto\"{\i}de, et $g$ l'image de $T$ dans $\sB$, posons $\sB^{[j]} := \sB\{ \frac{\varpi^j}{g}\}$. C'est une $\sK\langle T^{\e} \rangle$-alg\`ebre perfecto\"{\i}de: en effet, il d\'ecoule du lemme  \ref{L11} que $\sB^{[j]} \cong \sB^{\natural [j]}$, qui est perfecto\"{\i}de d'apr\`es la prop. \ref{P11}.

 \smallskip Soit $\sA$ est une $\sK\langle T^{\e} \rangle$-alg\`ebre presque perfecto\"{\i}de.  Rappelons que la cat\'egorie $\sA^{\hat a}\hbox{-}{\bf{uBan}}$ des $\sA^{\hat a}$-alg\`ebres de Banach uniformes a pour objets les $\sA$-alg\`ebres de Banach uniformes et pour morphismes ceux obtenus apr\`es application du foncteur $(\;)^{\s a}$ (\cf \S \ref{recc}). Les $\sA^{\hat a}$-alg\`ebres presque perfecto\"{\i}des forment une sous-cat\'egorie pleine $\sA^{\hat a}\hbox{-}{\bf{pPerf}}$ (\cf \S \ref{app}). D'apr\`es la formule \eqref{e11}, on a un isomorphisme canonique
 \begin{equation} \sB^{[j]} \cong \sB \hat\otimes_{\sA} \sA^{[j]}. \end{equation}  

   Notons $2\hbox{-}{\rm{lim}}\,\sA^{[j]}\hbox{-}{\bf{Perf}} $ la cat\'egorie des syst\`emes projectifs de $\sA^{[j]}$-alg\`ebres perfecto\"{\i}des $\sB^{j}$  dont les morphismes de transition $\sB^{i}\to \sB^{j}$ se factorisent \`a travers des isomorphismes 
\begin{equation}\label{e40} \sB^{i}\{ \frac{\varpi^j}{g}\}   \cong \sB^{j}.\end{equation}
  La localisation induit un foncteur

 \centerline{$\sA^{\hat a}\hbox{-}{\bf{pPerf}} \; \stackrel{\varsigma}{\to} \; 2\hbox{-}{\rm{lim}}\,\sA^{[j]}\hbox{-}{\bf{Perf}}  $.}
 
     \begin{thm}\label{T6} Le foncteur $\varsigma$ admet ${\rm{ulim}}$ comme quasi-inverse \`a gauche et adjoint \`a droite. En particulier, il induit une equivalence de $\sA^{\hat a}\hbox{-}{\bf{pPerf}} $ avec une sous-cat\'egorie (pleine) cor\'eflexive de $2\hbox{-}{\rm{lim}}\,\sA^{[j]}\hbox{-}{\bf{Perf}}  $. \end{thm}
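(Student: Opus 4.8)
The plan is to check three things in turn: (i) that ${\rm{ulim}}$ is a well‑defined functor $2\hbox{-}{\rm{lim}}\,\sA^{[j]}\hbox{-}{\bf{Perf}}\to\sA^{\hat a}\hbox{-}{\bf{pPerf}}$; (ii) that ${\rm{ulim}}$ is right adjoint to $\varsigma$ with invertible unit of adjunction, which already contains the claim that ${\rm{ulim}}$ is a quasi‑inverse à gauche; and (iii) that the final assertion about coreflective subcategories then follows formally. I will work throughout in the cadre $(\sK^{\s}\langle T^{\e}\rangle,\,T^{\e}\sK^{\ss}\sK^{\s}\langle T^{\e}\rangle)$ and, as in \ref{T5}, assume $g$ is not a zero divisor.

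For (i): let $(\sB^i)$ be an object of $2\hbox{-}{\rm{lim}}\,\sA^{[j]}\hbox{-}{\bf{Perf}}$ and put $\sB:={\rm{ulim}}_i\,\sB^i$, so $\sB$ is a $\sK\langle T^{\e}\rangle$-algebra of Banach uniforme with $\sB^{\s}=\lim_i\sB^{i\s}$ by \eqref{e18}. To show $\sB$ is presque perfectoïde I would invoke remark \ref{r11}: it is enough that $\lim_i(\sB^{i\s}/\varpi)\to\lim_i(\sB^{i\s}/\varpi_{\frac1p})$ be presque surjectif. The maps $\sB^{i\s}/\varpi\to\sB^{i\s}/\varpi_{\frac1p}$ are surjective with kernel $\varpi_{\frac1p}\sB^{i\s}/\varpi\sB^{i\s}$, and since $\sB^{i\s}$ is $\varpi$-torsion free, multiplication by $\varpi_{\frac1p}$ identifies this kernel with $\sB^{i\s}/\varpi^{r}\sB^{i\s}$, where $r:=\frac{p-1}{p}\in\N[\frac1p]$ and $\varpi^{r}:=\varpi/\varpi_{\frac1p}$, compatibly with the transition maps. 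Hence the obstruction to almost surjectivity of the limit is $\lim^1_i(\sB^{i\s}/\varpi^{r})^a$, and I claim this vanishes by proposition \ref{P16} applied with $R^i:=\sB^{i\s}/\varpi^{r}$: the transition $\sB^i\to\sB^j$ ($i\ge j$) factors through $\sB^i\{\tfrac{\varpi^j}{g}\}\cong\sB^j$, and as each $\sB^i$ is perfectoid, proposition \ref{P11} identifies $\sB^i\{\tfrac{\varpi^j}{g}\}^{\s}$, up to presque-isomorphisme, with $\sB^{i\s}\hat\otimes_{\sK^{\s}\langle T^{\e}\rangle}\sK^{\s}\langle T^{\e},(\tfrac{\varpi^j}{T})^{\e}\rangle$, whose reduction mod $\varpi^{r}$ is precisely $R^{i[j]}$ as in \eqref{e28}; so the hypotheses of proposition \ref{P16} hold and $\lim^1_i(R^i)^a=0$. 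Applying $\lim_i$ in the almost category to the exact sequences $0\to\varpi_{\frac1p}\sB^{i\s}/\varpi\sB^{i\s}\to\sB^{i\s}/\varpi\to\sB^{i\s}/\varpi_{\frac1p}\to 0$ gives the desired almost surjectivity, and remark \ref{r11} yields that $\sB$ is presque perfectoïde (and that $\sB^{\s}/\varpi\to\lim_i(\sB^{i\s}/\varpi)$ is a presque-isomorphisme). Functoriality is immediate.

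For (ii): if $\sC$ is presque perfectoïde then $\varsigma\sC=(\sC\{\tfrac{\varpi^i}{g}\})_i$ is an object of the $2$-limit category by \ref{ec}, and ${\rm{ulim}}_i\sC\{\tfrac{\varpi^i}{g}\}$ has unit ball $\lim_i\sC\{\tfrac{\varpi^i}{g}\}^{\s}=g^{\f}\sC^{\s}$ by the théorème d'extension de Riemann perfectoïde \ref{T5}; and $(g^{\f}\sC^{\s})^a=\sC^{\s a}$ by lemma \ref{L7}(1) (formula \eqref{e6'}), so ${\rm{ulim}}_i\sC\{\tfrac{\varpi^i}{g}\}\cong\sC$ in $\sA^{\hat a}\hbox{-}{\bf{pPerf}}$, naturally via the canonical maps $\sC\to\sC\{\tfrac{\varpi^i}{g}\}$. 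For the adjunction, a morphism $\varsigma\sC\to(\sB^i)$ is a compatible family $f_i:\sC\{\tfrac{\varpi^i}{g}\}\to\sB^i$; precomposing with the localization maps gives a compatible family $\sC\to\sB^i$, hence a map $\sC\to\lim_i\sB^i$ carrying $\sC^{\s}$ into $\lim_i\sB^{i\s}=\sB^{\s}$, i.e. a morphism $\sC\to{\rm{ulim}}_i\sB^i$; conversely, composing a morphism $\sC\to{\rm{ulim}}_i\sB^i$ with the projections and localizing gives $\sC\{\tfrac{\varpi^i}{g}\}\to\sB^i\{\tfrac{\varpi^i}{g}\}=\sB^i$ (the last equality because $\sB^i$ is an $\sA^{[i]}$-algebra). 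These two assignments are mutually inverse by the universal property of the localizations $\sC\to\sC\{\tfrac{\varpi^i}{g}\}$ together with ${\rm{ulim}}$ being a limit, so ${\rm{ulim}}$ is right adjoint to $\varsigma$; by the previous point the unit ${\rm{id}}\to{\rm{ulim}}\circ\varsigma$ is an isomorphism, hence $\varsigma$ is pleinement fidèle. Then (iii) is formal (\cf \cite{HS}): a pleinement fidèle left adjoint identifies its source with a sous-catégorie pleine coréflexive of its target, here with coreflector $\varsigma\circ{\rm{ulim}}$.

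The main obstacle is the argument in (i) that ${\rm{ulim}}$ lands in $\sA^{\hat a}\hbox{-}{\bf{pPerf}}$: rewriting the kernel system as $(\sB^{i\s}/\varpi^{r})$, verifying through proposition \ref{P11} that the ``gigogne'' hypothesis of proposition \ref{P16} is satisfied so that the almost $\lim^1$ vanishes, and then applying remark \ref{r11}. Steps (ii) and (iii) are bookkeeping with theorem \ref{T5}, lemma \ref{L7}, and the formal theory of coreflective subcategories.
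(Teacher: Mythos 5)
Your proof is correct and follows essentially the same route as the paper's: vanishing of $\lim^1(\sB^{i\s}/\varpi^r)^a$ via Prop.~\ref{P16}, the short exact sequence $0\to\sB^{i\s}/\varpi^{1-\frac1p}\to\sB^{i\s}/\varpi\to\sB^{i\s}/\varpi_{\frac1p}\to 0$ passed to the limit, Rem.~\ref{r11} to conclude that ${\rm{ulim}}$ lands in the presque-perfectoïde algebras, Th.~\ref{T5} for the left quasi-inverse, and the explicit adjunction bijection. You are slightly more explicit than the paper in two places that are worth keeping: the verification (via Prop.~\ref{P11} and \eqref{e28}) that the ``gigogne'' hypotheses of Prop.~\ref{P16} hold for $R^i=\sB^{i\s}/\varpi^r$, and the identification of the kernel of $\sB^{i\s}/\varpi\to\sB^{i\s}/\varpi_{1/p}$ with $\sB^{i\s}/\varpi^{(p-1)/p}$ by $\varpi$-torsion-freeness. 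One small notational remark: you announce the cadre $(\sK^{\s}\langle T^{\e}\rangle, T^{\e}\sK^{\ss}\sK^{\s}\langle T^{\e}\rangle)$, whereas Prop.~\ref{P15}, \ref{P16} and the paper's proof are stated in $(\sK^{\s}[T^{\e}],(\varpi T)^{\e}\sK^{\s}[T^{\e}])$; since $(\varpi T)^{\e}$ generates the same ideal as $T^{\e}\sK^{\ss}$ and the completion only changes the base ring, the recadrage is an isomorphism of almost categories (\cf\ \S\ref{rec}), so this is harmless but should be said.
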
 
   
  \begin{proof}  Commen\c cons par prouver que pour tout objet $(\sB^{j})$ de $2\hbox{-}{\rm{lim}}\,\sA^{[j]}\hbox{-}{\bf{Perf}}, $ la limite uniforme $\sB$ est presque perfecto\"{\i}de (c'est clair si $\car\, \sK=p$ puisque la perfection est pr\'eserv\'ee \`a la limite des $\sA^{[j]{\s}}$). Pla\c cons-nous de nouveau dans le cadre $(\sK^{\s}[T^{\e}], (\varpi T)^{\e}\sK^{\s}[T^{\e}])$. 

 En appliquant la prop. \ref{P16} avec $ R^j := \sB^{j\sm o}/\varpi^r $, on obtient  $ \displaystyle { \lim_{j}}^1  (\sB^{ j\sm o}/\varpi^r )^a = 0 $.
 Comme $\sB^{j\sm o} $ est plate sur $\sK^{\s}$, on a une suite exacte
\[ 0\to \sB^{j\sm o}/\varpi^{1-\frac{1}{p}} \to  \sB^{j\sm o}/\varpi\to  \sB^{j\sm o}/\varpi^{\frac{1}{p}} \to 0\]
et par presque-nullit\'e de la $ \displaystyle {\lim_{j}}^1$, on obtient encore une suite exacte \`a la limite:
 \[\displaystyle 0\to \lim_{j}(\sB^{j\sm o}/\varpi^{1-\frac{1}{p}})^a \to \lim_{j}( \sB^{j\sm o}/\varpi)^a \to  \lim_{j}(\sB^{j\sm o}/\varpi^{\frac{1}{p}})^a \to 0.\]
 La variante de prop. \ref{P14} signal\'ee dans la rem. \ref{r11} entra\^{\i}ne alors que $\sB$ est presque perfecto\"{\i}de.
 
 Que ${\rm{ulim}} $, ou de mani\`ere \'equivalente ${\rm{ulim}}^\natural $, soit quasi-inverse \`a gauche de $\varsigma$ d\'ecoule du th. \ref{T5}.  
 Enfin, soient $\sC$ un objet de $\sA^{\hat a}\hbox{-}{\bf{pPerf}} $ et $(\sB^j)_j$ un objet de  $2\hbox{-}{\rm{lim}}\,\sA^{[j]}\hbox{-}{\bf{Perf}}  $. L'application ${\rm{Hom}}(\varsigma(\sC)  , (\sB^j)_j)\to {\rm{Hom}}(\sC  , {\rm{ulim}}\, \sB^j) $ induite par ${\rm{ulim}} $ admet comme inverse (fonctoriellement en $(\sC, (\sB^j)_j)$) l'application induite par $(\;)^{[j]}$ suivi du morphisme canonique $(\sB^{[j]})_j\to (\sB^j)_j$.  
 Pour la seconde assertion, voir \cite[IV 4 ex. 4]{M}.  \end{proof}

  \subsubsection{Remarques} $(1)$ On ne peut remplacer les $\sK\langle T^{\e}\rangle$-alg\`ebres par des $\sK\langle T \rangle$-alg\`ebres, comme on le voit sur l'ex. \ref{E17}. 

\smallskip\noindent $(2)$ En vertu du lemme \ref{L15}, on pourrait remplacer $\sA^{\hat a}\hbox{-}{\bf{pPerf}} $ par sa sous-cat\'egorie pleine \'equivalente $\sA^{\hat a}\hbox{-}{\bf{Perf}} $, et ${\rm{ulim}}$ par ${\rm{ulim}}^\natural$. Si la question \ref{q} a une r\'eponse positive, il est m\^eme inutile de substituer ${\rm{ulim}}^\natural$ \`a ${\rm{ulim}}$.

\begin{qn} {\it $\varsigma\,$ est-il essentiellement surjectif $\,$ (et par suite une \'equivalence)?} \end{qn}
 
\medskip  Il revient au m\^eme de demander si l'on retrouve les $\sB^j$ par localisation affino\"{\i}de de ${\rm{ulim}}\, \sB^j$. 
   Voici un cas particulier important o\`u c'est bien le cas:
 
 \begin{prop}\label{P17} Soit $\sB'$ une $\sA[\frac{1}{g}]$-alg\`ebre \'etale finie. 
 \begin{enumerate} 
 \item Les $\sB^j := \sB'\otimes_{\sA[\frac{1}{g}]} \sA^{[j]}$ forment un objet de $2\hbox{-}{\rm{lim}}\,\sA^{[j]}\hbox{-}{\bf{Perf}}  $. On note ${} \sB$ la limite uniforme. 
 \item Les co\"{u}nit\'es d'adjonction $ {} \sB^{[j]}\stackrel{\eta_j}{\to} \sB^j$ sont des isomorphismes. 
 \item Elles proviennent en fait d'un isomorphisme $  {} \sB[\frac{1}{g}]\stackrel{\sim}{\to}  \sB' $. 
 \end{enumerate}
 \end{prop}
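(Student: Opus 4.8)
\noindent\textit{Plan de preuve.} Je compte tout d\'eduire du cas \'etale de la presque-puret\'e (th.~\ref{T4}), de l'identit\'e de type Riemann du th.~\ref{T5}, et de l'adjonction du th.~\ref{T6}.

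\smallskip Pour $(1)$: $g$ \'etant inversible dans $\sA^{[j]}:=\sA\{\frac{\varpi^{j}}{g}\}$, celle-ci est une $\sA[\frac1g]$-alg\`ebre, et $\sB^{j}=\sB'\otimes_{\sA[\frac1g]}\sA^{[j]}$ est \'etale finie sur $\sA^{[j]}$; comme $\sA^{[j]}$ est perfecto\"{\i}de ($\sA$ \'etant presque perfecto\"{\i}de, \cf~\ref{ec}), $\sB^{j}$ sera perfecto\"{\i}de d'apr\`es le th.~\ref{T4}, et compl\`ete en tant que $\sA^{[j]}$-module projectif fini (\cf~\ref{pt}). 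Les morphismes de transition s'obtiendront en combinant la formule \eqref{e25'} (la localisation affino\"{\i}de est un changement de base), l'isomorphisme de relocalisation $\sA^{[i]}\{\frac{\varpi^{j}}{g}\}\cong\sA^{[j]}$ et l'associativit\'e du produit tensoriel: $\sB^{i}\{\frac{\varpi^{j}}{g}\}\cong\sB^{i}\hat\otimes_{\sA^{[i]}}\sA^{[j]}\cong\sB'\otimes_{\sA[\frac1g]}\sA^{[j]}=\sB^{j}$, compatiblement en $(i,j)$; donc $(\sB^{j})_{j}$ sera un objet de $2\hbox{-}{\rm{lim}}\,\sA^{[j]}\hbox{-}{\bf{Perf}}$.

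\smallskip Pour $(2)$ et $(3)$ je poserai $\sB={\rm{ulim}}_{j}\,\sB^{j}$, presque perfecto\"{\i}de d'apr\`es le th.~\ref{T6}, de sorte que $\sB^{\s}=\lim_{j}\sB^{j\s}$, et j'\'etablirai d'abord $(3)$. Je construirai $\mu\colon\sB'\to\sB[\frac1g]$ ainsi: $\sB'$ \'etant finie sur $\sA[\frac1g]=\sA^{\s}[\frac1{\varpi g}]$, tout $b\in\sB'$ v\'erifie une \'equation de d\'ependance int\'egrale \`a coefficients dans $\sA^{\s}$ apr\`es multiplication par un $\varpi^{K}g^{M}$ ad\'equat; l'image de $\varpi^{K}g^{M}b$ dans chaque $\sB^{j}$ est alors enti\`ere sur $\sA^{[j]\s}$, donc dans $\sB^{j\s}$ (cor.~\ref{C2'}), et de norme spectrale $\leq1$ uniform\'ement en $j$ (les coefficients \'etant dans $\sA^{\s}$), d'o\`u un \'el\'ement de $\lim_{j}\sB^{j\s}=\sB^{\s}$, puis $\mu(b)$ en divisant par $\varpi^{K}g^{M}$. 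L'injectivit\'e de $\mu$ viendra de ce que $\sA\inj{\rm{ulim}}_{j}\,\sA^{[j]}=g^{\f}\sA^{\s}[\frac1\varpi]$ (th.~\ref{T5}) entra\^{\i}ne $\bigcap_{j}\ker(\sA[\frac1g]\to\sA^{[j]})=0$, d'o\`u $\sB'\inj\prod_{j}\sB^{j}$ par platitude de $\sB'$ sur $\sA[\frac1g]$.

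\smallskip Le point d\'elicat sera la surjectivit\'e de $\mu$ --- autrement dit que $(\sB^{j})_{j}$ est dans l'image essentielle du foncteur $\varsigma$ du th.~\ref{T6}, ce qui n'est pas vrai pour un syst\`eme $(\sB^{j})$ quelconque. Pour un $y=(y_{j})\in\sB^{\s}=\lim_{j}\sB^{j\s}$, les formes $\sA[\frac1g]$-lin\'eaires $\lambda_{j}\colon\sB'\to\sA^{[j]}$, $b\mapsto Tr_{\sB^{j}/\sA^{[j]}}(y_{j}\cdot(b\otimes1))$, sont compatibles (la trace commute au changement de base), d'o\`u $\lambda\colon\sB'\to\lim_{j}\sA^{[j]}$. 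Pour $b\in\sB'$ et $\varpi^{K}g^{M}$ tels que $\varpi^{K}g^{M}(b\otimes1)\in\sB^{j\s}$ (cor.~\ref{C2'}), on aura $\vert Tr_{\sB^{j}/\sA^{[j]}}(y_{j}\cdot\varpi^{K}g^{M}(b\otimes1))\vert_{sp}\leq1$, \ie $\varpi^{K}g^{M}\lambda_{j}(b)\in\sA^{[j]\s}$ de norme spectrale $\leq1$, et ceci uniform\'ement en $j$; donc $\varpi^{K}g^{M}\lambda(b)\in\lim_{j}\sA^{[j]\s}=g^{\f}\sA^{\s}\subset\sA^{\s}[\frac1g]$ par le th.~\ref{T5}, d'o\`u $\lambda(b)\in\sA[\frac1g]$. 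Ainsi $\lambda$ appartiendra au dual ${\rm{Hom}}_{\sA[\frac1g]}(\sB',\sA[\frac1g])\cong\sB'$ ($\sB'$ \'etant \'etale, \cf~rem.~\ref{r1'}$(2)$) et correspondra \`a un $b_{y}\in\sB'$; comme $\sB^{j}=\sB'\otimes_{\sA[\frac1g]}\sA^{[j]}$ est engendr\'e sur $\sA^{[j]}$ par l'image de $\sB'$, on aura $Tr_{\sB^{j}/\sA^{[j]}}((y_{j}-b_{y}\otimes1)\cdot z)=0$ pour tout $z\in\sB^{j}$, d'o\`u $y_{j}=b_{y}\otimes1$ par perfection de l'accouplement trace sur la $\sA^{[j]}$-alg\`ebre \'etale finie $\sB^{j}$ (rem.~\ref{r1'}$(2)$). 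Donc $y=\mu(b_{y})$: joint \`a l'injectivit\'e, ceci montrera que $\mu$ est un isomorphisme $\sB'\iso\sB[\frac1g]$, ce qui est $(3)$.

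\smallskip Enfin $(2)$ s'en d\'eduira: tensorisant l'isomorphisme $\sB[\frac1g]\iso\sB'$ par $\sA^{[j]}$ au-dessus de $\sA[\frac1g]$, et utilisant $\sB^{[j]}=\sB\{\frac{\varpi^{j}}{g}\}=\sB\hat\otimes_{\sA}\sA^{[j]}$ (\eqref{e25'}) avec $\sA\to\sA^{[j]}$ se factorisant par $\sA[\frac1g]$, on aura $\sB^{[j]}\cong\sB'\otimes_{\sA[\frac1g]}\sA^{[j]}=\sB^{j}$ (projectif fini, donc complet, sur $\sA^{[j]}$); la fonctorialit\'e identifiera cet isomorphisme \`a la co\"{u}nit\'e $\eta_{j}$. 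La difficult\'e centrale aura \'et\'e de combler, \emph{apr\`es inversion de $g$}, l'\'ecart entre ${\rm{ulim}}$ et $\lim$ (\cf~\S\ref{lp}): c'est l\`a que le th.~\ref{T5} et l'hypoth\`ese {\it \'etale} sur $\sB'$ (discriminant inversible, accouplement trace parfait) seront tous deux indispensables.
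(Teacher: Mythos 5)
Your proposal is correct, and it takes a genuinely different route from the paper's for parts (2) and (3). Part (1) is the same. For (3), the paper first establishes (2) via an adjunction trick (the explicitly constructed $\delta_j$ is a right inverse of $\eta_j$; the adjunction of th.~\ref{T6} then forces it to be a two-sided inverse), and then reduces the surjectivity of $\delta$ to the Galois case via lemma~\ref{L0}, concluding with lemma~\ref{L3}. You instead prove (3) directly by trace duality: given $y=(y_j)\in\sB^{\s}$, the compatible forms $\lambda_j(b)=Tr_{\sB^j/\sA^{[j]}}(y_j(b\otimes1))$ glue to $\lambda\in\lim_j\sA^{[j]}$, and the Riemann-extension identity $\lim_j\sA^{[j]\s}=g^{\f}\sA^{\s}$ of th.~\ref{T5}, together with the integrality estimate from cor.~\ref{C2'}, forces $\lambda$ to land in $\sA[\frac1g]$; perfectness of the trace pairing (rem.~\ref{r1'}(2)) then produces $b_y\in\sB'$, and a second application of perfectness over each $\sA^{[j]}$ gives $y_j=b_y\otimes1$. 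This avoids both the Galois closure of lemma~\ref{L0} and the descent argument of lemma~\ref{L3}, at the cost of more delicate norm bookkeeping uniform in $j$ — a fair trade. One small caveat: your deduction of (2) from (3) is slightly terse. Tensoring $\sB'\cong\sB[\frac1g]$ by $\sA^{[j]}$ does give an \emph{algebraic} isomorphism $\sB\otimes_\sA\sA^{[j]}\cong\sB^j$, and $\sB^j$ is complete because it is finite projective over the complete $\sA^{[j]}$; but one must still match the topology of $\sB\otimes_\sA\sA^{[j]}$ induced by the tensor-product seminorm against the canonical one. The cleanest way to close this is the paper's: the resulting algebra map $\delta_j\colon\sB^j\to\sB^{[j]}$ is automatically continuous (any $\sA^{[j]}$-algebra map from a finite projective module is), $\eta_j\delta_j=\mathrm{id}$ on generators, and $\delta_j\eta_j$ equals the identity on the dense subalgebra generated by $\sB$ and $\sA^{[j]}$, hence everywhere by continuity. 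With that supplement your argument is complete.
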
 
 
 \begin{proof} $(1)$ Les  $\sB^j $ forment un syst\`eme projectif de $\sA^{[j]}$-alg\`ebres \'etales finies, donc perfecto\"{\i}des (th. \ref{T4}). On a donc 
 $\sB^{i}\otimes_{\sA^{[i]}} \sA^{[j]} = \sB'\otimes_{\sA[\frac{1}{g}]} \sA^{[i]} \otimes_{\sA^{[i]}} \sA^{[j]} \cong \sB^{j}$, et comme  $\sB^{i}$ est projectif fini sur $\sA^{[i]}$, $\sB^{i}\otimes_{\sA^{[i]}} \sA^{[j]} = \sB^{i}\hat\otimes_{\sA^{[i]}} \sA^{[j]} \cong \sB^{i}\{ \frac{\varpi^j}{g}\}  $. 
 
 \noindent \smallskip $(2)$ La fermeture int\'egrale $ \sA^{\s +}_{\sB'}$ s'envoie naturellement vers la fermeture int\'egrale $\sA^{[j] \s +}_{\sB^j}$ qui n'est autre que $\sB^{j \s}$ d'apr\`es le cor. \ref{C2'}, ce qui fournit \`a la limite un morphisme canonique  $\sA^{\s +}_{\sB'}   \to {} \sB^{\s} $, d'o\`u aussi $\sA^{\s +}_{\sB'}[\frac{1}{\varpi g}]= \sB'  \stackrel{\delta}{\to} {} \sB[\frac{1}{g}]$. Le morphisme compos\'e 
    $\delta_j:\, \sB^j = \sB' \otimes_{\sA[\frac{1}{g}]} \sA^{[j]}  \stackrel{\delta\otimes 1}{\to}  {} \sB[\frac{1}{g}]  \otimes_{\sA[\frac{1}{g}]} \sA^{[j]} =  {} \sB  \otimes_{\sA} \sA^{[j]}  \to  {} \sB   \hat\otimes_{\sA} \sA^{[j]} =  {} \sB^{[j]} $   est 
 un inverse \`a droite de $\eta_j$.  
  Or par adjonction, tout morphisme de la forme $\varsigma(\sC) \to (\sB^j)_j$ se factorise de mani\`ere unique en un morphisme $\varsigma(\sC) \to \varsigma( {} \sB)$. Appliquant ceci \`a $\sC = {} \sB$ et au morphisme de composantes $\eta_j (\delta_j \eta_j ) = \eta_j$, on en d\'eduit $\delta_j \eta_j = id_{{} \sB^{[j]}}$.  Donc $\eta_j$ est un isomorphisme d'inverse $\delta_j$ \footnote{cet argument d'adjonction montre plus g\'en\'eralement que toute sous-cat\'egorie \'epi-cor\'eflexive est mono-cor\'eflexive \cite[prop. 1]{HS}.}. 
  
\noindent $(3)$ Par ailleurs, $\delta$ est injectif. En effet, le noyau $\mathfrak I$ de $\sB' \to {}\sB[\frac{1}{g}]$ est un id\'eal de trace nulle sur  $\sA[\frac{1}{g}]$: en effet, $tr(\mathfrak I)$ est d'image nulle dans chaque $\sA^{[j]}$, donc est nul puisque $\sA[\frac{1}{g}] $ s'envoie injectivement dans ${\rm{ulim}}\,\sA^{[j]}$ (lemme \ref{L11'}). Donc $\mathfrak I$ est lui-m\^eme nul puisque $\sB'$ est \'etale sur $\sA[\frac{1}{g}] $.  

 ll reste \`a voir que $\delta$ est surjectif. On peut remplacer $\sA^{\s}$ par $g^{\f}\sA^{\s}$, ce qui permet de supposer $\sA$ (compl\`etement) int\'egralement ferm\'e dans $\sA[\frac{1}{g}]$.  
 
  \smallskip $a)$  Supposons pour commencer $\sB'$ galoisien sur ${\sA[\frac{1}{g}]} $ de groupe $G$. On a alors 
  \[ {}\sB[\frac{1}{g}]^G=({\rm{ulim}}\, (\sB'\otimes_{\sA[\frac{1}{g}]} \sA^{[j]})^G)[\frac{1}{g}] = ({\rm{ulim}}\,  \sA^{[j]})[\frac{1}{g}]  = \sA[\frac{1}{g}].  \]  
On conclut par le lemme \ref{L3} (avec $R= \sA[\frac{1}{g}],\, S' =\sB', \,S = {}\sB[\frac{1}{g}]$).

  \smallskip $b)$ En g\'en\'eral, le rang de $\sB'$ sur $\sA[\frac{1}{g}]$ \'etant fini continu et born\'e, et $\sA$ \'etant $\sA$ int\'egralement ferm\'e dans $\sA[\frac{1}{g}]$, on peut supposer, en d\'ecomposant  $\sA$ en un nombre fini de facteurs, que ce rang est constant, \'egal \`a $r$. D'apr\`es le lemme \ref{L0}, on a une extension galoisienne $\sA[\frac{1}{g}] \inj \sC'$ de groupe $\mathfrak S_r$ se factorisant par $\sB'$ et telle que $\sB' = \sC'^{\mathfrak S_{r-1}}$; d'o\`u $\sB^j = \sC^{j \mathfrak S_{r-1}}$ et \`a la limite ${}\sB = {}\sC^{\mathfrak S_{r-1}}$. D'apr\`es le pas $a)$, on a  $\sC'= {}\sC[\frac{1}{g}]$, donc $\sB'=   \sC'^{\mathfrak S_{r-1}} =  {}\sC[\frac{1}{g}]^{\mathfrak S_{r-1}} = {}\sB [\frac{1}{g}]$. \end{proof}

   \subsubsection{\it Remarque.} Le fait que $\delta$ et $\delta_j$ soient des isomorphismes implique que $\tilde \sB  \otimes_{\sA} \sA^{[j]}  \to  \tilde \sB   \hat\otimes_{\sA} \sA^{[j]}$ en est un.

\newpage  \section{Le {``{lemme d'Abhyankar}"} perfecto\"{\i}de.}\label{LAP}   
   
\subsection{}   On fixe un corps perfecto\"{\i}de $\sK$ et un \'el\'ement $\varpi\in \sK^{{\ss}}$ admettant une suite compatible de racines $p^m$-i\`emes, tel que $\vert p\vert \leq \vert \varpi \vert$. On
 prend pour cadre 
 \[(\mathfrak V =  \sK^{\s}[T^{\e}], \, \mathfrak m =   (\varpi T)^{\e}\mathfrak V ).\]
 
 \smallskip Soit $\sA$ est une $\sK\langle T^{\e} \rangle$-alg\`ebre presque perfecto\"{\i}de.  Rappelons encore que la cat\'egorie $\sA^{\hat a}\hbox{-}{\bf{pPerf}}$ a pour objets les $\sA$-alg\`ebres de Banach uniformes $\sB$ qui sont presque isomorphes \`a $\sB^\natural$, et pour morphismes ceux obtenus apr\`es application du foncteur $(\;)^{\s a}$ (\cf \S \ref{recc}). 
   
 On dira qu'un objet $\sB^{\hat a}$ de $\sA^{\hat a}\hbox{-}{\bf{uBan}}$ est {\it presque \'etale fini} sur $\sA^{\hat a}$ si $\sB^{a}$ est \'etale fini sur $\sA^{a}$. 
 Rappelons que d'apr\`es la prop. \ref{P10}, si $\sA$ est presque perfecto\"{\i}de,  l'inversion de $\varpi$ induit une \'equivalence
\begin{equation}\label{ep10}\sA^{\s a}\hbox{-}{\bf{Alg}}^{et.fin} \; \stackrel{\xi}{\to} \; \sA^{\hat a}\hbox{-}{\bf{pPerf}}^{p.et.fin} ,\end{equation} de quasi-inverse $(\;)^{\s a}$,
entre  $\sA^{\s a}$-alg\`ebres \'etales finies et $\sA^{\hat a}$-alg\`ebres presque perfecto\"{\i}des presque \'etales finies. 
  
      \subsection{}\label{tp}  Le th\'eor\`eme principal de cet article est une extension du th. \ref{T4} au cas ramifi\'e (on retrouve \ref{T4} dans le cas o\`u $T^{\frac{1}{p^i}}.1 = 1$).    
     
       \begin{thm}\label{T7}  Soient $\sK$ un corps perfecto\"{\i}de et $\sA$ une $ \sK\langle T^{\e}\rangle$-alg\`ebre presque perfecto\"{\i}de. On note $g $ l'\'el\'ement $T.1$ de $\sA^{\s}$. 
     On a alors des \'equivalences de cat\'egories 
        
     \medskip\leftline{
      $   \{\sA^{\s a}$-alg\`ebres compl\`etes, \'etales  finies modulo toute puissance de $p $ } 
       \centerline{et \'etales finies apr\`es inversion de $g$ 
   $\}\stackrel{\sim}{\to} $ } 
     
        \medskip\centerline{$\{$ $\sA^{\hat a}$-alg\`ebres presque perfecto\"{\i}des, \'etales finies apr\`es inversion de $g\}  \stackrel{\sim}{\to}$}

        \medskip\rightline{ $\{ \sA[\frac{1}{g}]$-alg\`ebres \'etales finies$\}  $ }    
     \medskip \noindent   induites par inversion de $\varpi$ et de $ g$ respectivement.
     \end{thm}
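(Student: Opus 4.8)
Le plan est de scinder l'\'enonc\'e en deux \'equivalences que l'on traite s\'epar\'ement: le foncteur d'inversion de $\varpi$ de la premi\`ere cat\'egorie vers la deuxi\`eme, et le foncteur d'inversion de $g$ de la deuxi\`eme vers la troisi\`eme; leur compos\'e fournit l'\'equivalence annonc\'ee. La seconde \'equivalence est le c\oe ur g\'eom\'etrique, la premi\`ere encapsulant la ``presque-puret\'e ramifi\'ee modulo $p^m$''.

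Pour l'\'equivalence entre les $\sA^{\hat a}$-alg\`ebres presque perfecto\"{\i}des \'etales apr\`es inversion de $g$ et les $\sA[\frac{1}{g}]$-alg\`ebres \'etales finies, on construit un quasi-inverse explicite $R$ du foncteur $\sB\mapsto \sB[\frac{1}{g}]$. \'Etant donn\'ee une $\sA[\frac{1}{g}]$-alg\`ebre \'etale finie $\sB'$, on pose $\sA^{[j]}:=\sA\{\frac{\varpi^j}{g}\}$ (perfecto\"{\i}de d'apr\`es la prop.~\ref{P11} et \S\ref{ec}) et $\sB^{j}:=\sB'\otimes_{\sA[\frac{1}{g}]}\sA^{[j]}$; la prop.~\ref{P17}$(1)$ dit que ces alg\`ebres forment un objet de $2\hbox{-}{\rm{lim}}\,\sA^{[j]}\hbox{-}{\bf{Perf}}$, le th.~\ref{T6} que $R(\sB'):={\rm ulim}\,\sB^{j}$ est presque perfecto\"{\i}de, et la prop.~\ref{P17}$(3)$ que $R(\sB')[\frac{1}{g}]\cong\sB'$; donc $R(\sB')$ est bien dans la deuxi\`eme cat\'egorie, et l'on a d\'ej\`a $L\circ R\cong {\rm id}$. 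Pour $R\circ L\cong {\rm id}$, on utilise le th.~\ref{T6} pour \'ecrire $\sB^{\hat a}\cong {\rm ulim}\,\varsigma(\sB^{\hat a})={\rm ulim}_j\,\sB\{\frac{\varpi^j}{g}\}$, et l'on observe que, $g$ devenant inversible apr\`es cette localisation et $\sB[\frac{1}{g}]$ \'etant projectif fini sur $\sA[\frac{1}{g}]$, on a $\sB\{\frac{\varpi^j}{g}\}=\sB\hat\otimes_{\sA}\sA^{[j]}\cong\sB[\frac{1}{g}]\otimes_{\sA[\frac{1}{g}]}\sA^{[j]}$ (le produit tensoriel compl\'et\'e \'etant automatiquement complet, \cf \S\ref{pt}), d'o\`u $\sB^{\hat a}\cong R(\sB[\frac{1}{g}])$. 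La pleine fid\'elit\'e r\'esulte alors de l'adjonction du th.~\ref{T6} jointe au fait qu'un syst\`eme compatible de morphismes $\sB_1^{j}\to\sB_2^{j}$ \'equivaut \`a un unique morphisme $\sA[\frac{1}{g}]$-lin\'eaire $\sB_1[\frac{1}{g}]\to\sB_2[\frac{1}{g}]$: un tel syst\`eme fournit un morphisme $\sB_1[\frac{1}{g}]\to {\rm ulim}\,\sB_2^{j}$, qui se factorise par $({\rm ulim}\,\sB_2^{j})[\frac{1}{g}]=\sB_2[\frac{1}{g}]$ puisque $g$ y est inversible, la r\'eciproque \'etant le changement de base (on utilise aussi le lemme~\ref{L11'}).

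Pour l'\'equivalence entre les deux premi\`eres cat\'egories, induite par l'inversion de $\varpi$: si $\mathfrak C$ est compl\`ete, plate sur $\sK^{\s}$ (la platitude se d\'eduisant de ce que $\mathfrak C/p^m$ est \'etale finie sur $\sA^{\s a}/p^m$, plate sur $\sK^{\s}/p^m$) et \'etale finie modulo toute puissance de $p$, l'isomorphisme de Frobenius relatif pour les alg\`ebres \'etales \cite[th. 3.5.13 ii]{GR1} transf\`ere la bijectivit\'e de $x\mapsto x^p$ de $\sA^{\s a}/\varpi$ \`a $\mathfrak C/\varpi$, de sorte que $\mathfrak C[\frac{1}{\varpi}]$ est presque perfecto\"{\i}de d'apr\`es le lemme~\ref{L16'}; elle est \'etale apr\`es inversion de $g$ par hypoth\`ese. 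Un quasi-inverse est $\sB^{\hat a}\mapsto\sB^{\s a}$, qui est compl\`ete et v\'erifie les conditions $(1)$, $(4)$ du cor.~\ref{C1}; le caract\`ere \'etale fini apr\`es inversion de $g$ sur $\sA^{\s a}[\frac{1}{g}]$ dans le cadre $(\sK^{\s},\sK^{\ss})$ r\'esulte de la presque-puret\'e non ramifi\'ee \ref{T4} appliqu\'ee au couple perfecto\"{\i}de $\sA\{\frac{1}{g}\}\inj\sB\{\frac{1}{g}\}$; la pleine fid\'elit\'e est l'identification $\mathfrak C=(\mathfrak C[\frac{1}{\varpi}])^{\s a}$ (cor.~\ref{C1}, lemme~\ref{L14'}). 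Il reste \`a v\'erifier que $\sB^{\s a}/p^m$ est \'etale finie sur $\sA^{\s a}/p^m$ pour tout $m$.

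C'est l\`a le point crucial, et l'obstacle principal. \'Ecrivant $\sB^{\hat a}=R(\sB[\frac{1}{g}])$, chaque $\sB^{j}=\sB[\frac{1}{g}]\otimes_{\sA[\frac{1}{g}]}\sA^{[j]}$ est \'etale finie sur l'alg\`ebre perfecto\"{\i}de $\sA^{[j]}$, donc par le cas non ramifi\'e (prop.~\ref{P8'}$(2)$, th.~\ref{T4}) $\sB^{j\,\s a}$ est \'etale finie sur $\sA^{[j]\,\s a}$, et de m\^eme $\sB^{j\,\s a}/p^m$ sur $\sA^{[j]\,\s a}/p^m$. D'apr\`es le th\'eor\`eme d'extension de Riemann perfecto\"{\i}de modulo $p^m$ — prop.~\ref{P15}, \cf l'isomorphisme \eqref{eqmod} dans la preuve du th.~\ref{T5} — on a, dans le cadre $(\sK^{\s}[T^{\e}],(\varpi T)^{\e}\sK^{\s}[T^{\e}])$, des isomorphismes canoniques $\sA^{\s a}/p^m\cong\lim_j\sA^{[j]\,\s a}/p^m$ et $\sB^{\s a}/p^m\cong\lim_j\sB^{j\,\s a}/p^m$, et les morphismes de transition r\'ealisent (via la prop.~\ref{P7}) $\sB^{j\,\s a}/p^m$ comme le changement de base de $\sB^{i\,\s a}/p^m$ le long de $\sA^{[i]\,\s a}/p^m\to\sA^{[j]\,\s a}/p^m$. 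L'observation d\'ecisive est que, $g=T\cdot 1$ appartenant \`a l'id\'eal n\'egligeable $\mathfrak m$, la famille $(\sA^{[j]\,\s a}/p^m)_j$ est \emph{presque fid\`element plate} sur $\sA^{\s a}/p^m$ — les spectres des $\sA^{[j]}$ \'epuisent le compl\'ementaire de $V(\mathfrak m)$, presque n\'egligeable dans le cadre choisi — de sorte que la descente presque fid\`element plate (rem.~\ref{r1}$(3)$, \ref{r1'}$(3)$) force $\sB^{\s a}/p^m$ \`a \^etre \'etale finie sur $\sA^{\s a}/p^m$. Dans tout ce qui pr\'ec\`ede, la r\'eduction usuelle par bornitude du rang sur le spectre quasi-compact (d\'ecomposition de $\sA$ en un nombre fini de facteurs de rang constant, et lemme~\ref{L0} pour passer aux cl\^otures galoisiennes au besoin) ram\`ene au cas connexe de rang constant. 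Faire en sorte que le lieu tubulaire $\{g=0\}$ disparaisse dans la presque-cat\'egorie est pr\'ecis\'ement ce que le choix du cadre $(\sK^{\s}[T^{\e}],(\varpi T)^{\e}\sK^{\s}[T^{\e}])$ est con\c cu pour r\'ealiser.
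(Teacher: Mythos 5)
Your overall architecture is the same as the paper's (split into an inversion-of-$\varpi$ equivalence and an inversion-of-$g$ equivalence; handle the $g$-part via the $2$-lim category, Prop.~\ref{P17} and Th.~\ref{T6}), and your treatment of the second equivalence, including the observation that $\sB\{\tfrac{\varpi^j}{g}\}\cong\sB[\tfrac{1}{g}]\otimes_{\sA[\frac{1}{g}]}\sA^{[j]}$ because étale-finite modules are projective and the tensor product is already complete, is essentially correct and faithful to the paper's diagram \eqref{e44}. The problem lies entirely in the final paragraph, which is where the real content of the theorem is.

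The step you call ``l'observation décisive'' is not a valid argument. You want to descend ``\'etale fini'' from the family $(\sB^{j\,\s a}/p^m)_j$ over $(\sA^{[j]\,\s a}/p^m)_j$ down to $\sB^{\s a}/p^m$ over $\sA^{\s a}/p^m$, invoking a ``descente presque fid\`element plate'' of the family. But each $\sA^{[j]\,\s a}/p^m$ is a localization, hence flat but \emph{not} faithfully flat over $\sA^{\s a}/p^m$: the extension kills the $g$-torsion. There is no descent theorem in \cite{GR1} (nor elsewhere) for a ``family which exhausts the complement of $V(\mathfrak m)$'': the remarks \ref{r1}$(3)$ and \ref{r1'}$(3)$ that you cite require a single faithfully flat extension, and the heuristic that $V(g)$ is ``presque négligeable'' in the framework $(\sK^{\s}[T^{\e}],(\varpi T)^{\e}\sK^{\s}[T^{\e}])$ does not translate into such a statement. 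Nor does the collapse $\sA^{\s a}/p^m\cong\lim_j\sA^{[j]\,\s a}/p^m$ (Prop.~\ref{P15}) rescue the argument: knowing that $\sB^{j\,\s a}/p^m$ is projective finite over $\sA^{[j]\,\s a}/p^m$ and that $\sB^{\s a}/p^m$ is the limit does not make the latter projective finite over $\sA^{\s a}/p^m$ --- limits of finite projective algebras over a $\lim$-cover of rings need not be finite projective without some additional structure.

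The paper closes exactly this gap by \emph{not} relying on abstract descent: it first reduces to the Galois case via Lemma~\ref{L0} (the $\mathfrak S_r$-trick), and then observes that the Galois-decomposition isomorphisms $\sB^{[j]\s}_{2\ast}\stackrel{\sim}{\to}\prod_G\sB^{[j]\s}_\ast$ form a compatible system, so that passing to the limit gives $(\widehat{\sB_2}^{\s})_\ast\stackrel{\sim}{\to}\prod_G\sB^{\s}_\ast$; reducing mod $p^m$ produces the Galois isomorphism for $\sB^{\s a}/p^m$, and then ``Galois $\Rightarrow$ étale fini'' (Prop.~\ref{P2}$(1)$) finishes. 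The point is that the Galois structure is a \emph{concrete isomorphism of modules} which survives taking $\lim_j$ and reduction mod~$p^m$, whereas the abstract property ``projectif fini'' does not. Similarly, your one-line appeal to Th.~\ref{T4} for the assertion that $\sB^{\s a}[\tfrac{1}{g}]$ is étale finite over $\sA^{\s a}[\tfrac{1}{g}]$ in the frame $(\sK^{\s},\sK^{\ss})$ is too quick: the rings $\sA\{\tfrac{1}{g}\}^{\s a}$ and $\sA^{\s a}[\tfrac{1}{g}]$ do not coincide, and the paper needs a further chain of arguments (flatness via \cite[5.2.1]{GR1}, an isometry statement for $\widehat{\sB_{2\hat\ast}}$, and an argument locating the idempotents $e_\gamma$ in $\varpi^{-1/p^\infty}\sB^{\s}_2[\tfrac{1}{g}]$) to establish this part of Prop.~\ref{L20}.
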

     
    \begin{qn} {\it Ces cat\'egories sont-elles aussi \'equivalentes \`a celles de \eqref{ep10}?} \end{qn}
    \cf rem. \ref{fr}.

       \begin{proof} Il est loisible de remplacer $\sA$ par toute alg\`ebre isomorphe dans  $\sK\langle T^{\e}\rangle^{\hat a}\hbox{-}{\bf{pPerf}}$, notamment par $\sA^{\hat a}_{\hat \ast} = g^{\f}\sA^{\s}[\frac{1}{\varpi}]$. Cela permet de supposer d'embl\'ee que $g$ est non-diviseur de z\'ero et, en vertu du th. \ref{T5}, que 
  \begin{equation}\label{e41} \sA  \cong {\rm{ulim}}\, \sA^{[j]}  .  \end{equation} 

     Notons $ \sA^{\s a}\hbox{-}{\bf{Alg}}^{(/p^\bullet, \frac{1}{g})et.fin} $ la premi\`ere cat\'egorie du th\'eor\`eme, et par $\sA^{\hat a}\hbox{-}{\bf{pPerf}}^{\frac{1}{g} et.fin} $ la seconde, et 
       \[\sA^{\s a}\hbox{-}{\bf{Alg}}^{(/p^\bullet, \frac{1}{g})et.fin}  \stackrel{\xi'}{\to} \sA^{\hat a}\hbox{-}{\bf{pPerf}}^{\frac{1}{g} et.fin} \stackrel{\varrho'}{\to}  \sA[\frac{1}{g}]\hbox{-}{\bf{Alg}}^{et.fin}  \] les foncteurs en jeu, dont il s'agit de montrer que ce sont des \'equivalences.

  \smallskip Commen\c cons par prouver que {\it $\varrho'$ est une \'equivalence}. 
   
 \smallskip Notons $2\hbox{-}{\rm{lim}}\,\sA^{[j]}\hbox{-}{\bf{Alg}} $ la cat\'egorie des syst\`emes projectifs de $\sA^{[j]}$-alg\`ebres $\sB^{j}$  dont les morphismes de transition $\sB^{i}\to \sB^{j}$ se factorisent \`a travers des isomorphismes 
 \begin{equation}\label{e42} \sB^{i}\{ \frac{\varpi^j}{g}\}   \cong \sB^{j}.\end{equation}

   Observons que  $( \sA^{[j]})_j$ est un syst\`eme projectif de $ \sA[\frac{1}{g}]$-alg\`ebres, et consid\'erons le diagramme de foncteurs
  \begin{equation}\label{e43}  \begin{CD}  \sA^{\hat a}\hbox{-}{\bf{pPerf}}    @> \varrho >>  \sA[\frac{1}{g}]\hbox{-}{\bf{Alg}}  \\@VV\varsigma V     \\\ 2\hbox{-}{\rm{lim}}\,\sA^{[j]}\hbox{-}{\bf{Perf}}   @> \upsilon >>  2\hbox{-}{\rm{lim}}\,\sA^{[j]}\hbox{-}{\bf{Alg}}     \end{CD}\end{equation}
o\`u $\varrho$ est le foncteur oubli de la norme et inversion de $g$,
 $\upsilon$ est le foncteur oubli de la norme,   
 et $\varsigma$ est le foncteur consid\'er\'e au th. \ref{T6}, qui est pleinement fid\`ele, de quasi-inverse \`a gauche $\lambda$ donn\'e par la limite uniforme. 
  
   Apr\`es restriction aux alg\`ebres \'etales finies, ce diagramme se compl\`ete d'apr\`es la prop. \ref{P17} $(1), (2)$ en un diagramme essentiellement commutatif 
    \begin{equation} \begin{CD}\label{e44}  \sA^{\hat a}\hbox{-}{\bf{pPerf}}^{\frac{1}{g}et.fin}  @> \varrho' >>  \sA[\frac{1}{g}]\hbox{-}{\bf{Alg}}^{et.fin}  \\@VV\varsigma' V @V\tau' VV     \\\ 2\hbox{-}{\rm{lim}}\,\sA^{[j]}\hbox{-}{\bf{Perf}}^{et.fin}   @> \upsilon' >>  2\hbox{-}{\rm{lim}}\,\sA^{[j]}\hbox{-}{\bf{Alg}}^{et.fin}    \end{CD} \end{equation} o\`u $\tau$ est donn\'e par les produits tensoriels $(- \otimes_{\sA[\frac{1}{g}]}  \sA^{[j]}  )_j $.
  D'apr\`es le th. \ref{T4}, $\upsilon'$ est une \'equivalence: on obtient un quasi-inverse canonique $\upsilon'^{-1}$  en munissant chaque $\sB^{j}$ de sa norme spectrale canonique. Posons \[\sigma := \lambda \nu'^{-1}\tau': \sA[\frac{1}{g}]\hbox{-}{\bf{Alg}}^{et.fin} \to  \sA^{\hat a}\hbox{-}{\bf{pPerf}},\; \sB' \mapsto {} \sB := {\rm{ulim}}\, \sB'\otimes_{\sA[\frac{1}{g}]} \sA^{[j]}.\]  
  On a $\,\sigma \rho' = \lambda \nu'^{-1} \tau' \rho' =  \lambda \nu'^{-1}\nu'\varsigma'= \lambda\varsigma'= id_{\sA^{\hat a}\hbox{-}{\bf{pPerf}}^{\frac{1}{g}et.fin} }.$
  
 \noindent D'autre part, d'apr\`es la prop. \ref{P17} $(3)$, on a un isomorphisme canonique $\sB' \stackrel{\sim}{\to} {} \sB[\frac{1}{g}] = \rho'\sigma(\sB')$, ce qui montre que $\sigma$ est \`a valeurs dans la $\varrho$-pr\'eimage $\sA^{\hat a}\hbox{-}{\bf{pPerf}}^{\frac{1}{g}et.fin} $ de $\sA[\frac{1}{g}]\hbox{-}{\bf{Alg}}^{et.fin}$ dans $\sA^{\hat a}\hbox{-}{\bf{pPerf}} $, puis que $\,\rho' \sigma = id_{ \sA[\frac{1}{g}]\hbox{-}{\bf{Alg}}^{et.fin}}$. 
 
 En d\'efinitive, (\ref{e44}) est un {\it diagramme d'\'equivalences de cat\'egories}.  
  
 \medskip 
 Prouvons ensuite que {\it $\xi'$ est une \'equivalence}. 
  Comme il est clair que $\xi'$ est pleinement fid\`ele, et compte tenu de ce que $\rho'$ est essentiellement surjectif, il suffit de montrer la 
 
 \begin{prop}\label{L20} Soit $\sB$ une $\sA$-alg\`ebre perfecto\"{\i}de dans laquelle $g$ n'est pas diviseur de z\'ero, et telle que  $\sB[\frac{1}{g}] $ soit \'etale finie sur $\sA[\frac{1}{g}] $. Alors 
 pour tout $m\in \N$,  $\sB^{\s a}/p^m$ est \'etale finie sur $\sA^{\s a}/p^m$ dans le cadre  $( \sK^{\s}[T^{\e}], \,    (\varpi T)^{\e} \sK^{\s}[T^{\e}])$, 
 et $\sB^{\s}[\frac{1}{g}]$ est presque \'etale finie sur $\sA^{\s}[\frac{1}{g}]$ dans le cadre $(\sK^{\s}, \sK^{\ss})$.     \end{prop}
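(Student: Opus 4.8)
The plan is to reduce everything to the already-established local picture at finite level $j$ and then pass to the limit using the Riemann-type results of \S\ref{lul} and \S\ref{lun}. First I would use the freedom, already noted at the start of the proof of Theorem \ref{T7}, to replace $\sA$ by $\sA^{\hat a}_{\hat\ast}=g^{\f}\sA^{\s}[\frac{1}{\varpi}]$, so that $g$ is a non-zero-divisor and $\sA\cong{\rm{ulim}}\,\sA^{[j]}$ by Theorem \ref{T5}. Since $\sB[\frac{1}{g}]$ is \'etale finite over $\sA[\frac{1}{g}]$, it equals $\sB'\otimes_{\sA[\frac{1}{g}]}$ of nothing extra; in the notation of \S\ref{ec}--\ref{lun}, set $\sB^{j}:=\sB[\frac{1}{g}]\otimes_{\sA[\frac{1}{g}]}\sA^{[j]}$, which by Proposition \ref{P17}$(1)$ forms an object of $2\hbox{-}{\rm{lim}}\,\sA^{[j]}\hbox{-}{\bf{Perf}}$, each $\sB^{j}$ being \'etale finite over $\sA^{[j]}$, hence perfecto\"{\i}de, hence with $\sB^{j\s a}$ \'etale finite over $\sA^{[j]\s a}$ by Theorem \ref{T4}(1) and Proposition \ref{P8'}$(2)$. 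The point will be to identify $\sB$ (the perfecto\"{\i}de of the statement) with ${\rm{ulim}}\,\sB^{j}$: the map $\sB^{j}\to\sB[\frac{1}{g}]\otimes_{\sA[\frac{1}{g}]}\sA^{[j]}$ and the localizations $\sB\to\sB\{\frac{\varpi^{j}}{g}\}$ give a comparison, and by the lemma \ref{L11} (applied to $\sA\inj\sB$ and $\sB[\frac{1}{g}]$, since $\sB[\frac{1}{g}]\subset\sB[\frac{1}{g}]$ trivially and the relevant subalgebra condition holds because $\sB^{\s}\subset\sB[\frac{1}{g}]$) together with Theorem \ref{T5} for $\sB$, one gets $\sB\{\frac{\varpi^{j}}{g}\}\cong\sB^{j}$ and ${\rm{ulim}}\,\sB^{j}\cong\sB$.

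\textbf{Passing to the almost-\'etale statement at finite level.} Granting $\sB\cong{\rm{ulim}}\,\sB^{j}$, formula \eqref{e18} gives $\sB^{\s}=\lim\,\sB^{j\s}$, and Proposition \ref{P11} (or rather its content via \eqref{e26}) identifies $\sB^{j\s a}$ with the almost-elements of $\sB^{i\s}\hat\otimes_{\sA^{[i]\s}}\sK^{\s}\langle T^{\e},(\frac{\varpi^{j}}{T})^{\e}\rangle$. The heart is then Proposition \ref{P15}: applied with $R:=\sA^{\s}/\varpi^{r}$ (resp.\ $R:=\sB^{\s}/\varpi^{r}$, noting $\sB$ is (almost) perfecto\"{\i}de so $\sB^{\s}$ is flat, $\varpi$-adically complete, and $g$-root-adapted), it yields $(\sA^{\s}/\varpi^{r})^{a}\cong\lim_{j}(\sA^{[j]\s}/\varpi^{r})^{a}$ and $(\sB^{\s}/\varpi^{r})^{a}\cong\lim_{j}(\sB^{j\s}/\varpi^{r})^{a}$, in the cadre $(\sK^{\s}[T^{\e}],(\varpi T)^{\e}\sK^{\s}[T^{\e}])$. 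Since $\sB^{j\s a}$ is \'etale finite over $\sA^{[j]\s a}$, reducing mod $p^{m}$ and invoking that \'etale-finiteness is preserved under the (flat) base change $\sA^{[i]\s a}/p^{m}\to\sA^{[j]\s a}/p^{m}$ and under limits of such compatible systems — here one uses the Mittag-Leffler lemma of \S\ref{MLN} to control $\lim^{1}$, exactly as in the proof of Theorem \ref{T6} via Proposition \ref{P16}, so that the limit of \'etale-finite $R^{[j]k}$-algebras is \'etale-finite — one concludes that $\sB^{\s a}/p^{m}\cong\lim_{j}(\sB^{j\s a}/p^{m})$ is \'etale finite over $\sA^{\s a}/p^{m}\cong\lim_{j}(\sA^{[j]\s a}/p^{m})$. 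More directly: both mod-$p^{m}$ reductions are limits over $j$ of the perfecto\"{\i}de localizations, $\sB^{\s a}/p^{m}$ is projective finite over $\sA^{\s a}/p^{m}$ because each $\sB^{j\s a}/p^{m}$ is and projective-finiteness descends/ascends along the cofinal tower (Remark \ref{r1}$(3)$ plus Lemma \ref{L1}), and non-ramification (the idempotent decomposition of $\sB\otimes_{\sA}\sB$) is a finite-type condition that already holds at some finite $j$ and propagates.

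\textbf{From mod $p^{m}$ to inverting $g$, and the main obstacle.} For the second assertion, $\sB^{\s}[\frac{1}{g}]$ over $\sA^{\s}[\frac{1}{g}]$ in the cadre $(\sK^{\s},\sK^{\ss})$: inverting $g$ turns the cadre $(\sK^{\s}[T^{\e}],(\varpi T)^{\e}\sK^{\s}[T^{\e}])$ into $(\sK^{\s},\sK^{\ss})$ (recadrage, \S\ref{rec}, since the image of $(\varpi T)^{\e}$ generates $\varpi^{\e}=\sK^{\ss}$-worth once $g$ is a unit, compare Lemma \ref{L8}); so the mod-$p^{m}$ almost-\'etale-finiteness of $\sB^{\s a}$ over $\sA^{\s a}$ already established, combined with the equivalence remarquable \ref{eqrem} applied $\varpi$-adically after inverting $g$ (where $\sA^{\s}[\frac{1}{g}]$ and $\sB^{\s}[\frac{1}{g}]$ are still $p$-adically complete), gives the claim — alternatively one reads it off directly from Theorem \ref{T4}(1) applied to the perfecto\"{\i}de $\sA[\frac{1}{g}]^{\natural}$ once one knows $\sB[\frac{1}{g}]^{\natural}=\sB[\frac{1}{g}]$ is its \'etale-finite cover, using Proposition \ref{P8'}$(2)$. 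The step I expect to be the genuine obstacle is the very first identification $\sB\cong{\rm{ulim}}\,\sB^{j}$: \emph{a priori} $\sB$ is only assumed to be \emph{some} $\sA$-perfecto\"{\i}de with $\sB[\frac{1}{g}]$ \'etale finite over $\sA[\frac{1}{g}]$ and $g$ a non-zero-divisor, whereas ${\rm{ulim}}\,\sB^{j}$ is built canonically; one must show $\sB$ maps isomorphically onto it, which requires that $\sB^{\s}$ be complementarily integrally closed in $\sB[\frac{1}{g}]$ so that $\sB=(\sB^{\s})_{\sB[\frac{1}{g}]}^{\ast}[\frac{1}{\varpi}]$ — this is exactly Lemma \ref{uL} (the equivalent conditions of sorite \ref{S2} hold for $\sB$ perfecto\"{\i}de) — and then Lemma \ref{L11} plus Theorem \ref{T5} for $\sB$ close the loop. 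Once that identification is in hand the rest is the limit bookkeeping above; the delicate point throughout is that $(\;)_{\ast}$ and $\lim$ commute (Lemma \ref{L4}) but $(\;)_{\ast}$ does not commute with $/\varpi^{r}$, forcing the two-step limit (first in $r$, then in $j$, with an interversion) exactly as in the proof of Theorem \ref{T5}.
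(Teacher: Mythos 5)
Your setup is right up through the identification $\sB\cong{\rm{ulim}}\,\sB^{j}$ (via Lemma~\ref{uL}, Theorem~\ref{T5}, Lemma~\ref{L11}, and Proposition~\ref{P17}), and the use of Proposition~\ref{P15} with $R=\sA^{\s}/\varpi^r$ and $R=\sB^{\s}/\varpi^r$ to express the mod-$\varpi^r$ reductions as limits along the tower. But the key step afterwards is a genuine gap: you assert that projective finiteness and the étale decomposition of $\sB^{j\s a}/p^m$ over $\sA^{[j]\s a}/p^m$ pass to the inverse limit over $j$, citing Remark~\ref{r1}$(3)$ (which is faithfully flat descent, but the transition maps $\sA^{\s a}\to\sA^{[j]\s a}$ are localizations, not faithfully flat) and Lemma~\ref{L1} (which is about $\mathfrak I$-adic completeness, not limits of towers), and the claim that non-ramification \emph{propagates} because it holds at some finite $j$. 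There is no such general preservation principle: projective finiteness is not stable under $\lim$, and indeed this is precisely why the paper does not argue this way. The paper first reduces to the \emph{Galois} case (Lemma~\ref{L0}, exactly as in Proposition~\ref{P8'}$(2b)$); for a Galois extension of group $G$ the defining data are the $G$-invariants $\sB^G=\sA$ and the finite product decomposition $\sB\otimes_\sA\sB\cong\prod_G\sB$, and \emph{both} commute with inverse limits (finiteness of $G$ is what makes the product pass). This yields \eqref{e49} and hence \eqref{e50}, and via Proposition~\ref{P2}$(1)$ gives projective finiteness \emph{a posteriori} from the Galois structure. Without that reduction, the limit argument you sketch does not close.

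Your treatment of the second assertion is also off. The équivalence remarquable \ref{eqrem} does not apply to $\sA^{\s}[\frac{1}{g}]$: after inverting $g$ the ring is no longer $\varpi$-adically complete, and the paper's own Remark~\ref{fr} explains why \cite[5.3.27]{GR1} fails in this situation (the ideal generated by $\varpi$ is not tight). Nor can one invoke Theorem~\ref{T4}(1) for $\sA[\frac{1}{g}]^{\natural}$, since $\sA[\frac{1}{g}]$ is not a Banach $\sK$-algebra. The paper's actual route for this part is more delicate: it deduces almost flatness of $\sB^{\s}[\frac{1}{g}]$ from the mod-$p$ statement together with flatness of $\sB[\frac{1}{g}]$ via \cite[5.2.1]{GR1}, then uses the isometry $\widehat{\sB_{2\hat\ast}}\to(\widehat{\sB_2})_{\hat\ast}$ (Lemma~\ref{L7}$(3)$) to locate the idempotents $e_\gamma$ in $\varpi^{\f}\sB_2^{\s}[\frac{1}{g}]$. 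That analytic argument has no counterpart in your sketch.
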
 
 
  Si $\car \sK = p$, cela est prouv\'e dans \cite[3.5.28]{GR1} (en prenant $V = \sA^{\s}$ et $\varepsilon = \varpi T$ dans \loccit). 
 Supposons donc $\car \sK =0$. 
   
 $a)$ Pla\c cons-nous encore pour commencer dans la situation o\`u
  $\sB[\frac{1}{g}]$ est extension galoisienne de galoisienne de  $\sA[\frac{1}{g}]$  de groupe $G$.  
    Posons pour all\'eger \begin{equation}\sB_2 := \sB\otimes_\sA \sB, \, \sB_2^{[j]} := \sB^{[j]}\otimes_{\sA^{[j]}} \sB^{[j]}. \end{equation} 
  D'apr\`es la prop. \ref{P7},  les compl\'et\'es $\widehat{\sB_2} = \sB \hat\otimes_\sA \sB$ et $  \hat\sB_2^{[j]}$ sont des alg\`ebres perfecto\"{\i}des et $\widehat{\sB_2}^{\s a}=  \sB^{\s a}\hat\otimes_{\sA^{\s a}}\sB^{\s a} $ (c'est vrai dans le cadre $(\sK^{\s}, \sK^{\ss})$ et a fortiori dans $(\mathfrak V =  \sK^{\s}[T^{\e}], \, \mathfrak m =   (\varpi T)^{\e}\mathfrak V )$). En outre, $  \widehat{\sB_2}^{[j]} =  \sB_2^{[j]} $ car $ \sB^{[j]}$ est \'etale fini sur $ \sA^{[j]}$. 
    D'apr\`es le th. \ref{T5}, on a 
  \begin{equation}\label{e45}  \lim {\sA^{[j]\s}} = \sA_\ast^{\s} ,\;\;\;\; \lim {\sB^{[j]\s} } = \sB_\ast^{\s} , \;\; \;\;\lim {\sB_2^{[j]\s} } = (\widehat{\sB_2}^{\s}) _\ast  = g^{\f}\widehat{\sB_2}^{\s}.\end{equation}
  Comme $\sB[\frac{1}{g}]$ est galoisien de groupe $G$ sur $\sA[\frac{1}{g}]$, $\sB^{[j]\s a} $ est galoisien de groupe $G$ sur ${\sA^{[j]\s a} }$ d'apr\`es la prop. \ref{P8'}. On en tire que $G$ agit par automorphismes de $\sB^{\s}_\ast$ et $G\times G$ par automorphismes de $(\widehat{\sB_2}^{\s})_\ast$, et que 
  \begin{equation}\label{e46}   \sB_{\ast}^{\s G }= (\lim  \sB^{[j] \s})^{G }  = \lim \sA^{[j]\s} =  \sA^{\s}_\ast \end{equation} 
    \begin{equation}\label{e47} (\widehat{\sB_2}^{\s})_\ast^{  G\times G}= (\lim  \sB_2^{[j] \s})^{G\times G}  = \lim \sA^{[j]\s} =  \sA^{\s}_\ast.
  \end{equation}
En particulier $ \sB_{\ast}^{\s  }$ et $(\widehat{\sB_2}^{\s})_\ast$ 
sont entiers sur $\sA_\ast^{\s}$.    
   Les isomorphismes 
    \begin{equation}\label{e48}   \displaystyle  \sB_{2\ast}^{[j]\s  }   \stackrel{\sim}{\to}  \prod_{G} \sB_\ast^{[j]\s  }  ,\;\; b\otimes b' \mapsto (\gamma(b)b')_{\gamma\in G} \end{equation} 
    induisent \`a la limite un isomorphisme 
      \begin{equation}\label{e49}  \displaystyle (\widehat{\sB_2}^{\s})_\ast \stackrel{\sim}{\to}  \prod_{G} \sB_\ast^{\s}. \end{equation} 
        Modulo $p^m$ cela donne aussi   \begin{equation}\label{e50}  \displaystyle  (\sB^{\s a}/p^m) \otimes_{\sA^{\s a}/p^m} (\sB^{\s a}/p^m)  \stackrel{\sim}{\to}  \prod_{G} (\sB^{\s a}/p^m), \end{equation} 
   \ie $\sB^{\s a}/p^m$ est galoisienne de groupe $G$ sur $\sA^{\s a}/p^m$ dans le cadre $( \sK^{\s}[T^{\e}], \,    (\varpi T)^{\e} \sK^{\s}[T^{\e}])$.
 
 \smallskip En particulier, $(\sB^{\s}/p)[\frac{1}{g}]$ est presque plate sur $(\sA^{\s}/p)[\frac{1}{g}]$ dans le cadre   $( \sK^{\s}[T^{\e}], \,    (\varpi T)^{\e} \sK^{\s}[T^{\e}])$, donc aussi dans le cadre $(\sK^{\s}, \sK^{\ss})$ puisqu'on a invers\'e $g$. Comme $\sB[\frac{1}{g}]$ est \'etale, donc plate, sur $\sA[\frac{1}{g}]$ et que $\sB^{\s}[\frac{1}{g}]$ est sans $\varpi$-torsion, on conclut d'apr\`es \cite[5.2.1]{GR1} que $\sB^{\s}[\frac{1}{g}]$ est presque plate sur $\sA^{\s}[\frac{1}{g}]$ dans le cadre $(\sK^{\s}, \sK^{\ss})$, cadre dans lequel on se place dans les lignes qui suivent.  Donc $\sB^{\s}_2[\frac{1}{g}]= (\sB^{\s}_{2})_\ast[\frac{1}{g}]$ est presque plate sur $\sB^{\s}[\frac{1}{g}]$, en particulier presque sans $\varpi$-torsion, de sorte que la torsion $\varpi$-primaire de $\sB_2^{\s}$ (\resp de $(\sB^{\s}_{2})_\ast = g^{\f}\sB_2^{\s}$) est presque \'egale \`a la torsion $g$-primaire.
 
La prop. \ref{P17} (jointe \`a \eqref{e45}) montre que  $(\sB^{\s}_{2})_{\ast} \to  (\widehat{\sB_2}^{\s})_\ast$ induit un isomorphisme apr\`es inversion de $pg$. D'apr\`es ce qui pr\'ec\`ede, le morphisme compos\'e $ {\sB_{2{\hat\ast}}}Ê:=  (\sB^{\s}_{2})_\ast[\frac{1}{p}] \to  \widehat{\sB_{2{\hat\ast}}}Ê \to (\widehat{\sB_2})_{\hat\ast} = (\widehat{\sB_2}^{\s})_\ast[\frac{1}{p}]$ est donc injectif, de sorte que $ {\sB_{2{\hat\ast}}}$ est s\'epar\'e. 

D'autre part, $\widehat{\sB_{2{\hat\ast}}}Ê\to (\widehat{\sB_2})_{\hat\ast}$ est isom\'etrique: en effet, on d\'eduit du point $(3)$ du lemme \ref{L7} que $(\widehat{\sB_{2{ \ast}}^{\s}})/pÊ\to (\widehat{\sB_2}^{\s})_\ast/p$ est injectif. Donc la norme de  $ {\sB_{2{\hat\ast}}}Ê$ est spectrale tout comme celle de $(\widehat{\sB_2})_{\hat\ast}$. Il suit que $ {\sB_{2{\hat\ast}}}^{\s} =  {\sB_{2{\hat\ast}}}_{\leq 1}Ê$ est presque \'egal \`a $(\sB^{\s}_{2})_{\ast}/(\varpi^\infty$-tors).  On en conclut que $\sB^{\s}_2[\frac{1}{g}]= (\sB^{\s}_{2})_\ast[\frac{1}{g}]$ est presque \'egal \`a $\sB_2[\frac{1}{g}] \cap (\widehat{\sB_2}^{\s})_\ast[\frac{1}{g}]$, et que les idempotents $e_\gamma$ d\'efinissant la d\'ecomposition \eqref{e49} sont dans $\varpi^{\f}\sB_2^{\s}[\frac{1}{g}]$, \ie que $\sB^{\s}[\frac{1}{g}]$ est presque galoisienne de groupe $G$ sur $\sA^{\s}[\frac{1}{g}]$ dans le cadre $(\sK^{\s}, \sK^{\ss})$.

 $b)$ La r\'eduction au cas galoisien se fait comme dans la prop. \ref{P8'} $(2 b)$. 
 
 Ceci termine la preuve de la prop. \ref{L20}, ainsi que celle du th. \ref{T7}.
  \end{proof}
     
     \subsubsection{Remarque.}\label{fr} Il ne semble pas qu'on puisse tirer de l\`a que  $\sB^{\s a}$ est \'etale finie sur $\sA^{\s a}$ (comme $\sA_\ast^{\s}$ est $p$-adiquement hens\'elien, de m\^eme que son extension enti\`ere $\sB_\ast^{\s}$, il suffirait de voir que $\widehat{\sB^{\s}_{2\ast}} $ contient les $e_\gamma \in (\widehat{\sB^{\s}_{2}})_\ast$).  On ne peut invoquer  l'{``\'equivalence remarquable de Grothendieck"} comme en \ref{eqrem} car le r\'esultat de \cite[5.3.27]{GR1} ne s'applique plus: l'id\'eal engendr\'e par $\varpi$ n'est pas {``tight"}\footnote{comme m'en a averti O. Gabber.}. En fait, l'information qui manque ici est la presque finitude de $\sB^{\s }$ sur $\sA^{\s }$: si l'on en disposait, on conclurait par \cite[5.3.20]{GR1} que $\sB^{\s a}$ est projectif fini sur $\sA^{\s a}$, et il serait facile de passer de l\`a \`a {``\'etale fini"}.

      Par ailleurs, tenter de d\'eduire de la prop. \ref{L10} la presque-puret\'e (au sens de presque-injectivit\'e universelle) de $\sB^{\s }$ sur $\sA^{\s }$ se heurte au fait qu'il existe des modules de pr\'esentation finie non s\'epar\'es sur $\sA^{\s}$ pour certaines alg\`ebres perfecto\"{\i}des $\sA$, et m\^eme des id\'eaux principaux non ferm\'es: en $\car\, p$, un exemple est donn\'e par l'id\'eal $(U)$ dans $\sK^{\flat \s} \langle T_i^{\e}, U^{\e} , (\frac{\varpi^{\flat i}T_i}{U})^{\e}\rangle_{i\in \N}$.

  \subsection{Preuve du th\'eor\`eme \ref{T1}.}\label{pT1} Soit $\sA$ une $\sK\langle T^{\e}\rangle$-alg\`ebre perfecto\"{\i}de, et posons $g:= T.1\in \sA^{\s}$.
Soit $\sB'$ une $\sA[\frac{1}{g}]$-alg\`ebre \'etale finie. Notons $\sB^j := \sB' \otimes_{\sA[\frac{1}{g}] } \sA^{[j] } $, $  \sB $ la $\sA$-alg\`ebre presque perfecto\"{\i}de ${\rm{ulim}}\, \sB^{j}$, et $\tilde\sB^{\s} $ (\resp $\tilde\sB$) la fermeture int\'egrale de $g^{\f}\sA^{\s}$ (\resp $g^{\f}\sA^{\s}[\frac{1}{\varpi}]$) dans $\sB'$. 
 Comme la fermeture int\'egrale commute \`a la localisation, on a  $ \, \tilde\sB[\frac{1}{g}]  =  \sA[\frac{1}{g}]^{  +}_{\sB'}= \sB'$, qui n'est autre que $ \sB[\frac{1}{g}]$ d'apr\`es la prop. \ref{P17}.  
   On a $  \sB^{\s} = g^{\f}   \sB^{\s} = (\varpi g)^{\f}   \sB^{\s}$, et $  \sB^{\s}$ est $\varpi$-adiquement compl\`ete.

   \smallskip Le th. \ref{T1} r\'esulte alors de la proposition suivante (l'alg\`ebre not\'ee  $\sB$ dans  \loccit est ici $ \tilde\sB^\natural$).

 \begin{prop}\label{L19} \begin{enumerate} \item On a  $ \, \sB = \tilde \sB$, et $\tilde\sB^{\s}=  (g^{\f}\sA^{\s})^+_{\sB'} = (\varpi g)^{\f} \tilde\sB^{\s} = \hat{\tilde\sB}^{\s}$. 
\item $\tilde\sB^\natural$ est la plus grande $\sA$-alg\`ebre perfecto\"{\i}de contenue dans $\sB'$. 
\item Si $\sB'$ est un $\sA[\frac{1}{g}]$-module fid\`ele, ${\rm{Tr}}_{\sB'/ \sA[\frac{1}{g}]} (\tilde\sB^{\s} )\inj g^{\f}\sA^{\s}$ est un presque-isomorphisme.
\end{enumerate}
\end{prop}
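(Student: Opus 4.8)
The strategy is to deduce everything from Theorem~\ref{T7}, Proposition~\ref{P17}, and the Riemann extension Theorem~\ref{T5}, with Lemma~\ref{L6} handling the integral-closure comparisons. First I would establish $(1)$. By Proposition~\ref{P17}$(3)$ we already have a canonical isomorphism $\sB[\frac{1}{g}]\cong \sB'$, and since integral closure commutes with localization, the integral closure $\tilde\sB^{\s}$ of $g^{\f}\sA^{\s}$ in $\sB'$ inverts to give $\tilde\sB^{\s}[\frac{1}{g}] = (g^{\f}\sA^{\s})^{+}_{\sB'}[\frac{1}{g}] = \sB'$. The point is to identify $\tilde\sB^{\s}$ with $\sB^{\s}$. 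On the one hand, $\sB^{\s} = g^{\f}\sB^{\s}$ is $\varpi$-adically complete (Lemma~\ref{L14}, applied to $\lim\sB^{[j]\s}$, or directly from~\eqref{e45}); applying the corollary~\ref{C2'} to each $\sB^{[j]}/\sA^{[j]}$, each $\sB^{[j]\s}$ is the integral closure of $\sA^{[j]\s}$ in $\sB^{[j]}$, so by compatibility of integral closure with the projective system and with localization (and using $\lim\sA^{[j]\s} = g^{\f}\sA^{\s}$ from Theorem~\ref{T5}), $\sB^{\s} = \lim\sB^{[j]\s}$ is contained in the integral closure of $g^{\f}\sA^{\s}$ in $\lim\sB^{[j]}[\frac1g] = \sB'$, i.e. $\sB^{\s}\subseteq\tilde\sB^{\s}$. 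Conversely $\tilde\sB^{\s}$, being integral over $g^{\f}\sA^{\s}\subseteq\sB^{\s}$ and contained in $\sB[\frac1g]$, lies in the integral closure of $\sB^{\s}$ in $\sB[\frac1g]$, which by Lemma~\ref{L6}$(1)$ (applied with the completely integrally closed $\sB^{\s}$, using that $\sB$ is uniform) is $\sB^{\s}$ itself. Hence $\sB^{\s} = \tilde\sB^{\s}$ and $\sB=\tilde\sB$; the remaining equalities $\tilde\sB^{\s} = (g^{\f}\sA^{\s})^{+}_{\sB'} = (\varpi g)^{\f}\tilde\sB^{\s} = \hat{\tilde\sB}^{\s}$ follow from $\sB^{\s}=g^{\f}\sB^{\s}=(\varpi g)^{\f}\sB^{\s}$ (Lemma~\ref{L8} applied after inverting $\varpi$, together with $\sB^{\s}=g^{\f}\sB^{\s}$) and from $\varpi$-adic completeness.

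For $(2)$, I would invoke Corollary~\ref{C3}$(1)$: since $\sB$ is presque perfecto\"{i}de, $(g^{\f}\sB^{\s}[\frac1\varpi])^\natural = \sB^\natural = \tilde\sB^\natural$ is the largest perfecto\"{i}de $\sA$-subalgebra of $\sB[\frac1g] = \sB'$. One has to check that any perfecto\"{i}de $\sA$-algebra $\sC$ with continuous $\sA\inj\sC\subseteq\sB'$ factors through $\tilde\sB^\natural$; but Lemma~\ref{L11} gives $\sA\{\frac{\varpi^j}{g}\}\cong\sC\{\frac{\varpi^j}{g}\}$, hence by Theorem~\ref{T5} (passing to the uniform limit) $g^{\f}\sA^{\s}[\frac1\varpi] = g^{\f}\sC^{\s}[\frac1\varpi]$, so $\tilde\sB^\natural \supseteq \sC^\natural = \sC$. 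This is exactly the argument of Corollary~\ref{C3}$(1)$ transplanted to the present setting.

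Finally $(3)$. Assume $\sB'$ is a faithful $\sA[\frac1g]$-module. By Remark~\ref{r1'}$(2)$ the trace $\mathrm{Tr}_{\sB'/\sA[\frac1g]}\colon\sB'\to\sA[\frac1g]$ is an epimorphism of modules since $\sB'$ is a faithful \'etale finite algebra, so $\mathrm{Tr}$ is surjective; I must show $\mathrm{Tr}_{\sB'/\sA[\frac1g]}(\tilde\sB^{\s})$ is almost equal to $g^{\f}\sA^{\s}$ in the frame $(\sK^{\s},\sK^{\ss})$. First, $\mathrm{Tr}(\sB^{\s})\subseteq g^{\f}\sA^{\s}$: the trace of an element $b$ is, up to sign, a coefficient of its characteristic polynomial, and by Theorem~\ref{T7} (part $(2)$) $\sB^{\s a}/p^m$ is presque \'etale finite over $\sA^{\s a}/p^m$, so $\mathrm{Tr}_{(\wedge^i\sB)/\sA}(\cdot b) \in (g^{\f}\sA^{\s})_\ast$ as in the proof of Corollary~\ref{C2'} — more precisely one localizes and runs the argument on $\sB^{[j]}$, where $\sB^{[j]\s}$ is genuinely \'etale finite over $\sA^{[j]\s}$ and the trace lands in $\sA^{[j]\s}$, then passes to the limit $\lim\sA^{[j]\s}=g^{\f}\sA^{\s}$. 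For almost surjectivity, reduce to the Galois case $\sB[\frac1g]/\sA[\frac1g]$ of group $G$ as in the proof of Proposition~\ref{P8'}$(2b)$; then, as established in step $(a)$ of the proof of Theorem~\ref{T7}, $\sB^{\s}[\frac1g]$ is presque galoisienne of group $G$ over $\sA^{\s}[\frac1g]$, so the trace there is presque surjective by Proposition~\ref{P2}$(1)$; combining with $\mathrm{Tr}(\sB^{\s})\subseteq g^{\f}\sA^{\s}$ and the idempotent decomposition~\eqref{e49} whose idempotents lie in $\varpi^{\f}\sB_2^{\s}[\frac1g]$, one gets that $\mathrm{Tr}_{\sB'/\sA[\frac1g]}(\sB^{\s}) = \mathrm{Tr}(\tilde\sB^{\s})$ is almost all of $g^{\f}\sA^{\s}$. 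The main obstacle will be the bookkeeping in this last step: transferring the ``almost Galois'' conclusion from the frame $(\sK^{\s}[T^{\e}],(\varpi T)^{\e}\sK^{\s}[T^{\e}])$ back to $(\sK^{\s},\sK^{\ss})$ after inverting $g$, and making sure that the trace computed fiberwise on the $\sB^{[j]}$ is compatible with the limit — i.e. that $\mathrm{Tr}_{\sB'/\sA[\frac1g]}$ really restricts to $\lim\mathrm{Tr}_{\sB^{[j]\s}/\sA^{[j]\s}}$ on $\tilde\sB^{\s}=\sB^{\s}=\lim\sB^{[j]\s}$, which follows from compatibility of traces with base change (Remark~\ref{r1'}$(2)$) but needs to be stated carefully.
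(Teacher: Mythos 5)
Your proposal takes a different route for parts $(1)$ and $(3)$; these differences are mostly harmless though a couple of references are off. In $(1)$, you deduce $\tilde\sB^{\s}\subseteq\sB^{\s}$ from integral closedness of $\sB^{\s}$ in $\sB[\frac{1}{g}]$, citing Lemma~\ref{L6}$(1)$; but that lemma concerns integral closure in $\sB$, not in $\sB[\frac{1}{g}]$. The fact you actually need is Sorite~\ref{S2}(6) combined with Lemma~\ref{uL} (applied to $\sB^\natural$, using $\sB^{\s}=g^{\f}\sB^{\s}$). The paper obtains the same inclusion more directly from the Galois identity~\eqref{e46}. In $(3)$, you proceed via the Galois case and the idempotent decomposition~\eqref{e49}, whereas the paper shortcuts this: it invokes Proposition~\ref{L20} (so the trace is already an epimorphism after reducing mod $p$) and then the version of Nakayama for complete modules from \S\ref{MLN} to lift presque-surjectivity from $\sA^{\s a}/p$ to $g^{\f}\sA^{\s}=(\sA^{\s a})_\ast$. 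Your route omits any such completeness device; the bookkeeping you flag at the end is real and it is not clear your argument closes without one.

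The genuine gap is in $(2)$. You apply Lemma~\ref{L11} to $\sA\inj\sC$ for a perfectoïde $\sA$-algebra $\sC\subseteq\sB'$, obtaining $\sA\{\frac{\varpi^j}{g}\}\cong\sC\{\frac{\varpi^j}{g}\}$ and hence $g^{\f}\sA^{\s}[\frac{1}{\varpi}]=g^{\f}\sC^{\s}[\frac{1}{\varpi}]$. But Lemma~\ref{L11} requires $\sC\subseteq\sA[\frac{1}{g}]$, whereas here $\sC$ lives in $\sB'$, a finite \'etale \emph{extension} of $\sA[\frac{1}{g}]$, not a subalgebra of it. The conclusion you draw is visibly too strong: taking $\sC=\tilde\sB^\natural$ would yield $\tilde\sB^\natural\subseteq g^{\f}\sA^{\s}[\frac{1}{\varpi}]\subseteq\sA[\frac{1}{g}]$, hence $\sB'=\tilde\sB^\natural[\frac{1}{g}]\subseteq\sA[\frac{1}{g}]$, which fails whenever $\sB'$ is a proper extension. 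To make the paper's terse ``suit de $(1)$ et du cor.~\ref{C3}'' precise, Corollary~\ref{C3}$(1)$ has to be applied to the presque-perfectoïde algebra $\tilde\sB$ rather than to $\sA$ — part~$(1)$ puts $\tilde\sB$ in exactly the right shape, with $\tilde\sB^{\s}=g^{\f}\tilde\sB^{\s}$ and $\tilde\sB[\frac{1}{g}]=\sB'$ — and one must then check that any perfectoïde $\sA$-subalgebra of $\sB'$ already sits inside $\tilde\sB$ (so that maximality among $\tilde\sB$-algebras upgrades to maximality among $\sA$-algebras). This last step is absent from your sketch, and the substitute you offer does not supply it.
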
 

\begin{proof} $(1)$ On peut remplacer $\sA^{\s}$ par $g^{\f}\sA^{\s}$. Comme $(\sA^{\s})^+_{\sB'} \otimes_{\sA^{\s}} \sA^{[j]{\s}}$ est contenu dans la fermeture int\'egrale de $\sA^{[j]{\s}}$ dans $\sB^j$,  elle-m\^eme contenue dans  $\sB^{j \s}\, $ (lemme \ref{L6} $(1)$), on a un morphisme canonique $\, (\sA^{\s})^+_{\sB'} \to  \sB^{j \s}$. 
 Il est compatible avec les fl\`eches de transition en $j$, d'o\`u, en passant \`a la limite, un morphisme canonique $ (\sA^{\s})^+_{\sB'}  {\to}  \sB^{\s},$ et il suffit de montrer que c'est un isomorphisme. Comme il s'agit de sous-anneaux de $\sB'$, il est injectif, ce qui ram\`ene \`a montrer qu'il est surjectif, ou encore, que $  \sB^{\s}$ est entier sur $\sA^{\s}$, ce qui d\'ecoule de \eqref{e46}.

\smallskip $(2)$ suit de $(1)$ et du cor. \ref{C3}. 

\smallskip $(3)$ Si $\sB'$ est un $\sA[\frac{1}{g}]$-module fid\`ele, $\tilde\sB^{\s}$ est un $\sA^{\s}$-module fid\`ele, et comme le plongement $\sA \to \tilde \sB$ est isom\'etrique, $\tilde\sB^{\s}/p$ est un $\sA^{\s}/p$-module fid\`ele. Alors $\tilde\sB^{\s  a}/p$ est une extension \'etale finie de $\sA^{\s  a}/p$ (prop. \ref{L20}), et la trace fournit un homomorphisme de $(\tilde\sB^{\s  a})_\ast $ sur $(\sA^{\s  a})_\ast = g^{\f}\sA^{\s} $ qui est presque surjectif modulo $p$, donc presque surjectif d'apr\`es la version   \ref{MLN} du lemme de Nakayama.   \end{proof}

 \subsection{Normalisation de Noether et {``enveloppes"} presque perfecto\"{\i}des d'alg\`ebres affino\"{\i}des.}\label{nN}
{ Les constructions d'alg\`ebres perfecto\"{\i}des se bornent habituellement \`a des situations {``toriques"} o\`u l'on met \`a profit la version logarithmique du th\'eor\`eme de {``presque-puret\'e"} de Faltings. 

 Gr\^ace au lemme d'Abhyankar perfecto\"{\i}de, on peut en fait attacher des alg\`ebres perfecto\"{\i}des \`a toute alg\`ebre affino\"{\i}de r\'eduite $ B$ sur $\sK$ de dimension $n$, en consid\'erant $ B$ comme une extension finie de $  A := \sK\langle T_{\leq n}\rangle$ par le lemme de normalisation de Noether \cite[cor. 6.1.2/2]{BGR}. 
  Soit $g\in   A^{\s}  $ tel que $  B[\frac{1}{g}]$ soit \'etale sur $  A[\frac{1}{g}]$. Soient $\sA$ l'alg\`ebre presque perfecto\"{\i}de $  g^{\f}\sK\langle T^{\e}_{\leq n}, g^{\e}\rangle  $, et {\it $\sB$ la fermeture int\'egrale de $ B\otimes_{  A} \sA$ dans $ B\otimes_{  A} \sA[\frac{1}{g}]$. Alors $\sB$ est presque perfecto\"{\i}de} (prop. \ref{L19}), et $\sB^{\s a}$ est \'etale fini sur $\sA^{\s a}\,$ modulo toute puissance de $p$ dans le cadre $\,(  \sK^{\s}[T^{\e}], (\varpi T)^{\e}\sK^{\s}[T^{\e}])$, (via $T^{\frac{1}{p^k}}\mapsto g^{\frac{1}{p^k}}$).} 
  
  Si la question \ref{q} a une r\'eponse positive, $\sA$ et $\sB$ sont perfecto\"{\i}des.

    \bigskip 
   
         \section*{Index des symboles}

 \medskip
 
  $(\; )^\dagger,\; (\; )^+ , \;(\; )^\ast$  \dotfill \ref{ferm}, 
 
\smallskip $(\; )^{a},\; (\; )_\ast , \;(\; )_!, \; (\;)_{!!}$  \dotfill \ref{Mod}, \; \ref{Alg}, 
 
\smallskip $(\; )^{\hat a},\; (\; )_{\hat \ast} ,  \; (\;)_{\hat{!!}}$  \dotfill \ref{recc}, \; \ref{234}, 

\smallskip $(\;)^u, \; \hat\otimes^u$  \dotfill \ref{abu},  
 
 \smallskip  $ {\rm{ulim}} , {\rm{ucolim}}$  \dotfill \ref{ulim}, \; \ref{{ucolim}},   

 \smallskip $\flat, \;\#,\; \sharp$   \dotfill \ref{basc}, \; \ref{bascu},

\smallskip $\natural$   \dotfill \ref{nat}, \; \ref{app},  

  \smallskip $g^{\f}(\; )$   \dotfill \ref{E6}, \; \ref{prp},
 
   \smallskip $(\; )\langle g^{\e}\rangle$ \dotfill \ref{E9}, \; \ref{r8},
 
 \smallskip $(\; )\{  \frac{\lambda}{g}\}$   \dotfill \ref{loc},\;  \ref{locp},  
 
    \smallskip $(\; )^{[j]}$   \dotfill \ref{lu}, \; \ref{ec}.  

\bigskip

 \bigskip 
    
      \end{sloppypar}
            \end{document}